\def\NZQ{\mathbb}               
\def\NN{{\NZQ N}}
\def\QQ{{\NZQ Q}}
\def\ZZ{{\NZQ Z}}
\def\RR{{\NZQ R}}
\def\PP{{\NZQ P}}
\newtheorem{Theorem}{Theorem}[section]
\newtheorem{Lemma}[Theorem]{Lemma}
\newtheorem{Corollary}[Theorem]{Corollary}
\newtheorem{Proposition}[Theorem]{Proposition}
\newtheorem{Remark}[Theorem]{Remark}
\newtheorem{Example}[Theorem]{Example}
\newtheorem{Definition}[Theorem]{Definition}
\let\epsilon\varepsilon
\let\phi=\varphi
\let\kappa=\varkappa
\begin{document}
\title{Asymptotic multiplicities of graded families of  ideals and linear series}
\author{Steven Dale Cutkosky }
\thanks{Partially supported by NSF}

\address{Steven Dale Cutkosky, Department of Mathematics,
University of Missouri, Columbia, MO 65211, USA}
\email{cutkoskys@missouri.edu}

\begin{abstract} We  find  simple necessary and sufficient conditions on a local ring $R$ of dimension $d$ for  the limit 
$$
\lim_{i\rightarrow\infty}\frac{\ell_R(R/I_n)}{n^d}
$$
to exist whenever $\{I_n\}$ is a 
 graded family of  $m_R$-primary ideals, and
   give a number of applications. We also give simple necessary and sufficient conditions on projective schemes over a field $k$ for asymptotic limits of the growth of all graded linear series of a fixed Kodaira-Iitaka dimension to exist. 
\end{abstract}

\maketitle

\section{Introduction} 
\subsection{Limits of graded families of ideals}
In this paper we prove the following theorem about graded families of $m_R$-primary ideals. 

\begin{Theorem}(Theorem \ref{Theorem4})\label{TheoremI20}  Suppose that $R$ is a (Noetherian) local ring of dimension $d$, and  $N(\hat R)$ is the nilradical of the $m_R$-adic completion $\hat R$ of $R$.  Then   the limit 
\begin{equation}\label{I5}
\lim_{n\rightarrow\infty}\frac{\ell_R(R/I_n)}{n^d}
\end{equation}
exists for any graded family $\{I_n\}$ of $m_R$-primary ideals, if and only if $\dim N(\hat R)<d$.
\end{Theorem}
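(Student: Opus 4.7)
The plan is to reduce both directions to the case where $R$ is complete, using the equality $\ell_R(R/I_n)=\ell_{\hat R}(\hat R/I_n\hat R)$ together with the fact that $\{I_n\hat R\}$ is a graded family of $m_{\hat R}$-primary ideals; conversely, a graded family $\{J_n\}$ in $\hat R$ restricts via intersection with $R$ to a graded family in $R$ with identical length data. So I assume $R=\hat R$ and write $N=N(R)$.

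\emph{If direction.} Assume $\dim N<d$. From the short exact sequence $0\to N/(N\cap I_n)\to R/I_n\to R/(I_n+N)\to 0$ I obtain
\[
\ell_R(R/I_n)=\ell_{R/N}\bigl((R/N)/I_n(R/N)\bigr)+\ell_R\bigl(N/(N\cap I_n)\bigr).
\]
The first summand is attached to a graded family of primary ideals in the reduced complete local ring $R/N$ of dimension $d$, so the reduced-complete case of the theorem (which I assume is established earlier in the paper) yields the existence of the limit of this summand divided by $n^d$. For the second summand I pick a prime filtration $0=N_0\subset\cdots\subset N_\ell=N$ with $N_i/N_{i-1}\cong R/P_i$. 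Since each subquotient is a subquotient of $N$, $\dim R/P_i\le\dim N<d$, and a snake-lemma comparison gives $\ell_R(N/(N\cap I_n))\le\sum_i\ell_{R/P_i}\bigl((R/P_i)/I_n(R/P_i)\bigr)=O(n^{d-1})$, which is absorbed in the error.

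\emph{Only if direction.} Assume $\dim N=d$ and choose $x\in N$ with $x^2=0$ and $\dim R/\mathrm{ann}(x)=d$; put $\bar R=R/(x)$, still of dimension $d$. Consider
\[
I_n=m_R^n+x\,m_R^{a_n},\qquad a_n\in\NN.
\]
Since $x^2=0$, $I_nI_m=m_R^{n+m}+xm_R^{n+a_m}+xm_R^{m+a_n}$, so $\{I_n\}$ is a graded family iff $a_{n+m}\le\min(a_n+m,\,a_m+n)$. Setting $a_n=0$ when $n=2^k$ and $a_n=n-2^{\lfloor\log_2 n\rfloor}$ otherwise verifies these inequalities while $a_n/n$ oscillates between $0$ and values near $1/2$. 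The exact sequence $0\to xR/(xR\cap I_n)\to R/I_n\to \bar R/m_R^n\bar R\to 0$ combined with Artin--Rees (identifying $xR\cap m_R^n$ with $xm_R^{n-c}$ for some constant $c$ and all $n\gg 0$) shows $xR/(xR\cap I_n)\cong (R/\mathrm{ann}(x))/m_R^{a_n}(R/\mathrm{ann}(x))$ for large $n$. Since $\dim(R/\mathrm{ann}(x))=d$, this summand has $a_n^d$ leading order, so $\ell_R(R/I_n)/n^d$ inherits the oscillation of $(a_n/n)^d$ and no limit exists.

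\emph{Main obstacle.} The technical heart is the $O(n^{d-1})$ bound on $\ell_R(N/(N\cap I_n))$ in the \emph{if} direction, which requires the prime-filtration comparison above together with a Hilbert--Samuel style bound $\ell_{R/P}((R/P)/I_n(R/P))=O(n^{\dim R/P})$ valid for every graded family of primary ideals; ensuring that this bound holds uniformly for all such families (not just powers of a fixed ideal) is the delicate point. The reduction to the complete case and the verification of the counterexample are comparatively routine once the algebraic flexibility of $x^2=0$ is exploited.
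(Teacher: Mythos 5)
Your argument follows essentially the same route as the paper: reduction to the complete case, control of the nilradical's contribution by an $O(n^{\dim N})$ bound for sufficiency, and the Dao--Smirnov nilpotent counterexample for necessity. Both directions are in substance correct. Three remarks on the details.

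First, in the sufficiency direction the prime filtration is an unnecessary detour, and the step is not the ``delicate point'' you flag. Since $m_R^{cn}\subset I_1^n\subset I_n$, one simply bounds $\ell_R(N/(N\cap I_n))\le\ell_R(N/m_R^{cn}N)$ directly; by Hilbert--Samuel theory for the module $N$, the right-hand side is $O(n^{\dim N})=O(n^{d-1})$. There is no uniformity issue over the family because the single power $m_R^{cn}$ sits inside every $I_n$.

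Second, in the necessity direction you assert without proof the existence of $x\in N$ with $x^2=0$ and $\dim R/\mathrm{ann}(x)=d$; this is the one genuine gap. To produce such an $x$: take a minimal prime $p$ of $N$ with $\dim R/p=d$. Then $p\in\mathrm{Ass}(N)$, so there is $0\ne x$ with $\mathrm{ann}(x)=p$. If $x\notin p$ then $x$ is a unit in $R_p$ and $p_p=(pxR)_p=0$, contradicting $N_p\ne 0$; hence $x\in p=\mathrm{ann}(x)$ and $x^2=0$. Also, your Artin--Rees step should say $xR\cap m_R^n\subset xm_R^{n-c}$ (inclusion, not identification), which together with $n-a_n\to\infty$ gives $xR\cap I_n=xm_R^{a_n}$ for $n\gg 0$; the conclusion you draw is still correct.

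Third, your oscillating sequence $a_n=n-2^{\lfloor\log_2 n\rfloor}$ is cleaner than the paper's $\sigma$ built from a super-exponentially growing sequence $i_j$, and it does suffice for the present theorem. The paper's more elaborate choice is designed so that the oscillation of $\sigma(n)/n$ persists along every arithmetic progression, which is needed for the graded-linear-series analogues elsewhere in the paper but not here.
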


 A graded family of ideals $\{I_n\}$ in $R$ is a family of ideals indexed by the natural numbers such that $I_0=R$ and $I_mI_n\subset I_{m+n}$ for all $m,n$.

The nilradical $N(R)$ of a $d$-dimensional ring $R$ is 
$$
N(R)=\{x\in R\mid x^n=0 \mbox{ for some positive integer $n$}\}.
$$
Recall that $\dim N(R)=-1$ if $N(R)=0$ and if $N(R)\ne 0$, then
$$
\dim N(R)=\dim R/\mbox{ann}(N(R)),
$$
so that $\dim N(R)=d$ if and only if there exists a minimal prime $P$ of $R$ such that $\dim R/P =d$ and $R_P$ is not reduced.

If $R$ is excellent, then $N(\hat R)= N(R)\hat R$, and the theorem is true with the condition $\dim N(\hat R)<d$ replaced with 
$\dim N(R)<d$. However, there exist Noetherian local domains $R$ (so that $N(R)=0$) such that $\dim N(\hat R)=\dim R$ (Nagata (E3.2) \cite{N}).

It is not difficult to construct examples of graded families of $m_R$-primary ideals in a regular local ring such that the above limit is irrational.

The problem of existence of such limits (\ref{I5}) has been   considered by Ein, Lazarsfeld and Smith \cite{ELS} and Musta\c{t}\u{a} \cite{Mus}.
Lazarsfeld and Musta\c{t}\u{a} \cite{LM} showed that
the limit exists for all graded families of $m_R$-primary ideals in $R$ if $R$ is a domain which is essentially of finite type over an algebraically closed field $k$ with $R/m_R=k$. All of these assumptions are necessary in their proof. Their proof is by reducing the problem to one on graded linear series on a projective variety, and then using a method introduced by Okounkov \cite{Ok} to reduce the problem to one of counting points in an integral semigroup.

In our paper \cite{C1}, we prove that such limits exist for graded families of $m_R$-primary ideals, with the restriction that $R$ 
is an analytically unramified ($N=0$) equicharacteristic  local ring with perfect residue field (Theorem 5.8 \cite{C1}). In this paper we extend this result (in Theorem \ref{Theorem2}) to prove that the limit (\ref{I5}) exists for all graded families of $m_R$-primary ideals  in a local ring $R$ satisfying the assumptions of Theorem \ref{TheoremI20}, establishing  sufficiency in Theorem \ref{TheoremI20}. 
Our proof begins with  the cone method discussed above.

In Example \ref{Example1},
we give an example of a graded family of $m_R$-primary ideals in a nonreduced local ring $R$ for which the above limit does not exist. 
Hailong Dao and   Ilya Smirnov have shown that such examples are universal, so that if the nilradical of 
$R$ has dimension $d$, then there exists a graded family of $m_R$-primary ideals such that the  limit (\ref{I5}) does not exist (Theorem \ref{Theorem3} of this paper). Since a graded family of $m_R$-primary ideals on the completion of a ring lifts to the ring, necessity in Theorem \ref{TheoremI20} follows 
from this result.

In  Section \ref{SecApp} of this paper, we give some applications of this result and the method used in proving it, which generalize some of the applications in \cite{C1}. We extend the theorems to remove the requirement that the local ring be equicharacteristic with perfect residue field, to hold
on arbitrary analytically unramified  local rings.

We prove some volume = multiplicity formulas for graded families of $m_R$-primary ideals in  analytically unramified local rings in Theorems \ref{Theorem15} 
- \ref{Theorem13}. 
Theorem \ref{Theorem15} is proven for valuation ideals associated to an Abhyankar valuation in a regular local ring which is essentially of finite type over a field by Ein, Lazarsfeld and Smith in  \cite{ELS}, for general families of $m_R$-primary ideals when $R$ is a regular local ring containing a field by Musta\c{t}\u{a} in \cite{Mus} and when $R$ is a local domain which is essentially of finite type over an algebraically closed field $k$ with $R/m_R=k$ by Lazarsfeld and Ein in Theorem 3.8 \cite{LM}. All of these assumptions are necessary in the proof in \cite{LM}.
The volume = multiplicity formula is proven when $R$ is regular or $R$ is analytically unramified with perfect residue field in Theorem 6.5 \cite{C1}.

We  give, in Theorems \ref{Theorem14} - Theorem \ref{Theorem15}, some formulas showing that limits of the epsilon multiplicity type exist in analytically unramified local rings. We extend results of \cite{C1}, where it is assumed that $R$ is equicharacteristic with perfect residue field.
Epsilon multiplicity is defined as a limsup by Ulrich and Validashti in \cite{UV} and by Kleiman, Ulrich and Validashti in \cite{KUV}.  We also prove an asymptotic formula on multiplicities proposed by Herzog, Puthenpurakal and Verma \cite{HPV} on analytically unramified local rings. A weaker version of this result is proven in \cite{C1}. A general proof of the existence of epsilon multiplicities for torsion free finite rank modules over an analytically unramified local ring is given in Theorem 3.6 of \cite{C3} by developing the methods of  this paper. In the case of modules over local rings essentially of finite type over an algebraically closed field,  this is proven for modules which are locally free over the punctured spectrum  in \cite{K} and  for more general modules in \cite{C}, using different methods.

\subsection{Kodaira-Iitaka dimension and growth rate of graded linear series}
Before discussing limits of graded linear series we need to define the Kodaira-Iitaka dimension of a graded linear series. This  concept  is defined classically by several equivalent conditions for normal projective varieties. However, these conditions are no longer equivalent for more general proper schemes, so we must make an appropriate choice in our definition.

Suppose that $X$ is a $d$-dimensional proper scheme over a field $k$. A graded linear series $L=\bigoplus_{n\ge 0}L_n$ on $X$ is a graded $k$-subalgebra of a section ring
$\bigoplus_{n\ge 0}\Gamma(X,\mathcal L^n)$ of a line bundle $\mathcal L$ on $X$. We define the Kodaira-Iitaka dimension $\kappa(L)$    from the maximal number of algebraically independent forms in $L$
(the complete definition is given  in Section \ref{SecProp}). This definition agrees with the classical one for normal projective varieties. When $X$ is a projective scheme, the Kodaira-Iitaka dimension of $L$ is $-\infty$ if the Krull dimension of $L$ is 0, and is  one less than the Krull dimension of $L$  if the Krull dimension is positive (Lemma \ref{LKI7}).

Suppose that $\mathcal L$ is  a line bundle  on a normal projective variety $X$.  The index $m(\mathcal L)$ of $\mathcal L$ is defined to be the least common multiple of the positive integers $n$ such that $\Gamma(X,\mathcal L^n)\ne 0$. The theorem of Iitaka,  Theorem 10.2  \cite{I}, tells us that if
 $\kappa(\mathcal L)=-\infty$ then $\dim_k\Gamma(X,\mathcal L^n)=0$ for all positive integers $n$ and   if $\kappa(\mathcal L)\ge 0$ then there exist positive constants $0<a<b$ such that 
 \begin{equation}\label{F1}
 an^{\kappa(L)}<\dim_k\Gamma(X,\mathcal L^{m(\mathcal L)n})<bn^{\kappa(L)}
 \end{equation}
 for $n\gg 0$.
Thus, with the assumption that $X$ is a normal projective variety, the Kodaira-Iitaka dimension is  the growth rate of $\dim_k\Gamma(X,\mathcal L^n)$. Equation (\ref{F1}) continues to hold for graded linear series on a proper variety $X$ over a field (this is stated in (\ref{eq61})). However, when $X$ is not integral, (\ref{F1}) may not hold. In fact, the  rate of growth  has little meaning on nonreduced schemes. In Section \ref{Infinity}, it is shown that 
a graded linear series $L$ on a nonreduced $d$-dimensional projective scheme with
$\kappa(L)=-\infty$ can grow like    $n^d$ (Example \ref{Example2}) or can oscillate wildly between 0 and $n^d$ (Theorem \ref{Theorem21} and Example \ref{Example16}) so that there is no growth rate.

 In fact, it is quite easy to construct badly behaved examples with $\kappa(L)=-\infty$, since in this case the condition that $L_mL_n\subset L_{m+n}$ required for a graded linear series may be  vacuous.

\subsection{Limits of graded families of linear series} Let $k$ be a field.

From (\ref{F1}) and (\ref{eq61}) we have that both $\liminf_{n\rightarrow\infty} \frac{\dim_k L_{mn}}{n^{\kappa(L)}}$ and $\limsup_{n\rightarrow\infty} \frac{\dim_k L_{mn}}{n^{\kappa(L)}}$ exist for a graded linear series $L$ on a proper variety. The remarkable fact is that  they actually exist as a common limit on a proper variety. 

Suppose that $L$ is a graded linear series on a proper variety $X$ over a field $k$. 
The {\it index} $m=m(L)$ of $L$ is defined as the index of groups
$$
m=[\ZZ:G]
$$
where $G$ is the subgroup of $\ZZ$ generated by $\{n\mid L_n\ne 0\}$.

\begin{Theorem}\label{TheoremI1}(Theorem \ref{Theorem5})  Suppose that $X$ is a $d$-dimensional proper variety over a field $k$, and $L$ is a graded linear series on $X$ with Kodaira-Iitaka dimension $\kappa=\kappa(L)\ge 0$. Let $m=m(L)$ be the index of $L$.  Then  
$$
\lim_{n\rightarrow \infty}\frac{\dim_k L_{nm}}{n^{\kappa}}
$$
exists. 
\end{Theorem}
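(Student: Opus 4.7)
The plan is to apply the Okounkov body / cone method, following \cite{LM} and the generalization in \cite{C1}, to reduce the existence of the limit to a general theorem on counting lattice points in graded subsemigroups of $\NN \times \ZZ^d$. Since graded linear series, Kodaira-Iitaka dimension, and Krull dimension of the section algebra $\bigoplus L_n$ are preserved under proper birational modifications, I would first apply Chow's lemma and normalize to reduce to the case that $X$ is a normal projective variety of dimension $d$. Then I would choose a flag $X \supsetneq Y_1 \supsetneq \cdots \supsetneq Y_d = \{p\}$ with $\mathrm{codim}_X Y_i = i$ and all $Y_i$ smooth at the closed point $p$; this flag determines a rank $d$ valuation $\nu : K(X)^\times \to \ZZ^d$ on the function field of $X$.

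Fixing $t_0 \in L_{n_0} \setminus 0$ for some $n_0$ with $L_{n_0} \ne 0$, together with a local trivialization of $\mathcal L$ near $p$, I would attach to each nonzero section $s \in L_n$ a vector $\nu_L(s) \in \ZZ^d$ by applying $\nu$ to $s$ divided by an appropriate power of $t_0$ (passing to common multiples of $n_0$ as needed). The usual filtration-by-valuation argument yields $|\nu_L(L_n \setminus 0)| = \dim_k L_n$, so the Okounkov semigroup
$$\Gamma = \Gamma(L) = \bigcup_{n \ge 0} \{n\} \times \nu_L(L_n \setminus 0) \subset \NN \times \ZZ^d$$
satisfies $|\Gamma \cap (\{n\} \times \ZZ^d)| = \dim_k L_n$, and its first-coordinate projection generates precisely the subgroup $m\ZZ$ by definition of the index $m$. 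I would then invoke the general semigroup limit theorem: for a subsemigroup $\Gamma \subset \NN \times \ZZ^d$ whose closed convex cone $C(\Gamma)$ has dimension $q+1$ and satisfies an appropriate fullness hypothesis, the limit
$$\lim_{n \to \infty} \frac{|\Gamma \cap (\{nm\} \times \ZZ^d)|}{n^q}$$
exists. In the case $q = d$ this is the classical Khovanskii-Okounkov theorem; the lower-dimensional case is precisely what is established in \cite{C1}. The identification $q = \kappa(L)$ follows from Lemma \ref{LKI7}, since the Krull dimension of $\bigoplus_n L_n$ coincides with $\dim C(\Gamma) = q+1$.

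The main obstacle is verifying the fullness hypothesis of the semigroup theorem in the degenerate regime $\kappa < d$, where $C(\Gamma)$ sits in a proper affine subspace of $\RR \times \RR^d$ and the classical volume computation does not apply directly. The natural device is to produce, via Noether normalization of the algebra $\bigoplus L_n$ modulo its nilradical, a set of $\kappa + 1$ algebraically independent homogeneous elements whose valuations span a sublattice of rank $\kappa + 1$ on which the projected cone is top-dimensional; one then applies the classical case to the projection. Carrying out this dimensional reduction while preserving compatibility with the index $m$ and the flag-valuation $\nu$ is the technical heart of the argument, and is the reason why the full force of the generalized semigroup theorem of \cite{C1}, rather than the setup of \cite{LM}, is needed here.
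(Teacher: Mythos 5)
Your outline follows the paper's strategy at a high level (Chow's lemma, a rank-$d$ valuation at a regular closed point, the Okounkov semigroup, Kaveh--Khovanskii counting, and $q=\kappa$ via Noether normalization), but it silently assumes the key fact that makes the Lazarsfeld--Musta\c{t}\u{a}/Kaveh--Khovanskii setup over an algebraically closed field work, and that fact is precisely what fails here. You ask for a flag $X\supsetneq Y_1\supsetneq\cdots\supsetneq Y_d=\{p\}$ with each $Y_i$ smooth at the closed point $p$, and you use the resulting $\ZZ^d$-valued valuation $\nu$ to get $|\nu_L(L_n\setminus 0)|=\dim_k L_n$. This identity holds only when the residue field of $\nu$ is $k$ itself, i.e.\ $p$ is a $k$-rational regular point. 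Over an arbitrary field there may be no such point (the paper gives a normal example over an imperfect $k$ whose base change to $\overline k$ is not even generically reduced, and Proposition \ref{Prop1} makes precise when a valuation with residue field $k$ exists). One can always find a regular \emph{closed} point $Q$ (Zariski), but then $k'=k(Q)$ is only a finite extension of $k$, the quotients $I_\gamma/I_\gamma^+$ are copies of $k'$, and all one gets is $|\nu_L(L_n\setminus 0)|\le\dim_k L_n\le[k':k]\cdot|\nu_L(L_n\setminus 0)|$, which is not enough to conclude existence of the limit.

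This is the technical heart of the paper's argument, and your proposal does not address it. The paper compensates by replacing the single Okounkov semigroup by the family of semigroups
$$
S(L)^{(t)}=\{(\gamma,n)\mid \dim_k\bigl(L_n\cap I_\gamma/L_n\cap I_\gamma^+\bigr)\ge t\},\qquad 1\le t\le[k':k],
$$
so that $\dim_k L_n=\sum_{t=1}^{[k':k]}\#\bigl(S(L)^{(t)}_n\bigr)$, then verifies that each $S(L)^{(t)}$ is a strongly nonnegative subsemigroup with the same index $m$ and the same cone dimension $q=\kappa$ (Lemma \ref{Lemmaproj1} and the comparison with the semigroup $T$ coming from a very ample divisor), and finally applies the Kaveh--Khovanskii limit theorem to each layer and sums. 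Without this layered construction your step ``the filtration-by-valuation argument yields $|\nu_L(L_n\setminus 0)|=\dim_k L_n$'' is simply false in the generality claimed, and the rest of the proof does not go through. A secondary remark: since $X$ is a variety, the section algebra $\bigoplus L_n$ embeds into $k(X)[t]$ and is already a domain, so the ``modulo its nilradical'' step in your Noether normalization is vacuous; and the paper's treatment of the degenerate $\kappa<d$ case does not really pass to a projection, it works directly inside the subspace $L(S)\subset\RR^{d+1}$ via the $q(S)$, $\partial M(S)$ and ${\rm ind}(S)$ formalism of Kaveh--Khovanskii, which is what supplies the ``fullness'' you are worried about.
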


In particular, from the definition of the index, we have that the limit
$$
\lim_{n\rightarrow \infty}\frac{\dim_k L_{n}}{{n}^{\kappa}}
$$
exists, whenever $n$ is constrained to lie in an arithmetic sequence $a+bm$ ($m=m(L)$ and $a$ an arbitrary but fixed constant), as $\dim_kL_n=0$ if $m\not\,\mid n$.

An example of a big line bundle where the limit in Theorem \ref{TheoremI1} is an irrational number is given in Example 4 of Section 7 of the author's paper \cite{CS} with Srinivas.

Theorem \ref{TheoremI1} is proven for big line bundles on a nonsingular variety over an algebraically closed field of characteristic zero by Lazarsfeld (Example 11.4.7 \cite{La}) using Fujita approximation (Fujita, \cite{Fuj}). This result is extended by Takagi using De Jong's theory of alterations \cite{DJ} to hold on nonsingular varieties over algebraically fields of all characteristics $p\ge 0$.
Theorem \ref{TheoremI1} has been proven by  Okounkov  \cite{Ok} for section rings of ample line bundles,  Lazarsfeld and Musta\c{t}\u{a} \cite{LM} for section rings of big line bundles, and for graded linear series by Kaveh and Khovanskii \cite{KK} when $k$ is an algebraically closed field. A local form of this result is given by Fulger in \cite{Ful}. These last proofs use an ingenious method introduced by Okounkov to reduce  to a problem of
counting points in an integral semigroup. All of these proofs require the assumption that {\it $k$ is  algebraically closed}.  In this paper we establish Theorem \ref{TheoremI1} over an arbitrary ground field $k$ (in Theorem \ref{Theorem5}). 
We deduce Fujita approximation over an arbitrary field in Theorem \ref{FApp}.

It is worth remarking that
when $k$ is an arbitrary field and $X$ is geometrically integral over $k$ we easily obtain that the  limit of Theorem \ref{TheoremI1} exists by making the base change to
$\overline X=X\times_k\overline k$ where $\overline k$ is an algebraic closure of $k$. Then $\dim_kL_n=\dim_{\overline k}\overline L_n$
where $\overline L=\bigoplus_{n\ge 0}\overline L_n$ is the graded linear series on $\overline X$ with $\overline L_n=L_n\otimes_k\overline k$.
 The scheme $\overline X$
is a complete $\overline k$ variety (it is integral) since $X$ is geometrically integral. Thus the conclusions of Theorem \ref{TheoremI1}  are valid for $\overline L$ (on the complete variety $\overline X$ over the algebraically closed field $\overline k$)
so that the   limit for $L$ (on $X$ over $k$) exists as well. This observation is exploited by Boucksom and Chen in \cite{BC} where some limits on geometrically integral arithmetic varieties are computed. However, this argument is not applicable when $X$ is not geometrically integral. The most dramatic difficulty can occur when $k$ is not perfect, as there exist simple examples of irreducible
projective varieties which are not even generically reduced after taking the base change to the algebraic closure (we give a simple example below). 
In Theorem \ref{TheoremN1} it is shown that for general graded linear series the limit does not always exist if 
$X$ is not generically reduced.

We now give an example, showing that even if $X$ is normal and $k$ is algebraically closed in the function field of $X$, then 
$X\times_k\overline k$ may not be generically reduced, where $\overline k$ is an algebraic closure of $k$.
Let $p$ be a prime number, $F_p$ be the field with $p$ elements and let $k=F_p(s,t,u)$ be a rational function field in three variables over $F_p$. Let $R$ be the local ring $R=(k[x,y,z]/(sx^p+ty^p+uz^p))_{(x,y,z)}$ with maximal ideal $m_R$.  $R$ is the localization of
$T=F_p[s,t,u,x,y,z]/(sx^p+ty^p+uz^p)$ at the ideal $(x,y,z)$, since $F_p[s,t,u]\cap(x,y,z)=(0)$. $T$ is nonsingular in codimension 1 by the Jacobian criterion over the perfect field $F_p$, and so
$T$ is normal by Serre's criterion. Thus $R$ is normal since it is a localization of $T$. Let $k'$ be the algebraic closure of $k$ in the quotient field $K$ of $R$. Then $k'\subset R$ since $R$ is normal. $R/m_R\cong k$ necessarily contains $k'$, so $k=k'$. However, we have that $R\otimes_k\overline k$ is generically not reduced, if $\overline k$ is an algebraically closure of $k$. Now taking $X$ to be a normal projective model of $K$ over $k$ such that $R$ is the local ring of a closed point of $X$, we have the desired example.
In fact, we have that $k$ is algebraically closed in $K$, but $K\otimes_k\overline k$ has nonzero nilpotent elements.

The statement of Theorem \ref{TheoremI1} generalizes very nicely to reduced proper $k$-schemes, as we establish in Theorem \ref{Theorem18}. 

\begin{Theorem}(Theorem \ref{Theorem18}) Suppose that $X$ is a reduced proper scheme over a  field $k$. 
Let $L$ be a graded linear series on $X$ with Kodaira-Iitaka dimension  $\kappa=\kappa(L)\ge 0$.  
 Then there exists a positive integer $r$ such that 
$$
\lim_{n\rightarrow \infty}\frac{\dim_k L_{a+nr}}{n^{\kappa}}
$$
exists for any fixed $a\in \NN$.
\end{Theorem}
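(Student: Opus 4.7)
The plan is to reduce to Theorem \ref{TheoremI1} by decomposing the reduced proper scheme $X$ into its irreducible components $X_1,\ldots,X_s$, each a proper $k$-variety (integral proper $k$-scheme), and combining the resulting per-component asymptotics. For each $i$, set $L^{(i)}=\bigoplus_n L_n^{(i)}$ where $L_n^{(i)}$ is the image of $L_n$ under the restriction $\Gamma(X,\mathcal L^n)\to\Gamma(X_i,\mathcal L_i^n)$; this is a graded linear series on the proper variety $X_i$. Because $X$ is reduced, the induced ring homomorphism $L\hookrightarrow\prod_i L^{(i)}$ is injective, and each $L^{(i)}$ is a cyclic $L$-module, so the extension is integral; hence $\dim L=\dim\prod_i L^{(i)}=\max_i\dim L^{(i)}$, and (via Lemma \ref{LKI7} after reducing to the projective case by Chow's lemma) $\kappa(L)=\max_i\kappa(L^{(i)})$.

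For each $i$ with $\kappa(L^{(i)})\ge 0$, Theorem \ref{TheoremI1} applied to the proper variety $X_i$ produces an index $m_i=m(L^{(i)})$ such that $\lim_n\dim_k L^{(i)}_{nm_i}/n^{\kappa(L^{(i)})}$ exists. Since $\dim L^{(i)}_n=0$ whenever $m_i\nmid n$, the observation recorded after Theorem \ref{TheoremI1} then gives $\lim_n\dim L^{(i)}_{a+nr}/n^{\kappa(L^{(i)})}$ for every $a\in\NN$ and every $r$ divisible by $m_i$ (with value $0$ when $m_i\nmid a$); the components with $\kappa(L^{(i)})<\kappa$ in particular contribute $o(n^\kappa)$. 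I will take $r$ to be the least common multiple of the $m_i$, enlarged if necessary by the indices arising in the recursive step below.

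Assembling the per-component data into an asymptotic for $\dim L_n$ proceeds by induction on $s$. The base case $s=1$ is Theorem \ref{TheoremI1} itself. For $s>1$, write $X=X_s\cup X'$ with $X'=X_1\cup\cdots\cup X_{s-1}$ (a reduced proper $k$-scheme with fewer components) and let $Z=X_s\cap X'$ be the scheme-theoretic intersection, so $\dim Z<d$. The Mayer--Vietoris short exact sequence
$$0\to\mathcal O_X\to\mathcal O_{X_s}\oplus\mathcal O_{X'}\to\mathcal O_Z\to 0,$$
tensored with $\mathcal L^n$ and evaluated at global sections, relates $\dim L_n$ to the dimensions of $L^{(s)}_n$, $L^{(X')}_n$, and $L^{(Z)}_n$ (the images of $L_n$ in sections over $X_s$, $X'$, and $Z$ respectively). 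The inductive hypothesis applied to $L^{(X')}$ on $X'$ and Theorem \ref{TheoremI1} applied to $L^{(s)}$ on $X_s$ handle the first two terms, while the third is controlled by a secondary induction on $\dim X$ applied to $L^{(Z_{\mathrm{red}})}$ on the reduced proper scheme $Z_{\mathrm{red}}$, with the nilradical of $\mathcal O_Z$ handled by its finite filtration whose successive quotients are coherent sheaves on $Z_{\mathrm{red}}$.

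The hard part will be comparing $\dim L_n$ itself to the Mayer--Vietoris combination of component dimensions. The naive identity $\dim L_n=\dim L^{(s)}_n+\dim L^{(X')}_n-\dim L^{(Z)}_n$ can fail to leading order: on the disjoint union of two copies of $\mathbb P^1$ (where $Z=\emptyset$) the ``diagonal'' graded linear series satisfies $\dim L_n=n+1$ while the formula would predict $2(n+1)$. The gap arises because compatible section-tuples on the components need not lift into $L_n$. Overcoming this is the technical core of the proof, most naturally accomplished by a semigroup/Okounkov-body construction working with $L$ as a subalgebra of $\prod_i L^{(i)}$ directly --- tracking how sections of $L$ correlate across the components --- rather than treating the components independently.
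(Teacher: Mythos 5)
Your diagnosis of the obstacle is exactly right: the naive additivity $\dim L_n=\dim L_n^{(s)}+\dim L_n^{(X')}-\dim L_n^{(Z)}$ fails because compatible section-tuples on the components need not come from $L_n$, and your diagonal-on-two-$\PP^1$'s example is a correct illustration. But your proposal then stops at "this could be fixed by a semigroup/Okounkov-body construction tracking correlations across components" without actually producing such a construction, so there is a genuine gap: you have identified the problem without solving it. A Mayer--Vietoris decomposition of $X$ into $X_s$ and $X'$ plus secondary induction on $\dim Z$ does not suppress the overcounting, and the overcounting is order $n^\kappa$ in examples like yours, so it is not a lower-order term you can absorb.

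The paper's resolution is a filtration argument that avoids Mayer--Vietoris entirely. Order the components $X_1,\ldots,X_s$, set $M^0=L$ and $M^j=K(M^{j-1},X_j)$, where $K(\cdot,X_j)$ is the kernel of restriction to $X_j$. This yields exact sequences
$$
0\to M^{j+1}_n\to M^j_n\to (M^j|X_{j+1})_n\to 0,
$$
and since $X$ is reduced, $M^s_n=\ker\bigl(L_n\to\bigoplus_i (L|X_i)_n\bigr)=0$, giving the \emph{exact} decomposition $\dim_k L_n=\sum_{i=1}^s\dim_k(M^{i-1}|X_i)_n$. Each summand is the dimension of a graded linear series $M^{i-1}|X_i$ on the \emph{variety} $X_i$, so Theorem \ref{Theorem5} applies componentwise, and $r$ is taken to be the least common multiple of the indices $m(M^{i-1}|X_i)$ over those $i$ for which $\kappa(M^{i-1}|X_i)=\kappa(L)$. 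The crucial device you are missing is to restrict not $L$ itself to $X_i$, but the successively smaller kernels $M^{i-1}$; in your diagonal example this gives $M^1=0$, so the second component contributes $0$ and the count comes out to $n+1$, not $2(n+1)$. This is a cleaner and shorter argument than what you sketched, and it requires no induction on dimension, no handling of the intersection scheme $Z$, and no new Okounkov-body machinery beyond the variety case already established.
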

The theorem says that 
$$
\lim_{n\rightarrow \infty}\frac{\dim_k L_{n}}{n^{\kappa}}
$$
exists if $n$ is constrained to lie in an arithmetic sequence $a+br$ with $r$ as above, and for some fixed $a$. The conclusions of the theorem are a little weaker than the conclusions of Theorem \ref{TheoremI1} for  varieties. In particular, the index $m(L)$ has little relevance on reduced but nonirreducible schemes (as shown by the example after Theorem \ref{Theorem8} and Example \ref{Example3}).

Now we turn to the case of  nonreduced proper schemes. We begin by returning to our discussion of  the relationship between growth rates and the Kodaira-Iitaka dimension of graded linear series, which is much more subtle on nonreduced schemes.

 Suppose that $X$ is a  proper scheme over a field $k$. Let $\mathcal N_X$ be the nilradical of $X$. Suppose that $L$ is a graded linear series on $X$. Then by Theorem \ref{Theorem8},
 there exists a positive constant $\gamma$ such that $\dim_kL_n<\gamma n^e$ where 
\begin{equation}\label{I4}
e=\max\{\kappa(L),\dim \mathcal N_X\}.
\end{equation}
This is the best bound possible. It is shown in Theorem \ref{TheoremN20} that if $X$ is a nonreduced projective $k$-scheme, then for any $s\in \NN\cup\{-\infty\}$ with $s\le \dim \mathcal N_X$, there exists a graded linear series $L$ on $X$ with $\kappa(L)=s$ and a constant $\alpha>0$ such that
$$
\alpha n^{\dim \mathcal N_X} < \dim_kL_n
$$
for all $n\gg 0$.

It follows from Theorem \ref{TheoremN1} that if $X$ is a proper $k$-scheme with $r=\dim \mathcal N_X\ge s$, then there exists a graded linear series $L$ of  Kodaira-Iitaka dimension $\kappa(L)=s$ such that the limit   
$$
\lim_{n\rightarrow \infty}\frac{\dim_k L_{n}}{n^{r}}
$$
does not exist, even when $n$ is constrained to lie in {\it any} arithmetic sequence. However, we prove in Theorem \ref{Theorem8}  that if 
$L$ is a graded linear series on a proper scheme $X$ over a field $k$, with $\kappa(L)>\dim \mathcal N_X$, then 
there exists a positive integer $r$ such that 
$$
\lim_{n\rightarrow \infty}\frac{\dim_k L_{a+nr}}{n^{\kappa(L)}}
$$
exists for any fixed $a\in \NN$.

This is the strongest statement on limits that is true. In fact, the existence of all such limits characterizes the dimension of the nilradical, at least on projective schemes.
We show this in the following theorem.

\begin{Theorem}(Theorem \ref{TheoremN2})
Suppose that $X$ is a $d$-dimensional projective scheme over a field $k$ with $d>0$. Let $\mathcal N_X$ be the nilradical of $X$. Let $\alpha\in\NN$. Then the following are equivalent:
\begin{enumerate}
\item[1)] For every graded linear series $L$ on $X$ with $\alpha\le\kappa(L)$, there exists a positive integer $r$ such that 
$$
\lim_{n\rightarrow\infty}\frac{\dim_kL_{a+nr}}{n^{\kappa(L)}}
$$
exists for every positive integer $a$.
\item[2)] For every graded linear series $L$ on $X$ with $\alpha\le \kappa(L)$, there exists an arithmetic sequence $a+nr$ (for fixed $r$ and $a$ depending on $L$) such that 
$$
\lim_{n\rightarrow\infty}\frac{\dim_kL_{a+nr}}{n^{\kappa(L)}}
$$
exists.
\item[3)] The nilradical $\mathcal N_X$ of $X$ satisfies $\dim \mathcal N_X<\alpha$.
\end{enumerate}
\end{Theorem}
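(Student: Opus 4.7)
The plan is to deduce this equivalence from two deeper results already stated in the introduction: Theorem \ref{Theorem8}, which guarantees existence of the limit along an arithmetic progression whenever $\kappa(L) > \dim \mathcal N_X$, and Theorem \ref{TheoremN1}, which constructs badly oscillating examples when $\dim \mathcal N_X$ is at least as large as a prescribed Kodaira-Iitaka dimension. Given those two inputs, proving Theorem \ref{TheoremN2} reduces to arranging a short cycle of implications.

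The implication (1) $\Rightarrow$ (2) is immediate, since the universal $r$ provided by (1) in particular furnishes a single arithmetic sequence as required in (2).

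For (3) $\Rightarrow$ (1), assume $\dim \mathcal N_X < \alpha$. Any graded linear series $L$ on $X$ with $\kappa(L) \geq \alpha$ then satisfies $\kappa(L) > \dim \mathcal N_X$, so Theorem \ref{Theorem8} applies and yields a positive integer $r = r(L)$ such that
$$
\lim_{n\to\infty}\frac{\dim_k L_{a+nr}}{n^{\kappa(L)}}
$$
exists for every $a \in \NN$, which is precisely (1).

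For (2) $\Rightarrow$ (3), I would argue by contraposition. Suppose $\dim \mathcal N_X \geq \alpha$ and set $r := \dim \mathcal N_X$; since $\alpha \geq 0$ and $r \geq \alpha$, $X$ is in particular nonreduced. Applying Theorem \ref{TheoremN1} with $s = r$ produces a graded linear series $L$ on $X$ with $\kappa(L) = r \geq \alpha$ for which $\lim_{n\to\infty} \dim_k L_n/n^r$ fails to exist along \emph{any} arithmetic progression $a + nr'$. Since $r = \kappa(L)$, this directly contradicts (2), completing the cycle.

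The actual mathematical obstacle is packaged entirely inside the two auxiliary theorems: Theorem \ref{Theorem8} (the positive result, ultimately resting on the ideal-limit machinery and Okounkov-body style counting developed earlier in the paper) and Theorem \ref{TheoremN1} (the construction of linear series whose dimensions oscillate wildly along every arithmetic sequence). Once those are in hand, Theorem \ref{TheoremN2} is a formal repackaging that characterizes $\dim \mathcal N_X$ in terms of the existence of asymptotic limits, so no new ideas are required at this stage beyond choosing the parameter $s = \dim \mathcal N_X$ in Theorem \ref{TheoremN1} to make the growth exponent match the Kodaira-Iitaka dimension of the bad example.
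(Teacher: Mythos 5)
Your proposal is correct and takes essentially the same approach as the paper: both establish the cycle $1)\Rightarrow 2)\Rightarrow 3)\Rightarrow 1)$, with $1)\Rightarrow 2)$ immediate, $3)\Rightarrow 1)$ via Theorem \ref{Theorem8}, and $2)\Rightarrow 3)$ by contraposition via Theorem \ref{TheoremN1} with the choice $s=\dim\mathcal N_X$. The only point worth noting explicitly, which you elide slightly, is that nonexistence of $\lim_n \dim_k L_n/n^r$ along every arithmetic progression is equivalent (after a shift and the bounded rescaling $(a+mr')^r/m^r \to (r')^r$) to nonexistence of $\lim_m \dim_k L_{a+mr'}/m^{\kappa(L)}$ for every $a,r'$, which is the precise negation of condition 2).
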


If $X$ is a proper $k$-scheme of  dimension $d=0$ which is not irreducible, then the conclusions of Theorem \ref{TheoremN2} are true for $X$. This follows from Section \ref{SecZero}. However, 2) implies 3)  does not hold if $X$ is irreducible of dimension 0. In fact (Proposition \ref{TheoremN3}) if $X$ is irreducible of dimension 0,
and $L$ is a graded linear series on $X$ with $\kappa(L)=0$,  then there exists a positive integer $r$ such that the limit 
$\lim_{n\rightarrow \infty}\dim_kL_{a+nr}$ exists for every positive integer $a$.

\subsection{Volumes of line bundles}

The volume of a line bundle $\mathcal L$ on a $d$-dimensional proper variety $X$ is the limsup
\begin{equation}\label{in2}
\mbox{Vol}(\mathcal L)=\limsup_{n\rightarrow\infty}\frac{h^0(X,\mathcal L^n)}{n^d/d!}.
\end{equation}
There has  been much progress of our understanding of  the volume as a function on the big cone in $N^1(X)$ on a projective variety $X$
over an algebraically closed field (where (\ref{in2}) is actually a limit). Much of the  theory  is explained in \cite{La}, where extensive  references are given. Volume is continuous  on $N^1(X)$ but is not twice differentiable on all of $N^1(X)$ (as shown in an example of Ein Lazarsfeld, Musta\c{t}\u{a}, Nakamaye and Popa, \cite{ELMNP1}).   Boucksom, Favre and Jonsson \cite{BFJ} have shown that the volume is $\mathcal C^1$-differentiable on the big cone of $N^1(X)$ (when $X$ is a proper variety over an algebraically closed field of characteristic zero). This theorem is proven for a proper variety over an arbitrary field in \cite{C2}. The Fujita approximation type theorem Theorem \ref{FApp}, which is valid over an arbitrary field, is an ingredient of the proof.  Interpretation of the directional derivative in terms of intersection products and many  applications are given in \cite{BFJ}, \cite{ELMNP1}, \cite{LM} and \cite{C2}.

The starting point of the theory of volume on nonreduced schemes is to determine if the limsup defined in (\ref{in2}) exists as a limit. In Theorem \ref{Theorem14a}, it is shown that the volume of a line bundle always exists on a $d$-dimensional proper scheme $X$ over a field $k$ with $\dim \mathcal N_X<d$ (as explained earlier, this result was known on varieties over an algebraically closed field). We see from Theorem \ref{TheoremN1} and Example \ref{Example2}  that the limit does  not always exist for graded linear series $L$.

\section{notation and conventions}\label{SecNot} $m_R$ will denote the maximal ideal of a local ring $R$. $Q(R)$ will denote the quotient field of a domain $R$.
$\ell_R(N)$ will denote the length of an $R$-module $N$.  $\ZZ_+$ denotes the positive integers and $\NN$ the nonnegative integers. 
Suppose that $x\in \RR$. $\lceil x \rceil$ is the smallest integer $n$ such  that $x\le n$. $\lfloor x \rfloor$ is the largest integer $n$ such that $n\le x$. 

We recall some notation on multiplicity from Chapter VIII, Section 10 of \cite{ZS2}, Section V-2 \cite{Se} and Section 4.6 \cite{BH}.
Suppose that $(R,m_R)$ is a (Noetherian) local ring,  $N$ is a finitely generated $R$-module with $r=\dim N$ and $a$ is an ideal of definition of $R$. Then
$$
e_a(N)=\lim_{k\rightarrow \infty}\frac{\ell_R(N/a^kN)}{k^r/r!}.
$$
We write $e(a)=e_a(R)$.

If $s\ge r=\dim N$, then we define 
$$
e_s(a,N)=\left\{
\begin{array}{ll}
 e_a(N)&\mbox{ if }\dim N=s\\
 0&\mbox{ if } \dim N<s.
 \end{array}\right.
 $$

A local ring is analytically unramified if its completion is reduced. In particular, a reduced excellent local ring is 
analytically unramified.

 We will denote the maximal ideal of a local ring $R$ by $m_R$. If $\nu$ is a valuation of a field $K$, then we will write $V_{\nu}$ for the valuation ring of $\nu$, and $m_{\nu}$ for the maximal ideal of $V_{\nu}$. We will write $\Gamma_{\nu}$ for the value group of $\nu$. If $A$ and $B$ are local rings, we will say that $B$ dominates $A$ if $A\subset B$ and $m_B\cap A=m_A$.
 
 The dimension of an $R$-module $M$ is $\dim M=\dim R/\mbox{ann}(M)$.
 
 We  use the notation of Hartshorne \cite{H}. For instance, a variety is required to be integral.
If $\mathcal F$ is a coherent sheaf on a Noetherian scheme, then $\dim \mathcal F$ will denote the dimension of the support of $\mathcal F$, with  $\dim \mathcal F=-\infty$ if $\mathcal F=0$.

Suppose that $X$ is a scheme. The nilradical of $X$ is the ideal sheaf $\mathcal N_X$ on $X$ which is the kernel of the natural surjection
$\mathcal O_{X}\rightarrow \mathcal O_{X_{\rm red}}$ where $X_{\rm red}$ is the reduced scheme associated to $X$. $(\mathcal N_X)_\eta$ is the nilradical of the local ring $\mathcal O_{X,\eta}$ for all $\eta\in X$.

\section{Cones associated to semigroups}\label{SecCone}

In this section, we summarize some results of Okounkov \cite{Ok}, Lazarsfeld and Musta\c{t}\u{a} \cite{LM} and Kaveh and Khovanskii \cite{KK}. 

Suppose that $S$ is a subsemigroup of $\ZZ^{d}\times \NN$ which is not contained in $\ZZ^d\times\{0\}$. Let $L(S)$ be the subspace of $\RR^{d+1}$ which is generated by $S$, and let $M(S)=L(S)\cap(\RR^d\times\RR_{\ge 0})$. 

Let $\mbox{Con}(S)\subset L(S)$ be the closed convex cone which is the closure of  the set of all linear combinations $\sum \lambda_is_i$ with $s_i\in S$ and $\lambda_i\ge 0$.

$S$ is called {\it strongly nonnegative} (Section 1.4 \cite{KK}) if $\mbox{Cone}(S)$  intersects $\partial M(S)$ only at the origin (this is equivalent to being strongly admissible (Definition 1.9 \cite{KK}) since with our assumptions, $\mbox{Cone}(S)$ is contained in $\RR^d\times\RR_{\ge 0}$,  so the ridge of of $S$ must be contained in $\partial M(S)$). In particular, a subsemigroup of a strongly negative semigroup is itself strongly negative.

We now introduce some notation from \cite{KK}. Let 
\vskip .1truein

$S_k=S\cap (\RR^d\times\{k\})$.

$\Delta(S)=\mbox{Con}(S)\cap (\RR^{d}\times\{1\})$ (the Newton-Okounkov body of $S$).

$q(S)=\dim \partial M(S)$.

$G(S)$ be the subgroup of $\ZZ^{d+1}$ generated by $S$.

$m(S)=[\ZZ:\pi(G(S))]$
  where $\pi:\RR^{d+1}\rightarrow \RR$ be projection onto the last factor.

$\mbox{ind}(S)= [\partial M(S)_{\ZZ}:G(S)\cap \partial M(S)_{\ZZ}]$
where 

$\partial M(S)_{\ZZ}:=\partial M(S)\cap \ZZ^{d+1}= M(S)\cap (\ZZ^d\times\{0\})$.

${\rm vol}_{q(S)}(\Delta(S))$ is the integral volume of $\Delta(S)$. This volume is computed using the translation of the integral measure on $\partial M(S)$.
\vskip .2truein

$S$ is strongly negative if and only if $\Delta(S)$ is a compact set. If $S$ is strongly negative, then the dimension of $\Delta(S)$ is $q(S)$.
\vskip .1truein

\begin{Theorem}\label{ConeTheorem3}(Kaveh and Khovanskii) Suppose that $S$ is strongly nonnegative.  Then 
$$
\lim_{k\rightarrow \infty}\frac{\#S_{m(S)k}}{k^{q(S)}}=\frac{{\rm vol}_{q(S)}(\Delta(S))}{{\rm ind}(S)}.
$$
\end{Theorem}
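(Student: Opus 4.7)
The plan is to reduce the asymptotic count $\#S_{m(S)k}$ to a classical lattice-point-counting problem inside a compact convex body, following the Okounkov / Kaveh--Khovanskii strategy. I would first analyze the slice structure. Because $\pi(G(S))=m(S)\ZZ$ and $S$ generates $G(S)$ as a group, for all sufficiently large $k$ the slice $G(S)_{m(S)k}:=G(S)\cap(\RR^d\times\{m(S)k\})$ is nonempty, and in fact an affine translate of the sublattice $L_0:=G(S)\cap(\ZZ^d\times\{0\})$, which sits inside $\partial M(S)_{\ZZ}$ with index $\mbox{ind}(S)$. The corresponding slice $\mbox{Cone}(S)\cap(\RR^d\times\{m(S)k\})$ is the $k$-dilate of $m(S)\Delta(S)$, and is compact precisely because of strong nonnegativity (the hypothesis that $\mbox{Cone}(S)$ meets $\partial M(S)$ only at the origin is exactly what makes $\Delta(S)$ compact). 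Thus $S_{m(S)k}\subseteq G(S)_{m(S)k}\cap\mbox{Cone}(S)$, and the latter is the set of $L_0$-lattice points inside a compact convex body of $q(S)$-dimensional volume of order $k^{q(S)}$.

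I would then carry out the main count. A Minkowski / Ehrhart-type asymptotic for dilates of a compact convex body gives
$$
\#\bigl(G(S)_{m(S)k}\cap\mbox{Cone}(S)\bigr)=\frac{\mbox{vol}_{q(S)}\bigl(k\cdot m(S)\Delta(S)\bigr)}{\mbox{covol}(L_0)}+O(k^{q(S)-1}),
$$
where the covolume is computed with respect to the translation of the integral measure on $\partial M(S)$. Unwinding this normalization and using $[\partial M(S)_{\ZZ}:L_0]=\mbox{ind}(S)$, the leading coefficient, after dividing by $k^{q(S)}$, is precisely $\mbox{vol}_{q(S)}(\Delta(S))/\mbox{ind}(S)$.

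It remains to bound the difference $\#(G(S)_{m(S)k}\cap\mbox{Cone}(S))-\#S_{m(S)k}$, i.e.\ to show that up to a $o(k^{q(S)})$ error every lattice point of $G(S)$ in the cone at height $m(S)k$ already lies in $S$. Here I would invoke Khovanskii's theorem on finitely generated subsemigroups of a lattice: for any finitely generated $T\subseteq S$ spanning $G(S)$ as a group, there is a compact set $C_T$ such that every lattice point of $G(T)\cap\mbox{Cone}(T)\setminus C_T$ lies in $T$. Exhausting $S$ by a nested sequence of finitely generated subsemigroups $T_n$ whose cones $\mbox{Cone}(T_n)$ exhaust $\mbox{Cone}(S)$ and with $G(T_n)=G(S)$ eventually, I would combine the individual estimates and pass to the limit. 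Strong nonnegativity is essential here: it ensures that each truncation $C_{T_n}$ intersects any high-enough slice in a set whose $q(S)$-dimensional count is $o(k^{q(S)})$, because the cone is ``pointed away'' from $\partial M(S)$.

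The hardest part of the argument is this last step. Since $S$ itself need not be finitely generated, Khovanskii's theorem applies only to the approximating subsemigroups $T_n$; one must then show that the Newton--Okounkov bodies $\Delta(T_n)$ converge to $\Delta(S)$ in volume, that the indices $\mbox{ind}(T_n)$ stabilize at $\mbox{ind}(S)$, and that the compact truncations contribute only a negligible slice count uniformly in $n$. Balancing these three approximations simultaneously, with strong nonnegativity as the key geometric hypothesis that allows the limits to commute, is the real technical content of the theorem.
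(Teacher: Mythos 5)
The paper gives no proof of this statement---the body of the ``proof'' is the single sentence ``This is proven in Corollary 1.16 [KK].'' So there is nothing in the paper to compare against, and your task amounts to reconstructing the Kaveh--Khovanskii argument, which is essentially what you have done. Your strategy---sandwich $\#S_{m(S)k}$ between the lattice points of $G(S)$ in a compact slice of $\mbox{Cone}(S)$ (upper bound via a Minkowski/Ehrhart count) and the points of a finitely generated subsemigroup $T\subset S$ with $G(T)=G(S)$ (lower bound via Khovanskii's theorem that such a $T$ eventually captures all points of $G(T)\cap\mbox{Cone}(T)$ outside a compact exceptional set)---is the right one, and you correctly identify strong nonnegativity (compactness of $\Delta(S)$) as the hypothesis making both the slice count finite and $\mbox{vol}(\Delta(T))\to\mbox{vol}(\Delta(S))$ as $T$ grows.

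Two caveats. First, the closing step is lighter than your last paragraph suggests: no uniformity in $T$ and no commuting of limits are needed. The upper bound on $\limsup_k\#S_{m(S)k}/k^{q(S)}$ comes from $G(S)\cap\mbox{Cone}(S)$ alone; for each fixed finitely generated $T$, the Khovanskii exceptional set $C_T$ is compact, so its slices at height $m(S)k$ are \emph{empty} once $k\gg0$, giving the clean lower bound $\mbox{vol}(\Delta(T))/\mbox{ind}(T)$ for the $\liminf$ with no $o(k^{q})$ fudge; then just take $\sup_T$ and use that the indices stabilize once $G(T)=G(S)$. Second, track the powers of $m(S)$. The slice $\mbox{Cone}(S)\cap(\RR^d\times\{m(S)k\})$ is the $m(S)k$-dilate of $\Delta(S)$, so its volume carries a factor $(m(S)k)^{q(S)}$, and dividing by $k^{q(S)}$ should leave you with an extra $m(S)^{q(S)}$ that does not appear in your claimed leading coefficient. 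This may be a normalization question in the definition of $\mbox{vol}_{q(S)}$ or of $\Delta(S)$ (the paper's summary of \cite{KK}'s conventions does not fully unwind it), but you should resolve it before asserting the constant.
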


This is proven in  Corollary 1.16 \cite{KK}. 

With our assumptions, we have that $S_n=\emptyset$ if $m(S)\not\,\mid n$ and  the limit is positive, since
${\rm vol}_{q(S)}(\Delta(S))>0$.

\begin{Theorem}\label{ConeTheorem1}(Okounkov,  Section 3 \cite{Ok}, Lazarsfeld and Musta\c{t}\u{a}, Proposition 2.1 \cite{LM}) Suppose that a subsemigroup $S$ of  $\ZZ^d\times\NN$  satisfies the following two conditions:
\begin{equation}\label{Cone2}
\begin{array}{l}
\mbox{There exist finitely many vectors $(v_i,1)$ spanning a semigroup $B\subset\NN^{d+1}$}\\
\mbox{such that $S\subset B$}
\end{array}
\end{equation}
and
\begin{equation}\label{Cone3}
G(S)=\ZZ^{d+1}.
\end{equation}
Then
$$
\lim_{n\rightarrow\infty} \frac{\# S_n}{n^d}={\rm vol}(\Delta(S)).
$$
\end{Theorem}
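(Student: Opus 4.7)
The plan is to sandwich $\#S_n$ between matching upper and lower asymptotic estimates, both equal to $\text{vol}(\Delta(S))\cdot n^d$, using classical lattice-point counting for the upper bound and a Khovanskii-type filling argument for the lower bound. Throughout, I identify $\ZZ^d\times\{n\}$ with $\ZZ^d$, so that each $S_n$ becomes a finite subset of $\ZZ^d$.

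For the upper bound, observe that each $(x,n)\in S_n$ satisfies $(x/n,1)=\tfrac{1}{n}(x,n)\in \text{Con}(S)$, hence $x\in n\Delta(S)\cap \ZZ^d$. By (i) the body $\Delta(S)\subset \Delta(B)$ is bounded and convex, so the standard Ehrhart-type estimate $\#(nK\cap \ZZ^d)/n^d\to \text{vol}(K)$ for a bounded convex $K$ gives $\limsup_n \#S_n/n^d\le \text{vol}(\Delta(S))$.

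For the lower bound, fix $\epsilon>0$. Since $\Delta(S)$ is the closure of $\bigcup_{h\ge 1}\{s/h:s\in S_h\}$, I choose finitely many $s_i=(x_i,h_i)\in S$ with $P:=\text{conv}(s_1/h_1,\ldots, s_r/h_r)\subset \Delta(S)$ and $\text{vol}(P)\ge \text{vol}(\Delta(S))-\epsilon$. Invoking (ii), I further adjoin elements $w_1,\ldots,w_\ell \in S$ so that the differences among $\{s_i\}\cup\{w_j\}$ generate all of $\ZZ^{d+1}$. Let $T\subset S$ be the finitely generated subsemigroup they generate: then $T\subset \NN^{d+1}$ by (i), $G(T)=\ZZ^{d+1}$, and $\Delta(T)\supset P$.

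The crucial input is Khovanskii's lemma on finitely generated semigroups of lattice points: if $T\subset \NN^{d+1}$ is finitely generated with $G(T)=\ZZ^{d+1}$, then there exists $u\in \ZZ^{d+1}$ such that $(u+\text{Con}(T))\cap \ZZ^{d+1}\subset T$. Counting lattice points of $u+\text{Con}(T)$ at height $n$ by the Ehrhart-type estimate applied to the slice $n\Delta(T)$ and subtracting an $O(n^{d-1})$ boundary term yields $\#T_n\ge \text{vol}(\Delta(T))\,n^d + o(n^d)$; since $T\subset S$ and $\text{vol}(\Delta(T))\ge \text{vol}(\Delta(S))-\epsilon$, we conclude $\liminf_n \#S_n/n^d\ge \text{vol}(\Delta(S))-\epsilon$. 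Letting $\epsilon\to 0$ finishes the proof. The main obstacle is Khovanskii's lemma itself, a nontrivial structure theorem asserting that a finitely generated semigroup in $\NN^{d+1}$ eventually fills every lattice point of a suitably shifted rational cone; condition (ii) is indispensable here, because if $G(T)$ were a proper sublattice of $\ZZ^{d+1}$ the leading constant would acquire an index factor and the sandwich would not close.
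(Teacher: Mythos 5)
Your proposal is correct, but it takes a genuinely different route from the paper. You reprove the result essentially from scratch: an Ehrhart-type lattice-point count gives the upper bound $\limsup \#S_n/n^d\le {\rm vol}(\Delta(S))$, and for the lower bound you approximate $\Delta(S)$ from inside by a rational polytope with vertices in $S$, enlarge to a finitely generated $T\subset S$ with $G(T)=\ZZ^{d+1}$, and invoke Khovanskii's saturation lemma (that a finitely generated $T\subset\NN^{d+1}$ with $G(T)=\ZZ^{d+1}$ contains all lattice points of a translated copy of its cone) to get the matching lower estimate. This is precisely the mechanism behind Lazarsfeld and Musta\c{t}\u{a}'s Proposition 2.1, which the theorem statement cites. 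The paper, by contrast, does not reprove anything: it simply observes that the hypothesis (\ref{Cone2}) forces $S$ to be strongly nonnegative (since $B$ is), so the already-stated Kaveh--Khovanskii result (Theorem \ref{ConeTheorem3}) applies, and then uses (\ref{Cone3}) to compute that the normalization data in that theorem trivialize: $G(S)=\ZZ^{d+1}$ gives $L(S)=\RR^{d+1}$, hence $q(S)=d$, $m(S)=1$, ${\rm ind}(S)=1$, and the general formula collapses to the stated one. What your argument buys is self-containment and transparency about where condition (\ref{Cone3}) is used (it feeds directly into Khovanskii's lemma and eliminates any index factor); what the paper's argument buys is brevity and consistency, since Theorem \ref{ConeTheorem3} is needed elsewhere (e.g.\ in Theorem \ref{ConeTheorem4} and in the proofs of Theorems \ref{Theorem5} and \ref{Theorem100}) and Theorem \ref{ConeTheorem1} is most economically seen as its special case. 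One small point to tighten in your write-up: the form of Khovanskii's lemma you quote requires the cone $\mathrm{Con}(T)$ to be full-dimensional, which you do get from $G(T)=\ZZ^{d+1}$, but you should make that inference explicit before applying the saturation statement.
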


\begin{proof} 
$S$ is strongly nonnegative since $B$ is strongly nonnegative, so Theorem \ref{ConeTheorem3} holds.

$G(S)=\ZZ^{d+1}$ implies $L(S)=\RR^{d+1}$, so $M(S)=\RR^d\times\RR_{\ge 0}$, $\partial M(S)=\RR^d\times\{0\}$ and 
$q(S)=\dim \partial M(S)=d$. We thus have  $m(S)=1$ and $\mbox{ind}(S)=1$. 
\end{proof}

\begin{Theorem}\label{ConeTheorem4} Suppose that $S$ is  strongly nonnegative. Fix $\epsilon>0$. Then there is an integer $p=p_0(\epsilon)$ such that if $p\ge p_0$, then the limit
$$
\lim_{n\rightarrow\infty}\frac{\#(n*S_{pm(S)})}{n^{q(S)}p^{q(S)}}\ge \frac{{\rm vol}_{q(S)}\Delta(S)}{{\rm ind}(S)}-\epsilon
$$
exists, where
$$
n*S_{pm(S)}=\{x_1+\cdots+x_n\mid x_1,\ldots,x_n\in S_{pm(S)}\}.
$$
\end{Theorem}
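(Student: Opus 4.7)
The plan is to introduce, for each positive integer $p$, the subsemigroup $T_p\subseteq \ZZ^d\times\NN$ generated by the finite set $S_{pm(S)}$, and to apply Theorem~\ref{ConeTheorem3} to $T_p$. Since $T_p\subseteq S$ and $S$ is strongly nonnegative, $T_p$ is itself strongly nonnegative. By construction the graded pieces satisfy $(T_p)_{pm(S)n}=n*S_{pm(S)}$, and, once $S_{pm(S)}\ne\emptyset$, one has $m(T_p)=pm(S)$. For $p$ sufficiently large, the invariants $q(T_p)$ and $\mathrm{ind}(T_p)$ stabilize to $q(S)$ and $\mathrm{ind}(S)$: this follows by fixing a finite set of generators of $G(S)\cap\partial M(S)_{\ZZ}$ and arranging that each appears as a difference of two elements of $S_{pm(S)}$, which holds once $p$ is divisible by suitable integers. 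Theorem~\ref{ConeTheorem3} applied to $T_p$ then produces
$$
\lim_{n\to\infty}\frac{\#(n*S_{pm(S)})}{n^{q(S)}}\;=\;\frac{\mathrm{vol}_{q(S)}(\Delta(T_p))}{\mathrm{ind}(T_p)},
$$
and dividing by $p^{q(S)}$ gives existence of the limit in the statement.

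What remains is the approximation
$$
\frac{1}{p^{q(S)}}\cdot\frac{\mathrm{vol}_{q(S)}(\Delta(T_p))}{\mathrm{ind}(T_p)}\;\longrightarrow\;\frac{\mathrm{vol}_{q(S)}(\Delta(S))}{\mathrm{ind}(S)}\qquad(p\to\infty).
$$
The inclusion $\mathrm{Con}(T_p)\subseteq\mathrm{Con}(S)$ yields $(1/p)\Delta(T_p)\subseteq\Delta(S)$ and so provides the matching upper bound. For the lower bound I would choose, given $\epsilon>0$, finitely many elements $s_1,\dots,s_N\in S$ so that $\mathrm{conv}\bigl(s_1/\pi(s_1),\dots,s_N/\pi(s_N)\bigr)$ has volume within $\epsilon\cdot\mathrm{ind}(S)$ of $\mathrm{vol}_{q(S)}(\Delta(S))$; this is possible because the rational directions $\{s/\pi(s):s\in S\}$ are dense in the compact body $\Delta(S)$. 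Taking $p$ large and divisible by each $\pi(s_i)/m(S)$, the scaled elements $(pm(S)/\pi(s_i))s_i$ lie in $S_{pm(S)}$, so the rescaled convex hull $(1/pm(S))\mathrm{conv}(S_{pm(S)})$ contains $\mathrm{conv}(s_i/\pi(s_i))$ and therefore has volume at least $\mathrm{vol}_{q(S)}(\Delta(S))-\epsilon\cdot\mathrm{ind}(S)$. Combined with the stabilization of $\mathrm{ind}(T_p)=\mathrm{ind}(S)$, this yields the desired inequality.

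The main obstacle will be exactly this volume approximation of $\Delta(S)$ by convex hulls of $S_{pm(S)}$, because $\Delta(S)$ is guaranteed only to be compact and is not polytopal in general. Both compactness of $\Delta(S)$ and the density of the rational directions are essential, and the divisibility conditions required to realize the chosen $s_1,\dots,s_N$ as scaled elements of $S_{pm(S)}$ are what determine the threshold $p_0=p_0(\epsilon)$ in the statement. A secondary technical point is the stabilization of $q(T_p)$ and $\mathrm{ind}(T_p)$, which must be checked to hold uniformly for $p\ge p_0$ after enlarging $p_0$ to capture a finite set of generators of $G(S)\cap\partial M(S)_{\ZZ}$ as differences of elements of $S_{pm(S)}$.
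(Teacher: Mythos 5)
Your argument is structurally the same as the paper's: your $T_p$ is exactly the paper's $S^{[pm]}$, you apply Theorem~\ref{ConeTheorem3} to it, and you observe that for large $p$ the invariants $q(T_p)$, $\mathrm{ind}(T_p)$, and the rescaled volume approximate those of $S$. So this is not a genuinely different route.

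There is, however, a real gap in how you handle the parameter $p$. The theorem asserts the inequality for \emph{every} $p\ge p_0$, whereas both of your key steps --- capturing a generating set of $G(S)\cap\partial M(S)_{\ZZ}$ inside $G(T_p)$, and realizing the chosen directions $s_i/\pi(s_i)$ as rescaled points of $S_{pm(S)}$ --- are established only when $p$ is divisible by suitable integers. A divisibility constraint selects a sublattice of values of $p$, not a threshold, so your closing remark that ``the divisibility conditions \ldots{} are what determine the threshold $p_0$'' does not hold up: you would have proved the bound only along an arithmetic progression of $p$'s. The paper avoids this by a numerical-semigroup (Frobenius) device: it picks $(c,mb),(c',m(b+1))\in S$ with consecutive heights, and since $\gcd(b,b+1)=1$ every integer $k\ge (b-1)b$ is a nonnegative combination $jb+l(b+1)$, giving $S_{km}\ne\emptyset$ for all $k\ge (b-1)b$. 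With these ``padding'' elements available at \emph{all} sufficiently large levels, one can add a correction in $S_{(p-n_i)m}$ to bring $(a_i,n_im)$ and $(b_i,n_im)$ up to level $pm$ (so $v_i\in G(S^{[pm]})$ for all $p\ge p_0$), and similarly approximate $s_i/\pi(s_i)$ by $(k s_i + u)/(pm)$ with $u$ a bounded correction, the error tending to $0$ as $p\to\infty$, rather than requiring $\pi(s_i)\mid pm$. You need to insert this argument to pass from ``$p$ in a sublattice'' to ``all $p\ge p_0$''; without it the proof does not establish the stated theorem.
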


\begin{proof} Let $m=m(S)$ and $q=q(S)$. Let $S^{[pm]}=\cup_{n=1}^{\infty} (n*S_{pm(S)})$ be the subsemigroup of $S$ generated by $S_{pm}$. For $p\gg 0$, we have that $L(S^{[pm]})=L(S)$ so $m(S^{[pm]})=pm$ and $q(S^{[pm]})=q$. 

Suppose that $v_1,\ldots,v_r$ generate $G(S)\cap \partial M(S)_{\ZZ}$. For $1\le i\le r$, there exist $a_i,b_i,n_i$ such that
$v_i=(a_i,n_im)-(b_i,n_im)$ with $(a_i,n_im), (b_i,n_im)\in S_{n_im}$. There exist $b>0$ and $c,c'$ such that $(c,mb) \in S$ and $(c',m(b+1))\in S$.
$bm$ divides $n_im+n_i(b-1)(b+1)m$ and
$$
v_i=[(a_i,n_im)+n_i(b-1)(c',(b+1)m)]-[(b_i,n_im)+n_i(b-1)(c',(b+1)m)],
$$
so we may assume that $b$ divides $n_i$ for all $i$. Thus $v_1,\ldots,v_r\in G(S^{[nm]})$ where $n=\max\{n_i\}$, and $v_1,\ldots,v_r\in G(S^{[pm]})$ whenever $p\ge (b-1)b+n$. Thus
\begin{equation}\label{eqnr70}
{\rm ind}(S^{[pm]})={\rm ind}(S)
\end{equation}
whenever $p\gg 0$.
We have that
\begin{equation}\label{eqnr71}
\lim_{p\rightarrow\infty} \frac{{\rm vol}_q(\Delta(S^{[pm]}))}{p^q}={\rm vol}_q(\Delta(S).
\end{equation}
By Theorem \ref{ConeTheorem3},
\begin{equation}\label{eqnr72}
\lim_{n\rightarrow \infty}\frac{\#(n*S_{pm})}{n^q}=\frac{{\rm vol}_q(\Delta(S^{[pm]}))}{{\rm ind}(S^{[pm]})}.
\end{equation}
The theorem now follows from (\ref{eqnr70}), (\ref{eqnr71}), (\ref{eqnr72}).
 \end{proof}
 
 We obtain the following result.

\begin{Theorem}\label{ConeTheorem2}(Proposition 3.1 \cite{LM}) Suppose that a subsemigroup $S$ of $\ZZ^d\times\NN$ satisfies  (\ref{Cone2}) and (\ref{Cone3}). Fix $\epsilon>0$. Then there is an integer $p_0= p_0(\epsilon)$
such that if $p\ge p_0$, then the limit
$$
\lim_{k\rightarrow \infty} \frac{\#(k*S_p)}{k^dp^d}\ge {\rm vol}(\Delta(S))-\epsilon
$$
exists.
\end{Theorem}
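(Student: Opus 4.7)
The plan is to deduce this statement as a direct specialization of Theorem \ref{ConeTheorem4}. First I would verify that the hypotheses place us in a situation where all the combinatorial invariants $m(S)$, $q(S)$, and $\mbox{ind}(S)$ degenerate to simple values. Hypothesis (\ref{Cone2}) places $S$ inside the semigroup $B$ generated by vectors of the form $(v_i,1)$, and since $B\subset \NN^{d+1}$ is strongly nonnegative (its cone meets the hyperplane $\RR^d\times\{0\}$ only at the origin), the subsemigroup $S$ is also strongly nonnegative. This is the condition needed to invoke Theorem \ref{ConeTheorem4}.

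Next, I would extract the consequences of hypothesis (\ref{Cone3}). Since $G(S)=\ZZ^{d+1}$, the real span satisfies $L(S)=\RR^{d+1}$, so $M(S)=\RR^d\times \RR_{\ge 0}$, $\partial M(S)=\RR^d\times\{0\}$, and therefore $q(S)=d$. Moreover, $m(S)=[\ZZ:\pi(G(S))]=1$ and $\mbox{ind}(S)=[\partial M(S)_\ZZ:G(S)\cap \partial M(S)_\ZZ]=1$. These are the same reductions carried out in the proof of Theorem \ref{ConeTheorem1}.

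With these identifications in hand, I would apply Theorem \ref{ConeTheorem4} directly. Given $\epsilon>0$, choose $p_0=p_0(\epsilon)$ as provided by that theorem. For $p\ge p_0$, since $m(S)=1$ we have $pm(S)=p$, so $k*S_{pm(S)} = k*S_p$; since $q(S)=d$ and $\mbox{ind}(S)=1$, the conclusion of Theorem \ref{ConeTheorem4} reads
$$
\lim_{k\rightarrow\infty}\frac{\#(k*S_p)}{k^d p^d} \;\ge\; {\rm vol}_d(\Delta(S))-\epsilon,
$$
which is precisely the claimed inequality, and in particular the limit exists.

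Because this reduction is purely a matter of bookkeeping, I do not anticipate a serious obstacle; the substantive work has already been done in Theorem \ref{ConeTheorem4} (and underneath it in Theorem \ref{ConeTheorem3} of Kaveh--Khovanskii). The only small point to be careful about is checking that strong nonnegativity is inherited from $B$ to $S$, which is immediate from the remark in the text that any subsemigroup of a strongly nonnegative semigroup is itself strongly nonnegative.
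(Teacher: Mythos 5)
Your proposal is correct and matches the paper's intent exactly: the paper presents Theorem \ref{ConeTheorem2} as an immediate consequence of Theorem \ref{ConeTheorem4} (introducing it with ``We obtain the following result''), and the specialization $m(S)=1$, $q(S)=d$, ${\rm ind}(S)=1$ under hypotheses (\ref{Cone2}) and (\ref{Cone3}) is the same bookkeeping carried out in the proof of Theorem \ref{ConeTheorem1}. Nothing further is needed.
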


\section{Asymptotic theorems on lengths}\label{SecAsyRing}

\begin{Definition} A graded family of ideals $\{I_i\}$ in a ring $R$ is a family of ideals indexed by the natural numbers such that $I_0=R$ and $I_iI_j\subset I_{i+j}$
for all $i,j$.  If $R$ is a local ring and $I_i$ is $m_R$-primary for $i>0$, then we will say that $\{I_i\}$ is a graded family of $m_R$-primary ideals.
\end{Definition}

The following theorem is proven with the further assumptions that $R$ is equicharacteristic with perfect residue field in \cite{C1}.

\begin{Theorem}\label{Theorem1} Suppose that $R$ is an analytically irreducible local domain of dimension $d$ and  $\{I_i\}$ is a graded family of  $m_R$-primary ideals in $R$. 
Then
$$
\lim_{i\rightarrow\infty}\frac{\ell_R(R/I_i)}{i^d}
$$
exists.
\end{Theorem}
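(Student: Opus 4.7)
The plan is to reduce to a complete local normal domain, attach a rank-$d$ valuation $\nu$ of $K = Q(\hat R)$ with value group $\ZZ^d$, encode the graded family as a subsemigroup $\Gamma \subset \ZZ^d \times \NN$, and obtain the limit by a sandwich argument using Theorems \ref{ConeTheorem1}--\ref{ConeTheorem2}. First I would replace $R$ by $\hat R$: since each $I_n$ is $m_R$-primary, $\ell_R(R/I_n) = \ell_{\hat R}(\hat R/I_n\hat R)$, and $\hat R$ is a complete local domain by analytic irreducibility. Let $S$ be the integral closure of $\hat R$ in $K$: it is module-finite over $\hat R$, is a complete local normal domain, and has residue field $k_S$ finite over $k_R$. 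The cokernel $N = S/\hat R$ is finitely generated with $\dim N < d$; using $I_n \supset I_1^n \supset m_R^{cn}$ (valid because $I_1$ is $m_R$-primary), a Hilbert--Samuel estimate gives $\ell_R(N/I_n N) = O(n^{d-1})$, so it suffices to show that $\lim_n \ell_R(S/I_nS)/n^d$ exists.

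Next I would construct a rank-$d$ valuation $\nu$ on $K$ with value group $\ZZ^d$ in some lex order whose valuation ring dominates $S$, for instance by taking regular parameters of a regular Noether normalization $A \subset S$ and extending the associated lex monomial valuation from $Q(A)$ to $K$. The residue field $k_\nu$ is a finite extension of $k_R$. Set
$$
\Gamma = \bigl\{(\nu(f),\,n) : n \ge 0,\ 0 \ne f \in I_n S\bigr\} \subset \ZZ^d \times \NN ,
$$
a subsemigroup satisfying condition (\ref{Cone2}) of Theorem \ref{ConeTheorem1}, since $I_1$ is $m_R$-primary and $S/A$ is finite, so $\Gamma$ lies in a cone generated by finitely many $(v_i, 1)$. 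The valuation-level filtration of $S/I_nS$ yields the upper bound $\ell_R(S/I_nS) \le [k_\nu : k_R] \cdot \#\Gamma_n$, and Theorem \ref{ConeTheorem3} (applied to $\Gamma$, accounting for the lattice index $\mathrm{ind}(\Gamma)$) gives that $\lim_n \#\Gamma_n/n^d$ exists, producing both $\limsup_n \ell_R(R/I_n)/n^d < \infty$ and an explicit upper bound.

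For the matching lower bound I would invoke Theorem \ref{ConeTheorem2}: given $\epsilon > 0$, for every sufficiently large $p$ the subsemigroup $\Gamma^{(p)}$ generated by $\Gamma_{pm(\Gamma)}$ has
$$
\lim_n \frac{\#(\Gamma^{(p)})_n}{n^d} \ge \frac{\mathrm{vol}_d(\Delta(\Gamma))}{\mathrm{ind}(\Gamma)} - \epsilon .
$$
For such $p$, the degree-$p$ piece $I_p$ can be shown to contain elements whose valuation values span $\Gamma^{(p)}$ and whose products exhibit, in the valuation-graded pieces of $S/I_nS$, a full $[k_\nu : k_R]$ factor per occupied level (up to $o(n^d)$ loss). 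This yields $\liminf_n \ell_R(R/I_n)/n^d \ge [k_\nu : k_R]\bigl(\mathrm{vol}_d(\Delta(\Gamma))/\mathrm{ind}(\Gamma) - \epsilon\bigr)$, and letting $\epsilon \to 0$ closes the sandwich and establishes existence of the limit.

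\textbf{Main obstacle.} The principal technical hurdle, unlike the equicharacteristic perfect-residue-field setting of \cite{C1}, is that the residue field extension $k_\nu/k_R$ may be nontrivial and a valuation-graded piece of $S/I_nS$ is only a $k_R$-subspace of $k_\nu$, possibly proper; one cannot directly equate $\ell_R(S/I_nS)$ with $[k_\nu : k_R] \cdot \#\Gamma_n$ at finite $n$. The semigroup approximation of Theorem \ref{ConeTheorem2} is essential, because it lets one pass to a finitely generated subsemigroup $\Gamma^{(p)}$ for which the needed full $[k_\nu : k_R]$ contribution per valuation level can be arranged asymptotically; controlling the discrepancy uniformly in $n$ and tracking the index $\mathrm{ind}(\Gamma)$ through the passage $\hat R \subset S$ constitute the bulk of the work.
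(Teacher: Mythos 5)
Your proposal diverges from the paper's argument in two structural ways, and one of them leaves a genuine gap. The paper replaces $R$ by a regular local ring $S$ that is essentially of finite type and birational over $\hat R$ (a regular closed point on the normalized blow-up of $m_R$), uses a \emph{rank-one} real-valued valuation built from rationally independent $\lambda_i$, and then obtains an \emph{exact} identity (\ref{eq12}): writing $[k':k]$ for the residue degree and defining the semigroups $\Gamma^{(t)}$, $\hat\Gamma^{(t)}$ for $t=1,\dots,[k':k]$ (the valuation levels whose intersection with $I_i$, resp.\ $R$, has $k$-dimension at least $t$), one has $\ell_R(R/I_i) = \sum_t \#\hat\Gamma_i^{(t)} - \sum_t \#\Gamma_i^{(t)}$. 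Each $\Gamma^{(t)}$ is shown to be a semigroup satisfying the hypotheses of Theorem~\ref{ConeTheorem1}, so each of the finitely many counts has a limit and the theorem follows with no sandwich at all. Theorem~\ref{ConeTheorem2} is reserved for the volume=multiplicity result later in the paper, not for Theorem~\ref{Theorem1}.

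Your argument uses only the $t=1$ semigroup $\Gamma$ (the set of valuation levels occupied at all), and then tries to bound the length from both sides with a factor $[k_\nu:k_R]$. The upper bound $\ell_R(S/I_nS) \le [k_\nu:k_R]\cdot\#\Gamma_n$ is fine (after you truncate $\Gamma$: as written, $\Gamma$ contains $(\nu(f),n)$ for all $f\in I_nS$, so its fibers are infinite and it is not contained in any finitely generated cone; you must restrict to $\nu(f)\le\beta n$ using an Izumi-type linear comparison, which is exactly the content of (\ref{eqred50}) and is where analytic irreducibility enters). But the lower bound is the genuine gap. You assert that products of elements of $I_p$ ``exhibit a full $[k_\nu:k_R]$ factor per occupied level (up to $o(n^d)$ loss).'' Nothing in your proposal establishes this, and it is not true level-by-level: the $k$-span of $I_n S\cap K_\lambda$ inside $K_\lambda/K_\lambda^+ \cong k_\nu$ can be a proper $k$-subspace, and multiplying by a single fixed element $g$ of positive value maps this subspace injectively to another proper subspace (this is precisely Lemma~\ref{Lemmared1}), so iterated products do \emph{not} saturate the level. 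This is the precise reason the paper introduces the stratification by $t$: it replaces your one imprecise inequality with $[k':k]$ exact semigroup counts, each of which converges by Theorem~\ref{ConeTheorem1}. You have identified the right obstacle in your closing paragraph, but the argument you sketch does not overcome it; without the $t$-stratification (or an equivalent device) the sandwich does not close.

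Two smaller points. First, using the integral closure of $\hat R$ forces you to produce a valuation via Cohen/Noether normalization; this works, but the paper's choice of a regular closed point on the normalized blow-up is cleaner, is characteristic-free with no structure theory, and directly gives $[k':k]<\infty$. Second, because the paper's identity (\ref{eq12}) is exact, the existence of the limit reduces to Theorem~\ref{ConeTheorem1} alone; the additional approximation machinery (Theorem~\ref{ConeTheorem2}) that your sandwich requires is not needed for this theorem.
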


\begin{Corollary}\label{Cor33} Suppose that $R$ is an analytically irreducible local domain of dimension $d>0$ and  $\{I_i\}$ is a graded family of   ideals in $R$ such that there exists a positive number $c$ such that $m_R^c\subset I_1$.
Then
$$
\lim_{i\rightarrow\infty}\frac{\ell_R(R/I_i)}{i^d}
$$
exists.
\end{Corollary}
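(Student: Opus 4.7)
The plan is to reduce to Theorem \ref{Theorem1} by perturbing $\{I_n\}$ into a graded family of genuinely $m_R$-primary ideals whose lengths differ from those of $\{I_n\}$ by $O(1)$. The starting point is the standard induction on $n$ using $I_iI_j\subset I_{i+j}$, which yields $I_1^n\subset I_n$; together with the hypothesis $m_R^c\subset I_1$ this gives
\[
m_R^{cn}\subset I_n\qquad\text{for every } n\ge 1,
\]
so each $I_n$ is either $R$ or $m_R$-primary. If $I_1=R$ then $I_n=R$ for all $n$ and the limit is trivially $0$, so I may assume $I_1\subsetneq R$; then $I_1$, and every proper $I_n$, is $m_R$-primary.

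Next I would set $\tilde I_0=R$ and, for $n\ge 1$, $\tilde I_n=I_n$ when $I_n\ne R$ and $\tilde I_n=m_R$ when $I_n=R$. Each $\tilde I_n$ is $m_R$-primary. A short case check verifies $\tilde I_m\tilde I_n\subset\tilde I_{m+n}$: the only case not immediate from $I_mI_n\subset I_{m+n}$ is when $I_m$ and $I_n$ are both proper but $I_{m+n}=R$, and then $I_mI_n\subset m_R\cdot m_R\subset m_R=\tilde I_{m+n}$. Note that $m_R$ is the right replacement here; replacing $R$ by a higher power of $m_R$ would fail in exactly this case, since $I_mI_n$ need not lie in any higher power of $m_R$. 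Hence $\{\tilde I_n\}$ is a graded family of $m_R$-primary ideals, and Theorem \ref{Theorem1} produces the limit $\lim_{n\to\infty}\ell_R(R/\tilde I_n)/n^d$.

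To finish, I observe that $\ell_R(R/\tilde I_n)-\ell_R(R/I_n)\in\{0,1\}$ for every $n\ge 1$: the modification either leaves $I_n$ untouched (difference $0$) or replaces $R$ by $m_R$, which contributes an extra length-one factor $R/m_R$. Since $d\ge 1$, this $O(1)$ discrepancy divided by $n^d$ tends to $0$, and so $\lim_{n\to\infty}\ell_R(R/I_n)/n^d$ exists and coincides with the limit for $\{\tilde I_n\}$. There is no real obstacle in this argument; the essential content is the short step $m_R^{cn}\subset I_n$, after which everything is bookkeeping, and the only mildly subtle point is the choice of $m_R$ (rather than a higher power) as the replacement for $R$ in the definition of $\tilde I_n$.
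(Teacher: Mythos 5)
Your proof is correct, and it takes a genuinely different route from the paper's. The paper argues by a dichotomy: either every $I_n$ ($n\ge 1$) is $m_R$-primary, in which case Theorem \ref{Theorem1} applies directly, or some $I_{n_0}=R$ with $n_0\ge 1$, in which case writing $n=qn_0+r$ with $0\le r<n_0$ gives $I_r=I_{n_0}^qI_r\subset I_n$, so $\ell_R(R/I_n)\le\max_{0\le r<n_0}\ell_R(R/I_r)$ is bounded and the limit is $0$. Your approach instead uniformly perturbs the family by replacing each occurrence of $R$ with $m_R$, checks that the result $\{\tilde I_n\}$ is still a graded family of $m_R$-primary ideals, applies Theorem \ref{Theorem1} once, and transfers the conclusion back via the $O(1)$ discrepancy $\ell_R(R/\tilde I_n)-\ell_R(R/I_n)\in\{0,1\}$ together with $d\ge 1$. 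Both arguments are sound; the paper's is a bit shorter because it never needs to construct a new family, while yours has the mild advantage of funneling all cases through a single application of Theorem \ref{Theorem1}. Two small remarks. First, your claim that the only non-immediate case of $\tilde I_m\tilde I_n\subset\tilde I_{m+n}$ is ``$I_m,I_n$ proper, $I_{m+n}=R$'' is a slight understatement: the mixed cases (one of $I_m,I_n$ equal to $R$) also need the one-line observation that $\tilde I_j\subset I_j$ and $\tilde I_j\subset m_R$ for $j\ge 1$, though the verification is of course easy. Second, the aside that replacing $R$ by a higher power of $m_R$ ``would fail'' is an overstatement: since proper $I_m,I_n$ already satisfy $I_mI_n\subset m_R^2$, the choice $m_R^2$ would also produce a graded family, and the resulting discrepancy would still be $O(1)$; it is only $m_R^3$ and beyond that can break multiplicativity. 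Neither point affects the validity of your argument.
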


\begin{proof}
The assumption $m_R^c\subset I_1$ implies that either $I_n$ is $m_R$-primary for all positive $n$, or there exists
$n_0>1$ such that $I_{n_0}=R$. In the first case, the corollary follows from Theorem \ref{Theorem1}. In the second case, $m_R^{cn}\subset I_n$ for all $n\ge n_0$,  so $\ell_R(R/I_i)$ is actually bounded.
\end{proof}

We now give the proof of Theorem \ref{Theorem1}.

Since $I_1$ is $m_R$-primary, there exists   $c\in \ZZ_+$ such that 
\begin{equation}\label{eq8}
m_R^c\subset I_1.
\end{equation}

 Let $\hat R$ be the $m_R$-adic completion of $R$. Since the $I_n$ are $m_R$-primary, we have that 
$R/I_n\cong \hat R/I_n\hat R$ and $\ell_R(R/I_n)=\ell_{\hat R}(\hat R/I_n\hat R)$ for all $n$.   We may thus assume that $R$ is an excellent domain.
Let $\pi:X\rightarrow \mbox{spec}(R)$ be the normalization of the blow up of $m_R$. $X$ is of finite type over $R$ since $R$ is excellent.
Since $\pi^{-1}(m_R)$ has codimension 1 in $X$ and $X$ is normal, there exists a closed point $x\in X$ such that the local ring $\mathcal O_{X,x}$ is a regular local ring. Let $S$ be this local ring. $S$ is a regular local ring which  is essentially of finite type  and birational over $R$ ($R$ and $S$ have the same  quotient field).

Let $y_1,\ldots,y_d$ be a regular system of parameters in $S$. Let $\lambda_1,\ldots,\lambda_d$ be rationally independent real numbers, such that 
\begin{equation}\label{eq9}
\lambda_i\ge 1\mbox{ for all $i$}.
\end{equation}
 We define a valuation $\nu$ on
$Q(R)$ which dominates $S$ by prescribing 
$$
\nu(y_1^{a_1}\cdots y_d^{a_d})=a_1\lambda_1+\cdots+a_d\lambda_d
$$
for $a_1,\ldots,a_d\in \ZZ_+$, and $\nu(\gamma)=0$  if $\gamma\in S$ has nonzero residue in $S/m_S$.

Let $C$ be a coefficient set of $S$. Since $S$ is a regular local ring, for $r\in \ZZ_+$ and $f\in S$, there is a unique expression 
\begin{equation}\label{eqred11}
f=\sum s_{i_1,\ldots,i_d}y_1^{i_1}\cdots y_d^{i_d}+g_r
\end{equation}
with $g_r\in m_S^r$, $s_{i_1,\ldots,i_d}\in S$ and $i_1+\cdots+i_d<r$ for all $i_1,\ldots,i_d$ appearing in the sum. Take $r$ so large that 
$r> i_1\lambda_1+\cdots+i_d\lambda_d$ for some term with $s_{i_1,\ldots,i_d}\ne 0$. Then define
\begin{equation}\label{eqG61}
\nu(f)=\min\{i_1\lambda_1+\cdots+i_d\lambda_d\mid s_{i_1,\ldots,i_d}\ne 0\}.
\end{equation}
This definition is well defined, and we calculate that
$\nu(f+g)\ge \min\{\nu(f),\nu(g)\}$ and $\nu(fg)=\nu(f)+\nu(g)$ (by the uniqueness of the minimal value term in the expansion (\ref{eqred11})) for all $0\ne f,g\in S$. Thus $\nu$ is a valuation.
 Let $V_{\nu}$ be the valuation ring of $\nu$ (in $Q(R)$). The value group $\Gamma_{\nu}$ of $V_{\nu}$ is the (nondiscrete) ordered subgroup 
$\ZZ\lambda_1+\cdots+\ZZ\lambda_d$ of $\RR$. Since there is a unique monomial giving the minimum in (\ref{eqG61}), we have that the residue field of $V_{\nu}$ is
$S/m_S$.

Let $k=R/m_R$ and $k'=S/m_S=V_{\nu}/m_{\nu}$. Since $S$ is essentially of finite type over $R$, we have that $[k':k]<\infty$.

For $\lambda\in \RR$, define  ideals $K_{\lambda}$ and $K_{\lambda}^+$ in $V_{\nu}$ by
$$
K_{\lambda}=\{f\in Q(R)\mid \nu(f)\ge\lambda\}
$$
and
$$
K_{\lambda}^+=\{f\in Q(R)\mid \nu(f)>\lambda\}.
$$

We follow the usual convention that $\nu(0)=\infty$ is larger than any element of $\RR$. By Lemma 4.3 \cite{C1}, we have the following formula. The assumption that $R$ is analytically irreducible is necessary for the validity of the formula.

\begin{equation}\label{eqred50}
\mbox{There exists $\alpha\in \ZZ_+$ such that $K_{\alpha n}\cap R\subset m_R^n$ for all $n\in \NN$.}
\end{equation}

 Suppose that $I\subset R$ is an ideal and $\lambda\in \Gamma_{\nu}$ is nonnegative. Then we have inclusions of $k$-vector spaces
 $$
 I\cap K_{\lambda}/I\cap K_{\lambda}^+\subset K_{\lambda}/K_{\lambda}^+.
 $$
 Since $K_{\lambda}/K_{\lambda}^+$ is isomorphic to $k'$, we conclude that 
\begin{equation}\label{red1}
\dim_k I\cap K_{\lambda}/I\cap K_{\lambda}^+\le [k':k].
\end{equation}

 Let $\beta=\alpha c\in \ZZ_+$, where $c$ is the constant of (\ref{eq8}), and $\alpha$ is the constant of (\ref{eqred50}), so that for all $i\in \ZZ_+$,
\begin{equation}\label{eq13}
K_{\beta i}\cap R= K_{\alpha c i}\cap R\subset m_R^{ic}\subset I_i.
\end{equation}

For $t\ge  1$, define 
$$
\Gamma^{(t)}=\left\{
\begin{array}{l}
(n_1,\ldots,n_d,i)\in \NN^{d+1}\mid \dim_k I_i\cap K_{n_1\lambda_1+\cdots+n_d\lambda_d}/I_i\cap K_{n_1\lambda_1+\cdots+n_d\lambda_d}^+\ge t\\
\mbox{ and }n_1+\cdots+n_d\le \beta i
\end{array}
\right\},
$$
and
$$
\hat{\Gamma}^{(t)}=
\left\{
\begin{array}{l}(n_1,\ldots,n_d,i)\in \NN^{d+1}\mid \dim_k R\cap K_{n_1\lambda_1+\cdots+n_d\lambda_d}/R\cap K_{n_1\lambda_1+\cdots+n_d\lambda_d}^+\ge t\\
\mbox{ and }n_1+\cdots+n_d\le \beta i
\end{array}
\right\}.
$$

Let $\lambda=n_1\lambda_1+\cdots+n_d\lambda_d$ be such that $n_1+\cdots+n_d\le\beta i$. Then
\begin{equation}\label{eqred40}
\dim_kK_{\lambda}\cap I_i/K_{\lambda}^+\cap I_i=\#\{t|(n_1,\ldots,n_d,i)\in \Gamma^{(t)}\}.
\end{equation}

\begin{Lemma}\label{Lemmared1} Suppose that $t\ge 1$, $0\ne f\in I_i$, $0\ne g\in I_j$ and
$$
\dim_kI_i\cap K_{\nu(f)}/I_i\cap K_{\nu(f)}^+\ge t.
$$
Then
\begin{equation}\label{eqred10}
\dim_k I_{i+j}\cap K_{\nu(fg)}/I_{i+j}\cap K_{\nu(fg)}^+\ge t.
\end{equation}
In particular, when nonempty,
   $\Gamma^{(t)}$ and $\hat{\Gamma}^{(t)}$ are subsemigroups of the  semigroup $\ZZ^{d+1}$.
\end{Lemma}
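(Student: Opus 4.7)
The plan is to produce the required $t$-dimensional subspace inside $I_{i+j}\cap K_{\nu(fg)}/I_{i+j}\cap K_{\nu(fg)}^+$ by multiplying witnesses of the given $t$-dimensional subspace by $g$. First, I would choose $h_1,\ldots,h_t\in I_i\cap K_{\nu(f)}$ whose images are $k$-linearly independent in $I_i\cap K_{\nu(f)}/I_i\cap K_{\nu(f)}^+$. Since $I_iI_j\subset I_{i+j}$ and $\nu(h_\ell g)=\nu(h_\ell)+\nu(g)\ge \nu(f)+\nu(g)=\nu(fg)$, each product $h_\ell g$ lies in $I_{i+j}\cap K_{\nu(fg)}$, so it remains only to verify independence of the classes $[h_\ell g]$.

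For that, suppose $\sum_\ell \bar c_\ell [h_\ell g]=0$ in $I_{i+j}\cap K_{\nu(fg)}/I_{i+j}\cap K_{\nu(fg)}^+$ for some $\bar c_\ell\in k$, and pick lifts $c_\ell\in R$. Then $\sum_\ell c_\ell h_\ell g\in K_{\nu(fg)}^+$, and multiplicativity of $\nu$ yields $\nu(g)+\nu(\sum_\ell c_\ell h_\ell)=\nu(g\sum_\ell c_\ell h_\ell)>\nu(f)+\nu(g)$. Cancelling $\nu(g)$ gives $\nu(\sum_\ell c_\ell h_\ell)>\nu(f)$, and since $\sum_\ell c_\ell h_\ell\in I_i$, this element lies in $I_i\cap K_{\nu(f)}^+$. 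The linear independence hypothesis at level $\nu(f)$ then forces every $\bar c_\ell=0$, proving (\ref{eqred10}).

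For the semigroup claim, consider two elements $(n_1,\ldots,n_d,i)$ and $(n_1',\ldots,n_d',j)$ of $\Gamma^{(t)}$ with $\lambda=\sum_k n_k\lambda_k$ and $\lambda'=\sum_k n_k'\lambda_k$. Since $t\ge 1$ and the corresponding quotient dimensions are positive, I would pick $f\in I_i$ with $\nu(f)=\lambda$ and $g\in I_j$ with $\nu(g)=\lambda'$ (any representative of a nonzero class has the required value). The first part then yields $\dim_k I_{i+j}\cap K_{\lambda+\lambda'}/I_{i+j}\cap K_{\lambda+\lambda'}^+\ge t$, and $\sum_k (n_k+n_k')\le \beta(i+j)$ is immediate from the individual bounds, so $(n_1+n_1',\ldots,n_d+n_d',i+j)\in\Gamma^{(t)}$. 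Replacing $I_i$ and $I_j$ by $R$ throughout gives the same statement for $\hat\Gamma^{(t)}$.

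The argument is purely formal and its engine is the multiplicativity $\nu(fg)=\nu(f)+\nu(g)$. The only point requiring a bit of care is that the $k$-vector space structure on $I_i\cap K_\lambda/I_i\cap K_\lambda^+$ is realized by lifting scalars from $k=R/m_R$ to $R$; this is well defined because $\nu(m)>0$ for every $m\in m_R$, so the choice of lift does not affect the class. Beyond that bookkeeping, I do not foresee a real obstacle.
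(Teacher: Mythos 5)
Your proof is correct and follows essentially the same route as the paper: multiply a $k$-basis of $I_i\cap K_{\nu(f)}/I_i\cap K_{\nu(f)}^+$ by $g$, then use multiplicativity of $\nu$ to transport a dependency at level $\nu(fg)$ back down to level $\nu(f)$. The only cosmetic difference is that you lift the scalars from $k=R/m_R$ to $R$ and explicitly check this is well defined (using $m_R\subset m_\nu$), whereas the paper writes the scalars as elements of $k$ directly; your version is the more careful rendering of the same argument, and the rest --- including the reduction of the semigroup claim to \eqref{eqred10} by picking representatives $f,g$ of nonzero classes --- matches the paper's intent.
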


\begin{proof} 
 There exist $f_1,\ldots,f_t\in I_i\cap K_{\nu(f)}$ such that their classes  are linearly independent over $k$ in $I_i\cap K_{\nu(f)}/ I_i\cap K_{\nu(f)}^+$.
 We will show that the classes of $gf_1,\ldots, gf_t$ in 
$$
I_{i+j}\cap K_{\nu(fg)}/I_{i+j}\cap K_{\nu(fg)}^+
$$
 are linearly independent over $k$.

Suppose that 
$a_1,\ldots,a_t \in k$  are such that the class of $a_1gf_1+\cdots +a_tgf_t$ in $I_{i+j}\cap K_{\nu(fg)}/I_{i+j}\cap K_{\nu(fg)}^+$ is zero. Then
$\nu(a_1gf_1+\cdots +a_tgf_t)>\nu(fg)$, whence $\nu(a_1f_1+\cdots+a_tf_t)>\nu(f)$,
so $a_1f_1+\cdots +a_tf_t\in I_i\cap K_{\nu(f)}^+$. Thus $a_1=\cdots=a_t=0$, since the classes of $f_1,\ldots,f_t$ are linearly independent over $k$ in $I_i\cap K_{\nu(f)}/ I_i\cap K_{\nu(f)}^+$. 
\end{proof}

From (\ref{eq13}), and since $n_1\lambda_1+\cdots+n_d\lambda_d<\beta i$
implies $n_1+\cdots+n_d< \beta i$   by (\ref{eq9}),
we have that 

\begin{equation}\label{eq12}
\begin{array}{lll}
\ell_R(R/I_i)&=&\ell_R(R/K_{\beta i}\cap R)-\ell_R(I_i/K_{\beta i}\cap I_i)\\
&=& \dim_k\left(\bigoplus_{0\le\lambda<\beta i}K_{\lambda}\cap R/K_{\lambda}^+\cap R\right)
-\dim_k\left(\bigoplus_{0\le \lambda<\beta i}K_{\lambda}\cap I_i/K_{\lambda}^+\cap I_i\right)\\
&=& \left(\sum_{t=1}^{[k':k]}\# \hat{\Gamma}_i^{(t)}\right)-\left(\sum_{t=1}^{[k':k]}\# \Gamma_i^{(t)}\right),
\end{array}
\end{equation}
where $\Gamma_i^{(t)}=\Gamma^{(t)}\cap (\NN^d\times\{i\})$ and $\hat{\Gamma}_i^{(t)}=\hat{\Gamma}^{(t)}\cap (\NN^d\times\{i\})$.
Since $R$ is Noetherian, there are only finitely many values of $\nu$ on $R$ which are $\le \beta i$,

For $0\ne f\in R$, define
$$
\phi(f)=(n_1,\ldots,n_d)\in \NN^d
$$
if $\nu(f)=n_1\lambda_1+\cdots+n_d\lambda_d$. We have that $\phi(fg)=\phi(f)+\phi(g)$.

\begin{Lemma}\label{Lemmared3} Suppose that $t\ge 1$ and $\Gamma^{(t)}\not\subset (0)$. Then  $\Gamma^{(t)}$ satisfies equations (\ref{Cone2}) and (\ref{Cone3}).
\end{Lemma}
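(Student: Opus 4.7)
I will verify (\ref{Cone2}) and (\ref{Cone3}) separately. The first is immediate: the finitely many lattice vectors $(v,1)\in\NN^{d+1}$ with $v_1+\cdots+v_d\le\beta$ generate a subsemigroup $B\subseteq\NN^{d+1}$, and the defining inequality $n_1+\cdots+n_d\le\beta i$ forces $\Gamma^{(t)}\subseteq B$.

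For (\ref{Cone3}), my goal is to show $G(\Gamma^{(t)})=\ZZ^{d+1}$. I first reduce to the case $t=1$. Since $\Gamma^{(t)}\not\subseteq(0)$, fix a nontrivial $(n_0,i_0)\in\Gamma^{(t)}$ with witnesses $f_1,\ldots,f_t\in I_{i_0}$. For any $(m,j)\in\Gamma^{(1)}$ witnessed by $g\in I_j$, Lemma \ref{Lemmared1} applied to $f_1g,\ldots,f_tg\in I_{i_0+j}$ produces $(n_0+m,i_0+j)\in\Gamma^{(t)}$, so $(m,j)=(n_0+m,i_0+j)-(n_0,i_0)\in G(\Gamma^{(t)})$. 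Hence $\Gamma^{(1)}\subseteq G(\Gamma^{(t)})$, and it suffices to prove $G(\Gamma^{(1)})=\ZZ^{d+1}$.

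For each $j=1,\ldots,d$, write $y_j=a_j/b_j$ with $a_j,b_j\in R$; by rational independence of the $\lambda_\ell$'s this gives $\phi(a_j)-\phi(b_j)=e_j$. I plan to choose a buffer element $h\in I_N$ for large $N$ with $\sum\phi_\ell(h)$ small relative to $\beta N$ (for instance, a suitable $N$th power of a minimal generator of $I_1$, using $m_R^c\subseteq I_1$). Both $a_jh$ and $b_jh$ then lie in $I_N$ (as $a_j,b_j\in R$ and $I_N$ is an ideal), and Lemma \ref{Lemmared1} places $(\phi(a_j)+\phi(h),N)$ and $(\phi(b_j)+\phi(h),N)$ in $\Gamma^{(1)}$; their difference is $(e_j,0)$. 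To produce the remaining generator, fix any nonzero $g_1\in I_1$; then $(\phi(g_1)+\phi(h),N+1)\in\Gamma^{(1)}$, and subtracting $(\phi(h),N)$ yields $(\phi(g_1),1)\in G(\Gamma^{(1)})$. Combining with $(e_\ell,0)$ for all $\ell$ gives $(0,1)$, completing $G(\Gamma^{(1)})=\ZZ^{d+1}$.

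The main obstacle is to verify the sum constraint $n_1+\cdots+n_d\le\beta i$ for each constructed element, i.e., to bound $\sum\phi_\ell(a_j)+\sum\phi_\ell(h)\le\beta N$ and the analogous inequalities. This is to be controlled by choosing $h=g_0^N$ for $g_0\in m_R^c\subseteq I_1$ of (near-)minimal valuation, whose $\nu$-value is constrained by (\ref{eqred50}) via the constant $\alpha$; for $N$ large, the constant additive perturbations $\sum\phi_\ell(a_j),\sum\phi_\ell(b_j),\sum\phi_\ell(g_1)$ are absorbed, and the constraint reduces to a linear condition in $N$ satisfied by the chosen $g_0$.
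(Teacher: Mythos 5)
Your verification of (\ref{Cone2}) matches the paper. Your overall plan for (\ref{Cone3}) --- exhibiting the generators $(e_j,0)$ and $(0,1)$ of $\ZZ^{d+1}$ by multiplying the $a_j,b_j$ by a high-value ``buffer'' element in some $I_N$ --- is also in the spirit of the paper, and your reduction to $t=1$ is a legitimate shortcut (the paper instead sidesteps this by taking the buffer to be the $\Gamma^{(t)}$-witness itself, so that Lemma~\ref{Lemmared1} preserves the $t$-dimensional count and one works directly in $\Gamma^{(t)}$).

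The gap is in the sum-constraint estimate for your buffer $h=g_0^N$. You need $\sum_\ell\phi_\ell(a_j)+N\sum_\ell\phi_\ell(g_0)\le \beta N$ for large $N$, which forces the \emph{strict} inequality $\sum_\ell\phi_\ell(g_0)<\beta=\alpha c$. But (\ref{eqred50}) does not deliver this for an arbitrary nonzero $g_0\in I_1$: if $g_0\in m_R^m\setminus m_R^{m+1}$ then (\ref{eqred50}) only gives $\nu(g_0)<\alpha(m+1)$, and even in the best case ($g_0\in m_R^c\setminus m_R^{c+1}$) this is $\nu(g_0)<\alpha(c+1)=\beta+\alpha$, which is weaker than $<\beta$. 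There is no reason for the value of the chosen $g_0$ (or of any element of $I_1$) to be below the fixed threshold $\beta$. The paper's fix is that one is free to \emph{enlarge} the constant $c$: the hypothesis is only $m_R^c\subset I_1$, which remains true after replacing $c$ by any $c'\ge c$, and this replacement makes $\beta=\alpha c$ larger. Once $g_0$ (and the finitely many other elements $u$, $a_j$, $b_j$) are fixed, pick $c'\ge c$ so that none of them lies in $m_R^{c'}$; then (\ref{eqred50}) gives $\nu(g_0)<\alpha c'=\beta'$ (and similarly for the others), and the constraint $\sum_\ell\phi_\ell(\cdot)\le\nu(\cdot)<\beta'$ holds. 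With that replacement your argument closes; but note that at that point the $N\to\infty$ device is no longer needed --- the paper simply takes the single $\Gamma^{(t)}$-witness $h\in I_r$ and uses $hf_i,hg_i\in I_r$ and $uh\in I_{r+1}$ after enlarging $c$ once.
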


\begin{proof} Let $\{e_i\}$ be the standard basis of $\ZZ^{d+1}$.
The semigroup
$$
B=\{(n_1,\ldots,n_d,i)\mid (n_1,\ldots,n_d)\in \NN^{d}\mbox{ and }n_1+\cdots+n_d\le\beta i\}
$$
is generated by $B\cap (\NN^d\times\{1\})$ and contains $\Gamma^{(t)}$, so (\ref{Cone2}) holds.

By assumption, there exists $r\ge 1$ and $0\ne h\in I_r$ such that $(\phi(h),r)\in \Gamma^{(t)}$.

There exists $0\ne u\in I_1$. 
Write $y_i=\frac{f_i}{g_i}$ with $f_i,g_i\in R$ for $1\le i\le d$.  Then $hf_i, hg_i\in I_{r}$. There exists $c'\in \ZZ_+$ such that $c'\ge c$ and $u, hf_i, hg_i\not\in m_R^{c'}$ for $1\le i\le d$. We may replace $c$ with $c'$ in (\ref{eq8}). Then
$(\phi(hf_i),r), (\phi(hg_i),r)\in \Gamma_r^{(t)}=\Gamma^{(t)}\cap(\NN^d\times\{r\})$ for $1\le i\le d$, by (\ref{eqred10}) (with $f_i,g_i\in I_0=R$)  since $hf_i$ and $hg_i$ all have values $n_1\lambda_1+\cdots+n_d\lambda_d<\beta r$, so that $n_1+\ldots+n_d<\beta r$. We have that  $\phi(y_i)=\phi(hf_i)-\phi(hg_i)=\phi(y_i)=e_i$ for $1\le i\le d$. Thus
$$
(e_i,0)=(\phi(hf_i),r)-(\phi(hg_i),r)\in G(\Gamma^{(t)})
$$
for $1\le i\le d$. 
 $(\phi(uh),r+1)\in \Gamma^{(t)}$ by (\ref{eqred10}) and since $\nu(u)\le \beta$,
so that $(\phi(u),1)\in G(\Gamma^{(t)})$, so $e_{d+1} \in G(\Gamma^{(t)})$. Thus $G(\Gamma^{(t)})=\ZZ^{d+1}$ and (\ref{Cone3}) holds.
\end{proof}

The  same argument proves the following lemma.
\begin{Lemma}\label{Lemmared4} Suppose that $t\ge 1$ and $\hat\Gamma^{(t)}\not\subset (0)$.  Then  $\hat{\Gamma}^{(t)}$ satisfies equations (\ref{Cone2}) and(\ref{Cone3}).
\end{Lemma}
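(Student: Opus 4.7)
The plan is to mirror the argument of Lemma \ref{Lemmared3} with $R$ in the role of the ideals $I_i$. The first thing I would do is record the analogue of Lemma \ref{Lemmared1} for $R$: if $0\ne f,g\in R$ and $\dim_k R\cap K_{\nu(f)}/R\cap K_{\nu(f)}^+\ge t$, then $\dim_k R\cap K_{\nu(fg)}/R\cap K_{\nu(fg)}^+\ge t$. The proof is verbatim that of Lemma \ref{Lemmared1}: choose representatives $f_1,\ldots,f_t$ of a $k$-basis of the quotient on the left and verify that $gf_1,\ldots,gf_t$ stay $k$-linearly independent modulo $K_{\nu(fg)}^+$. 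This is in fact simpler than Lemma \ref{Lemmared1}, since closure of $R$ under multiplication makes any tracking of a graded index unnecessary.

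For property (\ref{Cone2}), I would observe that $\hat{\Gamma}^{(t)}$ lies inside the very same semigroup $B=\{(n_1,\ldots,n_d,i)\mid n_1+\cdots+n_d\le \beta i\}$ that was used for $\Gamma^{(t)}$, and $B$ is already generated by the finitely many vectors in $B\cap(\NN^d\times\{1\})$.

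For property (\ref{Cone3}), the plan is to copy the three-step argument of Lemma \ref{Lemmared3}. Using the hypothesis $\hat{\Gamma}^{(t)}\not\subset(0)$, I would pick $(\phi(h),r)\in \hat{\Gamma}^{(t)}$ with $0\ne h\in R$, a nonzero $u\in I_1$, and expressions $y_j=f_j/g_j$ with $f_j,g_j\in R$. I would then enlarge the constant $c$ of (\ref{eq8}) to some $c'\ge c$ so that none of $u,hf_j,hg_j$ lies in $m_R^{rc'}$. By (\ref{eq13}) this forces $\nu(hf_j),\nu(hg_j)<\beta r$ and $\nu(u)<\beta$, and combined with (\ref{eq9}) these inequalities supply the coordinate bound required to place $(\phi(hf_j),r), (\phi(hg_j),r), (\phi(uh),r+1)$ inside $\hat{\Gamma}^{(t)}$, the dimension condition at each of these points being furnished by the preliminary observation above. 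Differencing then yields $(e_j,0)=(\phi(hf_j),r)-(\phi(hg_j),r)\in G(\hat{\Gamma}^{(t)})$ for $1\le j\le d$ and $(\phi(u),1)\in G(\hat{\Gamma}^{(t)})$, from which $e_{d+1}\in G(\hat{\Gamma}^{(t)})$, so $G(\hat{\Gamma}^{(t)})=\ZZ^{d+1}$.

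I do not expect any real obstacle: the argument is strictly easier than Lemma \ref{Lemmared3} because the graded-family condition $I_iI_j\subset I_{i+j}$ plays no role once one is working inside $R$ itself. The only minor bookkeeping point is enlarging $c$ so that the chosen products simultaneously have small enough $\nu$-values; this is handled exactly as in the proof of Lemma \ref{Lemmared3}.
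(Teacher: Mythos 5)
Your proposal is exactly the paper's intent: the paper disposes of Lemma \ref{Lemmared4} in one sentence ("The same argument proves the following lemma") and you have correctly reconstructed what that argument is, including the key observation that the needed $R$-analogue of Lemma \ref{Lemmared1} is just the case $i=j=0$ (where $I_0=R$) of that lemma, so closure of $\hat{\Gamma}^{(t)}$ under addition comes for free. One small imprecision to flag: you ask that $u,hf_j,hg_j\notin m_R^{rc'}$ and then assert $\nu(u)<\beta$, but $u\notin m_R^{rc'}$ only yields $\nu(u)<\alpha rc'=\beta r$, not $\nu(u)<\beta$; for the bound on $(\phi(uh),r+1)$ you need $\nu(u)<\beta$, which requires the stronger $u\notin m_R^{c'}$ (the bound the paper actually uses in Lemma \ref{Lemmared3}). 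Since $u\ne 0$, Krull intersection lets you choose $c'$ so that this stronger condition also holds, so the fix is immediate and the argument stands.
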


By Theorem \ref{ConeTheorem1}, 
\begin{equation}\label{eq16}
\lim_{i\rightarrow\infty} \frac{\# \Gamma_i^{(t)}}{i^d}={\rm vol}(\Delta(\Gamma^{(t)}))
\end{equation}
and
\begin{equation}\label{eq17}
\lim_{i\rightarrow\infty} \frac{\# \hat{\Gamma}^{(t)}_i}{i^d}={\rm vol}(\Delta(\hat{\Gamma}^{(t)})).
\end{equation}
We obtain the conclusions of Theorem \ref{Theorem1} from equations (\ref{eq12}), (\ref{eq16}) and (\ref{eq17}).

\begin{Theorem}\label{Theorem2} Suppose that $R$ is a local ring of dimension $d$, and $\{I_i\}$ is a graded family of $m_R$-primary ideals in $R$. 
Let $N(\hat R)$ be the nilradical of the $m_R$-adic completion $\hat R$ of $R$, and suppose that $\dim N(\hat R)<d=\dim R$. Then
$$
\lim_{i\rightarrow \infty}\frac{\ell_R(R/I_i)}{i^d}
$$
exists.
\end{Theorem}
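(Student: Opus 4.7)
My plan is to reduce Theorem~\ref{Theorem2} to Theorem~\ref{Theorem1} by stripping away all sub-top-dimensional structure and then combining contributions of the top-dimensional minimal primes. Since each $I_n$ is $m_R$-primary, $\ell_R(R/I_n) = \ell_{\hat R}(\hat R/I_n\hat R)$ and $\{I_n\hat R\}$ is a graded family of $m_{\hat R}$-primary ideals; replacing $R$ by $\hat R$, I may assume $R$ is complete, so the hypothesis reads $\dim N(R) < d$. Writing $N = N(R)$, the short exact sequence $0 \to N \to R \to R/N \to 0$ yields $\ell(R/I_n) = \ell((R/N)/I_n(R/N)) + \ell(N/(N \cap I_n))$. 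Since $m_R^c \subseteq I_1$ for some $c$, $I_n \supseteq m_R^{cn}$, and $\ell(N/(N \cap I_n)) \le \ell(N/m_R^{cn}N)$ is a polynomial in $n$ of degree $\dim N < d$ by Hilbert--Samuel applied to $N$, hence is $O(n^{d-1})$. So the existence of the limit for $R$ reduces to that for $R/N$, and I may assume $R$ is complete and reduced.

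Let $P_1,\ldots,P_r$ denote the minimal primes of $R$ with $\dim R/P_i = d$ and $Q_1,\ldots,Q_s$ those with $\dim R/Q_j < d$, and set $K = \bigcap_i P_i$. Since $R$ is reduced, $(\bigcap_i P_i)\cap(\bigcap_j Q_j) = 0$, so $K\cdot\bigcap_j Q_j \subseteq K\cap\bigcap_j Q_j = 0$; hence $\mathrm{ann}(K)\supseteq\bigcap_j Q_j$ and $\dim K \le \max_j \dim R/Q_j < d$. The same length argument as before gives $\ell(K/(K\cap I_n)) = O(n^{d-1})$, so I may further replace $R$ by $R/K$ to assume every minimal prime of $R$ has dimension $d$. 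Then the injection $R \hookrightarrow R' := \prod_i R/P_i$ has cokernel $C$ with $\dim C < d$: at any minimal prime $P_j$ of $R$ the local ring $R_{P_j}$ is a field, the map $R_{P_j}\to (R/P_j)_{P_j}$ is an isomorphism, and $(R/P_i)_{P_j} = 0$ for $i\ne j$ (since $P_i\not\subseteq P_j$), so $C_{P_j} = 0$.

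Tensoring $0\to R\to R'\to C\to 0$ with $R/I_n$ and using $\mathrm{Tor}_i^R(R, R/I_n) = 0$ for $i\ge 1$, I obtain the exact sequence
\begin{equation*}
0 \to \mathrm{Tor}_1^R(R', R/I_n) \to \mathrm{Tor}_1^R(C, R/I_n) \to R/I_n \to R'/I_n R' \to C/I_n C \to 0,
\end{equation*}
whose alternating sum of lengths rearranges to
\begin{equation*}
\ell(R/I_n) = \sum_{i=1}^r \ell\bigl((R/P_i)/I_n(R/P_i)\bigr) - \ell(C/I_n C) + \ell(\mathrm{Tor}_1^R(C, R/I_n)) - \ell(\mathrm{Tor}_1^R(R', R/I_n)).
\end{equation*}
Each $R/P_i$ is a complete local domain, hence analytically irreducible, so Theorem~\ref{Theorem1} gives that $\lim_n \ell((R/P_i)/I_n(R/P_i))/n^d$ exists for each $i$; the desired conclusion for $R$ follows once the three correction terms on the right are shown to be $O(n^{d-1})$.

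The term $\ell(C/I_n C)$ is $O(n^{d-1})$ directly by the first paragraph's length bound applied to $C$. The main obstacle will be the Tor bound $\ell(\mathrm{Tor}_1^R(M, R/I_n)) = O(n^{d-1})$ for any finitely generated $R$-module $M$ with $\dim M < d$: applied to $M = C$ it controls $\ell(\mathrm{Tor}_1^R(C, R/I_n))$, and then $\ell(\mathrm{Tor}_1^R(R', R/I_n))$ is automatically bounded by it through the injection $\mathrm{Tor}_1^R(R', R/I_n) \hookrightarrow \mathrm{Tor}_1^R(C, R/I_n)$ exhibited above. I expect the Tor bound to follow by combining the containment $m_R^{cn}\subseteq I_n$ with an Artin--Rees argument on the $I_1$-adic filtration of a free presentation of $M$, or alternatively via a change-of-rings spectral sequence over the lower-dimensional ring $R/\mathrm{ann}(M)$.
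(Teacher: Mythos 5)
Your strategy matches the paper's: complete $R$, peel off the nilradical via the same short exact sequence and Hilbert--Samuel estimate, then compare $R$ with $R' = \prod_i R/P_i$ and apply Theorem~\ref{Theorem1} to each analytically irreducible factor. The paper carries out this comparison by invoking Lemma~\ref{Lemma5} (Lemma~5.1 of \cite{C1}), which states exactly $|\ell_R(R/I_n) - \sum_i \ell_{R_i}(R_i/I_nR_i)| \le \alpha n^{d-1}$ for a reduced local ring; that lemma already allows minimal primes of any dimension, so your intermediate reduction modulo $K = \bigcap_i P_i$ is correct but superfluous.

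The genuine gap is that you are, in effect, reproving Lemma~\ref{Lemma5}, and the key estimate $\ell(\mathrm{Tor}_1^R(C, R/I_n)) = O(n^{d-1})$ --- which you rightly single out as the crux --- is asserted rather than proved. Artin--Rees applied to a free presentation $0\to K\to F\to C\to 0$ controls $K\cap m_R^nF$ by $m_R^{n-\lambda}K$, but $\mathrm{Tor}_1^R(C,R/I_n)=(K\cap I_nF)/I_nK$ involves $I_n$, and the only available inclusion $m_R^{cn}\subseteq I_n$ points the wrong way for that to bite directly; the change-of-rings route likewise still leaves a length estimate to perform. Here is a direct way to close the gap. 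Since $\dim R/\mathrm{ann}(C)=\dim C<d$ and, after your reduction, every minimal prime of the reduced ring $R$ has dimension $d$, prime avoidance yields $a\in\mathrm{ann}(C)$ lying in no minimal prime, hence a nonzerodivisor on $R$ and on $K\subseteq F$. The module $\mathrm{Tor}_1^R(C,R/I_n)$ is a submodule of $K/I_nK$ killed by $a$, so its length is at most $\ell\bigl(\ker(K/I_nK\xrightarrow{a}K/I_nK)\bigr)$, which equals $\ell(K/(aK+I_nK))$ because kernel and cokernel of an endomorphism of a finite-length module have equal length; then $\ell(K/(aK+I_nK))\le\ell\bigl((K/aK)/m_R^{cn}(K/aK)\bigr)=O(n^{\dim(K/aK)})=O(n^{d-1})$, using that $a$ regular on $K$ forces $\dim(K/aK)\le d-1$. (Equivalently, and closer to the spirit of Lemma~\ref{Lemma5}, one can bound the kernel $J_n/I_n$ of $R/I_n\to R'/I_nR'$ directly: $\mathrm{ann}(C)\cdot J_n\subseteq I_n$ gives $J_n\subseteq(I_n:a)$, and $\ell((I_n:a)/I_n)=\ell(R/(I_n+aR))=O(n^{d-1})$.) With such an estimate supplied your argument is complete; as written it rests on an unestablished claim.
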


\begin{proof} Let $N=N(\hat R)$ and $A=\hat R/N$. We have a short exact sequence of $\hat R$-modules
$$
0\rightarrow N/(N\cap I_i\hat R)\rightarrow \hat R/I_i\hat R\rightarrow A/I_iA\rightarrow 0.
$$
There exists a number $c$ such that $m_R^c\subset I_1$. Hence $m_R^{ci}N\subset N\cap I_i\hat R$ for all $i$, so that
$$
\ell_{\hat R}(N/N\cap I_i\hat R)\le \ell_{\hat R}(N/m_{\hat R}^{ci}N)\le \alpha i^{\dim N}
$$
for some constant $\alpha$. 
Hence

$$
\lim_{i\rightarrow \infty}\frac{\ell_R(R/I_i)}{i^d}=\lim_{i\rightarrow \infty}\frac{\ell_{\hat R}(\hat R/I_i\hat R)}{i^d}
=\lim_{i\rightarrow \infty}\frac{\ell_A(A/I_iA)}{i^d}.
$$

 Let $p_1,\ldots, p_s$ be the minimal primes of $A$, and $A_j=A/p_j$ for $1\le j\le s$.
By Lemma \ref{Lemma5} below,
$$
\lim_{i\rightarrow \infty}\frac{\ell_{A}(A/I_iA)}{i^d}=\sum_{j=1}^s\lim_{i\rightarrow \infty}\frac{\ell_{A_j}(A_j/I_iA_j)}{i^d}
$$
which exists by Theorem \ref{Theorem1}
\end{proof}

\begin{Lemma}\label{Lemma5}(Lemma 5.1 \cite{C1}) Suppose that $R$ is a $d$-dimensional reduced local ring and $\{I_n\}$ is a graded family of $m_R$-primary ideals in $R$,  Let $\{p_1,\ldots, p_s\}$ be the minimal primes of $R$, $R_i=R/p_i$, and let $S$ be the ring
$S=\bigoplus_{i=1}^sR_i$. Then there exists $\alpha\in \ZZ_+$ such that for all $n\in \ZZ_+$,
$$
|\sum_{i=1}^s\ell_{R_i}(R_i/I_nR_i)-\ell_R(R/I_n)|\le \alpha n^{d-1}.
$$
\end{Lemma}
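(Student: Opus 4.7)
\medskip

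\noindent\textbf{Proof proposal.} The plan is to compare $R/I_n$ with $S/I_nS$ via the short exact sequence coming from the natural inclusion $R\hookrightarrow S$, and bound the discrepancy by $O(n^{d-1})$.

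Since $R$ is reduced, the intersection of its minimal primes is zero, so the canonical map $R\to S=\bigoplus_{i=1}^s R/p_i$ is injective. Set $C=S/R$, a finitely generated $R$-module. Localizing at a minimal prime $p_i$ of $R$ kills every $R/p_j$ with $j\neq i$ (by prime avoidance, each $p_j\not\subset p_i$) and gives $(R/p_i)_{p_i}=R_{p_i}$, so $S_{p_i}=R_{p_i}$ and $C_{p_i}=0$. Hence the support of $C$ contains no minimal prime of $R$, so $\dim C\le d-1$. From the inclusion $R\hookrightarrow S$ one extracts the four-term exact sequence
\begin{equation*}
0\to (R\cap I_nS)/I_n\to R/I_n\to S/I_nS\to C/I_nC\to 0,
\end{equation*}
since $\ker(R/I_n\to S/I_nS)=(R\cap I_nS)/I_n$ and $\mathrm{coker}=S/(R+I_nS)=C/I_nC$. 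Thus
\begin{equation*}
\bigl|\ell_R(R/I_n)-\ell_R(S/I_nS)\bigr|\le \ell_R(C/I_nC)+\ell_R\bigl((R\cap I_nS)/I_n\bigr),
\end{equation*}
and since $\ell_R(S/I_nS)=\sum_{i=1}^s\ell_{R_i}(R_i/I_nR_i)$ (the residue fields all coincide with $R/m_R$), it suffices to show each summand on the right is $O(n^{d-1})$.

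Pick $c\in\ZZ_+$ with $m_R^c\subset I_1$, so $m_R^{cn}\subset I_n$. Then $I_nC\supset m_R^{cn}C$, and
\begin{equation*}
\ell_R(C/I_nC)\le \ell_R(C/m_R^{cn}C),
\end{equation*}
which is $O(n^{\dim C})=O(n^{d-1})$ by the standard Hilbert--Samuel bound for a finitely generated module over a Noetherian local ring. For the other term, set $J=\mathrm{ann}_R(C)$; since $C_{p_i}=0$ for every minimal prime, $J\not\subset p_i$ for any $i$, so by prime avoidance we may choose $x\in J\setminus\bigcup_i p_i$. Such an $x$ is a non-zerodivisor of $R$, and $xS\subset R$ because $x$ annihilates $C=S/R$. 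Consequently $x\cdot(R\cap I_nS)\subset xS\cap I_nS\cap R\subset I_n$, so $(R\cap I_nS)/I_n$ is annihilated by $x$, hence sits inside $(I_n:x)/I_n$.

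Finally, multiplication by $x$ on $R/I_n$ yields the exact sequence
\begin{equation*}
0\to (I_n:x)/I_n\to R/I_n\xrightarrow{\,x\,} R/I_n\to R/(I_n+xR)\to 0,
\end{equation*}
so $\ell_R((I_n:x)/I_n)=\ell_R(R/(I_n+xR))$. The images $\bar I_n=(I_n+xR)/xR$ form a graded family of $m_{R/xR}$-primary ideals in $R/xR$, and $\dim R/xR=d-1$ since $x$ avoids all minimal primes. Using $m_R^{cn}\subset I_n$ we get
\begin{equation*}
\ell_R\bigl(R/(I_n+xR)\bigr)\le \ell_{R/xR}\bigl((R/xR)/m_{R/xR}^{cn}\bigr)=O(n^{d-1}),
\end{equation*}
again by the Hilbert--Samuel bound. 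Combining these estimates yields a constant $\alpha$ independent of $n$ such that the desired inequality holds. The main delicate point is producing the non-zerodivisor $x\in\mathrm{ann}_R(C)$, which relies crucially on $R$ being reduced (so that the zero-divisors are exactly the union of the minimal primes) and on $C$ having no associated minimal prime of $R$; the rest is bookkeeping with short exact sequences and two invocations of the standard Hilbert--Samuel bound.
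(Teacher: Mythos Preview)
The paper does not give its own proof of this lemma; it is quoted as Lemma~5.1 of \cite{C1} and used as a black box. So there is no in-paper argument to compare against directly.

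Your proof is correct and is essentially the standard conductor argument one would expect for such a statement. A few small remarks:

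\begin{itemize}
\item The four-term exact sequence is exactly right, and from it you actually get the equality $\ell_R(S/I_nS)-\ell_R(R/I_n)=\ell_R(C/I_nC)-\ell_R\bigl((R\cap I_nS)/I_n\bigr)$, so either one of the two error terms bounds the absolute value; your weaker inequality is of course also fine.
\item The key point, as you note, is producing a non-zerodivisor $x\in\mathrm{ann}_R(C)$. Your justification is correct: in a reduced Noetherian ring the associated primes of $(0)$ are precisely the minimal primes, and $C_{p_i}=0$ for each minimal $p_i$ forces $\mathrm{ann}_R(C)\not\subset p_i$, so prime avoidance yields the desired $x$. (This $x$ is a conductor element for $R\subset S$.)
\item The inclusion $x\cdot(R\cap I_nS)\subset I_n$ is correct because $x(I_nS)=I_n(xS)\subset I_nR=I_n$.
\item For the final bound you only need $\dim R/xR\le d-1$, which follows since $x$ avoids every minimal prime; you do not need equality, and if $x$ happens to be a unit the bound is trivial.
\end{itemize}

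In short: your argument is complete and is the natural proof of this lemma.
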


\section{A necessary and sufficient condition for limits to exist in a local ring}

Let $i_1=2$ and $r_1=\frac{i_1}{2}$. For $j\ge 1$, inductively define $i_{j+1}$ so that $i_{j+1}$ is even and $i_{j+1}>2^ji_j$.
Let $r_{j+1}=\frac{i_{j+1}}{2}$. For $n\in \ZZ_+$, define

\begin{equation}\label{eqsigma}
\sigma(n)=\left\{\begin{array}{ll}
1&\mbox{ if } n=1\\
\frac{i_j}{2} &\mbox{ if }i_j\le n<i_{j+1}
\end{array}\right.
\end{equation}

 \begin{Lemma}\label{Lemma1}
 Suppose that $a\in \NN$ and $r\in \ZZ_+$. Then given $m>0$ and $\epsilon>0$, there exists a positive integer $n=a+br$ with $b\in \NN$ such that $n>m$ and
 $$
 \left| \frac{\sigma(n)}{n}-\frac{1}{2}\right|<\epsilon
 $$
 \end{Lemma}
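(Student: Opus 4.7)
The plan is to exploit the fact that on the range $i_j \le n < i_{j+1}$, the function $\sigma$ is constant equal to $i_j/2$, so $\sigma(n)/n = i_j/(2n)$ takes its maximum value $1/2$ exactly at $n = i_j$ and decreases as $n$ grows. Thus
$$
\left|\frac{\sigma(n)}{n} - \frac12\right| = \frac{n - i_j}{2n}
\qquad (i_j \le n < i_{j+1}),
$$
and to make this smaller than $\epsilon$ it suffices to find $n$ in the arithmetic progression $\{a + br : b \in \NN\}$ that sits just slightly above some $i_j$.

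First I would choose $j$ large enough so that simultaneously (i) $i_j > \max(m, a)$, (ii) $i_{j+1} - i_j > r$, and (iii) $i_j > r/(2\epsilon)$. Condition (ii) is immediate for $j$ large from $i_{j+1} > 2^j i_j$, and (i), (iii) hold for all $j$ large since $i_j \to \infty$. Given such a $j$, the interval $[i_j, i_j + r)$ consists of $r$ consecutive integers, hence contains exactly one integer $n$ with $n \equiv a \pmod r$. Since $n \ge i_j > a$, this forces $n = a + br$ for some $b \in \NN$, and moreover $n > m$ by (i).

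Next I would verify the estimate. By (ii), $n < i_j + r < i_{j+1}$, so $\sigma(n) = i_j/2$. Then
$$
\left|\frac{\sigma(n)}{n} - \frac12\right| = \frac{n - i_j}{2n} < \frac{r}{2 i_j} < \epsilon,
$$
the last inequality by (iii). This gives the desired $n$.

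The proof is largely bookkeeping; there is no real obstacle. The only point to notice is that the superexponential growth $i_{j+1} > 2^j i_j$ guarantees that eventually each ``plateau'' $[i_j, i_{j+1})$ of $\sigma$ is long enough to swallow a full period of the progression $a + r\NN$, which is exactly what is needed to catch a value of $n$ in the narrow window $[i_j, i_j + r)$ where $\sigma(n)/n$ is forced to be close to $1/2$.
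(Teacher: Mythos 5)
Your proof is correct and follows essentially the same approach as the paper's: locate an $n$ from the progression $a+r\NN$ in the window $[i_j, i_j+r)$, which fits entirely inside a plateau of $\sigma$ for $j$ large, and bound $\frac12 - \frac{i_j}{2n}$ using that $n - i_j < r$ while $i_j$ is large. Your version is just a bit more explicit about stating conditions (i)--(iii) on $j$ and noting that the superexponential growth of $i_j$ makes them simultaneously achievable.
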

 
 \begin{proof} Choose $j$ sufficiently large that $i_j>m$, $i_j+r<i_{j+1}$  and
 \begin{equation}\label{eq1}
 \frac{i_j}{2(i_j+k)}> \frac{1}{2}-\epsilon
 \end{equation}
 for $0\le k<r$.
There exists $n=i_j+k$ with $0\le k<r$ in the arithmetic sequence $a+br$.
$$
\frac{\sigma(n)}{n}=\frac{i_j}{2n}=\frac{i_j}{2(i_j+k)}.
$$
By (\ref{eq1}),
$$
\frac{1}{2}\ge \frac{i_j}{2(i_j+k)}> \frac{1}{2}-\epsilon.
$$
\end{proof}

\begin{Lemma}\label{Lemma2}
 Suppose that $a\in \NN$ and $r\in \ZZ_+$. Then given $m>0$ and $\epsilon>0$, there exists a positive integer $n=a+br$ with $b\in \NN$ such that $n>m$ and
 $$
 \left| \frac{\sigma(n)}{n}\right|<\epsilon.
 $$
 \end{Lemma}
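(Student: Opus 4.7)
The plan is to exploit the fact that $\sigma$ is constant with value $i_j/2$ on the entire dyadic block $[i_j, i_{j+1})$, while the hypothesis $i_{j+1} > 2^j i_j$ makes this block exponentially longer than its left endpoint. Consequently, if $n$ is chosen sufficiently deep inside such a block, the ratio $\sigma(n)/n = i_j/(2n)$ becomes arbitrarily small. This is the mirror image of the argument for Lemma \ref{Lemma1}, where $n$ was taken at the left end of the block (giving ratio close to $1/2$) rather than far to the right.

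Concretely, I would first dispose of the trivial case $\epsilon > 1/2$: any $n = a + br$ with $n \geq i_1$ and $n > m$ already satisfies $\sigma(n)/n \leq 1/2 < \epsilon$. In the main case $\epsilon \leq 1/2$, I would choose $j$ large enough that both
\begin{equation*}
\frac{i_j}{2\epsilon} > \max\{m, a\} \qquad \text{and} \qquad i_{j+1} - \frac{i_j}{2\epsilon} > r.
\end{equation*}
The first inequality holds for all large $j$ since $i_j \to \infty$. The second is where the defining property $i_{j+1} > 2^j i_j$ enters: the left side is at least $(2^j - 1/(2\epsilon)) i_j$, which eventually dwarfs $r$.

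Since the open interval $(i_j/(2\epsilon),\, i_{j+1})$ then has length greater than $r$, it contains at least one integer of the form $n = a + br$ with $b \in \NN$ (the constraint $b \geq 0$ is ensured by $i_j/(2\epsilon) > a$). Because $\epsilon \leq 1/2$ gives $i_j/(2\epsilon) \geq i_j$, this $n$ lies in $[i_j, i_{j+1})$, so $\sigma(n) = i_j/2$ by definition. Therefore
\begin{equation*}
\frac{\sigma(n)}{n} = \frac{i_j}{2n} < \frac{i_j}{2 \cdot i_j/(2\epsilon)} = \epsilon,
\end{equation*}
while $n > i_j/(2\epsilon) > m$, as required.

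I do not expect any genuine obstacle in this argument; it is a straightforward counterpart to Lemma \ref{Lemma1}. The only mild point requiring attention is verifying that the chosen interval simultaneously catches a residue class modulo $r$ (guaranteed by its length exceeding $r$) and remains within a single constancy block of $\sigma$ (automatic from the upper bound $n < i_{j+1}$).
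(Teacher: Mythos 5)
Your proof is correct and follows essentially the same strategy as the paper's: pick $n$ deep inside a block $[i_j, i_{j+1})$, so that $\sigma(n)/n = i_j/(2n)$ is forced small because $i_{j+1} > 2^j i_j$ makes the block far longer than $r$. The paper takes $n$ within distance $r$ of the right endpoint $i_{j+1}$, while you take $n$ anywhere to the right of the explicit threshold $i_j/(2\epsilon)$; this is a cosmetic variant of the same argument, and your version is cleanly organized with the trivial case $\epsilon > 1/2$ handled up front.
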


\begin{proof} Choose $j$ sufficiently large that $i_j>m+r$, $2^ji_j>r$ and
\begin{equation}\label{eq2}
\frac{i_j}{2(2^ji_j-k)}<\epsilon
\end{equation}
for $0<k\le r$. Let $n=i_{j+1}-k$ with $0<k\le r$ in the arithmetic sequence $a+br$.
$$
\frac{\sigma(n)}{n}=\frac{i_j}{2n}=\frac{i_j}{2(i_{j+1}-k)}.
$$
By (\ref{eq2}), 
$$
0<\frac{i_j}{2(i_{j+1}-k)}<\epsilon.
$$
\end{proof}

It follows from the previous two lemmas that 
the limit
\begin{equation}\label{eqnr1}
\lim_{n\rightarrow \infty} \frac{\sigma(n)}{n}
\end{equation}
does not exist, even when $n$ is constrained to lie in an  arithmetic sequence. 
The following example shows that limits might not exist on nonreduced local rings.

\begin{Example}\label{Example1} Let $k$ be a field, $d>0$  and $R$ be the nonreduced $d$-dimensional local ring $R=k[[x_1,\ldots,x_d,y]]/(y^2)$.
There exists a  graded family of $m_R$-primary ideals $\{I_n\}$ in $R$ such that the limit
$$
\lim_{n\rightarrow \infty} \frac{\ell_R(R/I_n)}{n^d}
$$
does not exist, even when $n$ is constrained to lie in an arithmetic sequence.
\end{Example}

\begin{proof}  Let $\overline x_1, \ldots,\overline x_d,\overline y$ be the classes of $x_1,\ldots,x_d,y$ in $R$.
Let $N_i$ be the set of monomials of degree $i$ in the variables $\overline x_1,\ldots,\overline x_d$.
Let $\sigma(n)$ be the function defined in (\ref{eqsigma}).
Define $M_R$-primary ideals $I_n$ in $R$ by
 $I_n=(N_n,\overline yN_{n-\sigma(n)})$ for $n\ge 1$ (and $I_0=R$). 

We first verify that $\{I_n\}$ is a graded family of ideals, by showing that $I_mI_n\subset I_{m+n}$ for all $m,n>0$.
This follows since
$$
I_mI_n=(N_{m+n},\overline yN_{(m+n)-\sigma(m)}, \overline yN_{(m+n)-\sigma(n)})
$$
and  $\sigma(j)\le \sigma(k)$ for $k\ge j$.

$R/I_n$ has a $k$-basis consisting of 
$$
\{N_i\mid i<n\}\mbox{ and }\{\overline yN_j\mid j<n-\sigma(n)\}.
$$
Thus 
$$
\ell_R(R/I_n)=\binom{n}{d}+\binom{n-\sigma(n)}{d}.
$$
does not exist, even when $n$ is constrained to lie in an arithmetic sequence, by (\ref{eqnr1}).
\end{proof}

Hailong Dao and  and Ilya Smirnov have communicated to me that they have   proven the following  theorem.

\begin{Theorem}\label{Theorem3}(Hailong Dao and Ilya Smirnov) Suppose that $R$ is a local ring of dimension $d>0$ with nilradical $N(R)$. Suppose that for any graded family $\{I_n\}$ of $m_R$-primary ideals, the limit 
$$
\lim_{n\rightarrow\infty}\frac{\ell_R(R/I_n)}{n^d}
$$
exists.
Then $\dim N(R)<d$.
\end{Theorem}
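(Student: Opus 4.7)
The strategy is to upgrade Example \ref{Example1} to the general setting. Since $\dim N(R)=d$, the $R$-module $N(R)$ has an associated prime $P$ with $\dim R/P=d$; writing $P=\mathrm{ann}(y)$ for some nonzero $y\in N(R)$ gives a nilpotent element with $\dim R/\mathrm{ann}(y)=d$. To create the ``$y^2=0$'' setup that drove Example \ref{Example1}, I would pass to $R'':=R/y^{2}R$. Four properties must be verified: $\bar y\ne 0$ in $R''$ (if $y=y^{2}r$ then $y(1-yr)=0$ with $1-yr$ a unit), $\bar y^{2}=0$ (tautological), $\dim R''=d$ (since $y^{2}R\subset N(R)$ lies inside every minimal prime of $R$), and $\dim R''/\mathrm{ann}_{R''}(\bar y)=d$ (any maximal-dimensional prime over $\mathrm{ann}_{R}(y)$ contains the nilpotent $y$, hence contains $\mathrm{ann}_{R}(y)+yR$, and $R''/\mathrm{ann}_{R''}(\bar y)\cong R/(\mathrm{ann}_R(y)+yR)$). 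It is then enough to produce a bad family in $R''$, because for any graded family $\{J_n\}$ of $m_{R''}$-primary ideals the preimages $\pi^{-1}(J_n)\subset R$ form a graded family of $m_R$-primary ideals with $\ell_R(R/\pi^{-1}(J_n))=\ell_{R''}(R''/J_n)$.

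In $R''$ set $I_n=m_{R''}^{n}+\bar y\,m_{R''}^{n-\sigma(n)}$ with $\sigma$ as in (\ref{eqsigma}). Expanding $I_mI_n$ gives four summands; the only one not obviously in $I_{m+n}$ is $\bar y^{2}\,m_{R''}^{m+n-\sigma(m)-\sigma(n)}$, which vanishes because $\bar y^{2}=0$. The other three sit inside $I_{m+n}$ because $\sigma$ is nondecreasing and $\sigma(k)\le k/2$. Thus $\{I_n\}$ is a graded family of $m_{R''}$-primary ideals.

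For the length computation, set $A:=R''/\mathrm{ann}_{R''}(\bar y)$, a local ring of dimension $d$ with maximal ideal $m_A$, and use the $R''$-module isomorphism $\bar yR''\cong A$. Under it, $\bar y\,m_{R''}^{a}\leftrightarrow m_A^{a}$, while $\bar yR''\cap m_{R''}^{n}$ corresponds to an ideal $J_n\subset A$ satisfying $m_A^{n}\subset J_n\subset m_A^{n-c}$ for some constant $c$ (Artin--Rees applied to the submodule $\bar yR''\subset R''$). Once $\sigma(n)\ge c$, the intersection $\bar y\,m_{R''}^{n-\sigma(n)}\cap m_{R''}^{n}$ corresponds to $J_n$, so the short exact sequence $0\to I_n/m_{R''}^{n}\to R''/m_{R''}^{n}\to R''/I_n\to 0$ gives
\[
\ell_{R''}(R''/I_n)=\ell_{R''}(R''/m_{R''}^{n})-\ell_A(m_A^{n-\sigma(n)}/J_n).
\]
Squeezing $\ell_A(A/J_n)$ between the Hilbert--Samuel values of $A$ at $n$ and $n-c$ forces $\ell_A(A/J_n)=e_{m_A}(A)\,n^{d}/d!+O(n^{d-1})$, and combining with the Hilbert--Samuel polynomial of $(R'',m_{R''})$ produces
\[
\ell_{R''}(R''/I_n)=\frac{1}{d!}\Bigl[e_{m_{R''}}(R'')\,n^{d}-e_{m_A}(A)\bigl(n^{d}-(n-\sigma(n))^{d}\bigr)\Bigr]+O(n^{d-1}).
\]

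Because $e_{m_A}(A)>0$ (as $\dim A=d>0$), the coefficient of $n^{d}$ in $\ell_{R''}(R''/I_n)$ contains the oscillating summand $\tfrac{e_{m_A}(A)}{d!}(1-\sigma(n)/n)^{d}$, of strictly positive amplitude since $(1/2)^{d}<1$. By Lemmas \ref{Lemma1} and \ref{Lemma2}, $\sigma(n)/n$ accumulates at both $0$ and $1/2$ along every arithmetic progression, so $\ell_{R''}(R''/I_n)/n^{d}$ has two distinct accumulation values along every such progression; the limit does not exist. Pulling back to $R$ completes the proof. The main obstacle in this plan is the length computation: identifying $\bar y\,m_{R''}^{n-\sigma(n)}\cap m_{R''}^{n}$ via Artin--Rees and confirming that $\ell_A(A/J_n)$ has leading coefficient $e_{m_A}(A)/d!$ despite $J_n$ failing to be a power of $m_A$.
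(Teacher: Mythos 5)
Your proof is correct and follows essentially the same route as the paper's: both identify a nilpotent element $x$ with $\mathrm{ann}(x)$ a $d$-dimensional prime $P$, take the family $I_n = m_R^n + x\,m_R^{\,n-\sigma(n)}$, and use Artin--Rees to isolate the oscillating contribution $(n-\sigma(n))^d$ coming through $R/P$. The one step you could simply drop is the passage to $R'' = R/y^2R$: since $y$ is nilpotent it lies in every minimal prime of $R$, and $P=\mathrm{ann}(y)$ is a minimal prime (as $\dim R/P = d = \dim R$), so $y \in P = \mathrm{ann}(y)$ forces $y^2=0$ already in $R$ --- exactly the observation the paper makes, which lets it work in $R$ directly and use the slightly shorter exact sequence $0 \to xR/(xR\cap I_n) \to R/I_n \to (R/xR)/I_n(R/xR) \to 0$ in place of your $0 \to I_n/m_{R''}^n \to R''/m_{R''}^n \to R''/I_n \to 0$, though the two bookkeepings are equivalent.
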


\begin{proof} Let $N=N(R)$. Suppose that $\dim N=d$. Let $p$ be a minimal  prime of $N$ such that $\dim R/p=d$. Then $N_p\ne 0$, so $p_p\ne 0$ in $R_p$. $p$ is an associated prime of $N$,  so there exists $0\ne x\in R$ such that $\mbox{ann}(x)=p$. $x\in p$, since otherwise 
$0=pxR_p=p_p$ which is impossible. In particular, $x^2=0$.

Let $f(n)=n-\sigma(n)$ be the function of (\ref{eqsigma}), and define $m_R$-primary ideals in $R$ by
$$
I_n=m_R^n+xm_R^{f(n)}.
$$
$\{I_n\}$ is a graded family of ideals in $R$ since 
$$
I_mI_n=(m_R^{m+n},xm_R^{(m+n)-\sigma(m)}, xm_R^{(m+n)-\sigma(n)})
$$
and  $\sigma(j)\le \sigma(k)$ for $k\ge j$. Let $\overline R=R/xR$. We have short exact sequences
\begin{equation}\label{eqnr2}
0\rightarrow xR/xR\cap I_n\rightarrow R/I_n\rightarrow \overline R/I_n\overline R\rightarrow 0.
\end{equation}
By Artin-Rees, there exists a number $k$ such that
$xR\cap m_R^n=m_R^{n-k}(xR\cap m_R^{n-k})$ for $n>k$. Thus $xR\cap m_R^n\subset xm_R^{f(n)}$ for $n\gg 0$ and $xR\cap I_n=xm_R^{f(n)}$ for $n\gg 0$.
We have that
$$
xR/xR\cap I_n\cong xR/xm_R^{f(n)}\cong R/({\rm ann}(x)+m_R^{f(n)})\cong R/p+m_R^{f(n)},
$$
so that  $\ell_R(xR/xR\cap I_n)=P_{R/p}(f(n))$  for $n\gg0$, where $P_{R/p}(n)$ is the Hilbert-Samuel polynomial of $R/p$. Hence
\begin{equation}\label{eqnr3}
\lim_{n\rightarrow \infty}\frac{\ell_R(xR/xR\cap I_n)}{n^d}=\frac{e(m_{R/p})}{d!}\lim_{n\rightarrow\infty}\left(\frac{f(n)}{n}\right)^d
\end{equation}
does not exist by (\ref{eqnr1}).
For $n\gg0$, 
$$
\ell_R(\overline R/I_n\overline R)=\ell_R(\overline R/m_{\overline R}^n)=P_{\overline R}(n)
$$
where $P_{\overline R}(n)$ is the Hilbert-Samuel polynomial of $\overline R$. Since $\dim \overline R\le d$, we have that
\begin{equation}\label{eqnr4}
\lim_{n\rightarrow \infty}\frac{\ell_R(\overline R/I_n\overline R)}{n^d}
\end{equation}
exists. Thus
$$
\lim_{n\rightarrow \infty}\frac{\ell_R(R/I_n)}{n^d}
$$
does not exist by (\ref{eqnr2}), (\ref{eqnr3}) and (\ref{eqnr4}).
\end{proof}

\begin{Theorem}\label{Theorem4} Suppose that $R$ is a local ring of dimension $d$, and  $N(\hat R)$ is the nilradical of the $m_R$-adic completion $\hat R$ of $R$.  Then   the limit 
$$
\lim_{n\rightarrow\infty}\frac{\ell_R(R/I_n)}{n^d}
$$
exists for any graded family $\{I_n\}$ of $m_R$-primary ideals, if and only if $\dim N(\hat R)<d$.
\end{Theorem}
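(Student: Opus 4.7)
The plan is to establish the two directions of the equivalence separately by invoking results already in place. The sufficiency direction ($\dim N(\hat R) < d$ implies the limit exists) is exactly the content of Theorem \ref{Theorem2}, so nothing new needs to be done there. The work is entirely in the necessity direction, which I would handle by transferring the Dao--Smirnov counterexample from $\hat R$ down to $R$.

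For the necessity, assume $\dim N(\hat R) = d$ (the only possibility if $\dim N(\hat R) \not< d$, since $\dim N(\hat R) \le \dim \hat R = d$). Applying Theorem \ref{Theorem3} to the local ring $\hat R$, which has the same dimension $d$ as $R$, produces a graded family $\{J_n\}$ of $m_{\hat R}$-primary ideals in $\hat R$ such that $\lim_{n\to\infty}\ell_{\hat R}(\hat R/J_n)/n^d$ does not exist. I would then define
$$
I_0 = R, \qquad I_n = J_n \cap R \text{ for } n \ge 1,
$$
and verify three properties: (a) each $I_n$ is $m_R$-primary; (b) $\{I_n\}$ is a graded family; (c) $\ell_R(R/I_n) = \ell_{\hat R}(\hat R/J_n)$ for every $n$.

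Property (a) follows because $m_{\hat R}^c \subseteq J_n$ for some $c$, hence $m_R^c = m_{\hat R}^c \cap R \subseteq I_n \subseteq m_{\hat R}\cap R = m_R$. Property (b) is the routine chain $I_m I_n \subseteq (J_m \cap R)(J_n \cap R) \subseteq J_m J_n \cap R \subseteq J_{m+n}\cap R = I_{m+n}$. The step that really carries the argument is (c), which I would prove by observing that $\hat R/J_n$ has finite length, hence is discrete in the $m_{\hat R}$-adic topology, so the composition $R \hookrightarrow \hat R \twoheadrightarrow \hat R/J_n$ is surjective by density of $R$ in $\hat R$. Its kernel is $J_n \cap R = I_n$, giving a ring isomorphism $R/I_n \cong \hat R/J_n$; since $R$ and $\hat R$ have the same residue field, lengths of finite-length modules are preserved, yielding the asserted equality.

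Combining (c) with the choice of $\{J_n\}$, the sequence $\ell_R(R/I_n)/n^d$ inherits the nonconvergence of $\ell_{\hat R}(\hat R/J_n)/n^d$, which contradicts the hypothesis that all such limits exist, proving the contrapositive.

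The main (small) obstacle is the surjectivity used in (c); everything else is formal. One might worry that extending $J_n$ to $R$ could fail in the sense that $I_n \hat R \ne J_n$, but this is irrelevant because the lengths over $R$ and $\hat R$ are already matched directly through the isomorphism $R/I_n \cong \hat R/J_n$, which is all that is needed to transfer the counterexample.
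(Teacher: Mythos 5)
Your argument for the necessity direction matches the paper's when $d>0$: both apply Theorem~\ref{Theorem3} to $\hat R$ and lift the resulting family $\{J_n\}$ back to $R$ via $I_n = J_n \cap R$, and your verification of properties (a)--(c) is a correct and useful elaboration of the phrase ``naturally lifts'' in the paper. The key observation that $R\to\hat R/J_n$ is surjective because $\hat R/J_n$ is discrete and $R$ is dense is exactly the point, and the identity $\ell_R(R/I_n)=\ell_{\hat R}(\hat R/J_n)$ follows as you say.

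However, there is a genuine gap: Theorem~\ref{Theorem3} carries the hypothesis $d>0$, and you invoke it without restriction. Your argument therefore does not cover the case $d=0$ with $N(\hat R)\ne 0$, which is a legitimate instance of the necessity implication (it occurs precisely when $R$ is a nonreduced Artinian local ring, where $\hat R = R$). The paper treats this case by a separate short construction: take $t$ maximal with $m_R^t\ne 0$, and set $I_n = m_R^{t+\tau(n)}$ where $\tau(n)\in\{0,1\}$ oscillates (as in equation~(\ref{eqtau})); then $\ell_R(R/I_n)$ alternates between two distinct values and cannot converge. Since $t\ge 1$, one has $I_mI_n\subset m_R^{2t}\subset m_R^{t+1}\subset I_{m+n}$, so $\{I_n\}$ is indeed a graded family. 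You need to add this case to make the proof complete; as written, the proof silently assumes $d>0$.
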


\begin{proof} Sufficiency  follows from Theorem \ref{Theorem2}. Necessity  follows from  Theorem \ref{Theorem3} if $d>0$, since a family of 
$m_{\hat R}$-primary ideals in $\hat R$ naturally lifts to a graded family of $m_R$-primary ideals in $R$.

In the case when $d=0$ and $N(\hat R)\ne 0$, $R$ is an Artin local ring. Thus there exists some number $0<t$ such that $m_R^t\ne 0$ but $m_R^{t+1}=0$. With the notation before (\ref{eqsigma}), let
\begin{equation}\label{eqtau}
\tau(n)=\left\{\begin{array}{ll} 0&\mbox{ if $i_j\le n\le i_{j+1}$ and $j$ is even}\\
1&\mbox{ if $i_j\le n\le i_{j+1}$ and $j$ is odd.}\end{array}\right.
\end{equation}
Define a graded family of $m_R$-primary ideals $\{I_n\}$ in $R$ by $I_n=m_R^{t+\tau(n)}$. Then $\lim_{n\rightarrow \infty}\ell_R(R/I_n)$ does not exist.

\end{proof}

\begin{Corollary}\label{Cornr20}
Suppose that $R$ is an excellent local ring of dimension $d$, and  $N(R)$ is the nilradical of  $R$.  Then   the limit 
$$
\lim_{i\rightarrow\infty}\frac{\ell_R(R/I_n)}{n^d}
$$
exists for any graded family $\{I_i\}$ of $m_R$-primary ideals, if and only if $\dim N(R)<d$.
\end{Corollary}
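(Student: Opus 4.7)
The plan is to deduce the corollary directly from Theorem \ref{Theorem4} by showing that the hypothesis of excellence allows us to replace the condition $\dim N(\hat R)<d$ by $\dim N(R)<d$. More precisely, I will establish the identity $N(\hat R)=N(R)\hat R$ and then conclude that $\dim N(\hat R)=\dim N(R)$, after which Theorem \ref{Theorem4} gives the equivalence immediately.

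For the identity $N(\hat R)=N(R)\hat R$, the containment $N(R)\hat R\subset N(\hat R)$ is the easy direction: since $R$ is Noetherian, $N(R)$ is a finitely generated ideal all of whose generators are nilpotent, hence $N(R)$ is nilpotent, and therefore so is $N(R)\hat R$. For the reverse containment, I would use excellence as follows. Quotients of excellent local rings are excellent, so $R/N(R)$ is an excellent reduced local ring. Because $R$ is excellent, the canonical map $R\to\hat R$ is a regular homomorphism; tensoring up, the induced map $R/N(R)\to\widehat{R/N(R)}=\hat R/N(R)\hat R$ is also regular, and in particular has geometrically reduced fibers. Since $R/N(R)$ is reduced, this forces $\hat R/N(R)\hat R$ to be reduced as well. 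Hence every nilpotent element of $\hat R$ lies in $N(R)\hat R$, proving the reverse inclusion.

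Next, I would check that $\dim N(\hat R)=\dim N(R)$. Because $N(R)$ is finitely generated, the annihilator behaves well under flat base change: $\operatorname{ann}_{\hat R}(N(R)\hat R)=\operatorname{ann}_R(N(R))\hat R$. Combining this with $N(\hat R)=N(R)\hat R$ and the fact that completion preserves the dimension of quotients of $R$, we obtain
$$
\dim N(\hat R)=\dim \hat R/\operatorname{ann}_R(N(R))\hat R=\dim R/\operatorname{ann}_R(N(R))=\dim N(R).
$$
Now applying Theorem \ref{Theorem4} gives the corollary: the asymptotic length limit exists for every graded family of $m_R$-primary ideals iff $\dim N(\hat R)<d$, iff $\dim N(R)<d$.

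I expect no serious obstacle. The only step requiring care is the appeal to the regularity of $R\to\hat R$ and its preservation of reducedness, which is a standard consequence of the definition of excellence (specifically, of the G-ring condition), so this essentially amounts to citing well-known properties of excellent rings and the behavior of finitely generated annihilators under the faithfully flat map $R\to\hat R$.
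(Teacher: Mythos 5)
Your proposal is correct and follows essentially the same route as the paper's proof. Both establish $N(\hat R)=N(R)\hat R$ by showing that $\widehat{R/N(R)}\cong\hat R/N(R)\hat R$ is reduced, via excellence (you unpack the G-ring condition and the permanence of regularity under base change; the paper cites EGA IV Scholie 7.8.3 directly), then deduce $\dim N(\hat R)=\dim N(R)$, and finish by invoking Theorem \ref{Theorem4}. The only real difference is in the dimension step: the paper observes that $\operatorname{gr}_{m_{\hat R}}(N(\hat R))=\operatorname{gr}_{m_R}(N(R))$ and reads off the dimension from the associated graded module, while you compute $\operatorname{ann}_{\hat R}(N(R)\hat R)=\operatorname{ann}_R(N(R))\hat R$ by flat base change and use that completion preserves the dimension of quotients. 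Both are standard and correct; your version spells out the annihilator identity explicitly, which is a harmless and arguably clearer variant.
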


\begin{proof} Let $N(\hat R)$ be the nilradical of $\hat R$. $\widehat{(R/N(R)}\cong \hat R/N(R)\hat R$ is reduced since $R/N(R)$ is (by Scholie IV.7.8.3 \cite{EGAIV}). Since $N(R)\hat R\subset N(\hat R)$, we have that $N(\hat R)=N(R)\hat R$. Thus $\mbox{gr}_{m_{\hat R}}(N(\hat R)) = \mbox{gr}_{m_R}(N(R)$,
so $\dim N(\hat R)=\dim N(R)$. Now the corollary follows from Theorem \ref{Theorem4}.
\end{proof}

\begin{Example} For any $d\ge 1$, there exists a local domain $R$ of dimension $d$ with a graded family of $m_R$-primary ideals $\{I_n\}$ such that the limit
$$
\lim_{n\rightarrow \infty}\frac{\ell_R(R/I_n)}{n^d}
$$
does not exist.
\end{Example}
\begin{proof}
The example of (E3.2) in \cite{N} is of a local domain $R$ such that the nilradical of $\hat R$ has dimension $d$. The example then follows from
Theorem \ref{Theorem3}  by lifting an appropriate graded family of $m_{\hat R}$-primary ideals to $R$.
\end{proof}

In Section 4 of \cite{CDK}, a series of examples of graded families of $m_R$-primary valuation ideals in a regular local ring $R$ of dimension two are given which have asymptotic growth of the rate $n^{\alpha}$, where $\alpha$ can be any rational number $1\le \alpha\le 2$. An example, also in a regular local ring of dimension two,  with growth  rate $n\mbox{log}_{10}(n)$ is given. Thus we generally do not have  a polynomial rate of growth.

\section{Applications to asymptotic multiplicities}\label{SecApp}

In this section, we apply Theorem \ref{Theorem1} and its method of proof, to generalize some of the applications in \cite{C1} to analytically  unramified local rings.

\begin{Theorem}\label{Theorem14} Suppose that $R$ is an analytically unramified  local ring of dimension $d>0$. 
Suppose that $\{I_i\}$ and $\{J_i\}$ are graded families of ideals in $R$. Further suppose that $I_i\subset J_i$ for all $i$ and there exists $c\in\ZZ_+$ such that
\begin{equation}\label{eq60}
m_R^{ci}\cap I_i= m_R^{ci}\cap J_i
\end{equation}
 for all $i$. 
 Assume that if $P$ is a minimal prime of $R$ then $I_1\subset P$ implies $I_i\subset P$ for all $i\ge 1$.
 Then the limit
$$
\lim_{i\rightarrow \infty} \frac{\ell_R(J_i/I_i)}{i^{d}}
$$
exists.
\end{Theorem}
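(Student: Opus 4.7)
The plan is to reduce the problem to Theorem \ref{Theorem2} by replacing each (possibly non-$m_R$-primary) ideal $I_i$ and $J_i$ with an $m_R$-primary surrogate that preserves the length $\ell_R(J_i/I_i)$. Since $R$ is analytically unramified, $\hat R$ is reduced, so $N(\hat R)=0$ and $\dim N(\hat R)=-1<d$; hence Theorem \ref{Theorem2} is available for any graded family of $m_R$-primary ideals in $R$.

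The key observation is that the hypothesis $m_R^{ci}\cap I_i=m_R^{ci}\cap J_i$ yields the natural isomorphism
$$
J_i/I_i \;\cong\; (J_i+m_R^{ci})/(I_i+m_R^{ci}).
$$
Indeed, composing the inclusion $J_i\hookrightarrow J_i+m_R^{ci}$ with reduction modulo $I_i+m_R^{ci}$ is surjective, and an element $x\in J_i$ lies in the kernel iff $x=y+z$ with $y\in I_i$ and $z\in m_R^{ci}$; then $z=x-y\in J_i\cap m_R^{ci}=I_i\cap m_R^{ci}\subset I_i$, which forces $x\in I_i$. Consequently
$$
\ell_R(J_i/I_i)\;=\;\ell_R\!\bigl(R/(I_i+m_R^{ci})\bigr)-\ell_R\!\bigl(R/(J_i+m_R^{ci})\bigr).
$$
The next step is to check that $\{I_i+m_R^{ci}\}_{i\ge 0}$ and $\{J_i+m_R^{ci}\}_{i\ge 0}$ are graded families of $m_R$-primary ideals. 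Each term contains $m_R^{ci}$ and is therefore $m_R$-primary, and for either family $A=I$ or $A=J$ one has
$$
(A_m+m_R^{cm})(A_n+m_R^{cn})\;\subset\; A_mA_n+m_R^{c(m+n)}\;\subset\; A_{m+n}+m_R^{c(m+n)},
$$
so the graded property holds. Applying Theorem \ref{Theorem2} to each family produces the existence of both of the limits
$$
\lim_{i\to\infty}\frac{\ell_R(R/(I_i+m_R^{ci}))}{i^d},\qquad \lim_{i\to\infty}\frac{\ell_R(R/(J_i+m_R^{ci}))}{i^d},
$$
whose difference is the desired limit.

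The one point I expect to require care is the verification of the isomorphism displayed above, which is the whole content of the relative reduction and is where hypothesis (\ref{eq60}) enters; the rest is bookkeeping. Notably, this streamlined route does not seem to require the minimal-primes hypothesis ($I_1\subset P\Rightarrow I_i\subset P$). That assumption looks natural if one instead proves the theorem by a direct adaptation of the valuation/semigroup method behind Theorem \ref{Theorem1}, passing to $\hat R$, decomposing via Lemma \ref{Lemma5}, and comparing the semigroups attached to $I_i$ and $J_i$ component by component; the hypothesis then ensures that $I_i$ and $J_i$ have matching support on each minimal component so that the semigroup contributions can be paired. In any case, the final step is the subtraction of the two limits produced by Theorem \ref{Theorem2}.
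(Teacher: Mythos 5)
There is a genuine gap at the step where you assert that $\{I_i+m_R^{ci}\}$ and $\{J_i+m_R^{ci}\}$ are graded families. The displayed inclusion
$$
(A_m+m_R^{cm})(A_n+m_R^{cn})\subset A_mA_n+m_R^{c(m+n)}
$$
is false in general: expanding the left side gives $A_mA_n+A_m m_R^{cn}+m_R^{cm}A_n+m_R^{c(m+n)}$, and the cross terms $A_m m_R^{cn}$ need not lie in $A_mA_n+m_R^{c(m+n)}$. A graded family of ideals only satisfies $A_mA_n\subset A_{m+n}$; nothing forces $A_m\subset m_R^{cm}$, which is what would rescue the cross terms. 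Concretely, in $R=k[[x,y]]$ with $c=1$, take $A_1=(x)$, $A_2=(x^2,y)$, $A_3=(x^3,xy)$, a graded family. Then $A_2+m_R^2=(x^2,y)$ and $A_1+m_R=m_R$, so $y^2\in (A_2+m_R^2)(A_1+m_R)$, while $A_3+m_R^3=(x^3,xy,y^3)$ does not contain $y^2$. Hence $\{A_i+m_R^{i}\}$ is not a graded family and Theorem \ref{Theorem2} does not apply to it.

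Your isomorphism $J_i/I_i\cong (J_i+m_R^{ci})/(I_i+m_R^{ci})$ and the resulting length identity are correct, and in fact this is essentially the same first reduction the paper makes, just with $K_{\alpha ci}\cap(-)$ replaced by $(-)+m_R^{ci}$ (the paper writes $\ell_R(J_n/I_n)=\ell_R(J_n/K_{\beta n}\cap J_n)-\ell_R(I_n/K_{\beta n}\cap I_n)$). But the paper then runs the Okounkov-body semigroup argument of Theorem \ref{Theorem1} directly on the quotients $J_n/(K_{\beta n}\cap J_n)$, with semigroups $\Gamma(J_*)^{(t)}$ and $\Gamma(I_*)^{(t)}$ built from valuation ideals, rather than trying to repackage the data as a graded family of $m_R$-primary ideals and invoke Theorem \ref{Theorem2}. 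The minimal-primes hypothesis (that $I_1\subset P\Rightarrow I_i\subset P$) enters precisely there: when the argument is run component by component on $R/P_j$, one needs the component families to be identically zero or eventually nonzero in a uniform way so that the semigroups $\Gamma(\cdot)^{(t)}$ satisfy the non-degeneracy conditions (\ref{Cone2}) and (\ref{Cone3}). So the fact that your proposed route appears to avoid that hypothesis is itself a warning sign, and indeed the route breaks at the graded-family step.
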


Theorem \ref{Theorem14}  is proven for local rings $R$ which are regular, or normal excellent and equicharacteristic in \cite{C1}.

\begin{Remark} A reduced analytic local ring $R$ satisfies the hypotheses  of Theorem \ref{Theorem14}. 
In fact, an analytic local ring is excellent by Theorem 102 on page 291 \cite{Ma} and a reduced, excellent local ring is unramified by
(x) of Scholie 7.8.3 \cite{EGAIV}.
\end{Remark}

\begin{proof}  We may assume that $R$ is complete, by replacing $R$, $I_i$, $J_i$ by $\hat R$, $I_i\hat R$, $J_i\hat R$. 

First suppose that $R$ is analytically irreducible. Then either $I_i=J_i=0$ for all $i\ge 1$ or $I_1\ne 0$ (and $J_1\ne 0$). We may thus assume that $I_1\ne 0$. We will prove the theorem in this case. We will apply the method of Theorem \ref{Theorem1}.
 Construct the regular local ring $S$ by the argument of the proof of Theorem \ref{Theorem1}.

Let $\nu$ be the valuation of $Q(R)$ constructed from $S$ in the proof of Theorem \ref{Theorem1}, with associated valuation ideals $K_{\lambda}$ in the valuation ring $V_{\nu}$ of $\nu$. Let $k=R/m_R$ and $k'=S/m_S=V_{\nu}/m_{\nu}$.

By (\ref{eqred50}), there exists
$\alpha\in \ZZ_+$ such that 
$$
K_{\alpha n}\cap R\subset m_R^n
$$
for all $n\in \ZZ_+$. 
We have that 
$$
K_{\alpha cn}\cap I_n=K_{\alpha cn}\cap J_n
$$
for all $n$. Thus
\begin{equation}\label{eq31}
\ell_R(J_n/I_n)=\ell_R(J_n/K_{\alpha cn}\cap J_n)-\ell_R(I_n/K_{\alpha cn}\cap I_n)
\end{equation}
for all $n$. Let $\beta=\alpha c$. For $t\ge 1$, let
$$
\Gamma(J_*)^{(t)}=\{\begin{array}{l}
(n_1,\ldots,n_d,i)\mid \dim_k J_i\cap K_{n_1\lambda_1+\cdots+n_d\lambda_d}/J_i\cap K_{n_1\lambda_1+\cdots+n_d\lambda_d}^+\ge t\\
\mbox{ and }n_1+\cdots+n_d\le \beta i, 
\end{array}\}
$$
and
$$
\Gamma(I_*)^{(t)}=\{\begin{array}{l}
(n_1,\ldots,n_d,i)\mid \dim_k I_i\cap K_{n_1\lambda_1+\cdots+ n_d\lambda_d}/I_i\cap K_{n_1\lambda_1+\cdots+n_d\lambda_d}^+\ge t\\
\mbox{ and }n_1+\cdots+n_d\le \beta i
\end{array}\}.
$$
We have that 
\begin{equation}\label{eq32}
\ell_R(J_n/I_n)=(\sum_{t=1}^{[k':k]} \# \Gamma(J_*)^{(t)}_n)-(\sum_{t=1}^{[k':k]} \# \Gamma(I_*)^{(t)}_n)
\end{equation}
as explained in the proof of Theorem \ref{Theorem1}. As in the proof of Lemma \ref{Lemmared3} (this is were we need $I_1\ne 0$ and thus $J_1\ne 0$), we have that $\Gamma(J_*)^{(t)}$ and $\Gamma(I_*)^{(t)}$
  satisfy the conditions
(\ref{Cone2}) and (\ref{Cone3})of Theorem \ref{ConeTheorem1}. Thus
$$
\lim_{n\rightarrow \infty} \frac{\# \Gamma(J_*)^{(t)}_n}{n^d}={\rm vol}(\Delta(\Gamma(J_*))\mbox{ and }\lim_{n\rightarrow \infty} \frac{\# \Gamma(I_*)^{(t)}_n}{n^d}
={\rm vol}(\Delta(\Gamma(I_*)^{(t)})
$$
by Theorem \ref{ConeTheorem1}. 
The theorem, in the case when $R$ is analytically irreducible, now follows from (\ref{eq31}).

Now suppose that $R$ is only analytically unramified. We may continue to assume that $R$ is  complete. 
Let $P_1,\ldots,P_s$ be the minimal primes of $R$. Let $R_i=R/P_i$ for $1\le i\le s$. Let $T=\bigoplus_{i=1}^s R_i$ and $\phi:R\rightarrow T$ be the natural inclusion. By Artin-Rees, there exists a positive integer $\lambda$ such that
$$
\omega_n:=\phi^{-1}(m_R^nT)=R\cap m_R^nT\subset m_R^{n-\lambda}
$$
for all $n\ge \lambda$. 
Thus
$$
m_R^n\subset \omega_n\subset m_R^{n-\lambda}
$$
for all $n$.
We have that
$$
\omega_n=\phi^{-1}(m_R^nT)=\phi^{-1}(m_R^nR_1\bigoplus\cdots\bigoplus m_R^nR_s)
=(m_R^n+P_1)\cap \cdots \cap(m_R^n+P_s).
$$
Let $\beta=(\lambda+1)c$.
Now $\omega_{\beta n}\subset m_R^{c(\lambda+1)n-\lambda}\subset m_R^{cn}$ for all $n\ge 1$, so that 
$$
\omega_{\beta n}\cap I_n=\omega_{\beta n}\cap(m_R^{cn}\cap I_n)=\omega_{\beta n}\cap(m_R^{cn}\cap J_n)=\omega_{\beta n}\cap J_n
$$
for all $n\ge 1$, so 
\begin{equation}\label{eqNew1}
\ell_R(J_n/I_n)=\ell_R(J_n/\omega_{\beta n}\cap J_n)-\ell_R(I_n/\omega_{\beta n}\cap I_n)
\end{equation}
for all $n\ge 1$.

Define $L_0^j=R$ for $0\le j\le s$, and for $n>0$, define $L_n^0=J_n$ and
$$
L_n^j=(m_R^{\beta n}+P_1)\cap\cdots\cap (m_R^{\beta n}+P_j)\cap J_n
$$
for $1\le j\le s$. For fixed $j$, with $0\le j\le s$, $\{L_n^j\}$ is a graded family of ideals in $R$. For $n\ge 1$, we have isomorphisms
$$
L_n^j/L_n^{j+1}= L_n^j/(m_R^{\beta n}+P_{j+1})\cap L_n^j\cong L_n^jR_{j+1}/(L_n^jR_{j+1})\cap m_{R_{j+1}}^{\beta n}
$$
for $0\le j\le s-1$, and 
$$
L_n^s=\omega_{\beta n}\cap J_n.
$$
Thus
\begin{equation}\label{eqNew2}
\ell_R(J_n/\omega_{\beta n}\cap J_n)=\sum_{j=0}^{s-1}\ell_R(L_n^j/L_n^{j+1})
=\sum_{j=0}^{s-1}\ell_{R_{j+1}}\left(L_n^jR_{j+1}/(L_n^jR_{j+1})\cap m_{R_{j+1}}^{\beta n}\right).
\end{equation}

For some fixed $j$ with $0\le j\le s-1$, let $\overline R=R_{j+1}$, $\overline J_n=L_n^j\overline R$  and $\overline I_n =\overline J_n\cap m_{\overline R}^{\beta n}$.
$\{\overline I_n\}$ and $\{\overline J_n\}$ are graded families of ideals in  $\overline R$
and $m_{\overline R}^{\beta n}\cap\overline I_n=m_{\overline R}^{\beta n}\cap\overline J_n$ for all $n$. 

We have that $\overline I_1=0$ implies $\overline I_i=0$ for all $i\ge 1$ and $\overline J_1=0$ implies $\overline J_i=0$ for all $i\ge 1$ by our initial assumptions.
Since $\dim \overline R\le \dim R=d$ and $\overline R$ is  analytically irreducible, by the first part of the proof we have that
$$
\lim_{n\rightarrow \infty}\frac{\ell_{\overline R}(\overline J_n/\overline I_n)}{n^d}
$$
exists, and from (\ref{eqNew2}), we have that
$$
\lim_{n\rightarrow \infty}\frac{\ell_R(J_n/\omega_{\beta n}\cap J_n)}{n^d}
$$
exists. The same argument applied to the graded family of ideals $\{I_n\}$ in $R$ implies that 
$$
\lim_{n\rightarrow \infty}\frac{\ell_R(I_n/\omega_{\beta n}\cap I_n)}{n^d}
$$
exists. Finally, (\ref{eqNew1}) implies that the limit
$$
\lim_{n\rightarrow \infty}\frac{\ell_R(J_n/I_n)}{n^d}
$$
exists.

\end{proof}

If $R$ is a local ring and $I$ is an ideal in $R$ then the saturation of $I$ is 
$$
I^{\rm sat}=I:m_R^{\infty}=\cup_{k=1}^{\infty}I:m_R^k.
$$

\begin{Corollary}\label{Corollary5} Suppose that $R$ is an analytically unramified local ring of dimension $d>0$ and  $I$ is an ideal in $R$. Then the limit
$$
\lim_{i\rightarrow \infty} \frac{\ell_R((I^i)^{\rm sat}/I^i)}{i^{d}}
$$
exists.

\end{Corollary}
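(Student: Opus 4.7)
The plan is to apply Theorem \ref{Theorem14} to the pair of graded families $I_i = I^i$ and $J_i = (I^i)^{\rm sat}$. The family $\{I^i\}$ is trivially graded. To see that $\{(I^i)^{\rm sat}\}$ is graded, given $f \in (I^m)^{\rm sat}$ and $g \in (I^n)^{\rm sat}$, I would choose $k$ with $m_R^k f \subset I^m$ and $m_R^k g \subset I^n$; then $m_R^{2k}(fg) \subset I^{m+n}$, so $fg \in (I^{m+n})^{\rm sat}$. The inclusion $I^i \subset (I^i)^{\rm sat}$ is clear, and the minimal-prime hypothesis of Theorem \ref{Theorem14} is automatic since $I \subset P$ forces $I^i \subset P$ for all $i$.

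The main step is to verify (\ref{eq60}): I need a constant $c$ with $m_R^{ci} \cap (I^i)^{\rm sat} \subset I^i$ for every $i$ (the reverse inclusion is free). I would appeal to Swanson's theorem on the linear growth of primary decompositions, which in any Noetherian ring supplies a constant $c$ such that for every $i$ there is a primary decomposition $I^i = \bigcap_j Q_j(i)$ with each $Q_j(i)$ being $\mathfrak{p}_j$-primary and $\mathfrak{p}_j^{ci} \subset Q_j(i)$. In particular, when $m_R \in \mbox{Ass}(R/I^i)$, the $m_R$-primary component $Q(i)$ contains $m_R^{ci}$. Since $(I^i)^{\rm sat}$ is the intersection of the remaining primary components and $I^i = Q(i) \cap (I^i)^{\rm sat}$, any $f \in m_R^{ci} \cap (I^i)^{\rm sat}$ lies in $Q(i) \cap (I^i)^{\rm sat} = I^i$. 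When $m_R \notin \mbox{Ass}(R/I^i)$ we have $(I^i)^{\rm sat} = I^i$ and nothing is to check.

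With (\ref{eq60}) in hand, Theorem \ref{Theorem14} immediately yields existence of $\lim_{i \to \infty} \ell_R((I^i)^{\rm sat}/I^i)/i^d$. The principal obstacle is the uniform bound on the $m_R$-primary component in the previous paragraph; once Swanson's theorem is invoked, everything else is a direct bookkeeping reduction to Theorem \ref{Theorem14}. An alternative route would be to first pass to the reduced completion $\hat R$ (legitimate because $(I^i)^{\rm sat}/I^i$ has finite length, so lengths are preserved) and exploit the analytically unramified hypothesis more directly, but the quoted general bound already suffices.
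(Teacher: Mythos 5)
Your proof is correct and follows essentially the same route as the paper: both reduce to Theorem~\ref{Theorem14} with $I_i = I^i$, $J_i = (I^i)^{\rm sat}$ and both obtain the crucial linear bound $m_R^{ci}\cap (I^i)^{\rm sat} \subset I^i$ by invoking Swanson's theorem on linear growth of primary decompositions (Theorem 3.4 of \cite{S}). You are a bit more explicit than the paper in checking the remaining hypotheses of Theorem~\ref{Theorem14} (that $\{(I^i)^{\rm sat}\}$ is a graded family and that the minimal-prime condition holds), which is a worthwhile addition but not a different argument.
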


Since $(I^n)^{\rm sat}/I^n\cong H^0_{m_R}(R/I^n)$, the epsilon multiplicity of Ulrich and Validashti \cite{UV}
$$
\epsilon(I)=\limsup \frac{\ell_R(H^0_{m_R}(R/I^n))}{n^d/d!}
$$
exists as a limit, under the assumptions of Corollary \ref{Corollary5}.

Corollary \ref{Corollary5} is proven for more general families of modules when $R$ is a local domain which is essentially of finite type over a perfect field $k$ such that $R/m_R$ is algebraic over $k$ in \cite{C}. The corollary is proven with more restrictions on $R$ in Corollary 6.3 \cite{C1}. The limit in corollary \ref{Corollary5} can be irrational, as shown in \cite{CHST}.

\begin{proof} By Theorem 3.4 \cite{S}, there exists $c\in\ZZ_+$ such that each power $I^n$ of $I$ has an irredundant primary decomposition 
$$
I^n=q_1(n)\cap\cdots\cap q_s(n)
$$
where $q_1(n)$ is $m_R$-primary and $m_R^{nc}\subset q_1(n)$ for all $n$. Since $(I^n)^{\rm sat}=q_2(n)\cap \cdots\cap q_s(n)$,
we have that 
$$
I^n\cap m_R^{nc}=m_R^{nc}\cap q_2(n)\cap \cdots \cap q_s(n)=m_R^{nc}\cap (I^n)^{\rm sat}
$$
for all $n\in \ZZ_+$. Thus the corollary follows from Theorem \ref{Theorem14}, taking $I_i=I^i$ and $J_i=(I^i)^{\rm sat}$.

\end{proof}

A stronger version of the previous corollary is true. 
The following corollary proves a formula proposed by Herzog, Puthenpurakal and Verma in the introduction to \cite{HPV}.
The formula is proven with more restrictions on $R$ in Corollary 6.4 \cite{C1}.

Suppose that $R$ is a ring, and $I,J$ are ideals in $R$. Then the $n^{\rm th}$ symbolic power of $I$ with respect to $J$ is
$$
I_n(J)=I^n:J^{\infty}=\cup_{i=1}^{\infty}I^n:J^i.
$$

\begin{Corollary}\label{Cor5} Suppose that $R$ is an analytically unramified local ring of dimension $d$. 
 Suppose that $I$ and $J$ are ideals in $R$.  
 Let $s$ be the constant  limit dimension of $I_n(J)/I^n$ for $n\gg 0$. Suppose that $s<d$. Then
$$
\lim_{n\rightarrow \infty} \frac{e_{m_R}(I_n(J)/I^n)}{n^{d-s}}
$$
exists.
\end{Corollary}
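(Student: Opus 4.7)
The plan is to reduce the statement to multiple applications of Corollary~\ref{Corollary5} in localizations of $R$, by means of the associativity formula for Hilbert--Samuel multiplicities. Set $M_n := I_n(J)/I^n$; by hypothesis $\dim M_n = s$ for $n \gg 0$. First, since colons with finitely generated ideals commute with the flat base change $R \to \hat R$ and since Hilbert--Samuel multiplicities of finitely generated modules are preserved under $m_R$-adic completion, one has $e_{m_R}(M_n) = e_{m_{\hat R}}(M_n \otimes_R \hat R)$, so I may replace $R$ by $\hat R$. Then $R$ is complete and reduced (by analytic unramification), hence excellent, and every localization $R_P$ is excellent and reduced, hence analytically unramified.

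By Brodmann's theorem, $\mathcal{A} := \mathrm{Ass}(R/I^n)$ is independent of $n$ for $n \gg 0$. Let $\mathcal{A}_J := \{Q \in \mathcal{A} : J \subset Q\}$ and let $\mathcal{P}$ be the set of primes $P \in \mathcal{A}_J$ that are minimal in $\mathcal{A}_J$ and satisfy $\dim R/P = s$. For $n \gg 0$ these are precisely the minimal primes of $M_n$ of dimension $s$. The associativity formula for Hilbert--Samuel multiplicities (e.g.\ Matsumura, Theorem~14.7) then yields
$$
e_{m_R}(M_n) \;=\; \sum_{P \in \mathcal{P}} e(m_R, R/P) \cdot \ell_{R_P}\bigl((M_n)_P\bigr).
$$

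Fix $P \in \mathcal{P}$ and set $d_P := \dim R_P \le d - s$. The minimality of $P$ in $\mathcal{A}_J$ forces every associated prime of $R_P/(IR_P)^n$ containing $JR_P$ to equal the maximal ideal $m_{R_P}$, so
$$
I_n(J)R_P \;=\; (IR_P)^n : (JR_P)^\infty \;=\; (IR_P)^n : m_{R_P}^\infty \;=\; \bigl((IR_P)^n\bigr)^{\mathrm{sat}}.
$$
When $d_P \ge 1$, Corollary~\ref{Corollary5} applied in the analytically unramified local ring $R_P$ shows that $L_P := \lim_n \ell_{R_P}((M_n)_P)/n^{d_P}$ exists, and in particular $\ell_{R_P}((M_n)_P) = O(n^{d_P})$; the case $d_P = 0$ is trivial since then $(M_n)_P$ is bounded by $\ell_{R_P}(R_P)$. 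Consequently, if $d_P < d - s$ the corresponding summand is $O(n^{d_P - (d - s)}) \to 0$, and if $d_P = d - s$ it tends to $e(m_R, R/P) \cdot L_P$. Summing over $\mathcal{P}$ yields the existence of $\lim_n e_{m_R}(M_n)/n^{d - s}$.

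The main obstacle is the identification $I_n(J)R_P = ((IR_P)^n)^{\mathrm{sat}}$ at each $P \in \mathcal{P}$: it requires combining Brodmann's stabilization theorem with the minimality condition defining $\mathcal{P}$ and the compatibility of colons with localization. Once this identification is in place, the remainder is a routine combination of the associativity formula with Corollary~\ref{Corollary5}.
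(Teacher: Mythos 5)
Your proof is correct and follows essentially the same route as the paper's: Brodmann stabilization of $\mathrm{Ass}(R/I^n)$, the associativity formula for $e_{m_R}(M_n)$ over the finitely many minimal primes of $\mathrm{Supp}(M_n)$ of dimension $s$, the identification $I_n(J)R_P = ((IR_P)^n)^{\mathrm{sat}}$ at those primes, and then Corollary~\ref{Corollary5} in each $R_P$. The only cosmetic differences are that you pass to $\hat R$ up front (whereas the paper cites Rees / Swanson--Huneke directly for analytic unramifiedness of localizations) and that you spell out the degenerate case $\dim R_P = 0$; the substance is identical.
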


\begin{proof} There exists a positive integer $n_0$ such that the  set of associated primes of $R/I^n$ stabilizes for
$n\ge n_0$ by  \cite{Br}. Let $\{p_1,\ldots, p_t\}$ be this set of associated primes.  We thus  have irredundant primary decompositions
for $n\ge n_0$,
\begin{equation}\label{eq**}
I^n=q_1(n)\cap \cdots \cap q_t(n),
\end{equation}
where $q_i(n)$ are $p_i$-primary.

We further have that 
\begin{equation}\label{eq*}
I^n:J^{\infty}=\cap_{J\not\subset p_i}q_i(n).
\end{equation}
Thus $\dim I_n(J)/I^n$ is constant for $n\ge n_0$. Let $s$ be this limit dimension. The set 
$$
A=\{p\in \cup_{n\ge n_0}{\rm Ass}(I_n(J)/I^n)\mid n\ge n_0\mbox{ and }\dim R/p=s\}
$$
is a finite set. Moreover, every such prime is in ${\rm Ass}(I_n(J)/I^n$ for all $n\ge n_0$. For $n\ge n_0$, we have 
by the additivity formula (V-2 \cite{Se} or Corollary 4.6.8, page 189 \cite{BH}), that
$$
e_{m_R}(I_n(J)/I^n)=\sum_{p}\ell_{R_p}((I_n(J)/I^n)_p)e(m_{R/p})
$$
where the sum is over the finite set of primes $p\in \mbox{Spec}(R)$ such that $\dim R/p=s$. This sum is thus over the finite set $A$.

Suppose that $p\in A$ and $n\ge n_0$. Then 
$$
I^n_p=\cap q_i(n)_p
$$
where the intersection is over the $q_i(n)$ such that $p_i\subset p$, and
$$
I_n(J)=\cap q_i(n)_p
$$
where the intersection is over the $q_i(n)$ such that $J\not\subset p_i$ and $p_i\subset p$. Thus  there exists an index $i_0$ such that $p_{i_0}=p$ and 
$$
I^n_p=q_{i_0}(n)_p\cap I_n(J)_p.
$$
By (\ref{eq**}), 
$$
(I^n_p)^{\rm sat}=I_n(J)_p
$$
for $n\ge n_0$. Since $R_p$ is analytically unramified (by \cite{R3} or Proposition 9.1.4 \cite{SH}) and $\dim R_p\le d-s$, by Corollary \ref{Corollary5}, the limit
$$
\lim_{n\rightarrow \infty} \frac{\ell_R((I_n(J)/I_n)_p)}{n^{d-s}}
$$
exists.
\end{proof}

\vskip .2truein
We now establish some Volume = Multiplicity formulas.

\begin{Theorem}\label{Theorem15} Suppose that $R$ is a $d$-dimensional analytically unramified local ring and  $\{I_i\}$ is a graded family of $m_R$-primary ideals in $R$. Then 
$$
\lim_{n\rightarrow \infty}\frac{\ell_R(R/I_n)}{n^d/d!}=\lim_{p\rightarrow \infty}\frac{e(I_p)}{p^d}
$$
exists.
Here $e(I_p)$ is the multiplicity
$$
e(I_p)=e_{I_p}(R)=\lim_{k\rightarrow \infty} \frac{\ell_R(R/I_p^k)}{k^d/d!}.
$$
\end{Theorem}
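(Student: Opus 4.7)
The existence of $V := \lim_{n\to\infty}\ell_R(R/I_n)/(n^d/d!)$ follows from Theorem~\ref{Theorem2}, since $R$ being analytically unramified means $\hat R$ is reduced and hence $\dim N(\hat R)<d$. The remaining task is to prove $\lim_{p\to\infty}e(I_p)/p^d=V$. I would first reduce to a complete local domain: replacing $R$ by $\hat R$ leaves both sides unchanged, so we may assume $R$ is complete and reduced. Letting $P_1,\ldots,P_s$ be the minimal primes and $R_j=R/P_j$, Lemma~\ref{Lemma5} gives
$$V=\sum_{j:\,\dim R_j=d}V_j,\qquad V_j:=\lim_{n\to\infty}\frac{d!\,\ell_{R_j}(R_j/I_nR_j)}{n^d},$$
while the additivity formula for Hilbert--Samuel multiplicity in the reduced ring $R$ gives $e(I_p)=\sum_{j:\,\dim R_j=d}e_{R_j}(I_pR_j)$. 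Each $R_j$ is a complete local domain, hence analytically irreducible, so it suffices to treat that case.

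Under that assumption, transport the valuation $\nu$, the semigroups $\Gamma^{(t)}$ and $\hat\Gamma^{(t)}$, and the constants $c,\alpha,\beta=\alpha c$ from the proof of Theorem~\ref{Theorem1}. For each fixed $p\ge 1$, repeat the construction for the graded family $\{I_p^k\}_k$: the inclusion $m_R^{cp}\subset I_p$ makes the cutoff $\sum n_i\le\beta pk$ legitimate at level $k$, so the ``denominator'' semigroup has $k$-th slice
$$\Gamma'(p)^{(t)}_k=\Big\{(n_1,\ldots,n_d)\,\Big|\,\dim_k\tfrac{I_p^k\cap K_{\sum n_i\lambda_i}}{I_p^k\cap K^+_{\sum n_i\lambda_i}}\ge t,\ \textstyle\sum n_i\le\beta pk\Big\},$$
while the ``numerator'' semigroup is $\hat\Gamma^{(t)}_{pk}$. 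Equation (\ref{eq12}) gives $\ell_R(R/I_p^k)=\sum_{t=1}^{[k':k]}(\#\hat\Gamma^{(t)}_{pk}-\#\Gamma'(p)^{(t)}_k)$. Since $\lim_k\#\hat\Gamma^{(t)}_{pk}/(k^d p^d)=\mathrm{vol}(\Delta(\hat\Gamma^{(t)}))$ by Theorem~\ref{ConeTheorem1}, the existence of $e(I_p)=\lim_k d!\,\ell_R(R/I_p^k)/k^d$ forces
$$\frac{e(I_p)}{d!\,p^d}=\sum_{t=1}^{[k':k]}\mathrm{vol}(\Delta(\hat\Gamma^{(t)}))-\lim_{k\to\infty}\sum_{t=1}^{[k':k]}\frac{\#\Gamma'(p)^{(t)}_k}{k^d p^d};$$
the analogous identity from Theorem~\ref{Theorem1} reads $V/d!=\sum_t(\mathrm{vol}(\Delta(\hat\Gamma^{(t)}))-\mathrm{vol}(\Delta(\Gamma^{(t)})))$.

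The crux is to sandwich $\#\Gamma'(p)^{(t)}_k/(k^d p^d)$ around $\mathrm{vol}(\Delta(\Gamma^{(t)}))$. On one hand, $I_p^k\subset I_{pk}$ induces an injection $I_p^k\cap K_\lambda/I_p^k\cap K^+_\lambda\hookrightarrow I_{pk}\cap K_\lambda/I_{pk}\cap K^+_\lambda$, so $\Gamma'(p)^{(t)}_k\subseteq\Gamma^{(t)}_{pk}$ and Theorem~\ref{ConeTheorem1} yields $\limsup_k\#\Gamma'(p)^{(t)}_k/(k^d p^d)\le\mathrm{vol}(\Delta(\Gamma^{(t)}))$. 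On the other, the proof of Lemma~\ref{Lemmared1} adapts verbatim to powers (if $f_1,\ldots,f_t\in I_p$ witness a point of $\Gamma_p^{(t)}$ and $g\in I_p$, then $gf_1,\ldots,gf_t$ remain independent modulo $I_p^2\cap K^+_{\nu(gf_1)}$), giving by induction $k*\Gamma_p^{(t)}\subseteq\Gamma'(p)^{(t)}_k$. For $\epsilon>0$, Theorem~\ref{ConeTheorem2} supplies $p_0(\epsilon)$ such that for $p\ge p_0$,
$$\liminf_k\frac{\#\Gamma'(p)^{(t)}_k}{k^d p^d}\ge\lim_k\frac{\#(k*\Gamma_p^{(t)})}{k^d p^d}\ge\mathrm{vol}(\Delta(\Gamma^{(t)}))-\epsilon.$$
Summing over the finite range $t\in\{1,\ldots,[k':k]\}$ gives $|e(I_p)/(d!\,p^d)-V/d!|\le[k':k]\,\epsilon$ for $p\ge p_0$, which is the desired convergence.

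The principal obstacle is this last sandwich: $\Gamma'(p)^{(t)}$ is built from the powers $I_p^k$, so it is not a priori comparable to $\Gamma^{(t)}$; the upper inequality is cheap from the graded-family containment $I_p^k\subset I_{pk}$, but the matching lower bound is only accessible through the quantitative approximation of a Newton--Okounkov body by the subsemigroup generated by a single slice provided by Theorem~\ref{ConeTheorem2}.
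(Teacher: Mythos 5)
Your proof is correct and follows essentially the same route as the paper: reduce to the analytically irreducible (complete domain) case via Lemma~\ref{Lemma5} and additivity of multiplicity over minimal primes, then for each fixed $p$ sandwich the semigroup built from the powers $I_p^k$ between $k*\Gamma_p^{(t)}$ and $\Gamma^{(t)}_{pk}$, with Theorem~\ref{ConeTheorem2} supplying the $\epsilon$-lower bound and Theorem~\ref{ConeTheorem1} the upper bound. Your derivation of the existence of $\lim_k\sum_t\#\Gamma'(p)^{(t)}_k/(k^dp^d)$ from the classical existence of $e(I_p)$ is a slightly cleaner bookkeeping choice than the paper's, but the argument is otherwise identical.
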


Theorem \ref{Theorem15} is proven for valuation ideals associated to an Abhyankar valuation in a regular local ring which is essentially of finite type over a field in  \cite{ELS}, for general families of $m_R$-primary ideals when $R$ is a regular local ring containing a field in \cite{Mus} and when $R$ is a local domain which is essentially of finite type over an algebraically closed field $k$ with $R/m_R=k$ in Theorem 3.8 \cite{LM}. 
It is proven when $R$ is regular or $R$ is analytically unramified with perfect residue field in Theorem 6.5 \cite{C1}.

\begin{proof} 
There exists $c\in\ZZ_+$ such that
$m_R^{c}\subset I_1$. 

We first prove the theorem when $R$ is analytically irreducible, and so satisfies the assumptions of Theorem \ref{Theorem1}. 
We may assume that $R$ is complete.
Let $\nu$ be the valuation of $Q(R)$ constructed from $S$ in the proof of Theorem \ref{Theorem1}, with associated valuation ideals $K_{\lambda}$ in the valuation ring $V_{\nu}$ of $\nu$. Let $k=R/m_R$ and $k'=S/m_S=V_{\nu}/m_{\nu}$.

Apply (\ref{eqred50}) to find
$\alpha\in \ZZ_+$ such that 
$$
K_{\alpha n}\cap R\subset m_R^n
$$
for all $n\in \NN$. 
We have that 
$$
K_{\alpha c n}\cap R \subset m_R^{cn}\subset I_n
$$
 for all $n$.

For $t\ge 1$, let
$$
\Gamma(I_*)^{(t)}=\{\begin{array}{l}
(n_1,\ldots,n_d,i)\mid \dim_k I_i\cap K_{n_1\lambda_1+\cdots+n_d\lambda_d}/I_i\cap K_{n_1\lambda_1+\cdots+n_d\lambda_d}^+\ge t\\
\mbox{ and }n_1+\cdots+n_d\le\alpha c i
\end{array} \},
$$
and
$$
\Gamma(R)^{(t)}=\{\begin{array}{l}
(n_1,\ldots,n_d,i)\mid \dim_k R\cap K_{n_1\lambda_1+\cdots+n_d\lambda_d}/R\cap K_{n_1\lambda-1+\cdots+n_d\lambda_d}^+\ge t\\
 \mbox{ and }n_1+\cdots+n_d\le\alpha c i
 \end{array}\}.
$$
As in the proofs of Lemmas \ref{Lemmared1} and \ref{Lemmared3},  $\Gamma(I_*)^{(t)}$ and $\Gamma(R)^{(t)}$ satisfy the conditions (\ref{Cone2}) and (\ref{Cone3})
of Theorem \ref{ConeTheorem1} when they are not contained in $\{0\}$.
For fixed $p\in \ZZ_+$ and $t\ge 1$, let
$$
\Gamma(I_*)(p)^{(t)}=\{\begin{array}{l}
(n_1,\ldots,n_d,kp)\mid \dim_k I_p^k\cap K_{n_1\lambda_1+\cdots+n_d\lambda_d}/I_p^k\cap K_{n_1\lambda_1+\cdots+n_d\lambda_d}^+\ge t\\
 \mbox{ and }n_1+\cdots+n_d\le\alpha c kp
 \end{array}\}.
$$
We have inclusions of semigroups
$$
k*\Gamma(I_*)_p^{(t)}\subset \Gamma(I_*)(p)^{(t)}_{kp}\subset \Gamma(I_*)^{(t)}_{kp}
$$
for all $p$, $t$ and $k$.

By Theorem \ref{ConeTheorem2}, given $\epsilon>0$, there exists $p_0$ such that $p\ge p_0$ implies
$$
{\rm vol}(\Delta(\Gamma(I_*)^{(t)})-\frac{\epsilon}{[k':k]}\le \lim_{k\rightarrow \infty}\frac{\#(k*\Gamma(I_*)^{(t)}_p)}{k^dp^d}.
$$
Thus
$$
{\rm vol}(\Delta(\Gamma(I_*)^{(t)})-\frac{\epsilon}{[k':k]}\le \lim_{k\rightarrow\infty}\frac{\#\Gamma(I_*)(p)^{(t)}_{kp}}{k^dp^d}
\le{\rm vol}(\Delta(\Gamma(I_*)^{(t)}).
$$
Again by Theorem \ref{ConeTheorem2}, we can choose $p_0$ sufficiently large that we also have that
$$
{\rm vol}(\Delta(\Gamma(R)^{(t)})-\frac{\epsilon}{[k':k]}\le\lim_{k\rightarrow \infty}\frac{\#\Gamma(R)^{(t)}_{kp}}{k^dp^d}\le{\rm vol}(\Delta(\Gamma(R)^{(t)})).
$$
Now 
$$
\ell_R(R/I_p^k)=(\sum_{t=1}^{[k':k]}\#\Gamma(R)^{(t)}_{kp})-(\sum_{t=1}^{[k':k]} \#\Gamma(I_*)(p)^{(t)}_{kp})
$$
and 
$$
\ell_R(R/I_n)=(\sum_{t=1}^{[k':k]} \#\Gamma(R)^{(t)}_n)-(\sum_{t=1}^{[k':k]} \#\Gamma(I_*)^{(t)}_n).
$$
By Theorem \ref{ConeTheorem1},
$$
\lim_{n\rightarrow \infty}\frac{\ell_R(R/I_n)}{n^d}=(\sum_{t=1}^{[k':k]}{\rm vol}(\Delta(\Gamma(R)^{(t)}))-(\sum_{t=1}^{[k':k]}{\rm vol}(\Delta(\Gamma(I_*)^{(t)}))).
$$
Thus
$$
\lim_{n\rightarrow\infty}\frac{\ell_R(R/I_n)}{n^d}-\epsilon\le \lim_{k\rightarrow\infty}\frac{\ell_R(R/I_p^k)}{k^dp^d}
=\frac{e(I_p)}{d!p^d}\le \lim_{n\rightarrow \infty}\frac{\ell_R(R/I_n)}{n^d}+\epsilon.
$$
Taking the limit as $p\rightarrow \infty$, we obtain the conclusions of the theorem.

Now assume that $R$ is analytically unramified. We may assume that $R$ is complete and reduced 
since 
$$
\ell_R(R/I_p^k)=\ell_{\hat R}(\hat R/I_p^k\hat R)\mbox{ and } e_d(I_p,R)=e_d(I_p\hat R,\hat R)
$$
for all $p,k$.

Suppose that the minimal primes of (the reduced ring) $R$ are $\{q_1,\ldots, q_s\}$. Let  $R_i=R/q_i$. $R_i$ are complete local domains.
We  have that
$$
\frac{e_d(I_p,R)}{p^d}=\sum_{i=1}^s\frac{e_d(I_pR_i,R_i)}{p^d}
$$
by the additivity  formula (page V-3 \cite{Se} or Corollary 4.6.8, page 189 \cite{BH}) or directly from Lemma \ref{Lemma5}. We also have  that
$$
\lim_{n\rightarrow\infty}\frac{\ell_R(R/I_n)}{n^d}=\sum_{i=1}^s\lim_{n\rightarrow\infty}\frac{\ell_R(R_i/I_nR_i)}{n^d}
$$
by Lemma \ref{Lemma5}. Since each $R_i$ is analytically irreducible, the limits 
$$
\lim_{n\rightarrow \infty}\frac{\ell_R(R_i/I_nR_i)}{n^d}=
\lim_{p\rightarrow \infty}\frac{e_d(I_pR_i,R_i)}{p^d}
$$
 exist by the earlier proof of this theorem for analytically irreducible local rings. The conclusions of the theorem now follow.

\end{proof}

Suppose that $R$ is a Noetherian ring, and $\{I_i\}$ is a graded  family of ideals in $R$.
Let
$$
s=s(I_*)=\limsup \dim R/ I_i.
$$
Let $i_0\in \ZZ_+$ be the smallest integer such that
\begin{equation}\label{eq40}
\mbox{$\dim R/I_i \le  s$ for $i\ge i_0$.}
\end{equation}
For $i\ge i_0$ and $p$ a prime ideal in $R$ such that $\dim R/p=s$, we have that $(I_i)_p=R_p$ or $(I_i)_p$ is $p_p$-primary.

$s$ is in general not a limit, as is shown by Example 6.6 \cite{C1}.

Let
$$
T=T(I_*)=\{p\in \mbox{spec}(R)\mid \dim R/p=s\mbox{ and there exist arbitrarily large $j$ such that $(I_j)_p\ne R_p$}\}.
$$

We recall some lemmas from \cite{C1}.

\begin{Lemma}\label{Lemma10}(Lemma 6.7 \cite{C1})  $T(I_*)$ is a finite set.
\end{Lemma}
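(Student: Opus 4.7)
The plan is to observe a subadditivity property for the minimal primes of dimension $s$ of the ideals $I_n$, inherited from the semigroup law $I_mI_n\subset I_{m+n}$ together with the dimension constraint $\dim R/I_n\le s$ for $n\ge i_0$, and then to iterate this to bound all such minimal primes by a finite union indexed by $[i_0,2i_0)$.

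Two observations form the basis. First, for any $m,n\ge 0$, $V(I_{m+n})\subset V(I_m)\cup V(I_n)$, since any prime $p\supset I_{m+n}\supset I_mI_n$ must contain $I_m$ or $I_n$ by primality. Second, for $n\ge i_0$, any prime $p\supset I_n$ with $\dim R/p=s$ is automatically a minimal prime of $I_n$: a strictly smaller prime $Q\subsetneq p$ with $I_n\subset Q$ would give $\dim R/Q>\dim R/p=s\ge \dim R/I_n$, contradicting $I_n\subset Q$.

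Write $\mathrm{Min}_s(J)$ for the (finite) set of minimal primes of an ideal $J$ of dimension $s$. Combining the two observations, for $m,n\ge \max(i_0,1)$,
$$
\mathrm{Min}_s(I_{m+n})\subset \mathrm{Min}_s(I_m)\cup \mathrm{Min}_s(I_n),
$$
since any $p$ on the left contains $I_m$ or $I_n$, and by the second observation must be a minimal prime of that ideal of dimension $s$. Setting $m_0:=\max(i_0,1)$ and iterating this relation with one argument fixed at $m_0$, for any $n\ge 2m_0$, writing $n=km_0+r$ with $r\in[m_0,2m_0)$ yields
$$
\mathrm{Min}_s(I_n)\subset \mathrm{Min}_s(I_{m_0})\cup \mathrm{Min}_s(I_r)\subset \bigcup_{j=m_0}^{2m_0-1}\mathrm{Min}_s(I_j)=:\mathcal F,
$$
while for $n\in[m_0,2m_0)$ the inclusion $\mathrm{Min}_s(I_n)\subset\mathcal F$ is trivial. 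Hence $\mathrm{Min}_s(I_n)\subset \mathcal F$ for every $n\ge i_0$, and $\mathcal F$ is a finite union of finite sets.

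Finally, any $p\in T(I_*)$ satisfies $I_n\subset p$ for some (in fact infinitely many) $n\ge i_0$, and by the second observation $p\in \mathrm{Min}_s(I_n)\subset\mathcal F$. Therefore $T(I_*)$ is finite. The heart of the argument is the subadditivity of the finite sets $\mathrm{Min}_s(I_n)$: the elementary set-theoretic bound $V(I_{m+n})\subset V(I_m)\cup V(I_n)$ does not by itself give finiteness, but the dimension constraint promotes it to subadditivity on finite sets, which is precisely what makes the iteration terminate in a fixed finite union.
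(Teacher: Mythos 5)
Your argument is correct. The paper does not reproduce the proof, instead citing Lemma~6.7 of \cite{C1}, so there is no in-paper proof to compare against; but every step you give checks out. Observation~1 is immediate from $I_mI_n\subset I_{m+n}$ and primality. Observation~2 is the key dimension-theoretic point: if $Q\subsetneq p$ and $I_n\subset Q$, then extending a maximal chain from $p$ by $Q$ gives $\dim R/Q\ge \dim R/p+1=s+1$, while $I_n\subset Q$ forces $\dim R/Q\le\dim R/I_n\le s$ because $n\ge i_0$; this contradiction shows $p$ is minimal over $I_n$. The subadditivity $\mathrm{Min}_s(I_{m+n})\subset\mathrm{Min}_s(I_m)\cup\mathrm{Min}_s(I_n)$ for $m,n\ge m_0:=\max(i_0,1)$ then follows, and the iteration $n\mapsto n-m_0$ correctly terminates with $n-km_0\in[m_0,2m_0)$, giving $\mathrm{Min}_s(I_n)\subset\mathcal F$ for all $n\ge m_0$. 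Since each $p\in T(I_*)$ satisfies $I_n\subset p$ for some $n\ge m_0$, Observation~2 places $p$ in $\mathrm{Min}_s(I_n)\subset\mathcal F$, and $\mathcal F$ is a finite union of finite sets of minimal primes.

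As a remark, the complementary set $S_p=\{n\in\NN\mid I_n\not\subset p\}$ is a subsemigroup of $\NN$ containing $0$ (again because $p$ is prime and $I_mI_n\subset I_{m+n}$), and for $p\in T(I_*)$ it is not cofinite in $\NN$. A subsemigroup of $\NN$ that is not cofinite either equals $\{0\}$ or has $\gcd\ge 2$; in either case at least one of $m_0,m_0+1$ lies outside $S_p$, so $p\in\mathrm{Min}_s(I_{m_0})\cup\mathrm{Min}_s(I_{m_0+1})$. This shaves the union $\mathcal F$ down to two sets, but the content is the same subadditivity you isolated, so this is cosmetic. Your proof stands as written.
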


\begin{Lemma}\label{Lemma11}(Lemma 6.8 \cite{C1}) There exist $c=c(I_*)\in \ZZ_+$ such that if $j\ge i_0$ and $p\in T(I_*)$, then 
$$
p^{jc}R_p\subset I_jR_p.
$$
\end{Lemma}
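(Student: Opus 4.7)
The plan is as follows. Since $T(I_*)$ is finite by Lemma \ref{Lemma10}, it suffices to produce, for each fixed $p \in T(I_*)$, a positive integer $c_p$ such that $p_p^{jc_p} \subset (I_j)_p$ for every $j \ge i_0$; then $c := \max_{p \in T(I_*)} c_p$ works uniformly. So fix $p \in T(I_*)$ and let $s = \dim R/p$.

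The first step is to show that whenever $j \ge i_0$ and $(I_j)_p \ne R_p$, the ideal $(I_j)_p$ is $p_p$-primary. For this, if $q$ is any prime of $R$ with $I_j \subset q \subseteq p$, then $R/q$ is a quotient of $R/I_j$, so $\dim R/q \le \dim R/I_j \le s$; a strict containment $q \subsetneq p$ would, by prepending $q$ to a maximal chain of primes above $p$ in $R$, force $\dim R/q \ge \dim R/p + 1 = s+1$, which is impossible. Hence $p$ is the unique prime of $R$ lying between $I_j$ and $p$, so $(I_j)_p$ has radical $p_p$ and (not being the unit ideal) is $p_p$-primary.

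Having secured this, for $j \ge i_0$ let $a_j$ be the smallest non-negative integer with $p_p^{a_j} \subset (I_j)_p$, taking $a_j = 0$ if $(I_j)_p = R_p$. By the previous paragraph each $a_j$ is finite. Since $p \in T(I_*)$, I may select $j_1 \ge i_0$ with $(I_{j_1})_p \ne R_p$, giving $c_1 := a_{j_1} \in \ZZ_+$. The graded-family inclusion $I_{j-j_1} I_{j_1} \subset I_j$ (valid when $j \ge j_1$) and the case analysis on whether $(I_{j-j_1})_p$ equals $R_p$ or is $p_p$-primary together yield the sub-additivity $a_j \le a_{j-j_1} + c_1$ whenever $j - j_1 \ge i_0$. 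Iterating this recursion down to a residue $r \in [i_0, i_0 + j_1)$ and bounding the finitely many values $a_r$ by a single constant $M$ produces $a_j \le (j/j_1) c_1 + M$, from which a linear bound $a_j \le c_p \, j$ follows for any integer $c_p \ge M/i_0 + c_1/j_1$.

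The main obstacle is the first step: forcing $(I_j)_p$ to be genuinely $p_p$-primary (rather than merely contained in $p_p$) for all $j \ge i_0$ with $(I_j)_p \ne R_p$. This is precisely where the definition of $i_0$ via $\limsup \dim R/I_j$ is essential, together with the maximality of $\dim R/p = s$ inherent in the definition of $T(I_*)$; without this uniform control on $\dim R/I_j$, strictly smaller primes containing $I_j$ could intrude and destroy the primary property. Once that hurdle is cleared, the remainder is a Fekete-style sub-additivity argument powered by the graded family inclusions, together with the finiteness of $T(I_*)$ to pass from the pointwise constants $c_p$ to the desired uniform $c$.
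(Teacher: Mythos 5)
Your proof is correct. The paper itself does not give a proof of this lemma; it simply cites Lemma~6.8 of \cite{C1}, so a line-by-line comparison with the source is not possible here. Nevertheless, your argument is the natural one and almost certainly matches in structure: (i) reduce to a single $p \in T(I_*)$ using the finiteness furnished by Lemma~\ref{Lemma10}; (ii) observe from the definition of $i_0$ and the maximality of $\dim R/p = s$ that for $j \ge i_0$ the ideal $(I_j)_p$ is either $R_p$ or $p_p$-primary, so the quantity $a_j = \min\{n : p_p^n \subset (I_j)_p\}$ is finite; (iii) exploit the graded-family inclusion $I_{j-j_1}I_{j_1} \subset I_j$ to obtain sub-additivity $a_j \le a_{j-j_1} + a_{j_1}$ whenever both indices stay $\ge i_0$; and (iv) deduce a linear bound $a_j \le c_p j$ by descending to a bounded block of residues and maximizing. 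Two small points worth noting explicitly: the decomposition $j = r + kj_1$ with $r \in [i_0, i_0 + j_1)$ and $k \ge 0$ is indeed available for every $j \ge i_0$, and since $i_0 \in \ZZ_+$ the division by $i_0$ in your final inequality is harmless. The passage from the per-prime constants $c_p$ to a single $c$ is exactly what the finiteness of $T(I_*)$ is for, and your treatment of the convention $a_j = 0$ when $(I_j)_p = R_p$ makes the recursion clean. In short, the argument is complete and correct.
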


Let 
$$
A(I_*)=\{q\in T(I_*)\mid  \mbox{$I_nR_q$ is $q_q$-primary for  $n\ge i_0$}\}.
$$

\begin{Lemma}\label{Lemma50}(Lemma 6.9 \cite{C1})  Suppose that $q\in T(I_*)\setminus A(I_*)$. Then there exists $b\in \ZZ_+$ such that $q_q^b\subset (I_n)_q$ for all $n\ge i_0$.
\end{Lemma}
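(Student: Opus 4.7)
The plan is to extract from the hypothesis $q\in T(I_*)\setminus A(I_*)$ a single element $x$ that is a unit at $q$ and lies in some $I_m$ with $m\ge i_0$, and then use the graded-family property to translate this into the desired uniform bound.

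First I would unpack the hypothesis. Since $q\in T(I_*)$ and $q\ge i_0$, we know that for every $n\ge i_0$ the ideal $(I_n)_q$ is either $R_q$ or $q_q$-primary. The condition $q\notin A(I_*)$ therefore means that there is at least one integer $m\ge i_0$ with $(I_m)_q=R_q$. Equivalently, there exists an element $x\in I_m$ with $x\notin q$; fix such $m$ and $x$ once and for all.

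Next, for an arbitrary $n\ge i_0$, I would perform a division-with-remainder relative to $m$, writing
$$
n = am + r \qquad \text{with } a\in\NN \text{ and } i_0\le r\le i_0+m-1,
$$
which is possible because $n\ge i_0$. The graded-family inclusion $I_m^a I_r\subset I_{am+r}=I_n$ combined with $x^a\in I_m^a$ yields $x^a I_r\subset I_n$. Localizing at $q$ and using that $x\notin q$ (so $x^a$ is a unit in $R_q$) gives
$$
(I_r)_q \;\subset\; (I_n)_q.
$$

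Finally, by Lemma \ref{Lemma11} applied at index $r\ge i_0$ we have $q_q^{rc}\subset (I_r)_q$, and since $r\le i_0+m-1$ we obtain
$$
q_q^{(i_0+m-1)c} \;\subset\; q_q^{rc} \;\subset\; (I_r)_q \;\subset\; (I_n)_q.
$$
Thus $b:=(i_0+m-1)c$ is a constant that depends only on $q$ and the family, but not on $n$, and the lemma follows. There is no genuine obstacle here; the only point that needs care is verifying that the hypothesis $q\notin A(I_*)$ (combined with the $R_q$-or-primary dichotomy for $n\ge i_0$) really does supply the single element $x$ which then, via the graded-family property, spreads a fixed power of $q_q$ into every $(I_n)_q$.
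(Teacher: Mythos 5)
Your proof is correct: extracting from $q\notin A(I_*)$ a single index $m\ge i_0$ with $(I_m)_q=R_q$ and an element $x\in I_m\setminus q$, then writing $n=am+r$ with $r$ in the window $[i_0,i_0+m-1]$ so that Lemma~\ref{Lemma11} gives $q_q^{rc}\subset (I_r)_q$ and the unit $x^a$ transports this into $(I_n)_q$, is exactly the right argument. The paper cites Lemma~6.9 of \cite{C1} rather than reproducing a proof, but this is the natural argument given the surrounding lemmas (the only slip is the typo ``$q\ge i_0$,'' which should read ``$n\ge i_0$'').
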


We obtain the following asymptotic additivity formula. It is proven in Theorem 6.10 \cite{C1}, with the additional assumption that $R$ is regular or analytically unramified of equicharacteristic zero.

\begin{Theorem}\label{Theorem13} Suppose that $R$ is a $d$-dimensional analytically unramified local ring 
and $\{I_i\}$ is a graded family of  ideals in $R$. Let
$s=s(I_*)=\limsup \dim R/I_i$ . Suppose that $s<d$. Then 
$$
\lim_{n\rightarrow \infty}\frac{e_{s}(m_R,R/I_n)}{n^{d-s}/(d-s)!}=\sum_{q}\left(\lim_{k\rightarrow \infty}\frac{e((I_k)_q)}{k^{d-s}}\right) e(m_{R/q})
$$
where the sum is over all prime ideals $q$ such that $\dim R/q=s$.
\end{Theorem}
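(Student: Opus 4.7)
The plan is to apply the additivity formula for multiplicities (Corollary 4.6.8 of \cite{BH}, or V-2 of \cite{Se}) to the module $R/I_n$ and reduce the problem to a finite collection of local questions, each solvable by Theorem \ref{Theorem15}. For $n\ge i_0$ we have $\dim R/I_n\le s$, so the additivity formula gives
$$
e_s(m_R,R/I_n)=\sum_{p}\ell_{R_p}((R/I_n)_p)\,e(m_{R/p}),
$$
the sum ranging over primes $p$ with $\dim R/p=s$. Since $(R/I_n)_p=0$ unless $I_n\subset p$, for $n\ge i_0$ the only primes with nonzero contribution lie in the finite set $T(I_*)$ (Lemma \ref{Lemma10}); primes $q$ with $\dim R/q=s$ but $q\notin T(I_*)$ eventually satisfy $(I_n)_q=R_q$, contributing zero on both sides of the desired identity.

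Next I would split $T(I_*)=A(I_*)\sqcup (T(I_*)\setminus A(I_*))$ and treat the two pieces separately. For $q\in A(I_*)$, by definition $\{(I_n)_q\}$ is a graded family of $q_q$-primary ideals in the local ring $R_q$ of dimension $d-s$. A key input is that $R_q$ is again analytically unramified (by Rees, cf.\ Proposition 9.1.4 of \cite{SH}), so Theorem \ref{Theorem15} applies and yields
$$
\lim_{n\to\infty}\frac{\ell_{R_q}((R/I_n)_q)}{n^{d-s}/(d-s)!}=\lim_{k\to\infty}\frac{e((I_k)_q)}{k^{d-s}},
$$
which is exactly the factor appearing in the right-hand side of the theorem for such $q$. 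For $q\in T(I_*)\setminus A(I_*)$, Lemma \ref{Lemma50} supplies $b\in\ZZ_+$ with $q_q^{b}\subset (I_n)_q$ for all $n\ge i_0$, so $\ell_{R_q}((R/I_n)_q)\le \ell_{R_q}(R_q/q_q^{b})$ is bounded in $n$; since $s<d$, this term divided by $n^{d-s}/(d-s)!$ tends to $0$. The same bound applied to powers shows $e((I_k)_q)\le e(q_q^{b})$ is bounded in $k$, so $\lim_k e((I_k)_q)/k^{d-s}=0$ as well, matching the vanishing on both sides.

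Combining the two cases, the additivity formula together with the dominated-type passage to the limit (finitely many summands indexed by the finite set $T(I_*)$) gives
$$
\lim_{n\to\infty}\frac{e_s(m_R,R/I_n)}{n^{d-s}/(d-s)!}=\sum_{q\in A(I_*)}\Big(\lim_{k\to\infty}\frac{e((I_k)_q)}{k^{d-s}}\Big)\,e(m_{R/q}),
$$
and by the zero-contribution discussion above the sum may be taken over all primes $q$ with $\dim R/q=s$, as claimed. The main obstacle I anticipate is the bookkeeping around $T(I_*)\setminus A(I_*)$: one must confirm that, even though $(I_n)_q$ can oscillate between being $q_q$-primary and being all of $R_q$ on this set, both the length on the left and the multiplicity on the right stay uniformly bounded and hence drop out in the limit. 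Everything else is a clean application of Theorem \ref{Theorem15} to the localizations $R_q$ for $q\in A(I_*)$.
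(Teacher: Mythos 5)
Your approach matches the paper's almost step for step: additivity of multiplicities over primes of codimension $d-s$, reduction to the finite set $T(I_*)$ via Lemma \ref{Lemma10}, the split $T(I_*)=A(I_*)\sqcup(T(I_*)\setminus A(I_*))$, analytic unramifiedness of the localizations $R_q$ by Rees, and the vanishing of the $T(I_*)\setminus A(I_*)$ contributions via Lemma \ref{Lemma50}. Two technical points, however, are stated inaccurately and need repair before the invocation of Theorem \ref{Theorem15} is legitimate.

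First, for $q\in A(I_*)$ you assert that $\{(I_n)_q\}$ ``is a graded family of $q_q$-primary ideals,'' but the definition of $A(I_*)$ only guarantees that $(I_n)_q$ is $q_q$-primary for $n\ge i_0$. For $0<n<i_0$ the ideal $(I_n)_q$ may be a proper ideal that is \emph{not} $q_q$-primary (some minimal prime of $I_n$ contained in $q$ could have codimension strictly between $0$ and $\dim R_q$), so $\{(I_n)_q\}$ need not satisfy the hypothesis of Theorem \ref{Theorem15}. The paper handles this by modifying the finitely many terms with $0<n<i_0$ (using the constant $c$ of Lemma \ref{Lemma11}); one clean way to do this is to replace $(I_n)_q$ by $q_q^{nc}\cap(I_n)_q$ for all $n\ge 1$, which produces a genuine graded family of $q_q$-primary ideals agreeing with $(I_n)_q$ for $n\ge i_0$, so that neither limit is affected. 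Second, you write that $R_q$ has ``dimension $d-s$,'' but in general only $\dim R_q\le d-s$. When the inequality is strict, Theorem \ref{Theorem15} applied to $R_q$ gives an identity in which the normalizing power is $k^{\dim R_q}$, not $k^{d-s}$; you then need to observe separately that both $\ell_{R_q}(R_q/(I_n)_q)/n^{d-s}$ and $e((I_k)_q)/k^{d-s}$ tend to $0$ in that case, so the term in the sum is $0$ on both sides. Neither issue is fatal, but both must be addressed for the proof to close.
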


\begin{proof} 
 Let $i_0$ be the (smallest) constant satisfying (\ref{eq40}). By the additivity formula (V-2 \cite{Se} or Corollary 4.6.8, page 189 \cite{BH}), for $i\ge i_0$,
$$
e_{s}(m_R,R/I_i)=\sum_p\ell_{R_p}(R_p/(I_i)_p)e_{m_R}(R/p)
$$
where the sum is over all prime ideals $p$ of $R$ with $\dim R/p=s$. By Lemma \ref{Lemma10}, for $i\ge i_0$,
the sum is actually over the finite set $T(I_*)$ of prime ideals of $R$.

For $p \in T(I_*)$, $R_p$ is a local ring of dimension $\le d-s$. Further, $R_p$  is analytically unramified
(by \cite{R3} or Prop 9.1.4 \cite{SH}). By Lemma \ref{Lemma11}, and by  Theorem \ref{Theorem2},  replacing $(I_i)_p$ with $p_p^{ic}$ if $i<i_0$, we have that
$$
\lim_{i\rightarrow \infty}\frac{\ell_{R_p}(R_p/(I_i)_p)}{i^{d-s}}
$$
exists. Further, this limit is zero if $p\in T(I_*)\setminus A(I_*)$ by Lemma \ref{Lemma50}, and since $s<d$. Finally, we have
$$
\lim_{i\rightarrow \infty}\frac{\ell_{R_q}(R_q/(I_i)_q)}{i^{d-s}/(d-s)!}=\lim_{k\rightarrow \infty}\frac{e_{(I_k)_q}(R_q)}{k^{d-s}}
$$
for $q\in A(I_*)$ by Theorem \ref{Theorem15}.

\end{proof}

\section{Kodaira-Iitaka dimension  on proper $k$-schemes}\label{SecProp}

Suppose that $X$ is a $d$-dimensional proper scheme over a field $k$, and $\mathcal L$ is a line bundle on $X$. 
Then under the natural inclusion of rings $k\subset \Gamma(X,\mathcal O_X)$, we have that the section ring
$$
\bigoplus_{n\ge 0}\Gamma(X,\mathcal L^n)
$$
is a graded $k$-algebra. Each $\Gamma(X,\mathcal L^n)$ is a finite dimensional $k$-vector space since $X$ is proper over $k$. In particular, $\Gamma(X,\mathcal O_X)$ is an Artin ring.     A graded $k$-subalgebra $L=\bigoplus_{n\ge 0}L_n$ of a section ring of a line bundle $\mathcal L$ on $X$ is called a {\it graded linear series} for $\mathcal L$. 

 We define the {\it Kodaira-Iitaka dimension} $\kappa=\kappa(L)$ of a graded linear series $L$  as follows.
Let
$$
\sigma(L)=\max \left\{m\mid 
\begin{array}{l}
 \mbox{there exists $y_1,\ldots,y_m\in L$ which are homogeneous of  positive}\\
\mbox{degree and are algebraically independent over $k$}
\end{array}\right\}.
$$
$\kappa(L)$ is then defined as
$$
\kappa(L)=\left\{\begin{array}{ll}
\sigma(L)-1 &\mbox{ if }\sigma(L)>0\\
-\infty&\mbox{ if }\sigma(L)=0
\end{array}\right.
$$

This definition is in agreement with the classical definition for line bundles on normal projective varieties (Definition in Section 10.1 \cite{I} or Chapter 2 \cite{La}).

\begin{Lemma}\label{LemmaKI} Suppose that $L$ is a graded linear series on a $d$-dimensional proper scheme $X$ over a field $k$. Then
\begin{enumerate}\item[1)]
\begin{equation}\label{eqKI1}
\kappa(L)\le d=\dim X.
\end{equation}
\item[2)] There exists a positive constant $\gamma$ such that 
\begin{equation}\label{eqKI4}
\dim_k L_n<\gamma n^d
\end{equation}
for all $n$.
\item[3)] Suppose that $\kappa(L)\ge 0$. Then there exists a positive constant $\alpha$ and a positive integer $e$ such that 
\begin{equation}\label{eqKI2}
\dim_kL_{en}>\alpha n^{\kappa(L)}
\end{equation}
for all positive integers $n$. 
\item[4)] Suppose that $X$ is reduced and $L$ is a graded linear series on $X$. Then $\kappa(L)=-\infty$ if and only if $L_n=0$ for all $n>0$.
\end{enumerate}
\end{Lemma}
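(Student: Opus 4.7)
The plan is to handle the four parts in the order (2), (1), (3), (4), since (1) will follow from (2) by a Hilbert-function argument, (3) mirrors that argument in the opposite direction, and (4) is mostly a separate observation about the section ring.

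For (2) I would appeal to the classical bound that for any line bundle $\mathcal{M}$ on a proper $d$-dimensional $k$-scheme $X$, one has $\dim_k\Gamma(X,\mathcal{M})=O(n^d)$ when $\mathcal{M}=\mathcal{L}^n$ (Snapper's theorem; equivalently, $\chi(X,\mathcal{L}^n)$ is a numerical polynomial of degree $\le d$, and a standard devissage by generically ample divisors promotes this to a bound on $h^0$). Since $L_n\subseteq\Gamma(X,\mathcal{L}^n)$, the estimate $\dim_kL_n<\gamma n^d$ transfers immediately. This is the step I expect to lean on the most; in a proper but not necessarily projective setting one may want to cite a reference rather than redo it.

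Having (2), part (1) goes as follows. Suppose $y_1,\ldots,y_m\in L$ are algebraically independent over $k$, homogeneous of positive degrees $e_1,\ldots,e_m$. Set $e=\mathrm{lcm}(e_i)$ and replace $y_i$ by $y_i^{e/e_i}$; the new elements are still algebraically independent (they generate the same field of fractions up to taking $e/e_i$-th roots of the $y_i$, so algebraic independence is preserved) and now share the common degree $e$. Then $k[y_1,\ldots,y_m]\hookrightarrow L$ as a graded $k$-algebra, and its degree-$en$ piece has dimension $\binom{n+m-1}{m-1}$; combining with $\dim_kL_{en}<\gamma(en)^d$ forces $m-1\le d$, so $\sigma(L)\le d+1$ and $\kappa(L)\le d$. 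Part (3) is the same construction applied to $\kappa+1$ algebraically independent homogeneous elements of common positive degree $e$: the $\binom{n+\kappa}{\kappa}$ monomials of total degree $n$ in $y_1,\ldots,y_{\kappa+1}$ are linearly independent in $L_{en}$, giving $\dim_kL_{en}\ge\alpha n^{\kappa}$ for a suitable $\alpha>0$ and all $n\ge 1$.

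For (4), the implication $L_n=0$ for all $n>0$ $\Rightarrow$ $\kappa(L)=-\infty$ is trivial. Conversely, assume $X$ is reduced and fix $0\ne s\in L_n$ with $n>0$. The key subclaim is that $s$ is not nilpotent in the section ring: on any affine open $U\subseteq X$ on which $\mathcal{L}$ is trivializable, a choice of trivialization identifies $s|_U$ with some $f\in\Gamma(U,\mathcal{O}_U)$, and $s^m=0$ would give $f^m=0$; since $\Gamma(U,\mathcal{O}_U)$ is reduced this forces $f=0$, so $s$ vanishes on $U$, and varying $U$ gives $s=0$, contrary to assumption. Now any algebraic relation $\sum_{i=0}^m a_is^i=0$ with $a_i\in k$ is a relation between elements of pairwise distinct degrees $in$, so each $a_is^i$ vanishes individually; this forces $a_0=0$ and, for each $i\ge 1$ with $a_i\ne 0$, $s^i=0$, contradicting the subclaim. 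Hence $s$ is transcendental over $k$, $\sigma(L)\ge 1$, and $\kappa(L)\ge 0$.
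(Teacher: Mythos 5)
Your proof is correct, and parts (1) and (3) take a genuinely different (and simpler) route than the paper. The paper also proves part (2) via (\ref{eqF}), but it gives the full argument: a filtration of a coherent sheaf with subquotients $\mathcal O_{Y_i}(n_i)$ in the projective case, then induction on $\dim\mathcal M$ using Chow's lemma in the general proper case. You instead cite the classical Snapper/Euler-characteristic bound, which is legitimate but skips the work the paper does. The more interesting divergence is in (1) and (3): the paper lower-bounds $\dim_kL_{nf}$ by realizing $k[x_0,\ldots,x_\kappa]_{nf}$ as sections of the ample sheaf $\mathcal O_Z(f)$ on the weighted projective space $Z$, and for (3) it passes through Lemma \ref{LKI7}, graded Noether normalization and the Hilbert polynomial of a Veronese subalgebra of $L$ over the Artin ring $L_0$. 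Your trick of replacing $y_i$ by $y_i^{e/e_i}$ to normalize all degrees to $e=\mathrm{lcm}(e_i)$, then counting the $\binom{n+\kappa}{\kappa}$ monomials directly, gets the same lower bound with none of that machinery, and works equally well for both parts. (The only thing worth saying out loud is why the powers $y_i^{e/e_i}$ remain algebraically independent: each $y_i$ is algebraic over $k(y_i^{e/e_i})$, so the transcendence degree of $k(y_1^{e/e_1},\ldots,y_m^{e/e_m})$ over $k$ equals that of $k(y_1,\ldots,y_m)$, which is $m$.) For (4) your argument is essentially the paper's: the paper checks nonvanishing of $z^s$ at the stalk of a single point where $z$ is nonzero, while you argue via trivializations over a cover of affine opens, but the underlying reduction to reducedness of local rings of $X$ is the same, as is the observation that the kernel of $k[t]\to L$, $t\mapsto s$, is forced by degree reasons to be generated by a power of $t$.
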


We will show in Theorem \ref{Theorem8} that (\ref{eqKI4}) of Lemma \ref{LemmaKI} can be sharpened to the statement that there exists a positive constant $\gamma$ such that
\begin{equation}\label{eqN1}
\dim_kL_n<\gamma n^e
\end{equation}
where $e=\max\{\kappa(L),\dim \mathcal N_X\}$, where  $\mathcal N_X$ is the nilradical of $X$ (defined in the section on notations and conventions).
By Theorem \ref{TheoremN1}, (\ref{eqN1}) is the best bound possible.

To prove Lemma \ref{LemmaKI}, we need the following lemma.

\begin{Lemma}\label{LKI7} Suppose that $L$ is a graded linear series on  a projective  scheme $X$ over a field $k$. 
  Then 
  $$
  \sigma(L)={\rm Krull\,\, dimension}\,(L)
  $$
  and
\begin{equation}\label{eqKI7}
\kappa(L)=\left\{\begin{array}{ll}
{\rm Krull\,\, dimension}\,(L)-1&\mbox{ if }{\rm Krull\,\, dimension}\,(L)>0\\
-\infty&\mbox{ if }{\rm Krull\,\, dimension}\,(L)=0.
\end{array}\right.
\end{equation}
\end{Lemma}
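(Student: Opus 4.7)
The two assertions are equivalent to the single identity $\sigma(L) = \dim L$, so I will prove this equality; the displayed formula for $\kappa(L)$ then follows immediately from its definition as $\sigma(L)-1$ when $\sigma(L) > 0$ and as $-\infty$ when $\sigma(L) = 0$. Note at the outset that $L_0 \subset \Gamma(X,\mathcal{O}_X)$, a finite-dimensional $k$-algebra (since $X$ is proper), so $L_0$ is Artinian; in particular every $k$-subspace of $L_0$ is itself a finite-dimensional Artinian $k$-algebra.

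For the upper bound $\dim L \le \sigma(L)$, I would work through finitely generated graded subalgebras. Any chain of primes $P_0 \subsetneq \cdots \subsetneq P_d$ in $L$ is witnessed by finitely many elements $x_i \in P_i \setminus P_{i-1}$; the graded $k$-subalgebra $R \subset L$ generated by $L_0$ together with the $x_i$ is then finitely generated (hence Noetherian), with $R_0$ Artinian, and the chain contracts to a strict chain in $R$ of length $d$. For each minimal prime $Q$ of such $R$, the quotient $R/Q$ is a graded domain whose degree-zero part $(R/Q)_0$ is an Artinian domain, hence a finite field extension $k'/k$; graded Noether normalization over $k'$ then produces $\dim(R/Q)$ homogeneous positive-degree elements of $R/Q$ that are algebraically independent over $k$, and lifting them degree-by-degree to $R$ preserves homogeneity and algebraic independence. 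This gives $\sigma(R) \ge \dim R \ge d$, and since $\sigma(R) \le \sigma(L)$ we conclude $d \le \sigma(L)$.

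For the lower bound $\sigma(L) \le \dim L$, choose algebraically independent homogeneous positive-degree elements $y_1,\ldots,y_s \in L$ with $s = \sigma(L)$, and set $A = k[y_1,\ldots,y_s] \subset L$. I would construct, by induction on $i$, a chain of homogeneous primes $P_0 \subsetneq \cdots \subsetneq P_s$ of $L$ with $P_i \cap A = (y_1,\ldots,y_i)$. For $P_0$: the set $T_0$ of nonzero homogeneous elements of $A$ is a multiplicative set disjoint from $0$ in $L$ (since $A$ injects into $L$), so the graded localization $L_{T_0}$ is nonzero and yields a homogeneous prime $P_0 \subset L$ with $P_0 \cap A = 0$. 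For the inductive step, $L/P_{i-1}$ is a graded domain into which $A/(y_1,\ldots,y_{i-1}) = k[y_i,\ldots,y_s]$ injects; since $y_i$ has positive degree it cannot be a unit in a nonnegatively graded domain, so $(y_i)(L/P_{i-1})$ is a proper homogeneous ideal, and applying the same graded-localization argument to the nonzero homogeneous elements of $k[y_{i+1},\ldots,y_s]$ produces a homogeneous prime $P_i \supset P_{i-1}$ containing $y_i$ with $P_i \cap A = (y_1,\ldots,y_i)$. Strict containment $P_i \supsetneq P_{i-1}$ follows from $y_i \in P_i \setminus P_{i-1}$ (since $P_{i-1} \cap A = (y_1,\ldots,y_{i-1})$ does not contain $y_i$).

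The main obstacle is the lower bound: one must work with homogeneous primes throughout in order to preserve the grading of each successive quotient, and at each inductive step must verify that $P_i \cap A$ equals $(y_1,\ldots,y_i)$ rather than a strictly larger prime of $A$. This last control cannot be obtained by invoking going-down, which fails for the generally non-flat extension $A \subset L$; it is instead produced by the graded localization at homogeneous elements of $A$ outside $(y_1,\ldots,y_i)$, which is precisely the construction that guarantees primes of $L$ with the desired controlled contraction to $A$.
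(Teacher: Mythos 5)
Your argument for the bound $\dim L \le \sigma(L)$ is correct and differs from the paper's in a useful way: you restrict a chain of primes to the finitely generated graded subalgebra $R$ generated by $L_0$ and the \emph{homogeneous components} of the witnesses $x_i$ (which you should say explicitly, since the $x_i$ need not be homogeneous and $R$ must be graded for Noether normalization; this still contains each $x_i$, so the contracted chain stays strict), then apply graded Noether normalization to each $R/Q$ for $Q$ a minimal prime. The paper instead gets this reduction by appealing to Corollary~4.7 of Kemper--Trung on dimensions of subfinite $k$-algebras together with the truncations $L^i$.

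Your lower bound $\sigma(L) \le \dim L$ has a genuine gap. At the inductive step you claim that the ideal generated by $y_i$ in $L/P_{i-1}$ stays proper after localizing at the set $T_i$ of nonzero homogeneous elements of $k[y_{i+1},\ldots,y_s]$. But that localization kills $(y_i)$ precisely when $y_i$ divides some element of $T_i$ in the domain $L/P_{i-1}$, and this can occur. Take $X$ the plane conic with section ring $L=k[x,y,z]/(xz-y^2)$ (a graded linear series for $\mathcal O_X(1)$), $y_1=x$, $y_2=y$ (algebraically independent, so $\sigma(L)=2$), and $P_0=(0)$. Since $y^2=xz\in(x)$, every prime of $L$ containing $x$ also contains $y$, so no prime of $L$ contracts to $(x)\subset k[x,y]$; equivalently $x\cdot(z/y^2)=1$ in $L_{T_1}$, so $(x)L_{T_1}$ is the unit ideal and your construction cannot produce $P_1$. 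This is exactly the going-down failure you flag, and graded localization does not repair it. Reordering to $y_1=y$, $y_2=x$ happens to fix this particular example, but your proof neither performs nor justifies such a reordering, and it is not clear that a good ordering always exists. The paper handles this direction, for not-necessarily-finitely-generated $L$, by embedding $L$ in a finitely generated section ring $B$ of an ample line bundle, invoking Kemper--Trung to obtain $\dim L^i\le\dim L\le\dim B$, and then reducing to a truncation $L^i$ containing all the $y_j$. Some such external input (or a carefully chosen Noether-normalizing system rather than an arbitrary one) seems to be needed; the localization argument on $L$ alone does not suffice.
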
 

\begin{proof} We first prove the lemma with the assumption that $L$ is a finitely generated $k$-algebra.
In the case when $L_0=k$, the lemma  follows from  graded Noether normalization (Theorem 1.5.17 \cite{BH}). 
For a general graded linear series $L$, we always have that $k\subset L_0\subset \Gamma(X,\mathcal O_X)$, which is a finite dimensional $k$-vector space since $X$ is a projective $k$-scheme.  Let $m=\sigma(L)$ and $y_1,\ldots, y_m\in L$ be homogeneous elements of positive degree which are algebraically independent over $k$. Extend to homogeneous elements of positive degree $y_1,\ldots, y_n$ which generate $L$ as an $L_0$-algebra. 
Let $B=k[y_1,\ldots, y_n]$. We have that $\sigma(L)\le \sigma(B)\le \sigma(L)$ so $\sigma(B)=\sigma(L)$. By the first case ($L_0=k$) proven above, we have that 
$\sigma(B)=\mbox{Krull dimension}(B)$. 
 Since $L$ is finite over $B$, we have that $\mbox{Krull dimension}(L)=\mbox{Krull dimension}(B)$. Thus the lemma  holds when $L$ is a finitely generated $k$-algebra.
 
Now suppose that $L$ is an arbitrary graded linear series on $X$.  
Since $X$ is projective over $k$, we have an expression $X=\mbox{Proj}(A)$ where $A$ is the quotient of a standard graded polynomial ring 
$R=k[x_0,\ldots,x_n]$ by a homogeneous ideal $I$, which we can take to be saturated; that is, $(x_0,\ldots,x_n)$ is not an associated prime of $I$.
Let $\mathfrak p_1,\ldots,\mathfrak p_t$ be the associated primes of $I$. By graded prime avoidance (Lemma 1.5.10 \cite{BH}) there exists a form $F$ in $k[x_0,\ldots,x_n]$ of
some positive degree $c$ such that $F\not\in \cup_{i=1}^t\mathfrak p_i$. Then $F$ is a nonzero divisor on $A$, so that
$A\stackrel{F}{\rightarrow}A(c)$ is 1-1. Sheafifying, we have an injection
\begin{equation}\label{eqKI5}
0\rightarrow \mathcal O_X\rightarrow \mathcal O_X(c).
\end{equation}

Since $\mathcal O_X(c)$ is ample on $X$, there exists $f>0$ such that $\mathcal A := \mathcal L\otimes \mathcal O_X(cf)$ is ample.
From (\ref{eqKI5}) we then have
 a 1-1 $\mathcal O_X$-module homomorphism $\mathcal O_X\rightarrow \mathcal O_X(cf)$, and a 1-1 $\mathcal O_X$-module homomorphism
$\mathcal L\rightarrow \mathcal A$, which induces  inclusions of graded $k$-algebras
$$
L\subset \bigoplus_{n\ge 0}\Gamma(X,\mathcal L^n)\subset B:= \bigoplus_{n\ge 0}\Gamma(X,\mathcal A^n).
$$
There exists a positive integer $e$ such that $\mathcal A^e$ is very ample on $X$. Thus, by Theorem II.5.19 and Exercise II.9.9 \cite{H}, $B'=\bigoplus_{n\ge 0}B_{en}$ is finite over a coordinate ring $S$ of $X$
and  thus $B$ is a finitely generated $k$-algebra.
 
 Let $L^i$ be the $k$-subalgebra generated by $L_j$ for $j\le i$. 
 
 A $k$-algebra is {\it subfinite} if it is a subalgebra of a finitely generated $k$-algebra. We have  that 
 $$
 L^i\subset L\subset B
 $$
are subfinite $k$-algebras.  By Corollary 4.7 \cite{KT},
$$
{\rm Krull\,\, dimension}\,(L^i)\le {\rm Krull\,\, dimension}\,(L)\le {\rm Krull\,\, dimension}\,(B)=\dim(X)+1
$$
for all $i$.

Let 
$$
P_0\subset P_1\subset \cdots \subset P_r
$$
be a chain of distinct prime ideals in $L$ with $r= {\rm Krull\,\, dimension}\,(L)$. Since $\cup_{i=1}^{\infty} L^i=L$, there exists $n_0$ such that 
$$
P_0\cap L^i\subset P_1\cap L^i\subset \cdots \subset P_r\cap L^i
$$
is a chain of distinct prime ideals in $L^i$ for $i\ge n_0$, and so 
$$
{\rm Krull\,\, dimension}\,(L)={\rm Krull\,\, dimension}\,(L^i)
$$
for $\ge n_0$. For $i\gg 0$ we also have that $\sigma(L)=\sigma(L^i)$, so 
$$
\sigma(L)={\rm Krull\,\, dimension}\,(L).
$$
\end{proof}

We now give the proof of Lemma \ref{LemmaKI}.

Formula 2) follows from the following formula: Suppose that $\mathcal M$ is a coherent sheaf on $X$. Then there exists a positive constant $\gamma$ such that 
\begin{equation}\label{eqF}
\dim_k \Gamma(X,\mathcal M\otimes \mathcal L^n)<\gamma n^d
\end{equation}
 for all positive $n$.
 
 We first prove (\ref{eqF}) when $X$ is projective. Let $\mathcal O_X(1)$ be a very ample line bundle on $X$. By Proposition 7.4 \cite{H}, there
 exists a finite filtration of $\mathcal M$ by coherent sheaves $\mathcal M^i$ with quotients $\mathcal M^i/\mathcal M^{i-1}\cong \mathcal O_{Y_i}(n_i)$,
 where $Y_i$ are closed integral subschemes of $X$ and $n_i\in \ZZ$. There exists a number $c>0$ such that $\mathcal L\otimes \mathcal O_{Y_i}(c)$ is
 ample for all $i$. Let $\mathcal A=\mathcal O_X(n)\otimes \mathcal L$,
 where $n=c+\max\{|n_i|\}$. For all $i$ and positive $n$, we have 
 $$
 \dim_k\Gamma(X,(\mathcal M_i/\mathcal M_{i-1})\otimes \mathcal L^n)\le \dim_k\Gamma(Y_i,\mathcal O_{Y_i}\otimes \mathcal A^n). 
 $$
 This last is a polynomial in $n$ of degree equal to $\dim Y_i$ for large $n$ (by Proposition 8.8a \cite{I}).
 Thus we obtain the formula (\ref{eqF}) in the case that $X$ is projective. 
 
 Now suppose that $X$ is proper over $k$. We prove the formula by induction on $\dim \mathcal M$. If $\dim \mathcal M=0$, then $\dim_k\Gamma(X,\mathcal M)<\infty$, and $\mathcal M\otimes\mathcal L^n\cong\mathcal M$ for all $n$, so (\ref{eqF}) holds.
 Suppose that $\dim \mathcal M=e\,(\le d)$ and the formula is true for coherent $\mathcal O_X$-modules whose support has dimension $<e$. Let $\mathcal I$ be the sheaf of ideals on $X$ defined for $\eta\in X$ by
 $$
 \mathcal I_{\eta}=\{f\in \mathcal O_{X,\eta}\mid f\mathcal M_{\eta}=0\}.
 $$
 Let $Y=\mbox{Spec}(\mathcal O_X/\mathcal I)$, a closed subscheme of $X$. $\mathcal M$ is a coherent $\mathcal O_Y$-module, and
 $Y$ and $\mathcal M$ have the same support, so $\dim Y=e$. By Chow's Lemma, there exists a proper morphism
 $\phi:Y'\rightarrow Y$ such that $Y'$ is projective over $k$ and $\phi$ is an isomorphism over an open dense subset of $Y$. Ket $\mathcal K$ be the kernel of the natural morphism of $\mathcal O_Y$-modules
 $$
 \mathcal M\rightarrow \phi_*\phi^*\mathcal M.
 $$
 $\dim\mathcal K<e$ since $\phi$ is an isomorphism over a dense open subset of $Y$. Let $\mathcal L'=\phi^*(\mathcal L\otimes\mathcal O_Y)$. We have inequalities
 $$
 \dim_k\Gamma(X,\mathcal M\otimes\mathcal L^n)\le \dim_k\Gamma(X,\mathcal K\otimes\mathcal L^n)+\dim_k\Gamma(Y',\phi^*(\mathcal M)\otimes(\mathcal L')^n)
 $$
 for all $n\ge 0$, so we get the desired upper bound of (\ref{eqF}).

Now we establish 1). Let $\kappa:=\kappa(L)$. Then there exists an inclusion of a weighted polynomial ring $k[x_0,\ldots,x_{\kappa}]$ into $L$. 
Let $f$ be the least common multiple of the degrees of the $x_i$. Let $Z=\mbox{Proj}(k[x_0,\ldots,x_{\kappa}])$. $\mathcal O_Z(f)$ is an ample line bundle on the $\kappa$-dimensional weighted projective space $Z$. Thus there exists a polynomial $Q(n)$ of degree $\kappa$ such that
$$
\dim_kk[x_0,\ldots,x_n]_{nf}=\dim_k\Gamma(Z,\mathcal O_Z(nf))=Q(n)\mbox{ for all }n\gg 0.
$$
Thus there exists a positive constant $\alpha$ such that 
$$
\dim_kL_{nf}\ge \alpha n^{\kappa}\mbox{ for }n\gg 0,
$$
whence $\kappa(L)\le d$ by 2).

We will now establish formula 3).
Suppose that $\kappa(L)\ge 0$.
Let $L^i$ be the $k$-subalgebra of $L$ generated by $L_j$ for $j\le i$. For $i$ sufficiently large, we have that $\kappa(L^i)=\kappa(L)$. For such an $i$, 
since $L^i$ is a finitely generated $L_0$-algebra, we have that there exists a number $e$ such that the Veronese algebra $L^*$ defined by
$L^*_n=(L^i)_{en}$ is generated as a $L_0$-algebra in degree 1. Thus, since $L_0$ is an Artin ring,  and $L^*$ has Krull dimension $\kappa(L)+1$ by (\ref{eqKI7}), $L^*$ has a Hilbert polynomial $P(t)$ of degree $\kappa(L)$, satisfying  
$\ell_{L_0}(L^*_n)=P(n)$ for $n\gg 0$ (Corollary to Theorem 13.2 \cite{Ma2}), where $\ell_{L_0}$ denotes length of an $L_0$ module, and thus
$\dim_kL^*_n=(\dim_kL_0)P(n)$ for $n\gg 0$. Thus there exists a positive constant $\alpha$ such that $\dim_kL^*_n >\alpha n^{\kappa(L)}$ for all $n$, and so
$$
\dim_kL_{en}>\alpha n^{\kappa(L)}
$$
for all positive integers $n$, which is formula (\ref{eqKI2}).

Finally, we will establish the fourth statement of the lemma.
Suppose that $X$ is reduced and $0\ne L_n$ for some $n>0$. Consider the graded $k$-algebra homomorphism
$\phi:k[t]\rightarrow L$ defined by $\phi(t)=z$ where $k[t]$ is graded by giving $t$ the weight $n$. The kernel of $\phi$ is weighted homogeneous, so it is either 0 or 
$(t^s)$ for some $s>1$.  Thus if $\phi$ is not 1-1 then there exists $s>1$ such that $z^s=0$ in $L_{ns}$. We will show that this cannot happen.
Since $z$ is a nonzero global section of $\Gamma(X,\mathcal L^n)$, there exists $Q\in X$ such that the image of $z$ in $\mathcal L^n_Q$ is $\sigma f$ where $f\in \mathcal O_{X,Q}$ is nonzero and $\sigma$ is a local generator of $\mathcal L^n_Q$.
 The image of $z^s$ in $\mathcal L^{sn}_Q=\sigma^s\mathcal O_{X,Q}$ is $\sigma^sf^s$. We have that $f^s\ne 0$ since
$\mathcal O_{X,Q}$ is reduced. Thus $z^s\ne 0$. We thus have that $\phi$ is 1-1, so $\kappa(L)\ge 0$.

\section{Limits of graded linear series on proper varieties over a field}\label{SecLim}

Suppose that $L$ is a graded linear series on a proper variety $X$ over a field $k$. 
The {\it index} $m=m(L)$ of $L$ is defined as the index of groups
$$
m=[\ZZ:G]
$$
where $G$ is the subgroup of $\ZZ$ generated by $\{n\mid L_n\ne 0\}$.

The following theorem has been proven by  Okounkov  \cite{Ok} for section rings of ample line bundles,  Lazarsfeld and Musta\c{t}\u{a} \cite{LM} for section rings of big line bundles, and for graded linear series by Kaveh and Khovanskii \cite{KK}. All of these proofs require the assumption that {\it $k$ is  algebraically closed}.  We prove the result here for an arbitrary base field $k$.

\begin{Theorem}\label{Theorem5}  Suppose that $X$ is a $d$-dimensional proper variety over a field $k$, and $L$ is a graded linear series on $X$ with Kodaira-Iitaka dimension  $\kappa=\kappa(L)\ge 0$. Let $m=m(L)$ be the index of $L$.  Then  
$$
\lim_{n\rightarrow \infty}\frac{\dim_k L_{nm}}{n^{\kappa}}
$$
exists. 
\end{Theorem}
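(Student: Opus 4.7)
The plan is to extend the Okounkov body method of Theorem \ref{Theorem1} to graded linear series, using the multiplicity filtration $\Gamma^{(t)}$, $t = 1, \ldots, [k':k]$, to handle a possibly non-algebraically closed base field $k$.

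First I would reduce to $X$ projective via Chow's lemma: a birational morphism $\pi \colon X' \to X$ with $X'$ projective and integral of dimension $d$ induces a pullback injection $L \hookrightarrow L'$ into a graded linear series on $X'$, preserving $\dim_k L_n$, $\kappa$, and $m(L)$. Second, pick a closed point $p$ in the regular locus of $X$, so $R := \mathcal{O}_{X,p}$ is a regular local ring of dimension $d$ with $Q(R) = k(X)$ and residue field $k' = R/m_R$ finite over $k$. Construct the valuation $\nu$ on $k(X)$ as in Theorem \ref{Theorem1}, from a regular system of parameters $y_1, \ldots, y_d \in R$ and rationally independent $\lambda_1, \ldots, \lambda_d \geq 1$, with valuation ideals $K_\lambda, K_\lambda^+$. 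Trivialize $\mathcal{L}$ at $p$ by a local generator $\sigma$: each $s \in L_n$ corresponds to $f_s \in R$ with $s = f_s \sigma^n$, and the resulting $L_n \hookrightarrow R$ is $k$-linear and injective because $X$ is integral.

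Because $X$ is projective, choose a very ample $\mathcal{A}$ on $X$ and $N \in \ZZ_+$ so that $\mathcal{A}^N \otimes \mathcal{L}^{-1}$ has a nonzero global section $t$, giving an injection $\Gamma(X, \mathcal{L}^n) \hookrightarrow \Gamma(X, \mathcal{A}^{Nn})$ via $s \mapsto s t^n$. Since $B := \bigoplus_n \Gamma(X, \mathcal{A}^{Nn})$ is a finitely generated graded $k$-algebra, expressing sections in terms of a finite set of homogeneous generators and tracking valuations yields a constant $\beta \in \ZZ_+$ with $\nu(f_s) \leq \beta n$ for all nonzero $s \in L_n$, and (using $\lambda_j \geq 1$) also $n_1 + \cdots + n_d \leq \beta n$ when $\nu(f_s) = \sum n_j \lambda_j$. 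For $t = 1, \ldots, [k':k]$, define
$$
\Gamma^{(t)} = \left\{ (n_1, \ldots, n_d, i) \in \NN^{d+1} : \dim_k \frac{L_i \cap K_{\sum n_j \lambda_j}}{L_i \cap K_{\sum n_j \lambda_j}^+} \geq t,\ \sum n_j \leq \beta i \right\}.
$$
The argument of Lemma \ref{Lemmared1} shows each $\Gamma^{(t)}$ is a subsemigroup of $\NN^{d+1}$, contained in the strongly nonnegative semigroup attached to $B$ and hence itself strongly nonnegative; by (\ref{red1}) and the dimension count in (\ref{eq12}), $\dim_k L_i = \sum_{t=1}^{[k':k]} \#\Gamma^{(t)}_i$ for every $i$.

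The theorem then follows by applying Theorem \ref{ConeTheorem3} to each $\Gamma^{(t)}$ and summing. The main obstacle is to align the asymptotics with $m(L)$ and $\kappa$: contributions from $t$ with $q(\Gamma^{(t)}) < \kappa$ are $o(n^\kappa)$ and so vanish, while for the "full-dimensional" $t$ with $q(\Gamma^{(t)}) = \kappa$ one needs $m(\Gamma^{(t)}) = m(L)$. The divisibility $m(L) \mid m(\Gamma^{(t)})$ is immediate since $\Gamma^{(t)}_i \neq \emptyset$ forces $L_i \neq 0$; the reverse divisibility, when $q(\Gamma^{(t)}) = \kappa$, is the delicate step, and I would establish it by a density argument showing that a semigroup of maximal Okounkov-body dimension has the same last-coordinate projection as $\Gamma^{(1)}$, whose projection generates $m(L) \ZZ$ by the definition of $m(L)$.
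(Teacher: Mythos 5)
Your proposal follows the paper's route quite closely: reduce to $X$ projective via Chow's lemma, take a closed regular point $Q$ with residue field $k'$ finite over $k$, build a valuation $\nu$ on $k(X)$ with residue field $k'$, embed $L_n$ in $\mathcal{O}_{X,Q} \subset V_\nu$, define the stratified semigroups $\Gamma^{(t)}$ for $1 \le t \le [k':k]$, bound them inside the strongly nonnegative semigroup attached to a finitely generated section ring, and sum the $\#\Gamma^{(t)}_n$ to reconstruct $\dim_k L_n$. This is all correct and is exactly the paper's machinery (the paper uses the lex valuation $(\ZZ^d)_{\mathrm{lex}}$ rather than rationally independent weights, but that is immaterial).

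However, there is a genuine gap at the end, and it is the hardest part of the proof. You apply Theorem \ref{ConeTheorem3} to each $\Gamma^{(t)}$ and then want to sum the resulting asymptotics. You correctly observe that terms with $q(\Gamma^{(t)}) < \kappa$ are $o(n^\kappa)$, and that for the remaining terms you need $m(\Gamma^{(t)}) = m(L)$. But nowhere do you establish the identity $q(\Gamma^{(t)}) = \kappa(L)$ for any $t$, nor the upper bound $q(\Gamma^{(t)}) \le \kappa(L)$. Without this, you only know that $\dim_k L_{nm}/n^{q_{\max}}$ converges (where $q_{\max} = \max_t q(\Gamma^{(t)})$), which is not the statement of the theorem unless $q_{\max} = \kappa$. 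The inequality $\kappa \le q_{\max}$ does follow from the lower bound (\ref{eqKI2}), but $q_{\max} \le \kappa$ is a real theorem: the paper proves it by introducing the finitely generated truncations $L^{[pm]} = k[L_{pm}]$, applying graded Noether normalization and Hilbert polynomials to show $q(S(L^{[pm]})^{(1)}) = \kappa(L^{[pm]}) = \kappa(L)$ for $p \gg 0$, and then a cone-comparison argument to pass from $L^{[pm]}$ to $L$ (equations (\ref{eqred30})--(\ref{eqnr85})). You would need to supply that.

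Secondly, your proposed density argument for $m(\Gamma^{(t)}) \mid m(L)$ is vague and is also aimed too narrowly. The paper's Lemma \ref{Lemmaproj1} proves both $m(S(L)^{(t)}) = m(S(L)^{(1)})$ and $q(S(L)^{(t)}) = q(S(L)^{(1)})$ uniformly for all $t$ with $S(L)^{(t)} \not\subset \{0\}$, not only for ``full-dimensional'' $t$. The argument for the $m$-equality is elementary and has nothing to do with maximal Okounkov dimension: pick $a \equiv 1 \pmod{m(S^{(t)})}$ with $S^{(1)}_{a\,m(S^{(1)})} \ne \emptyset$ and $b > 0$ with $S^{(t)}_{b\,m(S^{(t)})} \ne \emptyset$; multiplying a nonzero element of $L_{a\,m(S^{(1)})}$ against a section realizing the level-$t$ condition shows $S^{(t)}_{a\,m(S^{(1)}) + b\,m(S^{(t)})} \ne \emptyset$, forcing $m(S^{(t)}) \mid a\,m(S^{(1)})$, hence $m(S^{(t)}) \mid m(S^{(1)})$; combined with the trivial divisibility this gives equality. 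The $q$-equality is proved by the translation trick: a $q(S^{(1)})$-dimensional cone of slices of $S^{(1)}$ is translated into $S^{(t)}$ by adding a fixed element of $S^{(t)}$, using the semigroup property (\ref{eqproj10}). Establishing uniformity of $q$ across all $t$ up front also makes your case analysis unnecessary. Filling in these two pieces would bring your argument to a complete proof.
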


In particular, from the definition of the index, we have that the limit
$$
\lim_{n\rightarrow \infty}\frac{\dim_k L_{n}}{{n}^{\kappa}}
$$
exists, whenever $n$ is constrained to lie in an arithmetic sequence $a+bm$ ($m=m(L)$ and $a$ an arbitrary but fixed constant), as $\dim_kL_n=0$ if $m\not\,\mid n$.

An example of a big line bundle where the limit in Theorem \ref{Theorem5} is an irrational number is given in Example 4 of Section 7 \cite{CS}.

It follows that $\dim_kL_n=0$ if $m\not\,\mid n$, and if $\kappa(L)\ge 0$, then there exist positive constants $\alpha<\beta$ such that
\begin{equation}\label{eq61}
\alpha n^{\kappa(L)}<\dim_kL_{nm}<\beta n^{\kappa(L)}
\end{equation}
for all sufficiently large positive integers $n$

The following theorem is proven by Kaveh and Khovanskii \cite{KK} when $k$ is an algebraically closed field (Theorem 3.3 \cite{KK}). We prove the theorem for an arbitrary field. Theorem \ref{Theorem100} is a global analog of Theorem \ref{Theorem15}.

\begin{Theorem}\label{Theorem100} Suppose that $X$ is a $d$-dimensional proper variety over a field $k$, and $L$ is a graded linear series on $X$ with Kodaira-Iitaka dimension $\kappa=\kappa(L)\ge 0$. Let $m=m(L)$ be the index of $L$.  
Let $Y_{nm}$ be the projective subvariety of $\PP^{\dim_kL_{nm}}$ that is the closure of the image of the rational map 
$L_{nm}:X \dashrightarrow \PP_k^{\dim_kL_{nm}-1}$. Let $\deg(Y_{nm})$ be the degree of $Y_{nm}$ in $\PP_k^{\dim_kL_{nm}-1}$.
Then  $\dim Y_{nm}=\kappa$ for $n\gg 0$ and 
$$
\lim_{n\rightarrow \infty}\frac{\dim_k L_{nm}}{n^{\kappa}}=\lim_{n\rightarrow\infty}\frac{\deg(Y_{nm})}{\kappa!n^{\kappa}}.
$$
\end{Theorem}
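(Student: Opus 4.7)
The plan is to express $\deg Y_{nm}$ as $\kappa!$ times the leading Hilbert coefficient of the homogeneous coordinate ring of $Y_{nm}$, identify that coordinate ring with a natural subring of $L$, and then couple the resulting inequality with a semigroup Fujita approximation in the spirit of the proof of Theorem \ref{Theorem15}.

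First I would verify that $\dim Y_{nm}=\kappa$ for $n\gg 0$. Choose homogeneous algebraically independent $y_0,\dots,y_\kappa\in L$ of positive degree; after replacing each by a power and multiplying by a fixed nonzero section of sufficiently high degree, I may arrange that $y_0,\dots,y_\kappa\in L_{nm}$ for all $n$ in a cofinal arithmetic progression, whence $\dim Y_{nm}\ge\kappa$; the opposite inequality is automatic from $\dim Y_{nm}+1\le\sigma(L)=\kappa+1$. Next let $A_n\subseteq\bigoplus_{\ell\ge 0}L_{\ell nm}$ denote the graded $k$-subalgebra generated in degree one by $L_{nm}$. Since an element of $\mathrm{Sym}^{\bullet}(L_{nm})$ vanishes identically on the reduced image $Y_{nm}$ of $X$ exactly when it maps to zero in $\bigoplus_\ell L_{\ell nm}$, the surjections of $\mathrm{Sym}^{\bullet}(L_{nm})$ onto $A_n$ and onto $k[Y_{nm}]$ have the same kernel, giving a graded isomorphism $A_n\cong k[Y_{nm}]$. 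Because $Y_{nm}$ is a $\kappa$-dimensional projective variety, its Hilbert polynomial reads
\[
\dim_k(A_n)_\ell=\frac{\deg Y_{nm}}{\kappa!}\,\ell^\kappa+O(\ell^{\kappa-1}).
\]

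Using the inclusion $(A_n)_\ell\subseteq L_{\ell nm}$ and Theorem \ref{Theorem5}, I obtain at once the easy upper bound
\[
\frac{\deg Y_{nm}}{\kappa!\,n^\kappa}=\lim_{\ell\to\infty}\frac{\dim_k(A_n)_\ell}{(\ell n)^\kappa}\le\lim_{\ell\to\infty}\frac{\dim_k L_{\ell nm}}{(\ell n)^\kappa}=\lim_{n\to\infty}\frac{\dim_k L_{nm}}{n^\kappa},
\]
valid for every $n$. The nontrivial content of the theorem is therefore the matching lower bound $\liminf_n\deg Y_{nm}/(\kappa!\,n^\kappa)\ge\lim_n\dim_k L_{nm}/n^\kappa$.

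For that lower bound I would mimic the proof of Theorem \ref{Theorem5}: build a rank-$d$ rational valuation $\nu$ on $k(X)$ from a regular closed point on a suitable birational model of $X$, with residue field $k'$ finite over $k$, and for each $1\le t\le[k':k]$ form a semigroup $\Gamma^{(t)}\subseteq\ZZ^{d}\times\NN$ recording those $(n_1,\dots,n_d,i)$ at which the valuation filtration on $L_i$ has a graded piece of $k$-dimension at least $t$. Exactly as in Lemma \ref{Lemmared1} and equation \eqref{eq12}, the hypotheses of Theorem \ref{ConeTheorem1} will then be satisfied and one checks
\[
\dim_k L_{nm}=\sum_{t}\#\Gamma^{(t)}_{nm},\qquad \dim_k(A_n)_\ell=\sum_{t}\#\bigl(\ell*\Gamma^{(t)}_{nm}\bigr),
\]
up to lower-order error. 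Each $\Gamma^{(t)}$ has Okounkov body of dimension $\kappa$, so applying Theorem \ref{ConeTheorem4} to every $\Gamma^{(t)}$ yields, for any $\epsilon>0$ and all sufficiently large $n$,
\[
\lim_{\ell\to\infty}\frac{\dim_k(A_n)_\ell}{(\ell n)^\kappa}\ge\lim_{n\to\infty}\frac{\dim_k L_{nm}}{n^\kappa}-\epsilon,
\]
which is the missing lower bound after letting $\epsilon\to 0$. The principal difficulty is carrying out this global semigroup construction over an arbitrary (possibly imperfect) base field $k$ and verifying that multiplication inside $L$ interacts compatibly with the valuation filtration, so that $\ell*\Gamma^{(t)}_{nm}$ genuinely computes $\dim_k(A_n)_\ell$ and not merely $\dim_k L_{\ell nm}$. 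This global analogue of Lemma \ref{Lemmared1} is the main obstacle.
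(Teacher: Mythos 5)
Your plan is essentially the paper's proof: reduce $\deg Y_{nm}$ to the leading Hilbert coefficient of the coordinate ring $A_n\cong k[L_{nm}]$ (the paper's $L^{[nm]}$, reindexed), verify $\dim Y_{nm}=\kappa$ for $n\gg 0$, get the easy upper bound from $(A_n)_\ell\subseteq L_{\ell nm}$ and Theorem~\ref{Theorem5}, and recover the lower bound from the Okounkov valuation semigroups and Theorem~\ref{ConeTheorem4}. This is exactly the route taken in Section~\ref{SecLim}, where Theorem~\ref{Theorem100} is proved jointly with Theorem~\ref{Theorem5}, using the semigroup inclusions
\[
\sum_{t=1}^{[k':k]}\#\bigl(\ell*S(L)^{(t)}_{nm}\bigr)\;\le\;\sum_{t=1}^{[k':k]}\#\bigl(S(L^{[nm]})^{(t)}_{\ell nm}\bigr)\;=\;\dim_k(A_n)_\ell\;\le\;\dim_kL_{\ell nm}.
\]

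The one place where you make the argument seem harder than it is: you flag, as ``the main obstacle,'' the need to show that $\ell*\Gamma^{(t)}_{nm}$ ``genuinely computes'' $\dim_k(A_n)_\ell$, and you write the middle identity as an approximate equality $\dim_k(A_n)_\ell=\sum_t\#(\ell*\Gamma^{(t)}_{nm})$ up to lower order. That approximate equality is neither needed nor in general true: $\ell$-fold sums of valuation jumps of $L_{nm}$ only give a sub-semigroup of the jump set of $(A_n)_\ell$, since linear combinations of products can jump to higher values. What one actually uses is that (i) the exact filtration count $\dim_k(A_n)_\ell=\sum_t\#\bigl(S(L^{[nm]})^{(t)}_{\ell nm}\bigr)$ holds for the graded linear series $L^{[nm]}$ itself, and (ii) there is an inclusion $\ell*S(L)^{(t)}_{nm}\subseteq S(L^{[nm]})^{(t)}_{\ell nm}$. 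Inclusion (ii) is obtained precisely as in Lemma~\ref{Lemmaproj1} (the global analogue of Lemma~\ref{Lemmared1}, already proved in the paper over an arbitrary base field): take $f_1,\dots,f_t\in L_{nm}\cap I_{\gamma_1}$ independent modulo $I_{\gamma_1}^+$, take $g_i\in L_{nm}$ with $\nu(g_i)=\gamma_i$ for $2\le i\le\ell$, and multiply; the products $f_jg_2\cdots g_\ell$ lie in $(A_n)_\ell$ and remain independent modulo $I_{\gamma_1+\cdots+\gamma_\ell}^+$ because $L$ is a domain. This yields the one-sided bound $\dim_k(A_n)_\ell\ge\sum_t\#(\ell*S(L)^{(t)}_{nm})$, which combined with Theorem~\ref{ConeTheorem4} is exactly what closes the argument. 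So there is no open obstacle; your sketch is correct once you replace the purported approximate equality with the inclusion, as the paper does.
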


Letting $t$ be an indeterminate, $\deg(Y_{nm})$ is the multiplicity of the  graded $k$-algebra $k[L_{nm}t]$ (with elements of $L_{nm}t$ having degree 1).

The proof by  Kaveh and Khovanskii  actually is valid for a variety $X$ over an arbitrary field $k$,
 with the additional assumption that there exists a valuation $\nu$  of the function field $k(X)$ of $X$ 
such that the value group $\Gamma_{\nu}$ of $\nu$ is isomorphic to $\ZZ^d$  and the residue field $V_{\nu}/m_{\nu}=k$. 
The existence of such a valuation is always true if $k$ is algebraically closed. It is however a rather special condition over non closed fields, as is shown by the following proposition.

\begin{Proposition}\label{Prop1} Suppose that $X$ is a $d$-dimensional projective variety over a field $k$. Then there exists a valuation $\nu$ of the function field $k(X)$ of $X$ 
such that the value group $\Gamma_{\nu}$ of $\nu$ is isomorphic to $\ZZ^d$  and the residue field $V_{\nu}/m_{\nu}=k$ if and only if 
there exists a birational morphism $X'\rightarrow X$ of projective varieties such that there exists a nonsingular (regular) $k$-rational point $Q'\in X'$.
\end{Proposition}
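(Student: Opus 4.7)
The plan is to prove the two implications separately.

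For ($\Leftarrow$), given $X' \to X$ and a nonsingular $k$-rational point $Q' \in X'$, I would exhibit $\nu$ by a lex-order recipe. Set $R = \mathcal O_{X',Q'}$, a regular local ring of dimension $d$ with residue field $k$, so its completion is $\hat R \iso k[[y_1,\ldots,y_d]]$ for a regular system of parameters $y_1,\ldots,y_d$. For $0 \ne f = \sum_{\a \in \NN^d} c_\a y^{\a}$ in $\hat R$, define $\nu(f)$ to be the lex-minimal multi-index $\a$ with $c_\a \ne 0$, and extend to the quotient field of $\hat R$ by $\nu(f/g) = \nu(f) - \nu(g)$. Because the leading (lex-minimal) exponent of a product of power series is the sum of the leading exponents of the factors, this is a valuation. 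Restricting along $k(X) = k(X') = Q(R) \hookrightarrow Q(\hat R)$ gives a valuation whose value group is a subgroup of $\ZZ^d$ containing each $\nu(y_i) = e_i$, hence equal to $\ZZ^d$ with lex order, and whose residue of any quotient of value zero is the ratio of leading coefficients, lying in $k$, so that $V_\nu/m_\nu = k$.

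For ($\Rightarrow$), suppose $\nu$ has value group isomorphic to $\ZZ^d$ and residue field $k$. Then $\mathrm{rat.rank}(\Gamma_\nu) = d$ and $\mathrm{tr.deg}_k(V_\nu/m_\nu) = 0$, so Abhyankar's inequality is saturated and $\nu$ is an Abhyankar valuation of maximal rational rank with trivial residue extension. By the valuative criterion of properness $\nu$ has a unique center on $X$, and on any projective birational model $X_1 \to X$ the local ring $A = \mathcal O_{X_1,P}$ at the center is dominated by $V_\nu$. Applying Abhyankar's inequality to the dominant pair $A \subset V_\nu$ gives $d = \mathrm{rat.rank}(\Gamma_\nu) \le \dim A$, while $\dim A \le d$ since $X_1$ is $d$-dimensional; hence $\dim A = d$ and $P$ is a closed point. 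The residue field $A/m_A$ embeds into $V_\nu/m_\nu = k$ and contains $k$, so $A/m_A = k$: the center is automatically $k$-rational on every projective birational model.

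What remains is to arrange that the center is regular on some such model. I would choose $f_1,\ldots,f_d \in V_\nu$ whose $\nu$-values form a $\ZZ$-basis of $\Gamma_\nu \iso \ZZ^d$ and invoke the classical local uniformization theorem in this special case: a sequence of projective blow-ups along the centers of $\nu$ produces an $X'$ on which $f_1,\ldots,f_d$ restrict to a regular system of parameters at the (automatically $k$-rational) center, yielding the desired nonsingular point $Q'$. This appeal to local uniformization for Abhyankar valuations of maximal rational rank with trivial residue extension (which holds in arbitrary characteristic, as no residue-field inseparability can arise here) is the main obstacle of the proof; every other step is a formal dimension count.
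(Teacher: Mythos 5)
Your $\Leftarrow$ direction is exactly the paper's: take the regular system of parameters $y_1,\ldots,y_d$ at $Q'$, note $R/m_R = k$ is a coefficient field, pass to $\hat R \cong k[[y_1,\ldots,y_d]]$, put the lex-leading-monomial valuation on $\hat R$, and restrict to $k(X)$; the residue-field computation (ratio of leading coefficients) is the same. For $\Rightarrow$ the core idea — local uniformization for Abhyankar valuations — is also the same, and your observation via Abhyankar's inequality that the center of $\nu$ on any projective birational model of $X$ is automatically a closed $k$-rational point is a clean remark the paper does not spell out. The only imprecision is in the last step: you invoke local uniformization in the strong form of a sequence of projective blow-ups of $X$ along $\nu$-centers, which would directly hand you a birational \emph{morphism} $X' \to X$. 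The reference the paper uses (Knaf--Kuhlmann, Theorem 1.1, cited as \cite{KKu}) is not stated in blow-up form; it provides a regular local ring $R$, essentially of finite type over $k$ with fraction field $k(X)$, dominated by $V_\nu$, and — crucially — containing any prescribed finite subset $Z$ of $V_\nu$. The paper takes $Z$ to be a finite generating set of $\mathcal O_{X,Q}$ where $Q$ is the center of $\nu$ on $X$, so that $R$ dominates $\mathcal O_{X,Q}$, places $R$ as the local ring of a closed point $Q'$ on a projective model $X''$, and then passes to the graph $X'$ of the birational correspondence $X'' \dashrightarrow X$; near $Q'$ this correspondence is a morphism, so $X' \to X''$ is an isomorphism near $Q'$ and $X' \to X$ is the required birational morphism with regular $k$-rational point. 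So you should either cite a blow-up form of local uniformization for Abhyankar places (which would need justification in arbitrary characteristic) or replace that step by the graph-of-correspondence argument, which is what the ``include $Z$'' clause of \cite{KKu} is there for.
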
 

\begin{proof}First suppose  there exists a valuation $\nu$ of the function field $k(X)$ of $X$ 
such that the value group $\Gamma_{\nu}$ of $\nu$ is isomorphic to $\ZZ^d$ as a group and with residue field $V_{\nu}/m_{\nu}=k$. Then $\nu$ is an ``Abhyankar valuation''; that is 
$$
\mbox{trdeg}_kk(X)=d=0+d= \mbox{trdeg}_kV_{\nu}/m_{\nu}+\mbox{rational rank }\Gamma_{\nu},
$$
with $k=V_{\nu}/m_{\nu}$, so there exists a local uniformization of $\nu$ by \cite{KKu}. Let $Q$ be the center of $\nu$ on $X$, so that $V_{\nu}$ dominates $\mathcal O_{X,Q}$. $\mathcal O_{X,Q}$ is a localization of a $k$-algebra $k[Z]$ where $Z\subset V_{\nu}$ is a finite set. By Theorem 1.1 \cite{KKu}, there exists a regular local ring $R$ which is essentially of finite type over $k$ with quotient field $k(X)$ such that $V_{\nu}$ dominates $R$ and $Z\subset R$. Since $k[Z]\subset R$ and $V_{\nu}$ dominates $\mathcal O_{X,Q}$, we have that $R$ dominates $\mathcal O_{X,Q}$. The residue field $R/m_R=k$ since $V_{\nu}$ dominates $R$.
There exists a projective $k$-variety $X''$ such that
$R$ is the local ring of a closed $k$-rational point $Q'$ on $X''$, and the birational map $X''\dashrightarrow X$ is a morphism in a neighborhood of $Q'$. Let $X'$ be the graph of the birational correspondence between $X''$ and $X$.
Since $X''\dashrightarrow X$ is a morphism in a neighborhood of $Q'$, the projection of $X'$ onto $X''$ is an isomorphism in a neighborhood of $Q'$. We can thus
identify $Q'$ with a nonsingular $k$-rational point of $X'$.

Now suppose that there exists a birational morphism $X'\rightarrow X$ of projective varieties such that there exists a nonsingular  $k$-rational point $Q'\in X'$. 

Choose a regular system of parameters $y_1,\ldots, y_d$ in $R=\mathcal O_{X',Q'}$. $R/m_R=k(Q')=k$, so $k$ is  a coefficient field of $R$.
 We have that $\hat R=k[[y_1,\ldots,y_d]]$. We define a valuation $\hat \nu$ dominating $\hat R$ by stipulating that
 \begin{equation}
\hat\nu(y_i)= e_i\mbox{ for $1\le i\le d$}
\end{equation}
where $\{e_i\}$ is the standard basis of the totally ordered  group $(\ZZ^d)_{\rm lex}$, and
 $\hat\nu(c)=0$ if $c$ is a nonzero element of $k$.
 
If $f\in \hat R$ and $f=\sum c_{i_1,\ldots,i_d}y_1^{i_1}\cdots y_d^{i_d}$ with $c_{i_1,\ldots,i_d}\in k$, then 
$$
\hat \nu(f)=\min\{\nu(y_1^{i_1}\cdots y_d^{i_d})\mid c_{i_1,\ldots,i_d}\ne0\}.
$$
We let $\nu$ be the valuation of the function field $k(X)$ which is obtained by restricting $\nu$. The value group of $\nu$ is $(\ZZ^d)_{\rm lex}$.

Suppose that $h$ is in $k(X)$ and $\nu(h)=0$. Write $h=\frac{f}{g}$ where $f,g\in R$ and $\nu(f)=\nu(g)$. Thus in $\hat R$,
we have expansions $f=\alpha y_1^{i_1}\cdots y_d^{i_d} +f'$, $g=\beta y_1^{i_1}\cdots y_d^{i_d} +g'$ where $\alpha,\beta$ are nonzero elements of $k$, $\nu(y_1^{i_1}\cdots y_d^{i_d})=\nu(f)=\nu(g)$
and $\nu(f')>\nu(f)$, $\nu(g')>\nu(g)$. Let  $\gamma=\frac{\alpha}{\beta}$ in $k$. Computing $f-\gamma g$ in $\hat R$, we obtain that $\nu(f-\gamma g)>\nu(f)$, and thus the residue of $\frac{f}{g}$ in $V_{\nu}/m_{\nu}$ is equal to the residue of $\gamma$, which is in $k$. By our construction $k\subset V_{\nu}$. Thus the residue field $V_{\nu}/m_{\nu}=k$.
\end{proof}

We now proceed to prove Theorems \ref{Theorem5} and \ref{Theorem100}.

By Chow's Lemma, there exists a proper birational morphism $\phi:X'\rightarrow X$ which is an isomorphism over a dense open set, such that $X'$ is projective over $k$. Since $X$ is integral, we have an inclusion $\Gamma(X,\mathcal L^n)\subset \Gamma(X,\phi^*\mathcal L^n)$ for all $n$.
Thus $L$ is a graded linear series for $\phi^*\mathcal L$, on the projective variety $X'$. In this way, we can assume that $X$ is in fact projective over $k$.

By \cite{Z}, $X$ has a closed regular point $Q$ (even though there may be no points which are smooth over $k$ if $k$ is not perfect).
Let  $R=\mathcal O_{X,Q}$. $R$ is a $d$-dimensional regular local ring. Let $k'=k(Q)=R/m_R$.

Choose a regular system of parameters $y_1,\ldots, y_d$ in $R$. By a similar  argument to that of the proof of Theorem \ref{Theorem1}, we may
define a valuation $\nu$ of the function field $k(X)$ of $X$ dominating $R$, by stipulating that
 \begin{equation}\label{eq20}
\nu(y_i)= e_i\mbox{ for $1\le i\le d$}
\end{equation}
where $\{e_i\}$ is the standard basis of the totally ordered  group $\Gamma_{\nu}=(\ZZ^d)_{\rm lex}$, and
 $\nu(c)=0$ if $c$ is a unit in $R$. As  in the proof of Theorem \ref{Theorem1}, we have that the residue field of the valuation ring $V_{\nu}$ of $\nu$ is  $V_{\nu}/m_{\nu}=k(Q)=k'$.

$L$ is a graded linear series for some line bundle $\mathcal L$ on $X$.  Since $X$ is integral, $\mathcal L$ is isomorphic to an invertible sheaf
$\mathcal O_X(D)$ for some Cartier divisor $D$ on $X$. 
We can assume that $Q$ is not contained in the support of $D$, after possibly replacing $D$ with a Cartier divisor linearly equivalent to $D$.
We have an induced graded $k$-algebra isomorphism of section rings 
$$
\bigoplus_{n\ge 0}\Gamma(X,\mathcal L^n)\rightarrow \bigoplus_{n\ge 0}\Gamma(X,\mathcal O_X(nD))
$$
which takes $L$ to a graded linear series for $\mathcal O_X(D)$. Thus we may assume that $\mathcal L=\mathcal O_X(D)$.
 For all $n$, the restriction map followed by inclusion into $V_{\nu}$,
\begin{equation}\label{eqR3}
\Gamma(X,\mathcal L^n)\rightarrow \mathcal L_Q=\mathcal O_{X,Q}\subset V_{\nu}
\end{equation}
 is a 1-1 $k$-vector space homomorphism since $X$ is integral, and we have an induced $k$-algebra homomorphism (sending $t\mapsto 1$). 
$$
 L\rightarrow \mathcal O_{X,Q}\subset V_{\nu}.
 $$

Given a nonnegative element $\gamma$ in the  value group $\Gamma_{\nu}=(\ZZ^d)_{\rm lex}$ of $\nu$, we have associated valuation ideals $I_{\gamma}$ and $I_{\gamma}^+$ in $V_{\nu}$ defined by 
$$
I_{\gamma}=\{f\in V_{\nu}\mid \nu(f)\ge \gamma\}
$$
and
$$
I_{\gamma}^+=\{f\in V_{\nu}\mid \nu(f)>\gamma\}.
$$
Since $V_{\nu}/m_{\nu}=k'$,  we have  that $I_{\lambda}/I_{\lambda}^+\cong k'$ for all nonnegative elements $\lambda\in \Gamma_{\nu}$, so
\begin{equation}\label{eqR1}
\dim_k(I_{\gamma}/I_{\gamma}^+)= [k':k]< \infty
\end{equation}
for all non negative $\gamma \in \Gamma_{\nu}$.
For $1\le t$, let
$$
S(L)_n^{(t)}=\{\gamma\in \Gamma_{\nu}\mid \dim_k L_n\cap I_{\gamma}/L_n\cap I_{\gamma}^+\ge t\}.
$$
Since every element of $L_n$ has non negative value (as $L_n\subset V_{\nu}$), we have by (\ref{eqR1}) and  (\ref{eqR3}) that 
\begin{equation}\label{eqR2}
\dim_kL_n=\sum_{t=1}^{[k':k]}\#(S(L)_n^{(t)})
\end{equation}
for all $n$.
For $1\le t$, let 
$$
S(L)^{(t)}=\{(\gamma,n)|\gamma \in S(L)_n^{(t)}\}.
$$
We have inclusions of semigroups
$S(L)^{(t')}\subset S(L)^{(t)}$ if $t<t'$.

\begin{Lemma}\label{Lemmaproj1} Suppose that $t\ge 1$, $0\ne f\in L_i$, $0\ne g\in L_j$ and
$$
\dim_kL_i\cap I_{\nu(f)}/L_i\cap I_{\nu(f)}^+\ge t.
$$
Then
\begin{equation}\label{eqproj10}
\dim_k L_{i+j}\cap I_{\nu(fg)}/L_{i+j}\cap I_{\nu(fg)}^+\ge t.
\end{equation}
In particular,
 the $S(L)^{(t)}$  are subsemigroups of the  semigroup $\ZZ^{d+1}$ whenever $S(L)^{(t)}\ne \emptyset$. We have that $m(S(L)^{(t)})=m(S(L)^{(1)})$ and $q(S(L)^{(t)})=q(S(L)^{(1)})$ for all $t$ such that $S(L)^{(t)}\not\subset  \{0\}$.
\end{Lemma}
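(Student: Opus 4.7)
The first assertion is proved essentially verbatim as Lemma \ref{Lemmared1}, with the graded piece $L_i$ playing the role of the ideal $I_i$. Specifically, I would pick $f_1,\ldots,f_t\in L_i\cap I_{\nu(f)}$ whose images form a $k$-basis of the quotient (so that $\nu(f_s)=\nu(f)$ for every $s$), and show that $gf_1,\ldots,gf_t$ are $k$-linearly independent modulo $L_{i+j}\cap I_{\nu(gf)}^+$. Indeed, a relation $a_1gf_1+\cdots+a_tgf_t\in L_{i+j}\cap I_{\nu(gf)}^+$ forces $\nu\bigl(g(a_1f_1+\cdots+a_tf_t)\bigr)>\nu(gf)$ and hence $\nu(a_1f_1+\cdots+a_tf_t)>\nu(f)$ (since $\nu$ is a valuation and $\nu(g)$ is finite), contradicting the choice of the $f_s$. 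The semigroup property is then immediate: given $(\gamma,i),(\delta,j)\in S(L)^{(t)}$, pick any witnesses $f\in L_i$, $g\in L_j$ with $\nu(f)=\gamma$, $\nu(g)=\delta$ (available because $t\ge 1$), and the first assertion yields $(\gamma+\delta,i+j)\in S(L)^{(t)}$.

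For the equalities $m(S(L)^{(t)})=m(S(L)^{(1)})$ and $q(S(L)^{(t)})=q(S(L)^{(1)})$, the plan is to fix once and for all a base point $(\gamma_0,n_0)\in S(L)^{(t)}$ with $n_0>0$. Such a base point exists because $S(L)^{(t)}\not\subset\{0\}$ and any element of $S(L)^{(t)}$ at height $0$ is forced to equal the origin: nonzero elements of $L_0\subset\Gamma(X,\mathcal O_X)$ are units in the finite field extension $\Gamma(X,\mathcal O_X)$ of $k$, so they embed in $V_\nu$ as units of value $0$, forcing $\gamma=0$ whenever $(\gamma,0)$ lies in $S(L)^{(t)}$. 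Now for any $(\delta,m)\in S(L)^{(1)}$ with witness $g\in L_m$, multiplying the $t$ independent witnesses of $(\gamma_0,n_0)$ by $g$ and reapplying the first assertion shows $(\delta+\gamma_0,m+n_0)\in S(L)^{(t)}$; taking the difference with $(\gamma_0,n_0)\in S(L)^{(t)}$ in $G(S(L)^{(t)})$ yields $(\delta,m)\in G(S(L)^{(t)})$. Hence $G(S(L)^{(1)})\subset G(S(L)^{(t)})$ and dually $L(S(L)^{(1)})\subset L(S(L)^{(t)})$, and both reverse inclusions are trivial from $S(L)^{(t)}\subset S(L)^{(1)}$. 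So $G$, and therefore its projection to $\ZZ$, agree for $t$ and $1$, giving $m(S(L)^{(t)})=m(S(L)^{(1)})$; and $L$, and therefore $M=L\cap(\RR^d\times\RR_{\ge 0})$ and its boundary, agree, giving $q(S(L)^{(t)})=q(S(L)^{(1)})$.

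The only mild obstacle is the bookkeeping in the last paragraph: one must rule out the pathological case in which $S(L)^{(t)}$ consists entirely of height-$0$ points, which is what the observation about $\Gamma(X,\mathcal O_X)\hookrightarrow V_\nu$ is for. Once a base point with $n_0>0$ is secured, the translation trick $(\delta,m)\mapsto(\delta+\gamma_0,m+n_0)$ packages $S(L)^{(1)}$ into $S(L)^{(t)}$ up to a fixed shift, and both equalities follow formally.
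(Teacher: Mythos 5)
Your proof is correct, and for the final two equalities it takes a cleaner route than the paper. You observe that the translation trick (multiplying a height-positive witness of $S(L)^{(t)}$ against an arbitrary witness of $S(L)^{(1)}$, then subtracting the base point in the group) gives the inclusion $G(S(L)^{(1)})\subset G(S(L)^{(t)})$, and combined with the trivial reverse inclusion this yields \emph{equality of groups} $G(S(L)^{(t)})=G(S(L)^{(1)})$. Since both $m(S)=[\ZZ:\pi(G(S))]$ and $q(S)=\dim\partial M(S)$ (with $M(S)=L(S)\cap(\RR^d\times\RR_{\ge 0})$ and $L(S)$ the real span of $G(S)$) depend only on $G(S)$, both equalities drop out simultaneously. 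The paper instead proves the two equalities separately: $m$ by a modular argument (choosing $a\equiv 1\pmod{m(S(L)^{(t)})}$ so that $am(S(L)^{(1)})+bm(S(L)^{(t)})\in\pi(G(S(L)^{(t)}))$), and $q$ by a geometric cone argument (translating a set of generators of a full-dimensional subcone of $\mathrm{Con}(S(L)^{(1)})$ into $\mathrm{Con}(S(L)^{(t)})$). Your single argument is more economical and yields the stronger conclusion that the full groups agree, not merely their numerical invariants. Your justification that $S(L)^{(t)}\not\subset\{0\}$ guarantees a base point at positive height is also correct and supplies a detail the paper leaves implicit: nonzero elements of $L_0\subset\Gamma(X,\mathcal O_X)$ are units of a finite field extension of $k$, hence map to units of $R=\mathcal O_{X,Q}$ and so have valuation $0$, forcing any $(\gamma,0)\in S(L)^{(t)}$ to satisfy $\gamma=0$.
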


\begin{proof}
The proof of (\ref{eqproj10}) and that $S(L)^{(t)}$ are sub semigroups is similar to that of Lemma \ref{Lemmared1}.

Suppose that $S(L^{(t)})\not\subset  \{0\}$. $S(L)^{(t)}\subset S(L)^{(1)}$, so 
\begin{equation}\label{eqnr60}
m(S(L)^{(1)})\mbox{ divides }m(S(L)^{(t)})
\end{equation}
and
\begin{equation}\label{eqnr61}
q(S(L)^{(t)})\le q(S(L)^{(1)}).
\end{equation}

For all $a\gg 0$, $S(L)^{(1)}_{am(S(L)^{(1)})}\ne \emptyset$. In particular, we can take  $a\equiv 1\,((\mbox{mod }mS(L)^{(t)}))$. There exists $b>0$ such that
$S(L)^{(t)}_{bm(S(L)^{(t)})}\ne \emptyset$. By (\ref{eqproj10}), we have that 
$$
S(L)^{(t)}_{am(S(L)^{(1)})+bm(S(L)^{(t)})}\ne \emptyset.
$$
Thus $m(S(L)^{(1)})\in \pi(G(S(L)^{(t)}))$, and by (\ref{eqnr60}), $m(S(L)^{(t)})=m(S(L)^{(1)})$.

Let $q=q(S(L)^{(1)})$. There exists $n_1>0$ and $(\gamma_1,n_1),\ldots, (\gamma_q,n_1)\in S(L)^{(1)}_{n_1}$ 
 such that if $\mathcal C_1$ is the cone generated by $(\gamma_1,n_1),\ldots, (\gamma_q,n_1)$ in $\RR^{d+1}$, then 
 $\dim \mathcal C_1\cap(\RR^d\times \{1\})=q$. There exists $(\tau, n_2)\in S(L^{(t)})$ with $n_2>0$. Thus 
 $$
 (\tau+\gamma_1, n_1+n_2),\ldots, (\tau+\gamma_q,n_1+n_2)\in S(L)^{(t)}_{n_1+n_2}
 $$
 by (\ref{eqproj10}). Let $\mathcal C_2$ be the cone generated by 
 $$
 (\tau+\gamma_1, n_1+n_2),\ldots, (\tau+\gamma_q,n_1+n_2)
 $$
 in $\RR^{d+1}$. Then $\dim \mathcal C_2\cap(\RR^d\times\{1\})=q$, and $q\le q(S(L)^{(t)})$. Thus, by (\ref{eqnr61}), $q(S(L)^{(t)})=q(S(L)^{(1)})$.

\end{proof}

We have that $m=m(L)$ is the common value of  $m(S(L)^{(t)})$. Let $q(L)$ be the common value of $q(S(L)^{(t)})$ for  $S(L)^{(t)}\not\subset \{0\}$.

There exists a very ample Cartier divisor $H$ on $X$ (at the beginning of the proof we reduced to $X$ being projective) such that $\mathcal O_X(D)\subset \mathcal O_X(H)$ and the point $Q$ of $X$ (from the beginning of the proof)
is  not contained in the support of $H$. Let $A_n=\Gamma(X,\mathcal O_X(nH))$ and $A$ be the section ring
$A=\bigoplus_{n\ge 0}A_n$. After possibly replacing $H$ with a sufficiently high multiple of $H$, we may assume that $A$ is generated in degree 1 as a $k''=\Gamma(X,\mathcal O_X)$-algebra. $[k'':k]<\infty$ since $X$ is projective. The $k$-algebra homomorphism $L\rightarrow V_{\nu}$ defined after (\ref{eqR3}) extends to a $k$-algebra homomorphism
$L\subset A\rightarrow V_{\nu}$. Let
$$
T_n=\{\gamma\in \Gamma_{\nu}\mid A_n\cap I_{\gamma}/A_n\cap I_{\gamma}^+\ne 0\},
$$
and $T=\{(\gamma,n)\mid \gamma\in T_n\}$. 
$T$ is a subsemigroup of $\ZZ^{d+1}$ by the argument of Lemma \ref{Lemmaproj1}, and we have inclusions of semigroups $S^{(t)}\subset T$ for all $t$.

By our construction, $A$ is naturally a graded subalgebra of the graded algebra $\mathcal O_{X,Q}[t]$. Since $H$ is ample on $X$, we have that $A_{(0)}=k(X)$, where $A_{(0)}$ is the set of elements of degree 0 in the localization of $A$ at the set of nonzero homogeneous elements of $A$. Thus for $1\le i\le d$, there exists $f_i,g_i\in A_{n_i}$, for some $n_i$, such that $\frac{f_i}{g_i}=y_i$. Thus
$$
(e_i,0)=(\nu(y_i),0)=(\nu(f_i),n_i)-(\nu(g_i),n_i)\in G(T).
$$
for $1\le i\le d$. Since $A_1\ne 0$, we then have that $(0,1)\in G(T)$. Thus $G(T)=\ZZ^{d+1}$, so $L(T)=\RR^{d+1}$, $\partial M(T)=\RR^d\times\{0\}$ and $q(T)=d$ (with the notation of  Section \ref{SecCone} on cones and semigroups).
For all $n\gg 0$ we have a bound
$$
|T_n|\le \dim_kA_n=[k'':k]\dim_{k''}A_n= [k'':k]P_A(n)
$$
where $P_A(n)$ is the Hilbert polynomial of the $k''$-algebra $A$, which has degree $\dim X=d=q(T)$. Thus, by Theorem 1.18 \cite{KK}, $T$ is a strongly nonnegative semigroup. Since the $S(L)^{(t)}$ are subsemigroups of $T$, they are also strongly nonnegative,
 so by Theorem \ref{ConeTheorem3} and (\ref{eqR2}), we have that
\begin{equation}\label{eqnr80}
\lim_{n\rightarrow \infty}\frac{\dim_kL_{nm}}{n^{q(L)}}=\sum_{t=1}^{[k':k]}\lim_{n\rightarrow \infty}\frac{\#(S(L)_{nm}^{(t)})}{n^{q(L)}}
=\sum_{t=1}^{[k':k]}\frac{{\rm vol}_{q(L)}(\Delta(S(L)^{(t)}))}{{\rm ind}(S(L)^{(t)})}
\end{equation}
exists.

Let $Y_{pm}$ be the varieties defined in the statement of Theorem \ref{Theorem100}. Let $d(pm)=\dim Y_{pm}$. The coordinate ring of $Y_{pm}$ is the $k$-subalgebra $L^{[pm]}:= k[L_{pm}]$ of $L$ (but with the grading giving elements of $L_{pm}$ degree 1). The Hilbert polynomial $P_{Y_{pm}}(n)$ of $Y_{pm}$ (Section I.7 \cite{H} or Theorem 4.1.3 \cite{BH}) has the properties that
\begin{equation}\label{eqred90}
P_{Y_{pm}}(n)=\frac{\deg(Y_{pm})}{d(pm)!}n^{d(pm)}+\mbox{lower order terms}
\end{equation}
and
\begin{equation}\label{eqred61}
\dim_kL^{[pm]}_{npm}=P_{Y_{pm}}(n)
\end{equation}
for $n\gg 0$. We have that
\begin{equation}\label{eqnr81}
\lim_{n\rightarrow \infty}\frac{\dim_k(L^{[pm]})_{npm}}{n^{d(pm)}}=\frac{\deg(Y_{pm})}{d(pm)!}.
\end{equation}

Suppose that $t$ is such that $1\le t\le [k':k]$ and $S(L)^{(t)}\not\subset \{ 0\}$. By Lemma \ref{Lemmaproj1}, for $p$ sufficiently large, we have that $m(S(L^{[pm]})^{(t)})=mp$. Let $\mathcal C$ be the closed cone generated by $S(L)^{(t)}_{pm}$ in $\RR^{d+1}$. We also have that 
$$
\dim (\mathcal C\cap (\RR^d\times\{1\}))=\dim(\Delta(S(L)^{(t)})=q(L)
$$
 for $p$ sufficiently large (the last equality is by Lemma \ref{Lemmaproj1}). Since
$S(L)^{(t)}_{pm}=S(L^{[pm]})^{(t)}_{pm}$, we have that 
$$
\dim (\mathcal C\cap (\RR^d\times\{1\}))\le \dim(\Delta(S(L^{[pm]})^{(t)})\le \dim (\Delta(L)^{(t)}).
$$
Thus 
\begin{equation}\label{eqred30}
q(S(L^{[pm]})^{(t)})=q(L)
\end{equation}
for all $p$ sufficiently large.

By the definition of Kodaira-Iitaka dimension, we also have that
\begin{equation}\label{eqred31} 
\kappa(L^{[pm]})=\kappa(L)
\end{equation}
for $p$ sufficiently large.

Now by graded Noether normalization (Section I.7 \cite{H} or Theorem 1.5.17\cite{BH}), the finitely generated  $k$-algebra $L^{[pm]}$ satisfies
\begin{equation}\label{eqred63}
d(pm)=\dim Y_{pm}=\mbox{Krull dimension}(L^{[pm]})-1=\kappa(L^{[pm]}).
\end{equation}

 We have that
\begin{equation}\label{eqred62}
\frac{1}{[k':k]}\dim_k L^{[pm]}_{npm}\le \#(S(L^{[pm]})^{(1)}_{npm})\le \dim_kL^{[pm]}_{npm}
\end{equation}
for all  $n$. $S(L^{[pm]})^{(1)}$ is strongly nonnegative since $S(L^{[pm]})^{(1)}\subset S(L)^{(1)}$ (or since $L^{[pm]}$ is a finitely generated $k$-algebra).
It follows from Theorem \ref{ConeTheorem3}, (\ref{eqred62}), (\ref{eqred61}), (\ref{eqred90}) and (\ref{eqred63})  that 
\begin{equation}\label{eqred91}
q(S(L^{[pm]})^{(1)})=d(pm)=\kappa(L^{[pm]}).
\end{equation}
From (\ref{eqred30}), (\ref{eqred91}) and (\ref{eqred31}), we have that 
\begin{equation}\label{eqnr85}
q(L)=\kappa(L)=\kappa.
\end{equation}

Theorem \ref{Theorem5} now follows from (\ref{eqnr80}) and (\ref{eqnr85}).
We now prove Theorem \ref{Theorem100}.  For all $p$, we have inequalities
$$
\sum_{t=1}^{[k':k]}\#(n*S(L)^{(t)}_{mp})\le \sum_{t=1}^{[k':k]}\#(S(L^{[mp]})^{(t)}_{nmp})\le \sum_{t=1}^{[k':k]}\#(S(L)_{nmp}^{(t)}).
$$
The second term in the inequality is $\dim_k(L^{[pm]})_{nmp}$ and the third term is $\dim_kL_{nmp}$.
Dividing by $n^{\kappa}p^{\kappa}$, and taking the limit as $n\rightarrow \infty$, we obtain from 
Theorem \ref{ConeTheorem4}, (\ref{eqnr85}) and (\ref{eqnr80}) for the first term and (\ref{eqnr81}), (\ref{eqred31}) and (\ref{eqred63}) for the second term,
 that
for given $\epsilon >0$, we can take $p$ sufficiently large that
$$
\lim_{n\rightarrow\infty}\frac{\dim_kL_{nm}}{n^{\kappa}}-\epsilon\le \frac{\deg(Y_{pm})}{\kappa!p^{\kappa}}\le 
\lim_{n\rightarrow\infty}\frac{\dim_kL_{nm}}{n^{\kappa}}.
$$
Taking the limit as $p$ goes to infinity then proves Theorem \ref{Theorem100}.

\vskip .2truein

\begin{Theorem}\label{FApp}(Fujita Approximation)
Suppose that $D$ is a big Cartier divisor on a complete variety $X$  of dimension $d$ over a field $k$, and $\epsilon>0$ is given. Then there exists a projective variety $Y$ with a birational morphism $f:Y\rightarrow X$, a nef and big $\QQ$-divisor $N$ on $Y$, and an effective $\QQ$-divisor $E$ on $Y$ such that there exists $n\in \ZZ_{>0}$ so that $nD$, $nN$ and $nE$ are Cartier divisors with
 $f^*(nD)\sim nN+nE$, where $\sim$ denotes linear equivalence, and
 $$
 {\rm vol}_Y(N)\ge {\rm vol}_X(D)-\epsilon.
 $$
 \end{Theorem}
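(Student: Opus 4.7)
The plan is to realise $nN$ and $nE$, for $n=pm(L)$ with $p$ large, as the mobile and fixed parts of the pullback of $|nD|$ to a projective birational model $f\colon Y\to X$ that resolves the base locus of $|nD|$, and to deduce the volume estimate from Theorem~\ref{Theorem100} applied to the full section ring of $D$. By Chow's lemma we may assume $X$ is projective: a projective birational cover of $X$ dominates $X$, and pulling back does not change $\mathrm{vol}(D)$ on the integral variety $X$ (global sections of $nD$ inject under pullback, and the reverse bound comes from the projection formula on the birational cover). Set
\[
L=\bigoplus_{n\ge 0}\Gamma(X,\mathcal O_X(nD)),\qquad m=m(L).
\]
Since $D$ is big we have $\kappa(L)=d=\dim X$, so Theorem~\ref{Theorem100} applies with $\kappa=d$; because $L_N=0$ whenever $m\nmid N$, a direct comparison of the relevant limits (which by Theorem~\ref{Theorem5} do exist) yields
\[
\mathrm{vol}_X(D)=\frac{d!}{m^d}\lim_{k\to\infty}\frac{\dim_k L_{km}}{k^d}=\lim_{p\to\infty}\frac{\deg(Y_{pm})}{(pm)^d}.
\]

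Given $\epsilon>0$, fix $p$ large enough that $\deg(Y_{pm})/(pm)^d\ge\mathrm{vol}_X(D)-\epsilon$, and set $n=pm$. Let $\mathfrak b\subset\mathcal O_X$ be the base ideal of $L_n$ and let $f\colon Y\to X$ be the normalised blow-up of $\mathfrak b$, a projective birational morphism. The pullback $f^{-1}\mathfrak b\cdot\mathcal O_Y=\mathcal O_Y(-F)$ is invertible, producing a Cartier decomposition
\[
f^*(nD)=M+F
\]
in which $|M|$ is base-point free (so $M$ is nef) and $F\ge 0$ is the fixed part of $f^*|nD|$. Define the $\QQ$-divisors $N=\tfrac{1}{n}M$ and $E=\tfrac{1}{n}F$; then $nN$, $nE$, $nD$ are Cartier, $f^*(nD)\sim nN+nE$, and $N$ is nef.

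To bound $\mathrm{vol}_Y(N)$ from below, let $s_F\in\Gamma(Y,\mathcal O_Y(F))$ be the canonical section cutting out $F$. Every $s\in L_n$ pulls back to a section of $M+F$ divisible by $s_F$ (since $F$ is the common zero divisor of these pullbacks), giving $f^*s=s_F\cdot t(s)$ with $t(s)\in\Gamma(Y,\mathcal O_Y(M))$. Extending multiplicatively yields, for every $k\ge 1$, an injection of $k$-vector spaces
\[
L^{[n]}_{kn}\hookrightarrow\Gamma(Y,\mathcal O_Y(kM)),
\]
where $L^{[n]}$ denotes the subalgebra of $L$ generated by $L_n$. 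Combining this with $\dim_k L^{[n]}_{kn}=\deg(Y_n)k^d/d!+O(k^{d-1})$, which follows from (\ref{eqred61}) and (\ref{eqred90}), we obtain
\[
\mathrm{vol}_Y(N)=\frac{\mathrm{vol}_Y(M)}{n^d}\ge\frac{d!}{n^d}\lim_{k\to\infty}\frac{\dim_k L^{[n]}_{kn}}{k^d}=\frac{\deg(Y_{pm})}{(pm)^d}\ge\mathrm{vol}_X(D)-\epsilon,
\]
and in particular $N$ is big, completing the argument.

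The main obstacle is establishing the injection $L^{[n]}_{kn}\hookrightarrow\Gamma(Y,\mathcal O_Y(kM))$ rigorously. One must verify that $L^{[n]}_{kn}$, viewed as a quotient of $L_n^{\otimes k}$ via multiplication in $L$, maps consistently to $\Gamma(Y,\mathcal O_Y(kM))$; this uses that $Y$ is integral (so $s_F$ is a non-zero-divisor and the factorisation $f^*s=s_F\cdot t(s)$ is unique) and, crucially, that $F$ is the \emph{full} fixed divisor of $f^*|nD|$, so that every pulled-back section of $L_n$ actually factors through $s_F$. A secondary delicate point is the identification $\mathrm{vol}_X(D)=\lim_p\deg(Y_{pm})/(pm)^d$ for arbitrary index $m(L)$, which requires Theorem~\ref{Theorem5} to control the arithmetic progressions on which $L_N$ is forced to vanish.
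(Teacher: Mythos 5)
Your argument is essentially the same route the paper takes: reduce to projective $X$ by Chow's lemma, use Theorem~\ref{Theorem100} and equation~(\ref{eqnr81}) to identify $\mathrm{vol}_X(D)$ with the limit of $\deg(Y_{n})/n^d$, then resolve the base ideal of $L_n$ to get a nef/effective decomposition of $f^*(nD)$ — this last step is exactly the content of Remark~3.4 of \cite{LM}, which the paper cites rather than rewrites. Your version spells out the \cite{LM} argument rather than deferring to it, but the mathematical content is identical. The injection $L^{[n]}_{kn}\hookrightarrow\Gamma(Y,\mathcal O_Y(kM))$ that you flag as the ``main obstacle'' is in fact standard and your sketch of it is correct: on the integral scheme $Y$ one has $f^*u = s_F^k\cdot(\text{section of }kM)$ for every $u$ that is a sum of $k$-fold products of elements of $L_n$, the quotient by $s_F^k$ is unique since $s_F$ is a nonzerodivisor, and injectivity follows because $f^*$ is injective on sections.

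The one unnecessary complication is that you carry the index $m=m(L)$ throughout and call its management a ``secondary delicate point.'' For a big Cartier divisor $D$ on a projective variety one always has $m(L)=1$: this is proven in the course of Theorem~\ref{Theorem14a} (tensoring $\mathcal L^{ne}\otimes\mathcal O_X(-H)$, which has a section for $n\gg 0$, with the globally generated sheaf $\mathcal L\otimes\mathcal O_X(fH)$ shows $\Gamma(X,\mathcal L^{nef+1})\neq 0$), and the paper's proof of the Fujita statement explicitly invokes $m=1$. Noting this at the outset removes the bookkeeping about arithmetic progressions and the appeal to Theorem~\ref{Theorem5} for the ``identification of limits,'' which is the only part of your write-up that reads as genuinely uncertain.
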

 
 \begin{proof} By taking a Chow cover by a birational morphism, which is an isomorphism in codimension one, we may assume that $X$ is projective over $k$. This theorem was proven over an algebraically closed field of characteristic zero by Fujita \cite{Fuj} (c.f. Theorem 10.35 \cite{La}). It is proven in Theorem 3.4 and Remark 3.4 \cite{LM} over an arbitrary algebraically closed field (using Okounkov bodies) and by Takagi \cite{T} using de Jong's alterations \cite{DJ}. 
 
We give a proof for an arbitrary field.  The conclusions of Theorem 3.3 \cite{LM} over an arbitrary field follow from Theorem \ref{Theorem100} and formula (\ref{eqnr81}), taking  the $L_n$ of Theorem \ref{Theorem100} to be the $H^0(X,\mathcal O_X(nD))$ of Theorem 3.3 \cite{LM}. $m=1$ in Theorem \ref{Theorem100} since $D$ is big.  Then the $V_{k,p}$ of Theorem 3.3 \cite{LM} are the $L_{kp}^{[p]}$ of the proof of Theorem \ref{Theorem100}.

The proof of Remark 3.4 \cite{LM} is valid over an arbitrary field, using the strengthened form of Theorem 3.3 \cite{LM} given above, from which the
approximation theorem follows.
 
\end{proof}

\section{Limits on reduced proper schemes over a field}\label{SecRed}

Suppose that $X$ is a proper scheme over a field $k$ and $L$ is a graded linear series for a line bundle $\mathcal L$ on $X$.
Suppose that $Y$ is a closed subscheme of $X$. Set $\mathcal L|Y=\mathcal L\otimes_{\mathcal O_X}\mathcal O_Y$. Taking global sections of the natural surjections
$$
\mathcal L^n\stackrel{\phi_n}{\rightarrow} (\mathcal L|Y)^n\rightarrow 0,
$$
for $n\ge 1$ we have  induced short exact sequences of $k$-vector spaces
\begin{equation}\label{eq54}
0\rightarrow K(L,Y)_n\rightarrow L_n\rightarrow (L|Y)_n\rightarrow 0,
\end{equation}
where 
$$
(L|Y)_n:=\phi_n(L_n)\subset \Gamma(Y,({\mathcal L}|Y)^n)
$$
 and $K(L ,Y)_n$ is the kernel of $\phi_n|L_n$. Defining $K(L,Y)_0=k$ and $(L|Y)_0=\phi_0(L_0)$, we have that
$L|Y=\bigoplus_{n\ge 0}(L|Y)_n$ is a graded linear series for $\mathcal L|Y$ and $K(L,Y)=\bigoplus_{n\ge 0}K(L,Y)_n$ is a graded linear series for $\mathcal L$.

\begin{Lemma}\label{Lemma50a}  Suppose that $X$ is a reduced proper scheme over a field $k$ and $X_1,\ldots,X_s$ are the irreducible components of $X$. Suppose that $L$ is a graded linear series on $X$. Then 
$$
\kappa(L)=\max\{\kappa(L|X_i)\mid 1\le i\le s\}.
$$
\end{Lemma}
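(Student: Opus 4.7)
My plan is to prove the equality by establishing the two inequalities separately, with the key tool being an injective graded $k$-algebra homomorphism $L \hookrightarrow \prod_{i=1}^{s}(L|X_i)$.

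First I would establish this injectivity. Since $X$ is reduced with irreducible components $X_1,\ldots,X_s$, the intersection of the ideal sheaves $\Sect_i \mathcal I_{X_i}$ is zero (locally, on an affine open $\Spec(A)\subset X$, this just says $\Sect_i \mathfrak p_i = \sqrt{(0)} = 0$ with $\mathfrak p_i$ the minimal primes of the reduced ring $A$). Tensoring the injection $\mathcal O_X \hookrightarrow \bigoplus_i \mathcal O_{X_i}$ with the flat line bundle $\mathcal L^n$ and taking global sections yields an injection $\Gamma(X,\mathcal L^n) \hookrightarrow \bigoplus_i \Gamma(X_i,(\mathcal L|X_i)^n)$, which restricts to an injection $L_n \hookrightarrow \bigoplus_i (L|X_i)_n$. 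Assembling over $n$ gives the claimed injective graded $k$-algebra map.

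For $\kappa(L) \ge \max_i \kappa(L|X_i)$, I would lift: given algebraically independent homogeneous elements $g_1,\ldots,g_m \in L|X_i$ of positive degree, choose any homogeneous lifts $\tilde g_j \in L$ of the same degree (possible since $L_n \to (L|X_i)_n$ is surjective by definition). Any relation $P(\tilde g_1,\ldots,\tilde g_m)=0$ in $L$ would restrict to $P(g_1,\ldots,g_m)=0$ in $L|X_i$, forcing $P=0$. Hence $\sigma(L) \ge \sigma(L|X_i)$, giving the inequality. The case $\kappa(L|X_i) = -\infty$ is trivial.

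The reverse inequality $\kappa(L) \le \max_i \kappa(L|X_i)$ is the one requiring the injectivity from the first paragraph, and is the main content. If $\kappa(L) = -\infty$ then by part 4) of Lemma \ref{LemmaKI} we have $L_n = 0$ for all $n > 0$ (using that $X$ is reduced), so each $(L|X_i)_n = 0$ too, giving $\kappa(L|X_i) = -\infty$. Otherwise, set $\kappa=\kappa(L)\ge 0$ and pick homogeneous algebraically independent $f_1,\ldots,f_{\kappa+1}\in L$ of positive degree. Let $\pi_i \colon L \to L|X_i$ be the restriction. Suppose, for contradiction, that for every $i$ the images $\pi_i(f_1),\ldots,\pi_i(f_{\kappa+1})$ satisfy a nontrivial algebraic relation $P_i(\pi_i(f_1),\ldots,\pi_i(f_{\kappa+1}))=0$ with $0\ne P_i\in k[x_1,\ldots,x_{\kappa+1}]$. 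Set $P=\prod_{i=1}^s P_i$, which is nonzero since $k[x_1,\ldots,x_{\kappa+1}]$ is a domain. Then for each $i$,
$$
\pi_i\bigl(P(f_1,\ldots,f_{\kappa+1})\bigr)=\prod_{j=1}^s P_j(\pi_i(f_1),\ldots,\pi_i(f_{\kappa+1}))=0,
$$
because the $j=i$ factor vanishes. By the injectivity of $L\hookrightarrow \prod_i (L|X_i)$, this forces $P(f_1,\ldots,f_{\kappa+1})=0$ in $L$, contradicting algebraic independence of $f_1,\ldots,f_{\kappa+1}$. Thus for some $i$ the elements $\pi_i(f_1),\ldots,\pi_i(f_{\kappa+1})$ are algebraically independent in $L|X_i$, homogeneous of positive degree, yielding $\kappa(L|X_i)\ge \kappa$.

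The main obstacle is the reverse inequality, and specifically the observation that the product trick $P=\prod_i P_i$ converts "dependent on every component" into a single relation that can be pulled back via the injection. Everything else is bookkeeping around the definition of $\sigma(L)$ and the edge case $\kappa(L)=-\infty$.
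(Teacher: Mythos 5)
Your proof is correct and follows essentially the same path as the paper's: both use the injection $L \hookrightarrow \bigoplus_i L|X_i$ coming from reducedness of $X$, lift algebraically independent elements along the surjections $L \to L|X_i$ for one inequality, and for the other observe that the kernels of $k[z_1,\ldots,z_{\kappa+1}] \to L|X_i$ cannot all be nonzero. Your product trick $P = \prod_i P_i$ is exactly the inner mechanism behind the paper's statement that a finite intersection of nonzero primes in a domain is nonzero, so the two arguments are the same in substance.
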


\begin{proof} $L$ is a graded linear series  for a line bundle $\mathcal L$ on $X$.
Let $X_1,\ldots,X_s$ be the irreducible components of $X$. Since $X$ is reduced, 
we have a natural inclusion
$$
0\rightarrow \mathcal O_X\rightarrow \bigoplus_{i=1}^s \mathcal O_{X_i}.
$$
There is a  natural inclusion of $k$-algebras
$$
\bigoplus_{n\ge 0}\Gamma(X,\mathcal L^n)\rightarrow  \bigoplus_{i=1}^s\left(\bigoplus_{n\ge 0}\Gamma(X_i,\mathcal L^n\otimes_{\mathcal O_X}\mathcal O_{X_i})\right),
$$ 
which induces an inclusion of $k$-algebras
\begin{equation}\label{eq42}
L\rightarrow \bigoplus_{i=1}^sL|X_i.
\end{equation}
Suppose that $i$ is such that $1\le i\le s$. Set $t=\kappa(L|X_i)$. Then by the definition of Kodaira-Iitaka dimension, there exists a graded inclusion of $k$-algebras $\phi:k[z_1,\ldots,z_t]\rightarrow L|X_i$ where
$k[z_1,\ldots,z_t]$ is a graded polynomial ring. Since the projection $L\rightarrow L|X_i$ is a surjection, we have a lift of $\phi$ to a graded $k$-algebra homomorphism into $L$, which is 1-1, so that $\kappa(L)\ge t$. Thus
$$
\kappa(L)\ge \max\{\kappa(L|X_i)\mid 1\le i\le s\}.
$$
Let $\kappa=\kappa(L)$. Then there exists a 1-1 $k$-algebra homomorphism $\phi:k[z_1,\ldots,z_q]\rightarrow L$ where
$k[z_1,\ldots,z_q]$ is a positively graded polynomial ring. Let $\phi_i:k[z_1,\ldots,z_q]\rightarrow L|X_i$ be the induced homomorphisms, for $1\le i\le s$.
Let $\mathfrak p_i$ be the kernel of $\phi_i$. Since (\ref{eq42}) is 1-1, we have that $\mathfrak p_1\cap \cdots \cap \mathfrak p_s=(0)$.
Since $k[z_1,\ldots,z_q]$ is a domain, this implies that some $\mathfrak p_i=(0)$. Thus $\phi_i$ is 1-1 and we have that $\kappa(L|X_i)\ge \kappa(L)$.

\end{proof}

\begin{Theorem}\label{Theorem18} Suppose that $X$ is a reduced proper scheme over a  field $k$. 
Let $L$ be a graded linear series on $X$ with Kodaira-Iitaka dimension  $\kappa=\kappa(L)\ge 0$.  
 Then there exists a positive integer $r$ such that 
$$
\lim_{n\rightarrow \infty}\frac{\dim_k L_{a+nr}}{n^{\kappa}}
$$
exists for any fixed $a\in \NN$.
\end{Theorem}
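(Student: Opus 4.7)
The plan is to reduce to Theorem \ref{Theorem5} via a filtration of $L$ by the irreducible components of $X$. Write $X = X_1 \cup \cdots \cup X_s$ for the decomposition into reduced irreducible components, and set $Y_i = X_1 \cup \cdots \cup X_i$ with reduced structure, so that $Y_0 = \emptyset$ and $Y_s = X$. For $0 \le i \le s$, let $K^{(i)}_n \subset L_n$ be the subspace of sections vanishing on $Y_i$; these assemble into a graded ideal $K^{(i)} := \bigoplus_n K^{(i)}_n$ of $L$, with $K^{(0)} = L$ and $K^{(s)} = 0$ (the latter because $X$ is reduced). Restriction to $X_i$ yields short exact sequences
$$
0 \to K^{(i)}_n \to K^{(i-1)}_n \to M^{(i)}_n \to 0,
$$
where $M^{(i)}_n \subset \Gamma(X_i,(\mathcal L|X_i)^n)$ is the image of $K^{(i-1)}_n$. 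Since $K^{(i-1)}$ is closed under multiplication in $L$, the graded space $M^{(i)} := \bigoplus_n M^{(i)}_n$ is closed under multiplication in the section ring of $\mathcal L|X_i$, and after adjoining $1$ in degree $0$ (if absent) becomes a graded linear series on the irreducible proper variety $X_i$. Telescoping the filtration gives
$$
\dim_k L_n = \sum_{i=1}^s \dim_k M^{(i)}_n \quad \text{for all } n\ge 1.
$$

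Next, for each $i$ set $\kappa_i := \kappa(M^{(i)})$. If $\kappa_i \ge 0$, Theorem \ref{Theorem5} applied to $M^{(i)}$ on the variety $X_i$ produces an index $m_i := m(M^{(i)})$ such that $\lim_{n\to\infty} \dim_k M^{(i)}_{nm_i}/n^{\kappa_i}$ exists; a routine shift/rescaling argument (using that $\dim_k M^{(i)}_j = 0$ whenever $m_i \nmid j$) upgrades this to existence of $\lim_{n\to\infty} \dim_k M^{(i)}_{a+nr}/n^{\kappa_i}$ for any fixed $a \in \NN$ and any positive multiple $r$ of $m_i$. If $\kappa_i = -\infty$, then Lemma \ref{LemmaKI}(4) applied on the reduced scheme $X_i$ gives $M^{(i)}_n = 0$ for all $n \ge 1$, so such indices contribute nothing. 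Taking $r$ to be a common multiple of all the $m_i$ with $\kappa_i \ge 0$ makes all these limits exist simultaneously.

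It remains to identify the growth rate. Each $M^{(i)}$ is (after adjoining $1$) a graded subseries of $L|X_i$, so $\kappa_i \le \kappa(L|X_i) \le \kappa$ by Lemma \ref{Lemma50a}. In the reverse direction, the lower bound $\dim_k L_{en} > \alpha n^{\kappa}$ from Lemma \ref{LemmaKI}(3), combined with the displayed sum formula, forces $\max_{1\le i\le s}\kappa_i = \kappa$. Therefore
$$
\lim_{n\to\infty}\frac{\dim_k L_{a+nr}}{n^{\kappa}} = \sum_{i\,:\,\kappa_i = \kappa} \lim_{n\to\infty}\frac{\dim_k M^{(i)}_{a+nr}}{n^{\kappa}},
$$
since each index $i$ with $\kappa_i < \kappa$ contributes a term of order $n^{\kappa_i-\kappa}\to 0$. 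The right-hand side is a finite sum of existing limits, so the desired limit exists. The main technical point requiring care is verifying that each $M^{(i)}$ really inherits the structure of a graded linear series on $X_i$ via the short exact sequences (\ref{eq54}) applied to the filtration, together with the minor normalization of adjoining $1$ and the bookkeeping of the case $\kappa_i = -\infty$; beyond that, the argument is a formal reduction to the variety Theorem \ref{Theorem5}.
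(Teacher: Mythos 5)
Your proof is correct and takes essentially the same approach as the paper: you form the same successive-kernel filtration by restricting to the irreducible components $X_1,\dots,X_s$ (your $K^{(i)}$ are the paper's $M^i$ and your $M^{(i)}$ are the paper's $M^{i-1}|X_i$), telescope to get $\dim_k L_n=\sum_i \dim_k M^{(i)}_n$, and apply Theorem \ref{Theorem5} componentwise, discarding the terms with $\kappa_i<\kappa$ as negligible. The only minor differences are cosmetic: you take $r$ to be a common multiple of all $m_i$ with $\kappa_i\ge 0$ (the paper uses only the $i$ with $\kappa_i=\kappa$, which suffices since the smaller terms vanish after normalizing by $n^{\kappa}$ regardless of the arithmetic progression), and you spell out the $\kappa_i=-\infty$ case and the equality $\max_i\kappa_i=\kappa$ slightly more explicitly than the paper does.
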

The theorem says that 
$$
\lim_{n\rightarrow \infty}\frac{\dim_k L_{n}}{n^{\kappa}}
$$
exists if $n$ is constrained to lie in an arithmetic sequence $a+br$ with $r$ as above, and for some fixed $a$. The conclusions of the theorem are a little weaker than the conclusions of Theorem \ref{Theorem5} for  varieties. In particular, the index $m(L)$ has little relevance on reduced but not irreducible schemes (as shown by the example after Theorem \ref{Theorem8} and Example \ref{Example3}.

\begin{proof}  Let $X_1,\ldots,X_s$ be the irreducible components of $X$. 
 Define graded linear series $M^i$ on $X$
 by $M^0=L$, $M^i=K(M^{i-1},X_i)$ for $1\le i\le s$. 
 By (\ref{eq54}), for $n\ge 1$, we have exact sequences of $k$-vector spaces 
 $$
 0\rightarrow (M^{j+1})_n=K(M^j,X_{j+1})_n\rightarrow M_n^j\rightarrow (M^j|X_{j+1})_n\rightarrow 0
 $$
 for $0\le j\le s-1$, and thus
 $$
 M_n^j={\rm Kernel}(L_n\rightarrow \bigoplus_{i=1}^j(L|X_i)_n)
 $$
 for $1\le j\le s$. The natural map $L\rightarrow \bigoplus_{i=1}^sL|X_i$ is an injection of $k$-algebras since 
 $X$ is reduced. Thus $M_n^s=(0)$, and
 \begin{equation}\label{eq71}
 \dim_kL_n=\sum_{i=1}^{s}\dim_k (M^{i-1}|X_i)_n
 \end{equation}
 for all $n$.
  Let $r=\mbox{LCM}\{m(M^{i-1}|X_i)\mid  \kappa(M^{i-1}|X_i)=\kappa(L)\}$. 
 The theorem now follows from Theorem \ref{Theorem5} applied to each of the $X_i$ with $\kappa(M^{i-1}|X_i)=\kappa(L)$ (we can start with an $X_1$ with $\kappa(L|X_1)=\kappa(L)$).
\end{proof}

\begin{Corollary}\label{Cor72}
Suppose that $X$ is a reduced projective scheme over a perfect field $k$. 
Let $L$ be a graded linear series on $X$ with $\kappa(L)\ge 0$. Then there exists a positive constant $\beta$ such that 
\begin{equation}\label{eq80}
\dim_kL_n<\beta n^{\kappa(L)}
\end{equation}
for all $n$. Further, there exists a positive constant $\alpha$ and a positive integer $m$ such that
\begin{equation}\label{eq81}
\alpha n^{\kappa(L)}<\dim_kL_{mn}
\end{equation}
for all positive integers $n$.
\end{Corollary}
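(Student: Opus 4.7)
The lower bound (\ref{eq81}) requires no new work: it is the content of Lemma \ref{LemmaKI}(3), which (under the sole assumption $\kappa(L)\ge 0$) supplies a positive integer $e$ and constant $\alpha>0$ with $\dim_kL_{en}>\alpha n^{\kappa(L)}$ for every $n\ge 1$; take $m=e$. Neither reducedness of $X$, projectivity, nor perfection of $k$ enters here. The substantive task is therefore the upper bound (\ref{eq80}), since the general bound of Lemma \ref{LemmaKI}(2) only gives $\dim_kL_n<\gamma n^d$, which is too weak when $\kappa(L)<d$.

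For (\ref{eq80}), the plan is to reduce to the case of a projective variety, following the device in the proof of Theorem \ref{Theorem18}. Let $X_1,\ldots,X_s$ be the irreducible components of $X$; since $X$ is reduced, each $X_i$ with its reduced scheme structure is integral, and as a closed subscheme of the projective $k$-scheme $X$ it is a projective variety over $k$. Setting $M^0=L$ and $M^i=K(M^{i-1},X_i)$ inductively, the short exact sequences (\ref{eq54}) together with the injectivity of the canonical map $L\hookrightarrow\bigoplus_{i=1}^sL|X_i$ (which uses that $X$ is reduced) produce the identity
$$
\dim_kL_n=\sum_{i=1}^s\dim_k(M^{i-1}|X_i)_n,
$$
exactly as in (\ref{eq71}). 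Since $(M^{i-1}|X_i)_n\subset (L|X_i)_n$, the bound reduces to controlling $\dim_k(L|X_i)_n$ on each projective variety $X_i$.

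For each $i$, if $\kappa(L|X_i)\ge 0$, Theorem \ref{Theorem5} ensures that $\lim_{n\to\infty}\dim_k(L|X_i)_{nm_i}/n^{\kappa(L|X_i)}$ exists (where $m_i=m(L|X_i)$), while $\dim_k(L|X_i)_n=0$ for $m_i\nmid n$. A bounded sequence together with this vanishing gives a constant $\beta_i>0$ with $\dim_k(L|X_i)_n\le \beta_i n^{\kappa(L|X_i)}$ for all $n\ge 1$ (after enlarging $\beta_i$ to absorb finitely many small values and the factor $m_i^{\kappa(L|X_i)}$). If instead $\kappa(L|X_i)=-\infty$, Lemma \ref{LemmaKI}(4), applied to the integral $X_i$, gives $(L|X_i)_n=0$ for $n>0$, so the $i$-th summand vanishes. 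Invoking Lemma \ref{Lemma50a}, which provides $\kappa(L|X_i)\le \kappa(L)$ for every $i$, and summing over $i$ yields $\dim_kL_n\le \beta n^{\kappa(L)}$ with $\beta=\sum_i\beta_i$, for all $n\ge 1$.

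There is no serious obstacle: the argument is essentially a repackaging of the reduction to irreducible components carried out in Theorem \ref{Theorem18}, combined with the boundedness implicit in Theorem \ref{Theorem5}. The step that must be handled carefully is converting the existence of the limit along the arithmetic sequence $nm_i$ into a uniform bound for all $n\ge 1$, but this is straightforward once one observes that $\dim_k(L|X_i)_n$ vanishes off the sequence. The perfect field hypothesis does not enter explicitly into this route; if it is needed anywhere, it will be to keep the irreducible components well-behaved relative to other statements in the section, but the dimension count above does not rely on it.
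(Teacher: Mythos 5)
Your proof is correct and takes essentially the same route as the paper: decompose via (\ref{eq71}), bound each summand $\dim_k(M^{i-1}|X_i)_n \le \dim_k(L|X_i)_n$ using (\ref{eq61}) on the variety $X_i$ together with $\kappa(L|X_i)\le\kappa(L)$ from Lemma \ref{Lemma50a}, and cite (\ref{eqKI2}) for the lower bound. You are also right that the perfect field hypothesis plays no role here; the paper's own argument never invokes it.
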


\begin{proof} Equation (\ref{eq80}) follows from (\ref{eq71}), since $\dim_k(M^{i-1}|X_i)\le \dim_k(L|X_i)$ for all $i$, and since (\ref{eq61}) holds on a variety. Equation (\ref{eq81}) is immediate from (\ref{eqKI2}).
\end{proof}

The following lemma is required for the construction of the next example. It follows from Theorem V.2.17 \cite{H} when $r=1$.  The lemma uses the notation of \cite{H}.

\begin{Lemma}\label{Lemma52} Let $k$ be an algebraically closed field, and write $\PP^1=\PP^1_k$.
Suppose that $r\ge 0$. Let $X=\PP(\mathcal O_{\PP^1}(-1)\bigoplus \mathcal O_{\PP^1}^r)$ with natural projection $\pi:X\rightarrow \PP^1$. Then the complete linear
system $|\Gamma(X,\mathcal O_X(1)\otimes \pi^*\mathcal O_{\PP^1}(1))|$ is base point free, and the only curve contracted by the induced morphism of $X$ is the curve $C$ which is the section of $\pi$ defined by the projection of $\mathcal O(-1)_{\PP^1}\bigoplus \mathcal O_{\PP^1}^r$ onto the first factor.
\end{Lemma}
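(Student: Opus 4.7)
The plan is to reinterpret the line bundle using the canonical identification $\PP(\mathcal{E}) = \PP(\mathcal{E} \otimes \mathcal{M})$ for any line bundle $\mathcal{M}$ on the base. Setting $\mathcal{F} = \mathcal{E} \otimes \mathcal{O}_{\PP^1}(1) = \mathcal{O}_{\PP^1} \oplus \mathcal{O}_{\PP^1}(1)^r$, we get $X = \PP(\mathcal{F})$ and $\mathcal{O}_X(1) \otimes \pi^*\mathcal{O}_{\PP^1}(1) = \mathcal{O}_{\PP(\mathcal{F})}(1)$. By the projection formula, $\Gamma(X, \mathcal{O}_X(1) \otimes \pi^*\mathcal{O}_{\PP^1}(1)) = \Gamma(\PP^1, \mathcal{F})$ has dimension $2r+1$, spanned by the canonical section $1$ of $\mathcal{O}_{\PP^1}$ together with sections $x_0^{(i)}, x_1^{(i)}$ in each copy of $\mathcal{O}_{\PP^1}(1)$, $1 \le i \le r$.

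For base-point freeness, I would note that $\mathcal{F}$ is globally generated (each summand is). The tautological surjection $\pi^*\mathcal{F} \twoheadrightarrow \mathcal{O}_{\PP(\mathcal{F})}(1)$, composed with the pullback of the evaluation map $\mathcal{O}_{\PP^1}^{2r+1} \twoheadrightarrow \mathcal{F}$, yields a surjection $\mathcal{O}_X^{2r+1} \twoheadrightarrow \mathcal{O}_{\PP(\mathcal{F})}(1)$ given by the $2r+1$ basis sections; this immediately forces the complete linear system to be base-point free.

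To identify the contracted locus, I would analyze the induced morphism $\phi\colon X \to \PP^{2r}$ fibrewise. On each fiber $\pi^{-1}(p) \cong \PP^r$ the sections restrict to linear forms, and by global generation of $\mathcal{F}$ these $2r+1$ forms span the full $(r+1)$-dimensional space of linear forms on the fiber; hence $\phi$ embeds each fiber isomorphically as a linear $\PP^r \subset \PP^{2r}$. In the affine coordinate $u = x_0/x_1$ on $\PP^1$, writing the coordinates on $\PP^{2r}$ as $Y_0, Z_1, Y_1, \ldots, Z_r, Y_r$, one computes
$$
\phi(u,[q_0:\cdots:q_r]) = [\,q_0 : uq_1 : q_1 : uq_2 : q_2 : \cdots : uq_r : q_r\,],
$$
so the image of the fiber over $u$ is the linear $\PP^r$ cut out by $\{Z_i = u Y_i\}_{i=1}^r$; the fiber over $u=\infty$ gives $\{Y_1 = \cdots = Y_r = 0\}$. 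Two fibers over distinct $u \ne u'$ force $(u-u')Y_i = 0$ hence $Y_i = Z_i = 0$ for $i \ge 1$, so they meet only at $P_0 = [1:0:\cdots:0]$, and the same comparison holds against the fiber at infinity. Consequently $\phi$ is injective off $\phi^{-1}(P_0)$, which consists precisely of the points $(p,[1:0:\cdots:0])$; these form the section determined by the quotient $\mathcal{F} \twoheadrightarrow \mathcal{O}_{\PP^1}$, i.e.\ the section $C$ coming from $\mathcal{E} \twoheadrightarrow \mathcal{O}_{\PP^1}(-1)$. Any contracted curve must lie in some non-singleton fibre of $\phi$, hence must lie in $C$, and therefore must equal $C$.

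The main obstacle is purely bookkeeping: pinning down Hartshorne's convention $\pi_*\mathcal{O}_X(1) = \mathcal{E}$ and verifying that the section associated to the given quotient $\mathcal{E} \twoheadrightarrow \mathcal{O}_{\PP^1}(-1)$ corresponds, after the twist by $\mathcal{O}_{\PP^1}(1)$, to the functional $[1:0:\cdots:0]$ on each fibre of $\mathcal{F}$. Once this identification is in place, the explicit coordinate calculation above is routine and yields both assertions simultaneously.
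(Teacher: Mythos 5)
Your argument is correct, and it takes a genuinely different route from the paper's. The paper proves the lemma by induction on $r$: the case $r=0$ is trivial, and for the inductive step it identifies the divisor $V_0 = \PP(\mathcal O_{\PP^1}(-1)\oplus\mathcal O_{\PP^1}^{r-1})\subset X$, shows $\mathcal O_X(V_0)\cong\mathcal O_X(1)$ by pushing forward the sequence $0\to\mathcal O_X(1)\otimes\mathcal O_X(-V_0)\to\mathcal O_X(1)\to\mathcal O_{V_0}(1)\to 0$, checks surjectivity of the restriction $\Gamma(X,\mathcal O_X(V_0+F))\to\Gamma(V_0,\cdot)$ to inherit base-point freeness from the inductive hypothesis together with linear mobility of the fibers $F$, and then uses the intersection numbers $\gamma\cdot F$ and $\gamma\cdot V_0$ to rule out contractions of curves not lying on $V_0$. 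You instead twist to $\mathcal F=\mathcal E\otimes\mathcal O_{\PP^1}(1)=\mathcal O_{\PP^1}\oplus\mathcal O_{\PP^1}(1)^r$, which is globally generated, so base-point freeness of $|\mathcal O_{\PP(\mathcal F)}(1)|$ is immediate via the tautological surjection $\pi^*\mathcal F\twoheadrightarrow\mathcal O_{\PP(\mathcal F)}(1)$; you then describe the morphism $\phi:X\to\PP^{2r}$ in explicit coordinates, show it embeds each $\pi$-fiber as a linear $\PP^r$, that any two of these linear images (including the one over $\infty$) meet only at $P_0=[1:0:\cdots:0]$, and that $\phi^{-1}(P_0)$ is exactly the section $C$, whence $\phi$ is injective off $C$ and $C$ is the unique contracted curve. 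Your approach avoids induction and most intersection theory and produces a completely explicit description of $\phi$ and its image, at the modest cost of the bookkeeping (which you correctly flag) needed to match the quotient $\mathcal E\twoheadrightarrow\mathcal O_{\PP^1}(-1)$, after the twist, with the fiber-point $[1:0:\cdots:0]$. Both proofs are valid; the paper's inductive scaffolding was likely chosen for economy of exposition in a short lemma, whereas your calculation would also serve the subsequent construction in Example~\ref{Example3} by exhibiting the image $Y=\phi(X)$ concretely.
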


\begin{proof}
We prove this by induction on $r$. 

First suppose that $r=0$. Then $\pi$ is an isomorphism, and $X=C$.
$$
\mathcal O_X(1)\otimes \pi^*\mathcal O_{\PP^1}(1)\cong \pi_*\mathcal O_X(1)\otimes \mathcal O_{\PP^1}(1)\cong \mathcal O_{\PP^1}(-1)\otimes \mathcal O_{\PP^1}(1)\cong \mathcal O_{\PP^1},
$$
from which the statement of the lemma follows.

Now suppose that $r>0$ and the statement of the lemma is true for $r-1$. Let $V_0$ be the $\PP^1$-subbundle of $X$ corresponding to projection onto the first $r-1$
factors,
\begin{equation}\label{eq51}
 0\rightarrow \mathcal O_{\PP^1}\rightarrow \mathcal O_{\PP^1}(-1)\bigoplus \mathcal O_{\PP^1}^r \rightarrow \mathcal O_{\PP^1}(-1)\bigoplus \mathcal O_{\PP^1}^{r-1}\rightarrow 0.
 \end{equation}
 Apply $\pi_*$ to the exact sequence 
 $$
 0\rightarrow \mathcal O_X(1)\otimes \mathcal O_X(-V_0)\rightarrow \mathcal O_X(1)\rightarrow \mathcal O_{V_0}(1)\rightarrow 0
 $$
 to obtain the exact sequence (\ref{eq51}), from which we see that $\mathcal O_X(V_0)\cong \mathcal O_X(1)$ and $V_0\cong \PP(\mathcal O_{\PP^1}(-1)\bigoplus \mathcal O_{\PP^1}^{r-1})$ with $\mathcal O_X(V_0)\otimes \mathcal O_{V_0}\cong  \mathcal O_{V_0}(1)$. Let $F$ be the fiber over a point in $\PP^1$ by $\pi$. We have that $\mathcal O_X(1)\otimes \pi^*\mathcal O_{\PP^1}\cong \mathcal O_X(V_0+F)$.
 Apply $\pi_*$ to
 $$
 0\rightarrow \mathcal O_X(F) \rightarrow \mathcal O_X(V_0+F)\rightarrow \mathcal O_X(V_0+F)\otimes \mathcal O_{V_0}\rightarrow 0
 $$
 to get
 $$
 0\rightarrow \mathcal O_{\PP^1}(1)\rightarrow \mathcal O_{\PP^1}\bigoplus \mathcal O_{\PP^1}(1)^r\rightarrow \pi_*(\mathcal O_{V_0}(1)\otimes \pi^*\mathcal
 O_{\PP^1}(1))\rightarrow 0.
 $$
 Now take global sections to obtain that the restriction map 
 $$
 \Gamma(X,\mathcal O_X(V_0+F))\rightarrow \Gamma(V_0, \mathcal O_{V_0}(1)\otimes \pi^*\mathcal
 O_{\PP^1}(1)) 
 $$
 is a surjection. In particular, by the induction statement, $V_0$ contains no base points of $\Lambda=|\Gamma(X,\mathcal O_X(V_0+F))|$. 
 Since any two fibers $F$ over points of $\PP^1$ are linearly equivalent, $\Lambda$ is base point free.

 Suppose that $\gamma$ is a curve of $X$ which is not contained in $V_0$. If $\pi(\gamma)=\PP^1$ then $(\gamma\cdot F)>0$ and $(\gamma\cdot V_0)\ge 0$ so that $\gamma$ is not contracted by $\Lambda$. If $\gamma$ is in a fiber of $F$ then $(\gamma\cdot F)=0$. Let $F\cong \PP^r$ be the fiber of $\pi$ containing $\gamma$. Let $h=F\cdot V_0$, a hyperplane section of $F$. Then
 $(\gamma\cdot V_0)=(\gamma\cdot h)_{F}>0$. Thus $\gamma$ is not contracted by $\Lambda$. By induction on $r$, we have that $C$ is the only curve on $V_0$ which is contracted by $\Lambda$. We have thus proven the induction statement for $r$.
 
 \end{proof}

\begin{Example}\label{Example3} Let $k$ be an algebraically closed field.
Suppose that $s$ is a positive integer and $a_i\in \ZZ_+$ are positive integers for $1\le i\le s$. Suppose that $d>1$.
Then there exists a connected reduced projective scheme $X$ over $k$ which is equidimensional of dimension $d$ with a line bundle $\mathcal L$ on $X$
and a bounded function $\sigma(n)$ such that 
$$
\dim_k\Gamma(X,\mathcal L^n)=\lambda(n)\binom{d+n-1}{d-1}+\sigma(n),
$$
where $\lambda(n)$ is the periodic function
$$
\lambda(n)=|\{i\mid n\equiv 0 (a_i)\}|.
$$
The Kodaira-Iitaka dimension of $L$ is  $\kappa(L)=d-1$. Let $m'=\mbox{LCM}\{a_i\}$.
The limit
$$
\lim_{n\rightarrow \infty}\frac{\dim_k L_n}{n^{d-1}}
$$
exists whenever $n$ is constrained to be in an arithmetic sequence $a+bm'$ (with any fixed $a$). We  have that $\dim_kL_n\ne 0$ for all $n$ if some $a_i=1$, so the
conclusions of Theorem \ref{Theorem5} do not quite hold in this example.
\end{Example}

\begin{proof} Let $E$ be an elliptic curve over $k$. Let $p_0, p_1,\ldots, p_s$ be points on $E$ such that the line bundles $\mathcal O_E(p_i-p_0)$ have order $a_i$. Let $S=E\times_k\PP^{d-1}_k$, and define line bundles $\mathcal L_i=\mathcal O_E(p_i-p_0)\otimes \mathcal O_{\PP^{d-1}}(1)$ on $S$.
The Segre embedding gives  a closed embedding of $S$ in $\PP^{r}$ with $r=3d-1$ Let $\pi:X=\PP(\mathcal O_{\PP^1}(-1)\bigoplus \mathcal O_{\PP^1}^r)
\rightarrow \PP^1$ be the projective bundle, and let $C$ be the section corresponding to the surjection
of $\mathcal O_{\PP^1}(-1)\bigoplus \mathcal O_{\PP^1}^r$ onto the first factor. Let $b_1,\ldots,b_s$ be distinct points of $\PP^1$ and let $F_i$ be the fiber by $\pi$ over $b_i$. Let $S_i$ be an embedding of $S$ in $F_i$. We can if necessary make a translation of $S_i$ so that the point $c_i=C\cdot F_i$ lies
on $S_i$, but is not contained in $p_j\times\PP^{d-1}$ for any $j$. 
We have a line bundle $\mathcal L'$ on the (disjoint) union $T$ of the $S_i$ defined by $\mathcal L'|S_i=\mathcal L_i$.

By Lemma \ref{Lemma52}, there is a morphism $\phi:X\rightarrow Y$ which only contracts the curve $C$. $\phi$ is actually birational and an isomorphism away from $C$, but we do not need to verify this, as we can certainly obtain this  after  replacing $\phi$ with  the Stein factorization of $\phi$. Let $Z=\phi(T)$. 
The birational morphism $T\rightarrow Z$ is an isomorphism away from the points $c_i$, which are not contained on the support of the divisor defining $\mathcal L'$. Thus $\mathcal L'|(T\setminus\phi(C))$ extends naturally to a line bundle $\mathcal L$ on $Z$.

We have a short exact sequence
$$
0\rightarrow \mathcal O_Z\rightarrow \bigoplus_{i=1}^s \mathcal O_{S_i}\rightarrow \mathcal F\rightarrow 0
$$
where $\mathcal F$ has finite support. Tensoring this sequence with $\mathcal L^n$ and taking global sections, we obtain that
$$
0\le \sum_{i=1}^s\dim_k\Gamma(S_i,\mathcal L_i^n)-\dim_k\Gamma(Z,\mathcal L^n) \le \dim_k\mathcal F
$$
for all $n$. Since 
$$
\Gamma(S_i,\mathcal L_i^n)\cong \Gamma(E,\mathcal O_E(n(p_i-p_0)))\otimes_k\Gamma(\PP^{d-1},\mathcal O_{\PP^{d-1}}(n))
$$
by the Kuenneth formula, we obtain the conclusions of the example.

 \end{proof}

\section{Necessary and sufficient conditions for limits to exist on a proper scheme over a field}\label{SecNec}

The nilradical $\mathcal N_X$ of a scheme $X$ is defined in the section on notations and conventions.

\begin{Lemma}\label{Lemmanr90} Suppose that $X$ is a proper scheme over a field $k$ and $L$ is a graded linear series on $X$. Then $\kappa(L)=\kappa(L|X_{red})$.
\end{Lemma}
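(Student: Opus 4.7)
The plan is to show that there is a natural surjective graded $k$-algebra homomorphism $\phi: L \to L|X_{\rm red}$ whose kernel consists of nilpotent elements, and then to observe that such a surjection preserves the maximal number of algebraically independent homogeneous elements of positive degree.

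First I would set up the map. The section $L$ is a graded $k$-subalgebra of $\bigoplus_{n\ge 0}\Gamma(X,\mathcal L^n)$ for some line bundle $\mathcal L$. The short exact sequence $0\to\mathcal N_X\to\mathcal O_X\to\mathcal O_{X_{\rm red}}\to 0$, tensored with $\mathcal L^n$, induces a $k$-linear map $\Gamma(X,\mathcal L^n)\to \Gamma(X_{\rm red},\mathcal L^n|X_{\rm red})$ for each $n$, and these assemble into a graded $k$-algebra homomorphism on section rings. By definition, $(L|X_{\rm red})_n$ is the image of $L_n$, so $\phi: L\to L|X_{\rm red}$ is a surjective graded $k$-algebra homomorphism. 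I claim that $\ker\phi$ consists of nilpotent elements. Indeed, since $X$ is proper over $k$, it is Noetherian, so we may cover $X$ by finitely many affine opens $U_1,\ldots,U_r$ trivializing $\mathcal L$, and the nilradical $\mathcal N_X(U_i)$ of each coordinate ring $\mathcal O_X(U_i)$ is a nilpotent ideal, say of index $N_i$. Let $N=\max N_i$. If $s\in L_n$ satisfies $\phi(s)=0$, then on each $U_i$ the section $s$ is given by an element of $\mathcal N_X(U_i)$, so $s^N=0$ on each $U_i$, hence $s^N=0$ globally in $L_{nN}$.

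Next I would show $\sigma(L|X_{\rm red})\le\sigma(L)$ by lifting. Given homogeneous $\bar y_1,\ldots,\bar y_m\in L|X_{\rm red}$ of positive degree that are algebraically independent over $k$, pick lifts $y_i\in L$ of the same positive degree. If a nonzero polynomial $P\in k[T_1,\ldots,T_m]$ satisfied $P(y_1,\ldots,y_m)=0$ in $L$, then applying $\phi$ would give $P(\bar y_1,\ldots,\bar y_m)=0$ in $L|X_{\rm red}$, contradicting algebraic independence.

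For the reverse inequality $\sigma(L)\le\sigma(L|X_{\rm red})$, suppose $y_1,\ldots,y_m\in L$ are homogeneous of positive degree and algebraically independent over $k$. Each $y_i$ is non-nilpotent (otherwise $T_i^M$ would witness algebraic dependence), so $\bar y_i=\phi(y_i)\in(L|X_{\rm red})_{\deg y_i}$ is nonzero and of positive degree. If $P\in k[T_1,\ldots,T_m]$ is nonzero and $P(\bar y_1,\ldots,\bar y_m)=0$ in $L|X_{\rm red}$, then $P(y_1,\ldots,y_m)\in\ker\phi$, hence $P(y_1,\ldots,y_m)^N=0$ in $L$ by the nilpotency bound established above. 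But $P^N\in k[T_1,\ldots,T_m]$ is nonzero (the polynomial ring is a domain), contradicting the algebraic independence of $y_1,\ldots,y_m$. Thus $\bar y_1,\ldots,\bar y_m$ are algebraically independent in $L|X_{\rm red}$.

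Combining both inequalities gives $\sigma(L)=\sigma(L|X_{\rm red})$, and the definition of $\kappa$ yields $\kappa(L)=\kappa(L|X_{\rm red})$. The main obstacle is really just the verification that $\ker\phi$ is uniformly nilpotent, which rests on $X$ being Noetherian; once that is in hand, the algebraic-independence argument is formal.
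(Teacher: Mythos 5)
Your proof is correct and follows essentially the same strategy as the paper: both reduce to showing that the kernel of $\phi\colon L\to L|X_{\rm red}$ (which is $K=\bigoplus_n L_n\cap\Gamma(X,\mathcal N_X\otimes\mathcal L^n)$) consists of nilpotent elements, and then compare $\sigma(L)$ with $\sigma(L|X_{\rm red})$. The one place where you diverge from the paper is in proving nilpotency of the kernel: the paper argues per element $\sigma$ by stabilizing the decreasing chain of supports $\mathrm{supp}(\sigma^r)$ and checking stalks, obtaining a nilpotency exponent $r_0(\sigma)$ depending on the section, while you obtain a uniform exponent $N$ valid for all kernel elements by covering $X$ with finitely many affines trivializing $\mathcal L$ and using nilpotency of the nilradical of each Noetherian coordinate ring. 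Your uniform bound is a slightly stronger statement and makes the algebraic-independence comparison cleaner (one can contradict independence directly with $P^N$), but the ideas are the same; the paper's phrasing in terms of lifting inclusions of weighted polynomial rings is just the definition of $\sigma$ unwound.
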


\begin{proof} Let $\mathcal L$ be a line bundle associated to $X$. We have a commutative diagram
$$
\begin{array}{lllllllll}
&&0&&0&&0&&\\
&&\downarrow&&\downarrow&&\downarrow&&\\
0&\rightarrow & \bigoplus_{n\ge 0}K_n&\rightarrow & \bigoplus_{n\ge 0}L_n&\rightarrow & \bigoplus_{n\ge 0}(L|X_{\rm red})_n&\rightarrow &0\\
&&\downarrow&&\downarrow&&\downarrow&&\\
0&\rightarrow & \bigoplus_{n\ge 0} \Gamma(X,\mathcal L^n\otimes \mathcal N_X)&\rightarrow &\bigoplus_{n\ge 0}\Gamma(X,\mathcal L^n)&\rightarrow &
 \bigoplus_{n\ge 0}\Gamma(X_{\rm red},(\mathcal L|X_{\rm red})^n)
 \end{array}
 $$
 so that $K_n=L_n\cap \Gamma(X,\mathcal L^n\otimes\mathcal N_X)$ for all $n$.
 
 Suppose that $\sigma\in \Gamma(X,\mathcal L^m)$. $X$ is Noetherian, so there exists $r_0=r_0(\sigma)$ such that the closed sets $\mbox{sup}(\sigma^r)=\mbox{sup}(\sigma^{r_0})$ for all $r\ge r_0$. Thus 
 $$
 \begin{array}{l}
 \mbox{$\sigma\in \Gamma(X,\mathcal L^n\otimes\mathcal N_X)$ if and only if}\\
 \mbox{$\sigma_Q$ is torsion in the $\mathcal O_{X,Q}$-algebra $\bigoplus_{n\ge 0}\mathcal L_Q^n$ for all $Q\in X$, if and only if}\\
 \mbox{$\sigma_Q^{r_0}=0$ in $\bigoplus_{n\ge 0}\mathcal L^n_Q$ for all $Q\in X$, if and only if}\\
 \mbox{$\sigma^{r_0}=0$ in $\bigoplus_{n\ge 0}\Gamma(X,\mathcal L^n)$ since $\mathcal L$ is a sheaf.}
 \end{array}
 $$
 Thus $\bigoplus_{n\ge 0}\Gamma(X,\mathcal L^n\otimes \mathcal N_X)$ is the nilradical of $\bigoplus_{n\ge 0}\Gamma(X,\mathcal L^n)$ and 
 so $K$ is the nilradical of $L$.
 
 We have that $\kappa(L|X_{\rm red})\le \kappa(L)$ since any injection of a weighted polynomial ring into $L|X_{\rm red}$ lifts to a graded injection into $L$.
 
 If $A$ is a weighted polynomial ring which injects into $L$, then it intersects $K$ in $(0)$, so there is an induced graded inclusion of $A$ into $L|X_{\rm red}$. Thus $\kappa(L|X_{\rm red})=\kappa(L)$.

\end{proof}

\begin{Theorem}\label{Theorem8} 
Suppose that $X$ is a  proper scheme over a field $k$. Let $\mathcal N_X$ be the nilradical of $X$. Suppose that $L$ is a graded linear series on $X$. Then
\begin{enumerate}
\item[1)]  There exists a positive constant $\gamma$ such that $\dim_kL_n<\gamma n^e$ where 
$$
e=\max\{\kappa(L),\dim \mathcal N_X\}.
$$
\item[2)] Suppose that $\dim \mathcal N_X<\kappa(L)$. Then there exists a positive integer $r$ such that 
$$
\lim_{n\rightarrow \infty}\frac{\dim_kL_{a+nr}}{n^{\kappa(L)}}
$$
exists for any fixed $a\in\NN$. 
\end{enumerate}
\end{Theorem}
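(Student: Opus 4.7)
The plan is to split $L$ into a nilradical contribution and a reduced contribution, bounding each separately. For any line bundle $\mathcal{L}$ on $X$ associated to $L$, the proof of Lemma \ref{Lemmanr90} already furnishes the short exact sequence
$$0\to K_n \to L_n \to (L|X_{\rm red})_n\to 0,$$
where $K_n = L_n\cap \Gamma(X,\mathcal{L}^n\otimes \mathcal N_X)$, and moreover $\kappa(L|X_{\rm red})=\kappa(L)$. So part 1) reduces to separately bounding $\dim_k K_n$ and $\dim_k(L|X_{\rm red})_n$, while part 2) will follow from applying Theorem \ref{Theorem18} to $L|X_{\rm red}$ on the reduced proper $k$-scheme $X_{\rm red}$ and showing that the $K_n$-term contributes negligibly.

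To bound the reduced contribution, first note that $\dim_k(L|X_{\rm red})_n \le \sum_{i=1}^s \dim_k(L|X_i)_n$, where $X_1,\dots,X_s$ are the irreducible components of $X_{\rm red}$, since reducedness makes $\mathcal O_{X_{\rm red}} \hookrightarrow \bigoplus_i \mathcal O_{X_i}$ injective. Each $L|X_i$ is a graded linear series on the proper variety $X_i$, so (\ref{eq61}) in Theorem \ref{Theorem5} gives $\dim_k(L|X_i)_n < \beta_i n^{\kappa(L|X_i)}$. By Lemma \ref{Lemma50a}, $\kappa(L|X_i)\le \kappa(L|X_{\rm red}) = \kappa(L)$, so we obtain a constant $\beta>0$ with $\dim_k(L|X_{\rm red})_n < \beta n^{\kappa(L)}$ for all $n\ge 1$.

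To bound $\dim_k K_n$, filter the nilradical as $\mathcal N_X \supset \mathcal N_X^2 \supset \cdots \supset \mathcal N_X^m = 0$ (the filtration terminates because $X$ is Noetherian). Each successive quotient $\mathcal N_X^j/\mathcal N_X^{j+1}$ is a coherent sheaf on $X$ whose support is contained in the support of $\mathcal N_X$, hence of dimension $\le \dim \mathcal N_X$. The induction on $\dim\mathcal M$ in the proof of formula (\ref{eqF}) in Lemma \ref{LemmaKI} actually yields the sharper bound $\dim_k\Gamma(X,\mathcal M\otimes\mathcal L^n) < \gamma_{\mathcal M} n^{\dim\mathcal M}$ for any coherent $\mathcal M$. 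Summing over the successive quotients gives a constant $\gamma'>0$ with
$$\dim_k K_n \le \dim_k\Gamma(X,\mathcal L^n\otimes \mathcal N_X) < \gamma' n^{\dim \mathcal N_X}$$
for all $n$. Combining with the reduced estimate proves part 1) with $e=\max\{\kappa(L),\dim\mathcal N_X\}$.

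For part 2), assume $\dim\mathcal N_X<\kappa(L)$. Apply Theorem \ref{Theorem18} to the graded linear series $L|X_{\rm red}$ on the reduced proper $k$-scheme $X_{\rm red}$, which has Kodaira–Iitaka dimension $\kappa(L|X_{\rm red})=\kappa(L)\ge 0$ by Lemma \ref{Lemmanr90}: this produces a positive integer $r$ such that $\lim_{n\to\infty}\dim_k(L|X_{\rm red})_{a+nr}/n^{\kappa(L)}$ exists for every fixed $a\in\NN$. Meanwhile, from the previous paragraph $\dim_k K_{a+nr} < \gamma'(a+nr)^{\dim\mathcal N_X}$, so $\dim_k K_{a+nr}/n^{\kappa(L)}\to 0$ as $n\to\infty$. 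Adding the two contributions via the short exact sequence yields the existence of $\lim_{n\to\infty}\dim_kL_{a+nr}/n^{\kappa(L)}$. The only genuinely non-routine ingredient is the coherent-sheaf dimension bound $\gamma n^{\dim\mathcal M}$, which requires either invoking the sharper form of (\ref{eqF}) or redoing the Chow's lemma induction, and this is the step to double-check when writing the full proof.
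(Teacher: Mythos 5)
Your proof is essentially the paper's own argument: decompose via $0\to K_n\to L_n\to (L|X_{\rm red})_n\to 0$, bound $K_n$ inside $\Gamma(X,\mathcal N_X\otimes\mathcal L^n)$ by the sheaf estimate, and apply Lemma \ref{Lemmanr90} together with Theorem \ref{Theorem18} to the reduced part. The one point you flagged is right to verify but does hold: although (\ref{eqF}) is only \emph{stated} with exponent $d=\dim X$, both the projective filtration (the integral subschemes $Y_i$ lie in the support of $\mathcal M$, so $\dim Y_i\le\dim\mathcal M$) and the Chow's-lemma induction on $\dim\mathcal M$ actually give $\dim_k\Gamma(X,\mathcal M\otimes\mathcal L^n)<\gamma n^{\dim\mathcal M}$, which is exactly the form the paper invokes for $\mathcal M=\mathcal N_X$; given this, your extra filtration of $\mathcal N_X$ by its powers is harmless but redundant, since you could apply the sharpened bound directly to $\mathcal N_X$.
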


\begin{proof} Let $\mathcal L$ be a line bundle associated to $L$, so that $L_n\subset \Gamma(X,\mathcal L^n)$ for all $n$.
Let $K_n$ be the kernel of the surjection $L_n\rightarrow (L|X_{\rm red})_n$.
From the exact sequence
$$
0\rightarrow \mathcal N_X\rightarrow \mathcal O_X\rightarrow \mathcal O_{X_{\rm red}}\rightarrow 0,
$$ 
we see that $K_n\subset \Gamma(X,\mathcal N_X\otimes\mathcal L^n)$ for all $n$. There exists a constant $c$ such that
$$
\dim_k\Gamma(X,\mathcal N_X\otimes\mathcal L^n)<cn^{\dim \mathcal N_X}
$$
for all $n$. By Lemma \ref{Lemmanr90} and Theorem \ref{Theorem18}, 1. holds and there exists a positive integer $r$ such that for any $a$,
$$
\lim_{n\rightarrow \infty}\frac{\dim_k(L|X_{\rm red})_{a+nr}}{n^{\kappa(L)}}
$$
exists. Thus the conclusions of the theorem hold.
\end{proof}

An  example showing that the $r$ of the theorem might have to be strictly larger than the index $m(L)$ is obtained as follows. Let $X_1$ and $X_2$ be two general linear subspaces of dimension $d$  in $\PP^{2d}$. 
They intersect transversally in a rational point $Q$.  Let 
Let $L^i$ be the graded linear series on $X_i$ defined by 
$$
L^1_n=\left\{\begin{array}{ll}
\Gamma(X_1,\mathcal O_{X_1}(n)\otimes \mathcal O_{X_1}(-Q))&\mbox{ if } 2\mid n\\
0&\mbox{ otherwise}\end{array}\right.
$$
and
$$
L^2_n=\left\{\begin{array}{ll}
\Gamma(X_2,\mathcal O_{X_2}(n)\otimes \mathcal O_{X_2}(-Q))&\mbox{ if } 3\mid n\\
0&\mbox{ otherwise}\end{array}\right.
$$
Here $\mathcal O_{X_i}(-Q)$ denotes the ideal sheaf on $X_i$ of the point $Q$. Let $X$ be the reduced scheme whose support is $X_1\cup X_2$.
From the short exact sequence
$$
0\rightarrow \mathcal O_X\rightarrow \mathcal O_{X_1}\bigoplus \mathcal O_{X_1}\rightarrow k(Q)\rightarrow 0,
$$
 we see that there is a graded linear series $L$ on $X$ associated to $\mathcal O_X(1)$ such that
 $L|X_i=L^i$ for $i=1,2$, and $\dim_kL_n= \dim_kL^1_n+\dim_kL^2_n$ for all $n$. Thus
 $$
 \dim_k L_n=\left\{\begin{array}{cl}
 2\binom{d+n}{d}-2&\mbox{ if }n\equiv 0\, (\mbox{mod } 6)\\
 0&\mbox{ if }n\equiv 1\mbox{ or }5\,(\mbox{mod }6)\\
 \binom{d+n}{d}-1&\mbox{ if }n\equiv 2,3\mbox{ or }4\,(\mbox{mod }6).\\
 \end{array}\right.
 $$

In Theorem \ref{TheoremN1}, we give general conditions under which limits do not always exist.

\begin{Theorem}\label{TheoremN1} Suppose that $X$ is a $d$-dimensional projective scheme over a field $k$ with $d>0$.  Let $r=\dim \mathcal N_X$, where $\mathcal N_X$ is the nilradical of $X$. Suppose that $r\ge 0$. Let $s\in \{-\infty\}\cup \NN$ be such that $s\le r$. Then there exists a graded linear series $L$ on $X$ with $\kappa(L)=s$ such that 
$$
\lim_{n\rightarrow\infty} \frac{\dim_k L_n}{n^r}
$$
does not exist, even when $n$ is constrained to lie in any arithmetic sequence.
\end{Theorem}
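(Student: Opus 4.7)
The plan is to globalize the local construction of Example \ref{Example1} and Theorem \ref{Theorem3} by producing a square-zero coherent ideal sheaf of dimension $r$ on $X$ and combining it with a polynomial subalgebra to pin the Kodaira-Iitaka dimension at $s$. First I would construct a coherent square-zero ideal subsheaf $\mathcal{J}\subset\mathcal{N}_X$ with $\dim\mathcal{J}=r$ by setting $\mathcal{J}:=\mathcal{N}_X\cap\mathrm{ann}_{\mathcal{O}_X}(\mathcal{N}_X)$: then $\mathcal{J}^2\subset\mathcal{J}\cdot\mathcal{N}_X=0$, and at the generic point $\eta$ of an $r$-dimensional component of $\mathrm{supp}(\mathcal{N}_X)$, choosing $k$ maximal with $\mathcal{N}_{X,\eta}^k\neq 0$ gives $\mathcal{N}_{X,\eta}^k\subset\mathcal{J}_\eta\neq 0$, so $\dim\mathcal{J}=r$. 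Fix a very ample line bundle $\mathcal{L}$ on $X$ and set $A:=\bigoplus_n\Gamma(X,\mathcal{L}^n)$, $W_n:=\Gamma(X,\mathcal{J}\otimes\mathcal{L}^n)$, $J:=\bigoplus_n W_n$; then $J^2=0$ in $A$ and $\dim_k W_n\sim c\,n^r$ for some $c>0$.

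For $s=-\infty$: set $L_0=k$ and $L_n:=W_n$ or $0$ according as $\tau(n)=1$ or $\tau(n)=0$, with $\tau$ as in (\ref{eqtau}). Then $L_mL_n\subset J^2=0\subset L_{m+n}$ makes $L$ a graded linear series; every homogeneous $f$ of positive degree satisfies $f^2=0$, so $\kappa(L)=-\infty$; and $\dim_kL_n/n^r$ alternates between $\sim c$ and $0$ on $\tau=1$ and $\tau=0$ blocks, with no convergence on any arithmetic sequence by the block-length argument underlying Lemmas \ref{Lemma1} and \ref{Lemma2}. For $s\geq 0$: take a polynomial subalgebra $L^{(0)}=k[F_0,\dots,F_s]\subset A$ with $F_i$ homogeneous of positive degree and algebraically independent over $k$ (so $\kappa(L^{(0)})=s$), and set
\[
V_n:=L^{(0)}_{n-\sigma(n)}\cdot W_{\sigma(n)}\subset W_n,\qquad L_n:=L^{(0)}_n+V_n.
\]
The graded-family condition reduces to $L^{(0)}_mV_n\subset V_{m+n}$, which follows from $\sigma$ being non-decreasing together with the polynomial-ring factorization $L^{(0)}_{m+n-\sigma(n)}=L^{(0)}_{m+n-\sigma(m+n)}\cdot L^{(0)}_{\sigma(m+n)-\sigma(n)}$ and $L^{(0)}_{\sigma(m+n)-\sigma(n)}\cdot W_{\sigma(n)}\subset W_{\sigma(m+n)}$; and from $V_mV_n\subset J^2=0$. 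Every element of $V$ is nilpotent, so $\kappa(L)=\kappa(L^{(0)})=s$.

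The main obstacle is to arrange the $F_i$ so that $\dim_kV_n/n^r$ genuinely oscillates on every arithmetic sequence. For $s<r$ I would take $F_0,\dots,F_s$ to restrict to algebraically independent sections on an $s$-dimensional reduced subscheme of $X_{\mathrm{red}}$ in general position with respect to $\mathrm{supp}(\mathcal{J})$; the archetype is the thickening $X=Y\times_k\mathrm{Spec}(k[\epsilon]/(\epsilon^2))$ with $Y=\mathbb{P}^r$ and $L^{(0)}=k[y_0,\dots,y_s]\subset k[y_0,\dots,y_r]$, where a direct count on monomials yields $\dim_kV_n\sim\gamma\,n^s\sigma(n)^{r-s}$, so $\dim_kV_n/n^r\sim\gamma(\sigma(n)/n)^{r-s}$ oscillates between $\gamma\cdot 2^{-(r-s)}$ at $n=i_j$ and $0$ at $n$ close to $i_{j+1}$; since $\dim_kL^{(0)}_n/n^r\to 0$ when $s<r$, the ratio $\dim_kL_n/n^r$ inherits this oscillation, and Lemmas \ref{Lemma1} and \ref{Lemma2} give non-convergence in every arithmetic sequence. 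The borderline case $s=r$ is the most delicate: a generic $L^{(0)}$ satisfies $L^{(0)}\cdot J=J$, making $V_n=W_n$ with no oscillation, so one must instead take a \emph{strict} polynomial subalgebra of Krull dimension $r+1$. In the model case, $L^{(0)}=k[y_0,\dots,y_{r-1},y_0y_r]\subset k[y_0,\dots,y_r]$ produces $V_n$ equal to the span of the monomials $\{y^\gamma:\gamma_r-\gamma_0\leq\sigma(n)\}$ in $W_n$, and an elementary count shows $\dim_kV_n/n^r$ oscillates between two distinct positive constants depending on $\sigma(n)/n$. I would generalize this trick to any projective $X$ by selecting one generator of $L^{(0)}$ as a product of two low-degree sections, forcing $L^{(0)}\cdot J$ to omit a subsheaf of positive-dimensional support in $J$, with the oscillation analysis carrying over from the model.
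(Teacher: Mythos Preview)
Your strategy is the same as the paper's---build a graded linear series out of a ``polynomial'' piece of Kodaira--Iitaka dimension $s$ plus a square-zero ``nilpotent'' piece whose $n^r$-normalized dimension oscillates via the function $\sigma$---but your implementation has a real gap, precisely at the point you yourself flag as delicate.

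For $s<r$ your dimension count $\dim_k V_n\sim\gamma\,n^s\sigma(n)^{r-s}$ is only verified in the model $\PP^r\times\mathrm{Spec}(k[\epsilon]/(\epsilon^2))$; for a general $X$ you assert that choosing $F_0,\dots,F_s$ ``in general position with respect to $\mathrm{supp}(\mathcal J)$'' reproduces this, but you never prove it, and the interaction between $L^{(0)}$ and $W$ through the multiplication map of $A$ is exactly what has to be controlled. More seriously, for $s=r$ your proposed fix (replace $L^{(0)}$ by something like $k[y_0,\dots,y_{r-1},y_0y_r]$) is only sketched in the model and is never carried out on an arbitrary projective $X$; worse, this $L^{(0)}$ is not generated in degree~$1$, so the factorization $L^{(0)}_{m+n-\sigma(n)}=L^{(0)}_{m+n-\sigma(m+n)}\cdot L^{(0)}_{\sigma(m+n)-\sigma(n)}$ on which your graded-family check rests fails outright.

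The paper sidesteps both difficulties by a different placement of $\sigma$. Rather than splitting degree $n$ between $L^{(0)}$ and the whole nilpotent ideal, it works in the coordinate ring $S$ and finds a single homogeneous $x$ with $x^2=0$ and $\mathrm{ann}_S(x)=P_Y$ (the prime of a top-dimensional component $Y$ of $\mathrm{supp}\,\mathcal N_X$), together with $z_0,\dots,z_r\in S$ restricting to a homogeneous system of parameters in $S/P_Y$. Writing $M^{(\alpha)}_t$ for the forms of degree $t$ in $z_0,\dots,z_\alpha$, the linear series (for $r\ge1$, $s\ge0$) is
\[
L_n\;=\;z_0^{nf}M^{(s)}_{nf}\;+\;x\,z_0^{(n-\sigma(n))f-e}\,M^{(r)}_{(n+\sigma(n))f}.
\]
The point is that the nilpotent summand uses the \emph{full} $r+1$ parameters, so its dimension is exactly $Q_r(n+\sigma(n))\sim c_r\,(n+\sigma(n))^r$ for every $s$, and the oscillation $\bigl(1+\sigma(n)/n\bigr)^r$ is visible without any case distinction at $s=r$. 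The value of $s$ enters only through the first summand, which contributes $Q_s(n)=O(n^s)$ and pins $\kappa(L)=s$. This gives exact dimension formulas on any projective $X$, whereas your approach requires model-case computations you have not transferred to the general setting.
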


\begin{Remark} The sequence 
$$
\frac{\dim_kL_n}{n^r}
$$
in Theorem \ref{TheoremN1} must be bounded by Theorem \ref{Theorem8}.
\end{Remark}

\begin{proof} Let $Y$ be an irreducible component of the support of $\mathcal N_X$ which has maximal dimension $r$. Let $S$ be a homogeneous coordinate ring of $X$, which we may assume is saturated, so that the natural graded homomorphism $S\rightarrow \bigoplus_{n\ge 0}\Gamma(X,\mathcal O_X(n))$ is an inclusion. Let $P_Y$ be the homogeneous prime ideal of $Y$ in $S$. There exist homogeneous elements $z_0,\ldots,z_r\in S$ such that if $\overline z_i$ is the image of $z_i$ in $S/P_Y$, then $\overline z_1,\ldots, \overline z_r$ is a homogeneous system of parameters in $S/P_Y$
(by Lemma 1.5.10 and Proposition 1.5.11 \cite{BH}). We can assume that $\deg z_0=1$ since some linear form in $S$ is not in $P_Y$, so it is not a zero divisor $S/P_Y$. We can take $z_0$ to be this form
(If $k$ is infinite, we can take all of the $z_i$ to have degree 1). 
$k[\overline z_0,\ldots,\overline z_r]$ is a weighted polynomial ring (by Theorem 1.5.17 \cite{BH}), so $A:= k[z_0,\ldots,z_r]\cong k[\overline z_0,\ldots,\overline z_d]$ is  a weighted polynomial ring. Let $N$ be the nilradical of $S$. The sheafification of $N$ is $\mathcal N_X$. $P_Y$ is a minimal prime of $N$, so there exists a homogeneous element $x\in N$ such that $\mbox{ann}_S(x)=P_Y$. $N_{P_Y}\ne 0$ in $S_{P_Y}$, so $(P_Y)_{P_Y}\ne 0$. Thus $x\in P_Y$, since otherwise $0=(xP_Y)_{P_Y}=(P_Y)_{P_Y}$. Consider the graded
$k$-subalgebra $B:=A[x]=k[z_0,\ldots,z_r,x]$ of $S$. We have that $x^2=0$. Also, $\mbox{ann}_A(x)=\mbox{ann}_S(x)\cap A=P_Y\cap A=(0)$. Suppose that $ax+b=0$ with $a,b\in A$. Then $xb=0$, whence $b=0$ and thus $a=0$ also. Hence the only relation on $B$ is $x^2=0$. Let $d_i=\deg z_i$, $e=\deg x$. Recall that $d_0=1$. Let $f=\mbox{LCM}\{d_0,\ldots,d_r,e\}$.

First assume  that $r\ge 1$. For $0\le\alpha\le r$, let $M_t^{(\alpha)}$ be the $k$-vector space of homogeneous forms of degree $t$ in the weighted variables $z_0,\ldots,z_{\alpha}$. $\mathcal O_{Z_{\alpha}}(f)$ is an ample line bundle on the weighted projective space $Z_{\alpha}=\mbox{Proj}(k[z_0,\ldots,z_{\alpha}])$ (If $U_i=\mbox{Spec}(k[z_0,\ldots,z_{\alpha}]_{(z_i)})$, then $\mathcal O_{Z_{\alpha}}|U_i=z_i^{\frac{f}{d_i}}\mathcal O_{U_i}$).
$$
\dim_kM_{nf}^{(\alpha)}=\dim_k\Gamma(Z_{\alpha},\mathcal O_{Z_{\alpha}}(nf))
$$
is thus the value of a polynomial $Q_{\alpha}(n)$ in $n$ of degree $\alpha$ for $n\gg0$. Write
$$
Q_{\alpha}(n)=c_{\alpha}n^{\alpha}+\mbox{ lower order terms}.
$$

Suppose that $s\ge 0$ (and $r\ge 1$). 
Let $L_0=k$. For $n\ge 1$, let 
$$
L_n=z_0^{nf}M_{nf}^s+xz_0^{(n-\sigma(n))f-e}M^r_{(n+\sigma(n))f}\subset  B_{2nf}\subset S_{2nf}\subset \Gamma(X,\mathcal O_X(2nf))
$$
where $\sigma(n)$ is the function of (\ref{eqsigma}).
$L_mL_n\subset L_{m+n}$ since $\sigma(j)\ge \sigma(i)$ if $j\ge i$. $L=\bigoplus_{n\ge 0}L_n$ is a graded linear series with $\kappa(L)=s$. Since $B$ has $x^2=0$ as its only relation,  we have that
$$
\begin{array}{lll}
\dim_kL_n&=& \dim_k M_{nf}^s+\dim_k M^r_{(n+\sigma(n))f}\\
&=& Q_s(n)+Q_r(n+\sigma(n)),
\end{array}
$$
so that
$$
\lim_{n\rightarrow \infty}\frac{\dim_k L_n}{n^r}= \lim_{n\rightarrow \infty} \left(c_sn^{s-r}+c_r(1+(\frac{\sigma(n)}{n}))^r\right)
$$
which does not exist, even when $n$ is constrained to lie in any arithmetic sequence, since $\lim_{n\rightarrow \infty}\frac{\sigma(n)}{n}$ has this property (as commented after (\ref{eqnr1})).

Suppose that $s=-\infty$ (and $r\ge 1$). Then define the graded linear series $L$ by 
$$
L_n=xz_0^{(n-\sigma(n))f-e}M^r_{(n+\sigma(n))f}.
$$
Then $\kappa(L)=-\infty$. We compute as above that
$$
\lim_{n\rightarrow\infty} \frac{\dim_kL_n}{n^r}=\lim_{n\rightarrow \infty} c_r(1+(\frac{\sigma(n)}{n}))^r
$$
 does not exist, even when $n$ is constrained to lie in any arithmetic sequence.

Now assume that $r=s=0$. Since $\dim \mathcal N_X=0$, we have injections for all $n$,
$$
\Gamma(X,\mathcal N_X)\cong \Gamma(X,\mathcal N_X\otimes \mathcal O_X(n))\rightarrow \Gamma(X,\mathcal O_X(n)).
$$ 
 In this case $Y$ is a closed point, so that $\dim_k\Gamma(X,\mathcal I_Y\otimes\mathcal O_X(en))$ goes to infinity as $n\rightarrow\infty$ (we assume that $d=\dim X>0$). Thus for $g\gg 0$, there exists $h\in \Gamma(X,\mathcal I_Y\otimes\mathcal O_X(eg))$ such that $h\not\in \Gamma(X,\mathcal N_X\otimes\mathcal O_X(eg))$, so $h$ is not nilpotent in $S$. $h\in P_Y$ implies $hx=0$ in $S$.  Define $L_0=k$ and for $n>0$,
 $$
 L_n =\left\{\begin{array}{ll}
 kh^n&\mbox{ if }\tau(n)=0\\
 kh^n+kxz_0^{ng-e}&\mbox{ if }\tau(n)=1,
 \end{array}\right.
 $$
 where $\tau(n)$ is the function of (\ref{eqtau}). $\tau(n)$ has the property that $\tau(n)$ is not eventually constant, even when $n$ is constrained to line in an arithmetic sequence.

$L=\bigoplus_{n\ge 0}L_n$ is a graded linear series on $X$ with $\kappa(L)=0$ such that 
$\lim_{n\rightarrow\infty}\dim_kL_n$ does not exist, even when $n$ is constrained to lie in any arithmetic sequence.

The last case is when $r=0$ and $s=-\infty$. Define $L_0=k$ and for $n>0$,
$$
L_n=\left\{\begin{array}{ll}
0&\mbox{ if }\tau(n)=0\\
kxz_0^{ng-e}&\mbox{ if }\tau(n)=1.
\end{array}
\right.
$$
Then $L=\bigoplus_{n\ge 0}L_n$ is a graded linear series on $X$ with $\kappa(L)=-\infty$ such that 
$\lim_{n\rightarrow\infty}\dim_kL_n$ does not exist, even when $n$ is constrained to lie in any arithmetic sequence.

\end{proof}

\begin{Theorem}\label{TheoremN20} Suppose that $X$ is a projective nonreduced scheme over a field $k$. Suppose that $s\in \NN\cup\{-\infty\}$ satisfies
$s\le\dim \mathcal N_X$. Then there exists a graded linear series $L$ on $X$ with $\kappa(L)=s$ and a constant $\alpha>0$ such that 
$$
\alpha n^{\dim \mathcal N_X}<\dim_k L_{nm}
$$
for all $n\gg 0$.
\end{Theorem}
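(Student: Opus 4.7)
The plan is to adapt the construction in the proof of Theorem \ref{TheoremN1}, replacing the oscillating function $\sigma(n)$ by $0$ so that $\dim_k L_{nm}$ attains the maximal growth order $n^r$ with $r:=\dim\mathcal N_X$. Keep the same setup: choose an irreducible component $Y$ of the support of $\mathcal N_X$ with $\dim Y=r$, a saturated homogeneous coordinate ring $S$ of $X$ with homogeneous prime $P_Y$, a linear form $z_0\in S\setminus P_Y$, further homogeneous elements $z_1,\ldots,z_r\in S$ whose images form a homogeneous system of parameters in $S/P_Y$, and a homogeneous nilpotent $x\in P_Y$ with $\mathrm{ann}_S(x)=P_Y$ and $x^2=0$. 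Then $A:=k[z_0,\ldots,z_r]$ is a weighted polynomial ring and $B:=A[x]=A\oplus Ax$ has $x^2=0$ as its only relation. With $d_i=\deg z_i$, $e=\deg x$, $f=\mathrm{lcm}(d_0,\ldots,d_r,e)$, and $m=2f$, write $M^{(\alpha)}_t$ for the space of forms of degree $t$ in $z_0,\ldots,z_\alpha$; then $\dim_k M^{(\alpha)}_{nf}=Q_\alpha(n)=c_\alpha n^\alpha+\text{lower}$ for $n\gg 0$ with $c_\alpha>0$.

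Define $L_0=k$, $L_k=0$ for $0<k$ not divisible by $m$, and for $n\ge 1$
\[
L_{nm}=\begin{cases}
z_0^{nf}M^{(s)}_{nf}+x\,z_0^{nf-e}M^{(r)}_{nf} & \text{if }s\ge 0,\\
x\,z_0^{nf-e}M^{(r)}_{nf} & \text{if }s=-\infty,
\end{cases}
\]
viewed inside $S_{nm}\subset\Gamma(X,\mathcal O_X(nm))$; since $e\mid f$, the exponent $nf-e\ge 0$. Using $x^2=0$, the product $L_{im}L_{jm}$ decomposes into an $x$-free piece in $z_0^{(i+j)f}M^{(s)}_{(i+j)f}$ and $x$-pieces in $xz_0^{(i+j)f-e}M^{(r)}_{(i+j)f}$, so $L$ is indeed a graded linear series. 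From $B=A\oplus Ax$ the two summands defining $L_{nm}$ intersect trivially, giving
\[
\dim_k L_{nm}\ge \dim_k M^{(r)}_{nf}=Q_r(n)>\tfrac{c_r}{2}\,n^r
\]
for $n\gg 0$, which is the required bound with $\alpha=c_r/2$.

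It remains to pin down $\kappa(L)$. Let $\rho\colon B\to A$ be reduction modulo $x$. If $s=-\infty$, every homogeneous positive-degree element of $L$ lies in $Ax$, hence squares to zero and is algebraic over $k$; thus $\sigma(L)=0$ and $\kappa(L)=-\infty$. If $s\ge 0$, then any polynomial relation $P(\rho(g_0),\ldots,\rho(g_t))=0$ among images forces $P(g_0,\ldots,g_t)\in Ax$, which is square-zero, so $P^2(g_0,\ldots,g_t)=0$ in $L$; conversely a nontrivial algebraic relation in $L$ reduces modulo $x$ to one among the $\rho(g_i)$. Hence $\sigma(L)=\mathrm{trdeg}_k\rho(L)$. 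Now $\rho(L)\subset k[z_0,\ldots,z_s]$ shows $\mathrm{trdeg}_k\rho(L)\le s+1$, while $\rho(L_m)$ contains the $s+1$ elements $z_0^{2f},\,z_0^{2f-d_1}z_1,\ldots,z_0^{2f-d_s}z_s$, which are algebraically independent by comparison of monomial exponents in each fixed total $y$-degree in $A$. Therefore $\sigma(L)=s+1$ and $\kappa(L)=s$.

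The main obstacle is this last computation: the $x$-summand of $L_{nm}$ is precisely what supplies the extra growth beyond $n^{\kappa(L)}$ up to order $n^r$, yet one must simultaneously ensure that it does not enlarge the number of algebraically independent homogeneous generators beyond $s+1$. The square-zero identity $(Ax)^2=0$ is the key device, converting any potential relation modulo $x$ into a genuine algebraic relation in $L$ and thereby collapsing the transcendence degree of $L$ onto that of $\rho(L)$.
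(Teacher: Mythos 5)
Your proposal is correct and follows essentially the same route as the paper. The paper's proof for $r=\dim\mathcal N_X\ge 1$ simply cites the construction in the proof of Theorem~\ref{TheoremN1} verbatim (keeping the oscillation $\sigma(n)$, which only helps since $\sigma(n)\ge 1$ gives $Q_r(n+\sigma(n))\ge Q_r(n)$), and for $r=0$ it uses two short ad hoc series ($L=k[t]$ for $s=0$, and $L_n=\Gamma(X,\mathcal N_X)$ for $s=-\infty$). Your version sets $\sigma\equiv 0$, which is a mild simplification and handles all values of $r\ge 0$ in one formula, and it supplies an explicit verification that $\kappa(L)=s$ — a step the paper's Theorem~\ref{TheoremN1} proof asserts but does not spell out. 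Your argument that $\sigma(L)=\sigma(\rho(L))$ via the square-zero kernel of $\rho:B\to A$ is sound: a relation $P$ among the $\rho(g_i)$ lifts to $P(g_0,\dots,g_t)\in Ax$, hence $P^2(g_0,\dots,g_t)=0$, and conversely any relation descends. Two cosmetic points: the phrase ``in each fixed total $y$-degree'' should refer to the variables $z_0,\dots,z_r$ (and the cleanest justification is that the exponent vectors of your $s+1$ monomials are $\ZZ$-linearly independent, being upper triangular); and you should state explicitly that $e\mid f$ guarantees $nf-e\ge 0$, which you do. Overall: correct, self-contained, and marginally more uniform than the paper's case split, but not a genuinely different argument.
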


\begin{proof} Let $r=\dim \mathcal N_X$. When $r\ge 1$ and $s\le r$, this is established in the construction of Theorem \ref{TheoremN1}.
When $r=0$ and $s=0$, the graded linear series $L=k[t]$ (with associated line bundle $\mathcal O_X$) has $\kappa(L)=0$ and $\dim_kL_n=1$ for all $n$, so satisfies the bound.

Suppose that $r=0$ and $s=-\infty$. Then $0\ne \Gamma(X,\mathcal N_X)$ since the support of $\mathcal N_X$ is zero dimensional. Define $L_0=k$ and $L_n=\Gamma(X,\mathcal N_X)$ for $n>0$. Then $L=\bigoplus_{n\ge 0}L_n$ is a graded linear series for $\mathcal O_X$ with $\kappa(L)=-\infty$ which satisfies the bound.

\end{proof}

\begin{Theorem}\label{TheoremN2}
Suppose that $X$ is a $d$-dimensional projective scheme  over a field $k$ with $d>0$. Let $\mathcal N_X$ be the nilradical of $X$. Let $\alpha\in  \NN$. Then the following are equivalent:
\begin{enumerate}
\item[1)] For every graded linear series $L$ on $X$ with $\alpha\le\kappa(L)$, there exists a positive integer $r$ such that 
$$
\lim_{n\rightarrow\infty}\frac{\dim_kL_{a+nr}}{n^{\kappa(L)}}
$$
exists for every positive integer $a$.
\item[2)] For every graded linear series $L$ on $X$ with $\alpha\le \kappa(L)$, there exists an arithmetic sequence $a+nr$ (for fixed $r$ and $a$ depending on $L$) such that 
$$
\lim_{n\rightarrow\infty}\frac{\dim_kL_{a+nr}}{n^{\kappa(L)}}
$$
exists.
\item[3)] The nilradical $\mathcal N_X$ of $X$ satisfies $\dim \mathcal N_X<\alpha$.
\end{enumerate}
\end{Theorem}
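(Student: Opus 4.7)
The plan is to prove the cyclic chain of implications $3) \Rightarrow 1) \Rightarrow 2) \Rightarrow 3)$. Of these, $1) \Rightarrow 2)$ is immediate, since the existence of the limit along every arithmetic sequence $a+nr$ (for some fixed $r$) in particular yields existence along some such sequence.

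The implication $3) \Rightarrow 1)$ follows at once from Theorem \ref{Theorem8}(2). Indeed, any graded linear series $L$ satisfying $\alpha \le \kappa(L)$ automatically satisfies $\dim \mathcal N_X < \alpha \le \kappa(L)$ under the hypothesis of 3), and Theorem \ref{Theorem8}(2) then supplies a positive integer $r$ such that $\lim_{n\rightarrow\infty}\dim_kL_{a+nr}/n^{\kappa(L)}$ exists for every $a \in \NN$. This step costs essentially nothing beyond the invocation.

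The substantive direction is $2) \Rightarrow 3)$, which I would prove by contrapositive. Assuming $\dim \mathcal N_X \ge \alpha$, I set $r = \dim \mathcal N_X$ and apply Theorem \ref{TheoremN1} with $s = r$. This produces a graded linear series $L$ on $X$ with $\kappa(L) = r \ge \alpha$ such that $\lim_{n\rightarrow\infty}\dim_kL_n/n^r$ fails to exist along any arithmetic sequence. Since $\kappa(L)$ equals the exponent $r$ in the denominator, this is exactly the failure of 2) for this particular $L$. The crucial choice here is $s = \dim \mathcal N_X$ rather than $s = \alpha$: taking $s < r$ would make $\dim_k L_n$ of order $n^r$, so that $\dim_k L_n/n^{\kappa(L)} = \dim_k L_n/n^s$ diverges in a trivial (and potentially monotone) way rather than genuinely oscillating in the normalization demanded by condition 2).

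Thus Theorem \ref{TheoremN2} is essentially a packaging of Theorems \ref{Theorem8} and \ref{TheoremN1}. The real work has already been done in the construction of Theorem \ref{TheoremN1}, which uses a homogeneous nilpotent element supported on a top-dimensional component of $\mbox{Supp}(\mathcal N_X)$ together with the oscillating function $\sigma(n)$ to manufacture the pathological graded linear series. Given that machinery, the only subtlety in the present proof is the bookkeeping in the contrapositive that aligns $\kappa(L)$ with the exponent $r = \dim \mathcal N_X$, and no additional obstacle arises.
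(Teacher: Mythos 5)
Your proposal is correct and follows exactly the paper's route: the chain $1)\Rightarrow 2)$ (trivial), $3)\Rightarrow 1)$ via Theorem~\ref{Theorem8}(2), and $2)\Rightarrow 3)$ by contrapositive via Theorem~\ref{TheoremN1}. Your additional remark that one should invoke Theorem~\ref{TheoremN1} with $s=\dim\mathcal N_X$ so that the exponent in the failing limit matches $\kappa(L)$ is a sensible clarification of what the paper leaves implicit, even though the alternative $s=\alpha$ would also rule out the limit (it diverges to infinity rather than oscillating), so the phrase ``crucial choice'' overstates the point slightly.
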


\begin{proof} 1) implies 2) is immediate. 2) implies 3) follows from Theorem \ref{TheoremN1}. 3) implies 1) follows from Theorem \ref{Theorem8}.
\end{proof}

\begin{Theorem}\label{Theorem14a} Suppose that $X$ is a proper scheme of dimension $d$ over a  field $k$, such that $\dim \mathcal N_X<d$ and $\mathcal L$ is a  line bundle on $X$.  Then  the limit
$$
\lim_{n\rightarrow \infty}\frac{\dim_k \Gamma(X,\mathcal L^n)}{n^d}
$$
exists. 
\end{Theorem}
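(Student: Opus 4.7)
The plan is to reduce the theorem in three stages --- first from $X$ to $X_{\mathrm{red}}$, then from $X_{\mathrm{red}}$ to its $d$-dimensional irreducible components, and finally from each such proper variety to a projective birational cover --- and then to invoke Theorem \ref{Theorem5} together with Kodaira's lemma.

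For the first stage I would use the nilpotent filtration
\[
\mathcal{O}_X \supset \mathcal{N}_X \supset \mathcal{N}_X^2 \supset \cdots \supset \mathcal{N}_X^\ell = 0.
\]
Each successive quotient $\mathcal{N}_X^i/\mathcal{N}_X^{i+1}$ for $i\ge 1$ is a coherent sheaf of dimension at most $\dim \mathcal{N}_X<d$, so by the estimate of Lemma \ref{LemmaKI} --- specifically (\ref{eqF}) --- together with its straightforward $h^1$ analogue, proved by the same induction on the dimension of the support, both $h^0$ and $h^1$ of $(\mathcal{N}_X^i/\mathcal{N}_X^{i+1})\otimes \mathcal{L}^n$ are $O(n^{\dim \mathcal{N}_X})=o(n^d)$. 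Splicing the long exact cohomology sequences then yields $|\dim_k\Gamma(X,\mathcal{L}^n)-\dim_k\Gamma(X_{\mathrm{red}},(\mathcal{L}|_{X_{\mathrm{red}}})^n)|=o(n^d)$. For the second stage, write $X_{\mathrm{red}}=X_1\cup\cdots\cup X_s$ with $X_i$ its irreducible components; the exact sequence $0\to \mathcal{O}_{X_{\mathrm{red}}}\to\bigoplus_i \mathcal{O}_{X_i}\to\mathcal{G}\to 0$ has $\dim \mathcal{G}\le d-1$ (since $\mathcal{G}$ is supported on pairwise intersections) and gives $\dim_k\Gamma(X_{\mathrm{red}},(\mathcal{L}|_{X_{\mathrm{red}}})^n)=\sum_i \dim_k\Gamma(X_i,(\mathcal{L}|_{X_i})^n)+O(n^{d-1})$. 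Components with $\dim X_i<d$ or $\kappa(\mathcal{L}|_{X_i})<d$ contribute $o(n^d)$ by Lemma \ref{LemmaKI}, reducing the problem to each proper variety $X_i$ of dimension $d$ on which $\mathcal{L}_i:=\mathcal{L}|_{X_i}$ has maximal Kodaira-Iitaka dimension --- in other words, $\mathcal{L}_i$ is big.

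For the third stage, Chow's lemma provides a projective birational $\phi:X_i'\to X_i$ with $\mathcal{L}_i'=\phi^*\mathcal{L}_i$ still big. The projection formula gives $\phi_*\phi^*\mathcal{L}_i^n\cong \mathcal{L}_i^n\otimes \phi_*\mathcal{O}_{X_i'}$, and the short exact sequence $0\to \mathcal{O}_{X_i}\to \phi_*\mathcal{O}_{X_i'}\to \mathcal{H}\to 0$ (with $\dim \mathcal{H}\le d-1$, since $\phi$ is an isomorphism on a dense open) yields $|\dim_k\Gamma(X_i,\mathcal{L}_i^n)-\dim_k\Gamma(X_i',(\mathcal{L}_i')^n)|=O(n^{d-1})$. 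On the projective variety $X_i'$ I would invoke Kodaira's lemma: choosing an ample $\mathcal{A}\cong \mathcal{O}(F)$ with $F$ effective Cartier, the exact sequence $0\to (\mathcal{L}_i')^m\otimes \mathcal{A}^{-1}\to (\mathcal{L}_i')^m\to (\mathcal{L}_i')^m|_F\to 0$ together with the bound $h^0((\mathcal{L}_i')^m|_F)=O(m^{d-1})$ and the bigness of $\mathcal{L}_i'$ forces $h^0((\mathcal{L}_i')^{m_0}\otimes \mathcal{A}^{-1})>0$ for some $m_0>0$; hence $(\mathcal{L}_i')^{m_0}\cong \mathcal{A}\otimes \mathcal{O}(E)$ for an effective Cartier $E$. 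For each residue $b\in\{0,1,\ldots,m_0-1\}$ and $k\gg 0$, ampleness of $\mathcal{A}$ yields $h^0(\mathcal{A}^k\otimes (\mathcal{L}_i')^b)>0$, which injects via the canonical section of $\mathcal{O}(kE)$ into $h^0((\mathcal{L}_i')^{b+km_0})$. Hence $h^0((\mathcal{L}_i')^n)>0$ for all $n\gg 0$, so the index of the full section ring of $\mathcal{L}_i'$ is $1$ and Theorem \ref{Theorem5} produces $\lim_n \dim_k\Gamma(X_i',(\mathcal{L}_i')^n)/n^d$, which by the preceding bound agrees with $\lim_n \dim_k\Gamma(X_i,\mathcal{L}_i^n)/n^d$.

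The main obstacle is this final stage --- obtaining Kodaira's lemma over an arbitrary base field and verifying that the section-ring index equals $1$ on each top-dimensional component. The earlier reductions amount to routine small-support cohomological bookkeeping.
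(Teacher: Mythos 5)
Your reduction mirrors the paper's proof, just run in the opposite order: the paper proves the integral case first, then reduces the reduced case to it via the components exact sequence, and finally peels off $\mathcal N_X$, while you peel off $\mathcal N_X$ and the components first. The only real technical divergence is in the integral case, where both arguments come down to showing the section ring of a big line bundle on a projective variety has index $m(L)=1$ so that Theorem~\ref{Theorem5} applies: you do this by the usual Kodaira-lemma decomposition $(\mathcal L')^{m_0}\cong \mathcal A\otimes\mathcal O(E)$ followed by twisting by $\mathcal A^k$ to fill in every residue class, whereas the paper argues more directly, producing sections of $\mathcal L^{nef}$ and of $\mathcal L^{nef+1}$ (via a hyperplane section and a globally generated twist $\mathcal L\otimes\mathcal O(fH)$), so that $\gcd = 1$. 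Both routes are standard and work over any field. Two small remarks: the nilpotent filtration in your first stage is a detour --- estimate (\ref{eqF}) applied to $\mathcal N_X$ itself already gives the needed bound, since $\dim\mathcal N_X<d$; and you are right that an $h^1$-bound for sheaves of dimension $<d$ is needed to get the two-sided comparison of $h^0$'s across the exact sequences (the paper leaves this implicit), and your observation that it follows by the same dévissage as (\ref{eqF}) is correct.
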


\begin{proof} 
We first prove the theorem in the case when $X$ is integral (a variety). 
We may assume that the section ring $L$ of $\mathcal L$ has maximal Kodaira-Iitaka dimension $d$, because the limit is zero otherwise.
There then exists a positive constant $\alpha$ and a positive integer $e$ such that
$$
\dim_k\Gamma(X,\mathcal L^{ne})>\alpha n^d
$$
for all positive integers $n$ by (\ref{eqKI2}). Let $H$ be a hyperplane section of $X$, giving a short exact sequence
$$
0\rightarrow \mathcal O_X(-H)\rightarrow \mathcal O_X\rightarrow \mathcal O_H\rightarrow 0.
$$
Tensoring with $\mathcal L^n$ and taking global sections, we see that $\Gamma(X,\mathcal L^{ne}\otimes \mathcal O_X(-H))\ne 0$ for $n\gg 0$ as
$q(\mathcal L^e\otimes \mathcal O_H)\le \dim(H)=d-1$. Since $H$ is ample, there exists a positive integer $f$ such that $\mathcal L\otimes\mathcal O_X(fH)$
is generated by global sections. Thus
$$
\Gamma(X,\mathcal L^{nef+1})\cong \Gamma(X,(\mathcal L^{nef}\otimes \mathcal O_X(-fH))\otimes (\mathcal L\otimes \mathcal O_X(fH)))\ne 0
$$
for $n\gg 0$. Thus $m(L)=1$.
The theorem  in the case when $X$ is a variety thus follows from Theorem \ref{Theorem5}.

Now assume that $X$ is  reduced. Let $X_1,\ldots, X_s$ be the irreducible components of $X$. Since $X$ is reduced, we have a natural short exact sequence of
$\mathcal O_X$-modules
$$
0\rightarrow \mathcal O_X\rightarrow \bigoplus_{n\ge 0}\mathcal O_{X_i}\rightarrow \mathcal F\rightarrow 0
$$
where $\mathcal F$ has support of dimension $\le d-1$. Tensoring with $\mathcal L^n$, we obtain that
$$
\lim_{n\rightarrow \infty}\frac{\dim_k\Gamma(X,\mathcal L^n)}{n^d}=\sum_{i=1}^s\lim_{n\rightarrow \infty}\frac{\dim_k\Gamma(X_i, \mathcal L^n\otimes\mathcal O_{X_i})}{n^d}
$$
exists, as $\dim_k\Gamma(X,\mathcal F\otimes \mathcal L^n)$ grows at most like $n^{d-1}$.

Let $\overline X=X_{\rm red}$ so that $\mathcal O_{\overline X}=\mathcal O_X/\mathcal N_X$. From the exact sequence
$$
0\rightarrow \mathcal N_X\rightarrow \mathcal O_X\rightarrow \mathcal O_{\overline X}\rightarrow 0
$$
and since the support of $\mathcal N_X$ has dimension less than $d$, we have that
$$
\lim_{n\rightarrow\infty}\frac{\dim_k\Gamma(X,\mathcal L^n)}{n^d}=\lim_{n\rightarrow\infty}\frac{\dim_k\Gamma(\overline X,\mathcal L^n\otimes\mathcal O_{\overline X})}{n^d}
$$
exists.
\end{proof}

\section{Nonreduced zero dimensional schemes}\label{SecZero}

The case when $d=\dim X=0$ is  rather special. In fact, the implication 2) implies ) of Theorem \ref{TheoremN2} does not hold if $d=0$, as follows from
Proposition \ref{TheoremN3} below. There is however a very precise statement about what does happen in zero dimensional schemes, as we show below.

\begin{Proposition}\label{TheoremN3} Suppose that $X$ is a 0-dimensional irreducible but nonreduced $k$-scheme and $L$ is a graded linear series on $X$ with $\kappa(L)=0$. Then there exists a positive integer $r$ such that  
$$
\lim_{n\rightarrow\infty}\dim_kL_{a+nr}
$$
exists for every positive integer $a$.
\end{Proposition}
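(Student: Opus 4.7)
The plan is to reduce the proposition to a statement about $k$-subspaces of a finite-dimensional local Artin $k$-algebra. Since $X$ is a zero-dimensional irreducible proper $k$-scheme, we have $X=\mbox{Spec}(A)$ where $A$ is a local Artin $k$-algebra of finite $k$-dimension, with maximal ideal $m_A$. Because $X$ is affine, the line bundle $\mathcal L$ underlying $L$ is free; fixing a nowhere vanishing global section $t$ of $\mathcal L$ identifies the section ring $\bigoplus_{n\ge 0}\Gamma(X,\mathcal L^n)$ with the graded polynomial ring $A[t]$ in which $\deg t=1$. Thus $L\subseteq A[t]$ is a graded $k$-subalgebra, and via $ut^n\mapsto u$ each $L_n$ is identified with a finite-dimensional $k$-subspace of $A$, with the multiplication in $L$ corresponding to multiplication in $A$.

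Next I would use $\kappa(L)=0$ to produce a unit of $A$ in some positive-degree piece of $L$. By the definition in Section \ref{SecProp}, $\kappa(L)=0$ means $\sigma(L)=1$, so there exists a homogeneous element $y=ut^d\in L_d$ with $d\ge 1$ that is transcendental over $k$. Transcendence precludes any polynomial identity $y^m=0$, so $u^m\ne 0$ in $A$ for every $m\ge 1$, meaning $u$ is not nilpotent. Since $A$ is local Artinian, every nonunit is nilpotent, hence $u\in A^{\times}$.

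Set $r=d$. For each $n\ge 0$, multiplication by $y$ defines a $k$-linear map $L_n\to L_{n+r}$ which, under our identification, is the map $v\mapsto uv$ on subspaces of $A$; this map is injective because $u$ is a unit of $A$. Hence $\dim_k L_n\le \dim_k L_{n+r}$ for all $n\ge 0$. Given any positive integer $a$, the sequence $\{\dim_k L_{a+nr}\}_{n\ge 0}$ is therefore non-decreasing and uniformly bounded above by $\dim_k A<\infty$, so it is eventually constant. In particular the claimed limit exists.

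The argument is essentially a monotone convergence argument and I do not anticipate a genuine obstacle. The one delicate step, where the hypothesis on $X$ really enters, is the extraction of a unit of $A$ from the transcendence of $y$: this uses both that $A$ is Artinian and that $A$ is local, the latter being where the irreducibility of $X$ is used. An analogous argument over a reducible zero-dimensional scheme such as $\mbox{Spec}(k\times k)$ would fail because $(1,0)$ is non-nilpotent without being a unit, which is consistent with the fact that irreducibility is essential in the statement.
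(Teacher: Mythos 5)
Your proof is correct and follows essentially the same route as the paper's: identify $L$ with a graded $k$-subalgebra of $A[t]$, use $\kappa(L)=0$ to extract a unit of $A$ in some positive-degree piece $L_r$, deduce $\dim_k L_m \le \dim_k L_{m+r}$ by multiplication, and conclude stabilization from the uniform bound $\dim_k L_n \le \dim_k A$. The one point where you supply more detail than the paper is the derivation of the unit from the transcendence definition of $\kappa(L)$ (the paper merely asserts the equivalence), and that argument is sound; your closing observation about where locality/Artinianness are used, and the $\mathrm{Spec}(k\times k)$ contrast, is also correct.
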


\begin{proof} With our assumptions, $X=\mbox{Spec}(A)$ where $A$ is a nonreduced Artin local ring, with $\dim_kL<\infty$, and $L$ is a graded $k$-subalgebra of $\Gamma(X,\mathcal O_X)[t]=A[t]$. The condition $\kappa(L)=0$ is equivalent to the statement that there exists $r>0$ such that $L_r$ contains a unit $u$ of $A$. We then have that 
$$
\dim_kL_{m+r}\ge \dim_kL_mL_r\ge \dim_kL_m
$$
for all $m$. Thus for fixed $a$, $\dim_kL_{a+nr}$ must stabilize for large $n$.
\end{proof}

We do not have such good behavior for graded linear series $L$ with $\kappa(L)=\infty$. 

\begin{Proposition}\label{TheoremN6} Suppose that $X$ is a 0-dimensional  nonreduced $k$-scheme. Then there exists a  graded linear series $L$ on $X$ with $\kappa(L)=-\infty$, such that 
$$
\lim_{n\rightarrow\infty}\dim_kL_{n}
$$
does not exist, even when $n$ is constrained to lie in any arithmetic sequence.
\end{Proposition}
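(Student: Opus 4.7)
The plan is to mimic the $d=0$ construction from the proof of Theorem \ref{Theorem4}, modifying it so the resulting graded linear series has Kodaira-Iitaka dimension exactly $-\infty$. Since $X$ is zero-dimensional over $k$, we may write $X = \mathrm{Spec}(A)$ with $A$ a finite Artinian $k$-algebra, and nonreducedness yields a nonzero nilpotent element of $A$ which, after replacing by a suitable power, satisfies $x^2 = 0$ and $x \ne 0$. I will work with the line bundle $\mathcal L = \mathcal O_X$ (whose section ring is the polynomial ring $A[t]$) and the oscillating function $\tau \colon \ZZ_+ \to \{0,1\}$ defined in (\ref{eqtau}).

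I propose to take $L = \bigoplus_{n\ge 0} L_n$ with $L_0 = k$ and, for $n > 0$,
$$
L_n = \begin{cases} k\,xt^n & \text{if } \tau(n) = 1, \\ 0 & \text{if } \tau(n) = 0. \end{cases}
$$
The square-zero condition $x^2 = 0$ is what makes everything go: it forces every product $(c_1 x t^m)(c_2 x t^n)$ of positive-degree elements to be zero, so the containment $L_m L_n \subset L_{m+n}$ is automatic and $L$ is a graded $k$-subalgebra of $A[t]$, hence a graded linear series for $\mathcal O_X$. Simultaneously, every nonzero homogeneous element of $L$ of positive degree has the form $z = c\, x t^n$ and satisfies $z^2 = 0$, so is algebraic over $k$; this forces $\sigma(L) = 0$, i.e., $\kappa(L) = -\infty$.

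Since $\dim_k L_n = \tau(n) \in \{0,1\}$ for $n > 0$, the remaining point is to show that $\tau$ does not converge along any arithmetic progression $\{a + br : b \in \NN\}$ with $r \ge 1$. This will follow from the rapid-growth condition $i_{j+1} > 2^j i_j$ in the construction of $\{i_j\}$: once $j$ is large enough that $i_{j+1} - i_j > r$, the interval $[i_j, i_{j+1}]$ must contain an element of the progression, and on such an interval $\tau$ is identically $0$ when $j$ is even and identically $1$ when $j$ is odd, so both values are attained infinitely often along the progression. I do not anticipate any significant obstacle; the only real content is the observation that the nonreducedness of $X$ supplies a square-zero section serving two purposes at once, namely making $L$ a subalgebra (because cross-products vanish) and collapsing $\kappa(L)$ to $-\infty$.
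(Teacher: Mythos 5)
Your proof is correct and follows essentially the same approach as the paper: the paper sets $L^1_n = m_{A_1}^{t+\tau(n)}$ on a nonreduced local factor $A_1$ (where $m_{A_1}^t \neq 0$, $m_{A_1}^{t+1} = 0$) and extends by zero, which works for exactly the same reason as yours — namely that the ideal $m_{A_1}^t$ is square-zero, so the subalgebra condition and $\kappa = -\infty$ are both automatic. Your version, using a single square-zero element $x$ so that $\dim_k L_n = \tau(n) \in \{0,1\}$, is a minor streamlining of the same idea.
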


\begin{proof}  $X=\mbox{Spec}(A)$ where $A=\bigoplus_{i=1}^sA_i$, with $s$ the number of irreducible components of $X$ and the $A_i$ are  Artin local rings with $\dim_kA<\infty$. Let $m_{A_1}$ be the maximal ideal of $A_1$.  
There exists a number $0<t$ such that $m_{A_1}^t\ne 0$ but $m_{A_1}^{t+1}=0$. Let $\tau(n)$ be the function defined in (\ref{eqtau}).

Define a graded linear series $L^1$  on $\mbox{Spec}(A_1)$ by $L^1_n=m{A_1}^{t+\tau(n)}$. Then $\lim_{n\rightarrow \infty}\dim_k L_n^1$ does not exist, even when $n$ is constrained to  lie in an arithmetic sequence. Extend $L$ to a graded linear series $L$ on $X$ with $\kappa(L)=-\infty$ be setting
$L_n=L_n^1\bigoplus (0)\bigoplus\cdots\bigoplus (0)$.
\end{proof}

It follows that the conclusions of Proposition \ref{TheoremN3} do not hold in nonreduced 0-dimensional schemes which are not irreducible.

\begin{Proposition}\label{TheoremN7}  Suppose that $X$ is a 0-dimensional {\it nonirreducible} and nonreduced $k$-scheme. Then there exists a  graded linear series $L$ on $X$ with $\kappa(L)=0$, such that 
\begin{equation}\label{eqN10}
\lim_{n\rightarrow\infty}\dim_kL_{n}
\end{equation}
does not exist, even when $n$ is constrained in any arithmetic sequence.
\end{Proposition}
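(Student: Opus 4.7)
The plan is to adapt the construction used for Proposition \ref{TheoremN6} by adjoining a carefully chosen idempotent so that $\kappa(L)$ becomes $0$ rather than $-\infty$, while leaving the oscillation of $\dim_k L_n$ intact. Write $X=\mathrm{Spec}(A)$ with $A=A_1\oplus\cdots\oplus A_s$ the decomposition into local Artin factors. Since $X$ is neither irreducible nor reduced we have $s\ge 2$ and at least one factor, say $A_1$, is nonreduced; let $\ell$ be the positive integer with $m_{A_1}^{\ell}\ne 0$ but $m_{A_1}^{\ell+1}=0$, and let $e=(0,1_{A_2},0,\ldots,0)\in A$, which is an idempotent orthogonal to $A_1$.

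First I would define a graded linear series on $X$ for the line bundle $\mathcal O_X$ by $L_0=k$ and, for $n\ge 1$,
$$
L_n = k e + m_{A_1}^{\ell+\tau(n)}\subset A=\Gamma(X,\mathcal O_X),
$$
where $\tau$ is the function from (\ref{eqtau}). The subalgebra condition $L_mL_n\subset L_{m+n}$ needs to be checked directly: using $e^2=e$, the orthogonality $e\cdot A_1=0$ inside the product decomposition of $A$, and $m_{A_1}^{\ell+\tau(m)}\cdot m_{A_1}^{\ell+\tau(n)}\subset m_{A_1}^{2\ell}=0$, every cross and nilpotent term vanishes and the product collapses to $ke\subset L_{m+n}$.

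Next I would read off $\dim_k L_n = 1+\dim_k m_{A_1}^{\ell+\tau(n)}$, which takes the value $1+\dim_k m_{A_1}^{\ell}\ge 2$ on the $\tau=0$ intervals and the value $1$ on the $\tau=1$ intervals. The same oscillation argument already exploited in the $d=0$ case of Theorem \ref{Theorem4} (built from the behaviour of $\tau$ analogous to Lemmas \ref{Lemma1} and \ref{Lemma2}) shows that $\tau$ refuses to stabilise along any arithmetic progression $a+nr$, so neither does $\dim_k L_n$.

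Finally I would confirm $\kappa(L)=0$. The upper bound $\kappa(L)\le 0$ is immediate from Lemma \ref{LemmaKI}(1) since $\dim X=0$. For the lower bound it suffices to exhibit one algebraically independent homogeneous element of positive degree; I would take $e\in L_1$, noting that $e^m=e\ne 0$ for all $m\ge 1$ and that, since elements of distinct graded pieces of the section ring are linearly independent, a relation $\sum c_i e^i=0$ in $L$ with $c_i\in k$ forces $c_ie=0$ in $A$ for each $i$, hence $c_i=0$. Thus $\sigma(L)\ge 1$ and so $\kappa(L)=0$. The main subtlety is locating the idempotent: the hypothesis that $X$ is not irreducible is exactly what produces a non-nilpotent idempotent annihilating $A_1$, which is precisely what lifts the Kodaira--Iitaka dimension from $-\infty$ to $0$ without disturbing the wild behaviour of the length function on the $A_1$-part.
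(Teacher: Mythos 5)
Your proof is correct and follows essentially the same strategy as the paper: decompose $A$ into local Artin factors, build the oscillating sequence out of powers $m_{A_1}^{\ell+\tau(n)}$ of the maximal ideal of the nonreduced factor (exactly as in Proposition \ref{TheoremN6}), and use an idempotent supplied by a second factor to raise the Kodaira--Iitaka dimension to $0$. The only cosmetic difference is that the paper adjoins the entire other factor (taking $L_n = A_1 \oplus (L^2)_n$ and citing Proposition \ref{TheoremN6} directly), whereas you adjoin just the one-dimensional span $ke$ and inline the construction, but the underlying idea and all the key verifications are the same.
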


\begin{proof} $X=\mbox{Spec}(A)$ where $A=A_1\bigoplus A_2$, with $A_1$ an Artin local ring and $A_2$  an Artin ring.
 A graded linear series $L$ on $X$ is a graded $k$-subalgebra of $A[t]$. Let $L^2$ be a graded linear series on $\mbox{Spec}(A_2)$ with $\kappa(L^2)=-\infty$, such that the conclusions of Proposition \ref{TheoremN6} hold. Then the linear series $L$ on $X$ defined by 
$$
L_n=A_1\bigoplus (L^2)_n
$$
has $\kappa(L)=0$, but 
$$
\lim_{n\rightarrow\infty}\dim_kL_{n}=1+\lim_{n\rightarrow \infty}\dim_k L^2_n
$$
does not exist, even when $n$ is constrained to lie in any arithmetic sequence.  

\end{proof}

In particular, the conclusions of Theorem \ref{TheoremN2} are true for 0-dimensional projective $k$-schemes which are not irreducible.

\section{Examples with Kodaira-Iitaka dimension $-\infty$}\label{Infinity}
It is much easier to construct perverse examples with Kodaira-Iitaka dimension $-\infty$, since the condition $L_mL_n\subset L_{m+n}$ can be trivial
in this case. If $X$ is a reduced variety, and $L$ is a graded linear series on $X$, then it follows from Corollary \ref{Cor72} that there is an upper bound 
$\dim_kL_n<\beta n^{q(L)}$
for all $n$. However, for nonreduced varieties of dimension $d$, we only have the upper bound $\dim_kL_n<\gamma n^d$ of (\ref{eqKI4}).
Here is an example with $\kappa(L)=-\infty$ and maximal growth of order $n^d$.

\begin{Example}\label{Example2} Let $k$ be a field, and let $X$ be the one dimensional projective non reduced  $k$-scheme consisting of a double line in $\PP^2_k$. Let $T$ be a subset of the positive integers.
 There exists a graded  linear series
$L$  for $\mathcal O_X(2)$ such that 
$$
\dim_kL_n= \left\{\begin{array}{ll}
n+1&\mbox{ if }n\in T\\
0&\mbox{ if }n\not\in T
\end{array}\right.
$$

In the example, 
we have that $\kappa(L)=-\infty$, but $\dim_kL_n$ is $O(n)=O(n^{\dim X})$. 
\end{Example}

\begin{proof}
 We can choose homogeneous coordinates coordinates on $\PP^2_k$ so that $X=\mbox{Proj}(S)$, where $S=k[x_0,x_1,x_2]/(x_1^2)$. Let $\overline x_i$ be the classes of $x_i$ in $S$, so that $S=k[\overline x_0,\overline x_1,\overline x_2]$.
Define a graded linear series $L$ for $\mathcal O_X(2)$ by
defining $L_n$ to be the $k$-subspace of $\Gamma(X,\mathcal O_X(2n))$ spanned by $\{\overline x_1\overline x_0^i\overline x_2^j\mid i+j=n\}$
if $n\in T$ and $L_n=0$ if $n\not\in T$. Then
$$
\dim_kL_n= \left\{\begin{array}{ll}
n+1&\mbox{ if }n\in T\\
0&\mbox{ if }n\not\in T
\end{array}\right.
$$
\end{proof}

We modify the above example a little bit to find another  example with interesting growth.

\begin{Theorem}\label{Theorem21} Let $k$ be a field, and let $X$ be the one dimensional projective non reduced  $k$-scheme consisting of a double line in $\PP^2_k$. Let $T$ be any infinite subset of the positive integers $\ZZ_+$ such that $\ZZ_+\setminus T$ is also infinite.
 There exists a graded  linear series
$L$  for $\mathcal O_X(2)$ such that 
$$
\dim_kL_n= \left\{\begin{array}{ll}
\lceil \log(n)\rceil &\mbox{ if }n\in T\\
\lceil \frac{\log(n)}{2}\rceil&\mbox{ if }n\not\in T
\end{array}\right.
$$
In this example we have $\kappa(L)=-\infty$.
\end{Theorem}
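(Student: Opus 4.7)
The plan is to mimic Example \ref{Example2} very closely: working on the double line $X = \mathrm{Proj}(S)$ with $S = k[x_0, x_1, x_2]/(x_1^2)$, I will build $L$ entirely out of sections divisible by the nilpotent class $\overline{x}_1$. Because $\overline{x}_1^2 = 0$, any two such sections multiply to zero, so the subalgebra condition $L_m L_n \subset L_{m+n}$ is automatic (both sides vanish for $m, n \geq 1$), and every nonzero homogeneous $z \in L_n$ of positive degree will satisfy $z^2 = 0$, forcing $\sigma(L) = 0$ and hence $\kappa(L) = -\infty$. This mechanism grants complete freedom in choosing the dimensions $\dim_k L_n$, subject only to the upper bound given by the size of the ``$\overline{x}_1$-part'' of $\Gamma(X, \mathcal{O}_X(2n))$.

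Concretely, I would set $W_n := \overline{x}_1 \cdot S_{2n-1} \subseteq S_{2n} \subseteq \Gamma(X, \mathcal{O}_X(2n))$, which is spanned by the monomials $\overline{x}_1 \overline{x}_0^i \overline{x}_2^j$ with $i+j = 2n-1$. These are linearly independent in $S$, so $\dim_k W_n = 2n$. Define
$$ d_n = \begin{cases} \lceil \log n \rceil & \text{if } n \in T, \\ \lceil \log(n)/2 \rceil & \text{if } n \notin T. \end{cases} $$
Since $d_n \leq 2n$ for every $n \geq 1$, I can choose an arbitrary $k$-subspace $L_n \subseteq W_n$ of dimension exactly $d_n$. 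Setting $L_0 := k$ and $L := \bigoplus_{n \geq 0} L_n$ then gives a graded $k$-subalgebra of the section ring of $\mathcal{O}_X(2)$ whose dimension function realizes the two logarithmic branches demanded by the theorem.

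The verification is essentially bookkeeping: for $m, n \geq 1$,
$$ L_m \cdot L_n \subseteq W_m \cdot W_n \subseteq \overline{x}_1^2 \cdot S \;=\; 0 \;\subseteq\; L_{m+n}, $$
and $L_0 \cdot L_n = k \cdot L_n = L_n$ supplies the identity and closure in the remaining cases. The Kodaira-Iitaka statement follows because any nonzero homogeneous $z \in L_n$ of positive degree lies in $\overline{x}_1 \cdot S$ and so satisfies the nonzero polynomial $T^2 \in k[T]$, meaning $\{z\}$ is not algebraically independent; hence $\sigma(L) = 0$ and $\kappa(L) = -\infty$.

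There is really no serious obstacle here; the only point requiring any care is that the target $d_n$ never exceeds $\dim_k W_n = 2n$, which is trivially true for all $n \geq 1$ (in particular $n = 1$ gives $L_1 = 0$ since $\lceil \log 1 \rceil = 0$, and $L_0 = k$ still contains the identity). Indeed, the flexibility of the Example \ref{Example2} construction is such that \emph{any} admissible dimension function with $d_n \leq 2n$ can be realized by this argument, and our logarithmic $d_n$ is vastly below this cap, so the construction works regardless of the base of the logarithm.
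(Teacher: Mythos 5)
Your proposal is correct and uses essentially the same idea as the paper's proof: work in $S=k[x_0,x_1,x_2]/(x_1^2)$, build $L_n$ out of elements of $\overline{x}_1\cdot S$ so that $\overline{x}_1^2=0$ kills every product and forces both the subalgebra condition and $\kappa(L)=-\infty$ automatically. Two small remarks: the paper's printed proof actually constructs the series inside $\Gamma(X,\mathcal O_X(n))$ for the bundle $\mathcal O_X(1)$ (a minor mismatch with its own statement, which says $\mathcal O_X(2)$), whereas your version matches the statement; and your observation that \emph{any} subspace of the $\overline{x}_1$-part of the right dimension works (rather than the specific monomial span the paper writes down) is a slight but genuine conceptual clean-up making explicit the full flexibility of this construction.
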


\begin{proof}
 We can choose homogeneous coordinates coordinates on $\PP^2_k$ so that $X=\mbox{Proj}(S)$, where $S=k[x_0,x_1,x_2]/(x_1^2)$. Let $\overline x_i$ be the classes of $x_i$ in $S$, so that $S=k[\overline x_0,\overline x_1,\overline x_2]$.
Define
$$
\lambda(n)=\left\{\begin{array}{ll}
\lceil \log(n)\rceil &\mbox{ if }n\in T\\
\lceil \frac{\log(n)}{2}\rceil&\mbox{ if }n\in \ZZ_+\setminus T.
\end{array}\right.
$$ 
Define a graded linear series $L$ for $\mathcal O_X(1)$ by
defining $L_n$ to be the $k$-subspace of $\Gamma(X,\mathcal O_X(n))$ spanned by 
$$
\overline x_0^{n-1}\overline x_1,\overline x_0^{n-2}\overline x_1\overline x_2,\ldots, \overline x_0^{n-\lambda(n)}\overline x_1\overline x_2^{\lambda(n)-1}.
$$
Then $L_n$ has the desired property.
\end{proof}

The following is an example of a line bundle on a non reduced scheme for which there is interesting growth. The characteristic $p>0$ plays a role in the construction.

\begin{Example}\label{Example16} Suppose that  $d\ge 1$. There exists an irreducible but  nonreduced projective variety $Z$ of dimension $d$ over a field of  positive characteristic $p$,   and a line bundle $\mathcal N$ on $Z$, whose Kodaira-Iitaka dimension is  $-\infty$, such that 
$$
\dim_k\Gamma(Z,\mathcal N^{n})=
\left\{\begin{array}{ll}
 \binom{d+n-1}{d-1}&\mbox{ if $n$ is  a power of $p$}\\
 \\
0&\mbox{ otherwise}
\end{array}\right.
$$

\end{Example}
In particular,  given a positive integer $r$, there exists at least one integer   $a$ with $0\le a<r$ such that the limit
$$
\lim_{n\rightarrow \infty}\frac{\dim_k\Gamma(Z,\mathcal N^n)}{n^{d-1}}
$$
does not exist when $n$ is constrained to lie   in the arithmetic sequence $a+br$. 

\begin{proof}

Suppose that $p$ is a prime number such that $p\equiv 2\,\, (3)$.
In Section 6 of \cite{CS}, a projective genus 2 curve $C$ over an algebraic function field $k$ of characteristic $p$ is constructed, which has a $k$-rational point $Q$ and a degree zero line bundle $\mathcal L$ with the properties that
$$
\dim_k \Gamma(C,\mathcal L^n\otimes \mathcal O_C(Q))
=\left\{\begin{array}{ll}
1&\mbox{ if $n$ is  a power of $p$}\\
0&\mbox{ otherwise}
\end{array}\right.
$$
and
\begin{equation}\label{eqG32}
\Gamma(C,\mathcal L^n)=0\mbox{ for all }n.
\end{equation}

Let $\mathcal E=\mathcal O_C(Q)\bigoplus \mathcal O_C$. Let $S=\PP(\mathcal E)$ with natural projection $\pi:S\rightarrow C$, a ruled surface over $C$. Let $C_0$ be the section of $\pi$ corresponding to the
 surjection onto the second factor $\mathcal E\rightarrow \mathcal O_C\rightarrow 0$. By Proposition V.2.6 \cite{H}, we have that
$\mathcal O_S(-C_0)\otimes_{\mathcal O_S}\mathcal O_{C_0}\cong \mathcal O_C(Q)$. Let $X$ be the nonreduced subscheme $2C_0$ of $S$. We have a short exact sequence
$$
0\rightarrow \mathcal O_{C}(Q)\rightarrow \mathcal O_{X}\rightarrow \mathcal O_C\rightarrow 0.
$$
 Let $\mathcal M=\pi^*(\mathcal L)\otimes_{\mathcal O_S}\mathcal O_X$. Then
we have short exact sequences
\begin{equation}\label{eqG31}
0\rightarrow \mathcal L^n\otimes_{\mathcal O_C}\mathcal O_C(Q)\rightarrow \mathcal M^n\rightarrow \mathcal L^n\rightarrow  0.
\end{equation}

 By (\ref{eqG31}) and (\ref{eqG32}), we have that

$$
\begin{array}{lll}
\dim_k\Gamma(X,\mathcal M^n)&=&\dim_k\Gamma(C,\mathcal L^n\otimes \mathcal O_C(Q))\\
\\
&=&\left\{
\begin{array}{ll}
1&\mbox{ if $n$ is  a power of $p$}\\
0&\mbox{ otherwise}
\end{array}\right.\\
\end{array}
$$

Now let $Z=X\times \PP_k^{d-1}$ and $\mathcal N=\mathcal M\otimes \mathcal O_{\PP}(1)$. By the Kuenneth formula, we have that
$$
\Gamma(Z,\mathcal N^n)=\Gamma(X,\mathcal M^n)\otimes_k\Gamma(\PP^{d-1},\mathcal O_{\PP}(n))
$$
from which the conclusions of the example follow.
\end{proof}

\end{document}